\newtheorem{theorem}{\textbf{Theorem}}[section]
\newtheorem{proposition}[theorem]{\textbf{Proposition}}
\newtheorem{corollary}[theorem]{\textbf{Corollary}}
\newtheorem{lemma}[theorem]{\textbf{Lemma}}
\theoremstyle{definition}
\newtheorem{remark}[theorem]{Remark}
\newtheorem{definition}[theorem]{Definition}
\theoremstyle{remark}
\newtheorem{example}[theorem]{Example}
\tikzstyle{invisiblepath}=[draw=none, line width=1.5pt, spins]
\tikzstyle{path}=[line width=1.5pt, spins]
\tikzstyle{spins}=[every node/.style={spin}]
\tikzstyle{spin}=[circle, draw, fill=white, minimum size=14pt, inner sep=0pt, text=black]
\DeclareMathOperator{\MC}{MC}
\DeclareMathOperator{\stab}{stab}
\newcommand{\bs}{\boldsymbol}
\newcounter{Comment}
\newcommand{\icegrid}[2]{

    \foreach \i in {1,...,#1}{
        \pgfmathtruncatemacro\col{int(#1-\i+1)}
        \draw (2*\i-1,0) -- (2*\i-1,2*#2) node[spin, label=above:$\col$] {$+$};
    }

    \foreach \i in {1,...,#2}{
        \pgfmathtruncatemacro\row{int(#2-\i+1)}
        \draw (0,2*\i-1) -- (2*#1, 2*\i-1) node[spin, label=right:$\row$] {$+$} ;
    }

    \foreach \i in {1,...,#1}{
        \foreach \j in {1,...,#2}{
            \node[spin] at (2*\i-1, 2*\j-2) {$+$};
            \node[spin] at (2*\i-2, 2*\j-1) {$+$};
        }
    }
}
\definecolor{darkgreen}{RGB}{0,180,0}
\colorlet[named]{green}{darkgreen}
\newcommand{\Col}{\text{Col}}
\DeclareMathOperator{\GL}{GL}
\newcommand{\wt}{\text{wt}}
\newcommand{\vertex}[4]{%
\begin{tikzpicture}
\coordinate (a) at (-1, 0);
\coordinate (b) at (0, 1);
\coordinate (c) at (1, 0);
\coordinate (d) at (0, -1);
\coordinate (aa) at (-.75,.5);
\coordinate (cc) at (.75,.5);
\draw (a)--(c);
\draw (b)--(d);
\draw[white, fill=white] (a) circle (.25);
\draw[white, fill=white] (b) circle (.25);
\draw[white, fill=white] (c) circle (.25);
\draw[white, fill=white] (d) circle (.25);
\path[fill=white] (0,0) circle (.25);
\node at (0,0) {$\scriptstyle x,y$};
\node at (a) {$#1$};
\node at (b) {$#2$};
\node at (c) {$#3$};
\node at (d) {$#4$};
\end{tikzpicture}
}
\newcommand{\smallvertex}[4]{%
\begin{tikzpicture}[scale=.8,every node/.style={scale=1.0}]
\coordinate (a) at (-1, 0);
\coordinate (b) at (0, 1);
\coordinate (c) at (1, 0);
\coordinate (d) at (0, -1);
\coordinate (aa) at (-.75,.5);
\coordinate (cc) at (.75,.5);
\draw (a)--(c);
\draw (b)--(d);
\draw[white, fill=white] (a) circle (.25);
\draw[white, fill=white] (b) circle (.25);
\draw[white, fill=white] (c) circle (.25);
\draw[white, fill=white] (d) circle (.25);
\path[fill=white] (0,0) circle (.25);
\node at (0,0) {$\scriptstyle x,y$};
\node at (a) {$#1$};
\node at (b) {$#2$};
\node at (c) {$#3$};
\node at (d) {$#4$};
\end{tikzpicture}
}
\newcommand{\vertexnoxy}[4]{%
\begin{tikzpicture}
\coordinate (a) at (-1, 0);
\coordinate (b) at (0, 1);
\coordinate (c) at (1, 0);
\coordinate (d) at (0, -1);
\coordinate (aa) at (-.75,.5);
\coordinate (cc) at (.75,.5);
\draw (a)--(c);
\draw (b)--(d);
\draw[white, fill=white] (a) circle (.25);
\draw[white, fill=white] (b) circle (.25);
\draw[white, fill=white] (c) circle (.25);
\draw[white, fill=white] (d) circle (.25);
\node at (a) {$#1$};
\node at (b) {$#2$};
\node at (c) {$#3$};
\node at (d) {$#4$};
\end{tikzpicture}
}
\newcommand{\rmatrix}[4]{%
\begin{tikzpicture}[scale=0.8]
\draw (0,0) to [out = 0, in = 180] (2,2);
\draw (0,2) to [out = 0, in = 180] (2,0);
\path[fill=white] (0,0) circle (.35);
\path[fill=white] (0,2) circle (.35);
\path[fill=white] (2,2) circle (.35);
\path[fill=white] (2,0) circle (.35);
\path[fill=white] (1,1) circle (.35);
\node at (0,0) {$#1$};
\node at (0,2) {$#2$};
\node at (2,2) {$#3$};
\node at (2,0) {$#4$};
\node at (1,1) {$z_i,z_j$};
\end{tikzpicture}
}
\newcommand{\ybelhs}[9]{
\begin{tikzpicture}[baseline=(current bounding box.center)]
  \draw (0,1) to [out = 0, in = 180] (2,3) to (4,3);
  \draw (0,3) to [out = 0, in = 180] (2,1) to (4,1);
  \draw (3,0) to (3,4);
  \path[fill=white] (0,1) circle (.3);
  \path[fill=white] (0,3) circle (.3);
  \path[fill=white] (3,4) circle (.4);
  \path[fill=white] (4,3) circle (.3);
  \path[fill=white] (4,1) circle (.3);
  \path[fill=white] (3,0) circle (.4);
  \node at (0,1) {$#1$};
  \node at (0,3) {$#2$};
  \node at (3,4) {$#3$};
  \node at (4,3) {$#4$};
  \node at (4,1) {$#5$};
  \node at (3,0) {$#6$};
\path[fill=white] (1,2) circle (.3);
\node at (1,2) {$\scriptstyle #7$};
\path[fill=white] (3,3) circle (.3);
\node at (3,3) {$\scriptstyle #8$};
\path[fill=white] (3,1) circle (.3);
\node at (3,1) {$\scriptstyle #9$};
\end{tikzpicture}}
\newcommand{\yberhs}[9]{
\begin{tikzpicture}[baseline=(current bounding box.center)]
  \draw (0,1) to (2,1) to [out = 0, in = 180] (4,3);
  \draw (0,3) to (2,3) to [out = 0, in = 180] (4,1);
  \draw (1,0) to (1,4);
  \path[fill=white] (0,1) circle (.3);
  \path[fill=white] (0,3) circle (.3);
  \path[fill=white] (1,4) circle (.3);
  \path[fill=white] (4,3) circle (.3);
  \path[fill=white] (4,1) circle (.3);
  \path[fill=white] (1,0) circle (.3);
  \node at (0,1) {$#1$};
  \node at (0,3) {$#2$};
  \node at (1,4) {$#3$};
  \node at (4,3) {$#4$};
  \node at (4,1) {$#5$};
  \node at (1,0) {$#6$};
\path[fill=white] (3,2) circle (.3);
\node at (3,2) {$\scriptstyle #7$};
\path[fill=white] (1,1) circle (.3);
\node at (1,1) {$\scriptstyle #8$};
\path[fill=white] (1,3) circle (.3);
\node at (1,3) {$\scriptstyle #9$};
\end{tikzpicture}}
\begin{document}
\title{Lattice Models for Double Whittaker Polynomials and Motivic Chern Classes}
\author{Ben Brubaker}
\address{School of Mathematics, University of Minnesota, Minneapolis, MN 55455}
\email{brubaker@umn.edu}
\author{Daniel Bump}
\address{Department of Mathematics, Stanford University, Stanford, CA 94305-2125}
\email{bump@math.stanford.edu}
\author{Andrew Hardt}
\address{Department of Mathematics, University of Illinois Urbana-Champaign, Urbana, IL 61801}
\email{ahardt@illinois.edu}
\author{Hunter Spink}
\address{Department of Mathematics, University of Toronto, 40 St. George St., Toronto, ON, Canada}
\email{hunter.spink@utoronto.ca}

\begin{abstract}
We will describe solvable lattice models whose partition functions
depend on two sets of variables, $x_1,\cdots,x_n$ and $y_1, y_2, \cdots $
that have different connections with the representation theory of
$\GL(n,F)$ where $F$ is a nonarchimedean local field. If the
boundary conditions are chosen in one way, they are
essentially the Motivic Chern classes that were used
very effectively by Aluffi, Mihalcea, Sch\"urmann and Su (AMSS) to study such
problems. In particular, using this specialization we can obtain
deformations $r_{u,v}$ of the Kazhdan-Lusztig R-polynomials that were used
by Bump, Nakasuji and Naruse to study matrix coefficients of
intertwining operators (introduced by Casselman). Thus we are
able see that the recursion formula for the $r_{u,v}$ is a reflection
of the Yang-Baxter equation.  On the other hand, with more general 
boundary conditions, specializing the parameters $y_i\to 0$ we
recover colored lattice models that were previously used by Brubaker, 
Buciumas, Bump and Gustafsson to represent Iwahori Whittaker functions on $GL(n,F)$. Thus
we term the resulting two-variable-set family of functions as ``double Whittaker polynomials.''
\end{abstract}

\maketitle

\section{Introduction}

We introduce a new family of partition functions for solvable lattice models
on the square lattice whose various specializations and special cases result
in polynomials from:
\begin{enumerate}
	\item {\bf Schubert calculus:} new polynomial representatives for (double) motivic Chern classes for the
  Type~A flag variety and closely related Laurent polynomial representatives for localizations of K-theoretic
  stable envelopes \cite{MaulikOkounkov};
  
  \item {\bf $p$-adic group representation theory:} Iwahori Whittaker functions for $p$-adic groups in Type~A \cite{BBL, BBBGIwahori}; 
  
  \item {\bf Hecke algebra structure theory:} deformations of Kazhdan-Lusztig R-polynomials introduced by Bump
		and Nakasuji~{\cite{BumpNakasujiKL}}. (Although the lattice models that we consider are special to Type~A,
		this portion of our results is for general Cartan type.)
\end{enumerate}

While there's a large and growing body of literature dedicated to representing families of orthogonal polynomials
and special functions from Schubert calculus as partition functions of solvable lattice models,
only item (2) in our list has previously been represented by lattice models (see {\cite{BBBGIwahori}}). 
It may thus seem surprising that a lattice model simultaneously generalizing all three above items should exist.
However, there is a long precedent for structures in the
representation theory of $p$-adic groups mirroring objects coming from
equivariant
K-theory~{\cite{LusztigEquivariant,KazhdanLusztigDeligneLanglands}},
since the same representations of the affine Hecke algebra appear in both
contexts. More relevant for us here, Su, Zhao and Zhong~\cite{SuZhaoZhongKStable}
introduced Demazure recursions for stable envelopes, and then
Aluffi, Mihalcea, Sch\"urmann and Su~\cite{AMSSCasselman} gave similar
Demazure recursions for motivic Chern classes, allowing them to 
make a deep connection between motivic Chern classes and Iwahori fixed vectors
of $p$-adic groups. The existence of those Demazure recursions were a source of
inspiration for this paper, as we sought Boltzmann weights for our solvable models that would
give rise to these operators. In Section~\ref{sec:polys}, we define the double polynomials in question in terms
of these generalized Demazure operators, referring to the resulting functions as ``double Whittaker polynomials.''
Once these are defined, we conclude that section by summarizing the results in the remainder of the paper. 

The use of lattice models to study problems in Schubert calculus as in item (1) above is already too expansive
to survey here, but we highlight several close connections. In particular, solvable lattice models 
representing motivic Segre classes appeared in~{\cite{KnutsonZinnJustinMotivic}}.
Both the lattice models and their associated polynomial representatives
differ substantially from those of the present paper. 
See the introduction in Miller and Knutson~\cite{KnutsonMillerGrobner} for a lucid
discussion of the problem of choice of representatives for Schubert and related
polynomials. The representatives in~{\cite{KnutsonZinnJustinMotivic}} are natural
from the point of view of matrix Schubert varieties, but our representatives
combine highly desirable interpolation properties with an economy of terms when
expressed in the monomial basis (see Example~\ref{ex:kzjcomparison}). Further specializations result in other double polynomials
and connections to Schubert calculus which have previously appeared as partition functions
of lattice models in the following sources. For factorial and double Schur functions, see 
{\cite{ABPWSymmetric, BumpMcNamaraNakasujiFactorial, BumpHardtScrimshawF4, NaprienkoSchur}}; for 
variations on $\beta$-double Grothendieck polynomials and double Schubert polynomials, see \cite{FrozenPipes, Wheeler-Zinn-Justin, KnutsonMillerGrobner, BuciumasScrimshawWeber}. We address precise connections
to these specializations in Section~\ref{sec:special}.

As always with solvable lattice models, the underlying mechanism
depends on R-matrices and the Yang-Baxter equation. The R-matrix
produces recursion formulas for the partition functions involving 
Demazure operators. Since the same recursions can be realized for each of the
three items listed at the outset through
other means, one may prove in general that
the lattice model partition functions represent these special classes of polynomials.
The source of the R-matrices and Yang-Baxter equations is understood
to be the quantum superalgebra $U_q (\widehat{\mathfrak{g}\mathfrak{l}}(n|1))$,
though we will not rely on the quantum group for proofs of solvability. Instead, in Section~\ref{sec:ybe} we
prove Yang-Baxter equations through a series of reductions to a lattice model with
a bounded set of colors, which is then ultimately a finite computation. This approach may
prove valuable in other contexts where the quantum interpretation of the Boltzmann weights is not known.

In Section~\ref{sec:partition-function}, we then carry out the explicit determination of our partition functions 
as a recursion in generalized Demazure operators. In later sections, we make connections through these 
operators to the families of special functions alluded to at the outset. The expression of Double Whittaker 
polynomials in terms of lattice models opens the door to new polynomial identities including branching
rules and generalize Cauchy-type identities and new proofs of them with further instances of Yang-Baxter equations. 
We don't emphasize these techniques or pursue those new identities here, having seen many of them in 
special cases (e.g., Theorem~7 of~\cite{BumpMcNamaraNakasujiFactorial}, Sections~7 and~8 of~\cite{FrozenPipes}, Chapter~7 of~\cite{AggarwalBorodinWheelerColored}), but we do address several novel features of the lattice model 
representation in later sections. 

First, in Section~\ref{subsec:vanishing} we consider the vanishing locus for these polynomials. 
The prototype here are results of Knop-Sahi \cite{SahiInterpolation} on a family of interpolation polynomials indexed by compositions.
These polynomials are determined, up to normalization, by their vanishing at certain integral points in the symmetric group orbit
of the composition. There are symmetric and non-symmetric versions of these polynomials; as usual, the latter may be averaged 
over the symmetric group to obtain the symmetric ones. Vanishing results for partition functions of lattice models were addressed 
in Chapter 12 of~\cite{AggarwalBorodinWheelerColored}, in the context of factorial LLT polynomials, where vanishing resulted 
from a lack of non-zero admissible states (Section~1.4 of~\cite{AggarwalBorodinWheelerColored}). These factorial LLT polynomials 
generalize factorial Schur functions whose interpolation properties were considered in \cite{SahiInterpolation}. This is the symmetric 
version of the theory. We explore the non-symmetric version of the theory from the lattice model point of view, where we observe 
vanishing both from the absence of non-zero states and from more surprising non-trivial cancellation among states.

A second application is addressed in Section~\ref{sec:stable}, where we use the lattice model to provide two successive 
generalizations of certain localizations of the K-theoretic stable envelopes of~\cite{OkounkovEnumerative, MaulikOkounkov} 
that we refer to as ``stab polynomials.'' Initially we use the lattice to provide a lifting of these polynomials to two variable 
sets $\boldsymbol{x}$ and $\boldsymbol{y}$. Then in Section~\ref{subsec:interpstab}, we further extend this lifting 
beyond the familiar dominant and anti-dominant Weyl chambers by interpolating our lifted polynomials via an additional 
permutation indexing the Weyl chamber. This is another example where natural families of boundary data for 
lattice models may be used to properly generalize important classes of special functions.

We thank Amol Aggarwal for extended correspondence about the relations between our model
and degenerations of the model appearing in~\cite{AggarwalBorodinWheelerColored}. We also thank
Bogdan Ion for alerting us to the potential connection with Sahi's interpolation polynomials.
This work was partially supported by NSF grant DMS-2401470 (Brubaker)
and NSF RTG grant DMS-1937241 (Hardt).

\section{\label{sec:polys}Type A Double Whittaker Polynomials}

Iwahori Whittaker functions for principal series representations of $p$-adic
groups such as $\operatorname{GL}(n)$ over a nonarchimedean local field can be
computed using Demazure operators. These depend on a single set of variables
$\boldsymbol{x}$, the so-called ``Satake parameters'' classifying the principal series and
taking values in a certain complex torus. But the definition in terms of Demazure
operators admits an extension to two variables, just as in geometry where Schubert
polynomials can be extended almost trivially to double Schubert polynomials.
Such an extension of Whittaker functions does not have an obvious
interpretation for the representation theory of the $p$-adic group, but as we outline in
this section, it
enhances connections with geometry of the flag variety that have been noted
before, for example in~\cite{ReederCompositio,BBL,MihalceaSuWhittaker}.

Let $(\Phi, X(T), \Phi^\vee, X^\vee(T))$ denote the root datum for a split reductive Chevalley group $G$ with maximal torus $T$, let $(W,S)$ be the finite crystallographic Coxeter system of the datum with Weyl group $W$ and distinguished generators $S = \{ s_1, \ldots s_{n-1} \}$. Then to each $s_i$ we define an operator $\tau_i$ on elements $f$ in the polynomial ring $\mathbb{C}(t)[X(T)]$ by
\begin{equation} \tau_i \cdot f(\boldsymbol{x}) = \frac{t-1}{1-\boldsymbol{x}^{\alpha_i}} f(\boldsymbol{x}) + \frac{\boldsymbol{x}^{\alpha_i}-t}{1-\boldsymbol{x}^{\alpha_i}} f(s_i\boldsymbol{x}), 
	\label{eq:tauop}
\end{equation}
where $\alpha_i$ is the corresponding simple coroot in $\Phi^\vee$. 
The operator $\tau_i$ appeared previously as the operator $T_i$ defined in
{\cite{BBBGIwahori}}, equation (30), upon setting $t = v$. 

There is another collection of operators
\begin{equation} \mathfrak{T}_i = (\boldsymbol{x}^{\alpha_i} - 1)^{- 1} (1 - s_i - t (1
   -\boldsymbol{x}^{- \alpha_i} s_i)) \label{eq:demazurewhit} \end{equation}
that are defined in {\cite{BBBGIwahori}} equation (11) or in {\cite{BBL}} with
$t = q^{- 1}$. This operator is more natural in the theory of Whittaker
functions. It is related to $\tau_i$ by
\[ \tau_i =\boldsymbol{x}^{\rho} \mathfrak{T}_i \boldsymbol{x}^{- \rho}, \]
as is easy to check since $\boldsymbol{x}^{\rho} s_i \boldsymbol{x}^{- \rho}
=\boldsymbol{x}^{\alpha_i}$ where ${\rho}$ is the usual Weyl vector. The operators $\mathfrak{T}_i$ are
sometimes referred to as ``Demazure-Whittaker operators'' since they
define recursions for Iwahori Whittaker functions. Indeed a special case of Corollary~3.9 in~\cite{BBBGIwahori}
shows that to any Weyl group element $w$ and dominant weight
$\lambda$, and any path $(s_{i_1}, \ldots, s_{i_k})$ in the Bruhat graph of the Weyl group
from the identity $e$ to $w$, then the polynomial
\begin{equation}  \phi^\lambda_w(\boldsymbol{x}) := \mathfrak{T}_{i_k}^{\epsilon({i_k})} \cdots \mathfrak{T}_{i_1}^{\epsilon({i_1})} \boldsymbol{x}^\lambda 
\label{eq:whit-poly} \end{equation}
computes certain key values of the Iwahori Whittaker function for the standard Iwahori-fixed basis vector in the principal series corresponding to $w$. 
Here the exponents $\epsilon({i_j})$ are given by
\begin{equation}
\epsilon(i_j) = \begin{cases} +1 & \textrm{if $s_{i_j}$ is an ascent in the walk,} \\ -1 & \textrm{if $s_{i_j}$ is a descent in the walk.} \end{cases} \label{eq:epsilon}
\end{equation}
The expression on the right in (\ref{eq:whit-poly}) is well-defined as the $\mathfrak{T}_i$ satisfy braid relations and quadratic relations and are invertible.
In view of this connection to Iwahori Whittaker functions, we may refer to this family of functions given by (\ref{eq:whit-poly}) as ``Whittaker polynomials.''

We would like to define two variable analogues of Whittaker polynomials, often referred to in other examples as ``double polynomials.'' 
We do this by mimicking the construction above, but with slightly different notational conventions for the applications and connections 
that follow and in the full generality of Corollary 3.9 in~\cite{BBBGIwahori} which allows for a pair of Weyl group elements $v, w$ in $W$. 
The base case of our recursion will now become a family of base cases indexed by one of the two Weyl group elements and will depend on the
two sets of variables labeled $\boldsymbol{x}$ and $\boldsymbol{y}$. 
While the Demazure-Whittaker operators apply independent of the Cartan type of the datum, we don't have a type-free method 
of defining the base cases, and hence we'll ultimately restrict our definition of double Whittaker polynomials to type~$A$.  

Given two elements $v, w \in W$, we now let $(s_{i_1}, \ldots, s_{i_k})$ be a directed walk in the weak right Bruhat graph from $v w_0$ to $w$ with steps 
$$v w_0 \longrightarrow v w_0 s_{i_1} \longrightarrow v w_0 s_{i_1} s_{i_2} \longrightarrow \ldots \longrightarrow w. $$ 
To any dominant weight $\lambda$ and any directed walk from $v w_0$ to $v$ as above, we define
\begin{equation}
 \phi^\lambda_{v, w}(\boldsymbol{x}; \boldsymbol{y}) := \tau_{i_1}^{\epsilon(i_1)} \cdots \tau_{i_k}^{\epsilon(i_k)} \phi^\lambda_{v, vw_0}(\boldsymbol{x}; \boldsymbol{y}), \label{eq:twovarrecursion}
\end{equation}
where the operators $\tau_i$ are defined in~(\ref{eq:tauop}) and act on functions $f(\boldsymbol{x}; \boldsymbol{y})$ by treating the variables $\boldsymbol{y}$ as constants.
The exponents $\epsilon(i)$ are again defined as in~(\ref{eq:epsilon}).
Thus we've reduced the general family of polynomials in pairs $(v,w)$ to a single index $v$ and it remains 
to define $\phi^\lambda_{v, vw_0}(\boldsymbol{x}; \boldsymbol{y})$.

To define the initial seeds of the recursion $\phi^\lambda_{v, vw_0}(\boldsymbol{x}; \boldsymbol{y})$, we must restrict to Cartan type $A$. For any dominant weight  $v \in W$, set 
\begin{equation} \phi^\lambda_{v, vw_0}(\boldsymbol{x}; \boldsymbol{y}) := t^{\ell(v)} \prod_{i=1}^n \prod_{j < \lambda_i} \left(1 - \frac{x_i}{y_j}\right). \label{eq:twovarseed}
\end{equation}
Note this generalizes the seed of the recursion for double Schubert polynomials, which may be recovered by setting $v = e$, the identity of $W$, and setting $\lambda = \rho$, up to a monomial clearing denominators in the $y_j$.

\begin{definition}[Type A Double Whittaker Polynomials] To any dominant weight $\lambda$, and any pair of elements $v,w$ in $W$, the associated {\it Type A Double Whittaker Polynomial} is the unique function $\phi^\lambda_{v,w}(\boldsymbol{x}; \boldsymbol{y})$ in $\mathbb{C}(t)[x_1, \ldots, x_n; y_1, \ldots, y_{\lambda_1-1}]$ determined by identities~(\ref{eq:twovarrecursion}) and~(\ref{eq:twovarseed}).
\end{definition}

There are natural candidates for the base cases in other types, particularly classical groups, where two-variable generalizations of a polynomial representative for the ``top'' Schubert class of the 
cohomology of the flag variety have been proposed. Even in the ordinary (single set of variables) case, there are complications with any choice of polynomial representatives 
for Schubert classes and related objects as outlined
in~{FominKirillovFPSAC}. We are content with the definition in type A
for the present work, as we often rely on connections 
through solvable lattice models in type A to make connections between various special functions.

Our main results can now be rephrased in the context of these double Whittaker polynomials:
\begin{itemize}
\item In Sections~\ref{sec:ybe} and~\ref{sec:partition-function}, we show there
exists a family of solvable lattice models whose partition functions give all
		double Whittaker polynomials in Cartan type~$A$.
\item In Sections~\ref{sec:motivic-chern} and~\ref{sec:stable}, we show that similar recursions produce motivic Chern classes and stable envelope polynomials, and can be realized in type $A$ by modified solvable lattice models.
\item In Section~\ref{sec:denom}, we use the structure of the $\tau_i$ in the definition to provide a simplified proof of denominator conjectures for $r$-polynomials made by \cite{BumpNakasujiKL} and proved by other means in \cite{AMSSCasselman}. These results are valid for all Cartan types.
\item In Section~\ref{sec:special}, we describe other connections between double Whittaker polynomials and familiar families of polynomials, including double Schubert and Grothendieck polynomials and a more formal connection to Iwahori Whittaker functions, all in type $A$.
\end{itemize}

\section{\label{sec:ybe}The lattice models}

The class of models that we will consider are \textit{colored}
models similar to the models in~\cite{BorodinWheelerColored, BBBGIwahori}.
With our conventions to be described, states of the model will appear as colored paths
moving downward and rightward through the lattice. This behavior will arise as a consequence
of enumerating certain allowable local configurations in the model. 

The model is based on a rectangular grid,
a finite planar graph consisting of $n$ horizontal lines
called \textit{rows} and $N$ vertical lines called \textit{columns}.
The $nN$ points where the lines intersect are called \textit{vertices}.
The vertices partition the lines into segments called \textit{edges} and each vertex has four adjacent edges.
The \textit{boundary edges} of the grid are those at the
top, bottom, left or right side adjacent to a single
vertex. \textit{Interior edges} are those adjacent to two vertices, all of which reside within the
rectangle formed by the $nN$ vertices.

We next describe the set of {\it admissible states} on such grids by assigning decorations to each edge with
additional restrictions. The decorations, or labels, on each edge will be pictured as colorings of edges, so define
a finite set $\Col=\{c_1,\cdots,c_n\}$ comprised of \textit{colors} $c_i$ with total ordering
$c_1<\cdots<c_n$.  

\begin{remark}
  There is no \emph{a priori} reason why the number of colors should equal the number $n$ of
  rows of the grid. The assumption will be natural in light of eventual features of the lattice models
  we consider in this paper. Special cases of our models with fewer colors than lattice rows appeared in
  Section 8 of~\cite{BBBGIwahori} and in the general setup of~\cite{BorodinWheelerColored, AggarwalBorodinWheelerColored}.
  
\end{remark}

The allowable decorations on a given edge depend on its position in our grid. If the edge is \textit{horizontal},
meaning it lies in a row, then the allowable decorations on the edge are $\Col \cup \{ \oplus \} = \{c_1,\cdots,c_n,\oplus\}$ where $\oplus$ will be 
rendered in examples as the absence of color.\footnote{The strange notation $\oplus$ for the absence of color appears because our model 
may be seen as a generalization of the six-vertex model where each edge may be decorated with a $\oplus$ or $\ominus$. In this guise, our
colored paths are refining ``uncolored'' paths made by tracing out $\ominus$ signs along edges in a six-vertex model. Even here, it is a loose
analogy as our colored models allow for multiple distinct colors on a vertical edge, with no analogous configuration in the six-vertex model.}
 
If the edge is \textit{vertical}, meaning it lies in a column, then the allowable decorations for the edge are the power set of $\Col$ consisting of all subsets of colors. A \textit{decorated lattice} is
a lattice with decorations assigned to each edge in the grid.

\begin{definition} \label{def:admissiblestate} Let $B$ denote a choice of decoration for each boundary edge of a given lattice grid. Then an admissible state $\mathfrak{s}$ associated to $B$ is a decorated lattice grid with boundary edges decorated according to $B$ and internal edges decorated such that each vertex has adjacent edges matching one of the cases appearing in Figure~\ref{fig:whittaker_weights}. The set of all admissible states for choice of boundary decorations $B$ will be denoted $\mathfrak{S}_B$.
\end{definition} 

If the decoration assigned to a horizontal edge is the color $c_i$, or
if the color $c_i$ is an element of the assigned decoration of
the vertical edge, we say the edge \textit{carries}
the color. Note that while vertical edges may carry multiple colors,
they may not carry multiple copies of the same color. Thinking of each
color as representing a different type of particle and thinking of the columns between each row as a one dimensional particle system evolving discretely from row to row, our model may be said to be \textit{fermionic}.
That is we don't allow superposition of particles, or equivalenty that no edge may carry more than one instance of
the same color.

\begin{figure}[h]
\begin{center}
\begin{tabular}{|ccc|}
\hline
\hline
Case 1 & Case 2 & Case 3\\
\hline
\vertex{+}{\Sigma}{+}{\Sigma} &
\vertex{c}{\Sigma}{c}{\Sigma} &
\vertex{+}{\Sigma}{c}{\Sigma^-_c}
\\	
	$(-t)^{\Sigma_{[1,r]}}$ & 
	$t^{\Sigma_{(c,r]}} (x + t^{\Sigma_c} y)$ & 
	$(-t)^{\Sigma_{[1,c)}} t^{\Sigma_{(c,r]}} $ \\
\hline
\hline
Case 4 & Case 5 & Case 6\\
\hline
\vertex{c}{\Sigma}{+}{\Sigma^+_c} &
\vertex{c}{\Sigma}{d}{\Sigma^{+-}_{cd}} &
\vertex{d}{\Sigma}{c}{\Sigma^{+-}_{dc}}
\\	 
	 $(1-t) x (-t)^{\Sigma_{(c,r]}} $ & 
	 $(1-t) x (-t)^{\Sigma_{(c,d)}} t^{\Sigma_{(d,r]}} $ & 
	 $-y (1-t) (-1)^{\Sigma_{(c,d)}} t^{\Sigma_{(d,r]}} $ \\
\hline
\end{tabular}
\end{center}
\caption{Whittaker Boltzmann weights. The colors $c$ and $d$
are chosen so $c<d$. Each edge is labeled by its spintype,
which for the horizontal edges must be a color or $\oplus$.
Each vertical edge is labeled by a set $\Sigma$, the set 
of colors carried by the edge. It is understood that
if a configuration of spins does not appear in the figure, then its
Boltzmann weight is zero. If we specialize the second parameter
$y$ to zero, these are the same as the Boltzmann weights in
\cite{BBBGIwahori}, where it was shown that the partition
functions were values of Iwahori Whittaker functions.}
\label{fig:whittaker_weights}
\end{figure}

\begin{remark}[Colored paths]
	\label{rem:colored_paths}
	 Admissible states for our models are rendered as \textit{colored paths} through the lattice.
	Indeed if we paint every edge carrying a color $c$
	with that color, the admissible states consist of unbroken chains of colored edges
	forming \textit{paths} through the configuration.
	These always move down and to the right, starting at the
	top or left boundary, and exiting the grid on the right or
	bottom. (In this paper we will use boundary conditions that
	force the colored paths to start on the top and exit on
	the right.) At every vertex, if a colored path comes in
	from the top or left, it must exit to the right or bottom.
	If all four edged adjacent to the vertex carry the color,
	this description does not tell us whether the path coming
	in from the top exits to the right or bottom, but we can
	decide this arbitrarily, or for definiteness we can say
	it exits to the bottom, so two colored paths of the same
	color cross at this vertex. A characteristic example 
	is given in Figure~\ref{fig:characteristicstate}.
\end{remark}

Our next task will be to describe the \textit{Boltzmann weight}
$\beta(\mathfrak{s})$ of a state $\mathfrak{s}$. This is
the product of Boltzmann weights $\beta_v(\mathfrak{s})$ for each vertex $v$ in the lattice. The
Boltzmann weights at each vertex are functions of a global parameter $t$ and local variables $x$ and $y$
associated to the particular row and column, respectively, intersecting at the vertex $v$ in question.
Then the Boltzmann weights are determined by the labels on their adjacent edges and are given in 
Figure~\ref{fig:whittaker_weights}. 

To explain the weights at each vertex, we introduce a bit more notation.
If $\Sigma$ is a set of colors not containing $c$, let 
$\Sigma^+_c := \Sigma\sqcup \{c\}$. Similarly, if $\Sigma$ does contain $c$, 
then let $\Sigma^-_c := \Sigma\setminus \{c\}$. If $c\ne d$, let 
$\Sigma^{+-}_{cd} := (\Sigma^+_c)^-_d$.
If $c\leqslant d$ are colors, let $[c,d]$ be the set of
colors $e$ such that $c\leqslant e\leqslant d$. We will also
sometimes abuse notation and write $[i,j]$ instead of $[c_i,c_j]$
if $1\leqslant i\leqslant j\leqslant n$.
If $S\subseteq [1,r]$, let $\Sigma_S := \Sigma\cap S$, and if $S\subseteq [0,n]$, 
let $u_S := \prod_{i\in S} u_i$. For notational simplicity, let 
$\Sigma_c := \Sigma_{\{c\}}$, and let $a^S := a^{|S|}$ for any quantity~$a$.

As noted above, our lattice models will consist of $n$ rows (labeled $1,\cdots,n$ from top
to bottom) and $N$ columns, labeled from $1$
to $N$, from right to left. We assign local variables $x_i$ and $y_j$ to each vertex in row $i$ and column $j$. 
To any choice of boundary labels $B$, we may now define the partition function $Z_B$ of the lattice model as follows:
\begin{equation} 
\label{generic-partition-function}
Z_B(x_1, \ldots, x_n; y_1, \ldots, y_N) = 
\sum_{\mathfrak{s} \in \mathfrak{S}_B} \prod_{v \in \mathfrak{s}} \beta_v(\mathfrak{s}). 
\end{equation}

We will provide closed form expressions for $Z_B$ for a large family of boundary conditions $B$ in the next section. The key ingredient in
these arguments is the solvability of the lattice model. Here, solvability means that our Boltzmann weights will satisfy Yang-Baxter equations leading
to recursive relations for the partition function.

\subsection{Solvability of Boltzmann weights}

The Boltzmann weights defined in Figure~\ref{fig:whittaker_weights} are
{\em solvable}, meaning that they satisfy so-called Yang-Baxter equations.
The Yang-Baxter equations may be understood as identities of partition
functions for the two lattices on three vertices shown in Figure~\ref{fig:ybe}.
These diagrams consist of two types of vertices drawn with two different
orientations in Figure~\ref{fig:ybe}. The vertices labeled with $x_i, y$ and $x_j, y$
are understood as of the same type as those appearing in the square lattice, and
viewed as having their Boltzmann weights given. The vertices labeled with
$x_i, x_j$ and involving the crossing of a pair of horizontal rows are viewed as
having Boltzmann weights to be solved for so that the partition functions in Figure~\ref{fig:ybe}
are equal for every choice of boundary edge labels $a,b,c,d,e,f$. We sometimes refer to these
latter vertices and weights as \textit{R-vertices} and R-weights.
The vertices labeled $x_i,y$ and $x_j,y$ will be called \textit{T-vertices}.

\begin{definition} We say that a set of Boltzmann weights $T(x,y)$ is (row) solvable if, given row parameters $x_i$
and $x_j$, there exist a set of Boltzmann weights for the $R$-vertex at $x_i, x_j$ such that, for any choice
of boundary edge labels $a,b,c,d,e,f$, the partition functions in Figure~\ref{fig:ybe} made with weights $T(x_i, y)$, $T(x_j, y)$ and
$R(x_i, x_j)$ are equal.
\end{definition}

Our main result in this section is that the $R$-vertex weights in Figure~\ref{fig:rmatrix} provide such an equality, when
the Boltzmann weights of the square lattice are taken from Figure~\ref{fig:whittaker_weights}. Thus the
Whittaker weights are row solvable. We'll also have a second set of row-solvable weights, called ``K-theoretic weights''
which are variants of the Whittaker weights.

\begin{remark}
	The Yang-Baxter equations in Theorem~\ref{thm:ybe} can also be
	deduced from those of~\cite{AggarwalBorodinWheelerColored} by a somewhat intricate
	sequence of transformations. We thank Amol Aggarwal for working this out.
	Our proof is different, based on a reduction to the case of a
	small number of colors, which can then be checked by computer.
	The result in~\cite{AggarwalBorodinWheelerColored} is very general, but since both its proof
	is difficult and proving its relation to our models through a series of limits and specializations is
	challenging, it is useful to give an alternative
	proof in the case we need.
\end{remark}

\begin{figure}[h]
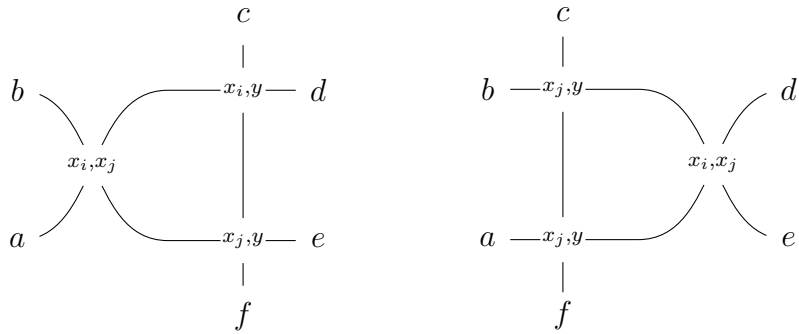

\[\ybelhs{a}{b}{c}{d}{e}{f}{x_i,x_j}{x_i,y}{x_j,y}
\qquad\qquad
\yberhs{a}{b}{c}{d}{e}{f}{x_i,x_j}{x_j,y}{x_j,y}
\]
\caption{The Yang-Baxter equation is the equality of partition functions on the lattices shown above.
The boundary conditions are fixed as shown, with labels $a,b,c,d,e,f$ matching on both sides, and the partition functions are then sums
of Boltzmann weights of the states over possible labelings of the three internal edges on each side.}
\label{fig:ybe}
\end{figure}

In Figure~\ref{fig:ybe} we will refer to the boundary edges
$a,b,c$ as \textit{input edges} and the boundary edges $d,e,f$
as \textit{output edges}. 

\begin{lemma}
	\label{lem:ybe_paths}
	In an admissible state of either the left or right 
	Yang-Baxter configurations (Figure~\ref{fig:ybe}) if an
	input edge carries a color $c$, then an output edge
	must carry the same color. 
\end{lemma}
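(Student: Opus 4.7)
The plan is to establish color conservation locally at each vertex, and then propagate the statement to the full three-vertex configuration by tracing a single color through the diagram.

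First, I would verify a \emph{local} color conservation property: at every admissible configuration of a T-vertex (Figure~\ref{fig:whittaker_weights}) and of an R-vertex (Figure~\ref{fig:rmatrix}), the multiset of colors carried by the input-side edges of the vertex equals the multiset of colors carried by the output-side edges. Following the orientation conventions of Remark~\ref{rem:colored_paths}, the input-side edges of a T-vertex are its top and left edges and its output-side edges are its bottom and right edges, while the input-side edges of an R-vertex (which lies at the crossing of two horizontal rows) are its two left edges and the output-side edges are its two right edges. Local conservation is immediate from the colored paths description: each admissible configuration is simply a bundle of colored paths passing through the vertex, so the color content is preserved.

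Second, fix a color $c$ appearing on some input boundary edge of the Yang-Baxter configuration. Enter the adjacent vertex along that edge; by local conservation, $c$ must appear on some output-side edge of that vertex. If this edge is a boundary edge of the configuration, then by construction it is an output edge and we are done. Otherwise, it is one of the finitely many interior edges of Figure~\ref{fig:ybe}, and $c$ is carried into the next vertex, where the argument repeats.

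Finally, I would note that this tracing procedure must terminate in an output boundary edge. This uses the fact that the vertices and interior edges in each of the two Yang-Baxter configurations carry a strict partial order compatible with the flow conventions: at every vertex the colors proceed from the input side to the output side, and each interior edge connects the output side of an earlier vertex to the input side of a later vertex. Since there are only three vertices, the tracing cannot loop and must reach a boundary, which by construction is an output boundary edge. The main obstacle is purely bookkeeping: one must check that the input-side/output-side convention for the R-vertex is consistent with that of the T-vertex when the three vertices are glued together in each of the two configurations of Figure~\ref{fig:ybe}, so that the partial order on the interior edges really exists; once this is set up, the argument is essentially a one-line induction on the distance travelled by the path.
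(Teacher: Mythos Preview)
Your proposal is correct and takes essentially the same approach as the paper: both arguments amount to following the colored paths (Remark~\ref{rem:colored_paths}) through the three-vertex configuration, using local color conservation at each vertex. The paper's proof is a terse one-liner invoking the colored-paths picture, while you have simply unpacked that picture into its constituent steps (local conservation, tracing, termination via the acyclic structure of the diagram).
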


\begin{proof}
	This follows by considering the colored paths 
	(Remark~\ref{rem:colored_paths}). The remark is easily
	extended to configurations involving an R-vertex.
	From the Boltzmann weights in Figures~\ref{fig:whittaker_weights}
	or~\ref{fig:modified_weights} and Figure~\ref{fig:rmatrix}
	that every colored path begins at an input edge and ends 
	at an output edge. Thus if an input edge carries the color, 
	some output edge must carry the same color.
\end{proof}

Either the left- or the right-hand figure can be regarded as a miniature system
with its own partition function. This means that the spins of the six boundary
edges, labeled $a$, $b$, $c$, $d$, $e$, $f$ are fixed, with 
$c$ and $f$ of vertical type, the other four of horizontal type. 
The spins of the three interior edges are summed over all possibilities. 
Then the Yang-Baxter equation asserts that these two configurations
have the same value. We make this assertion for both the
``Whittaker weights'' in Figure~\ref{fig:whittaker_weights} and
the ``K-theoretic weights'' in Figure~\ref{fig:modified_weights}.

\begin{figure}[h]
\[
\begin{array}{|ccccc|}
\hline
\rmatrix{+}{+}{+}{+} &
\rmatrix{c}{c}{c}{c} &
\rmatrix{c}{+}{+}{c} &
\rmatrix{+}{c}{c}{+} &
\rmatrix{+}{c}{+}{c}
\\
\hline
x_j - tx_i & x_i - tx_j & (1-t)x_i & (1-t)x_j & t(x_i-x_j) \\
\hline     
\rmatrix{c}{+}{c}{+} &
\rmatrix{c}{d}{d}{c} &
\rmatrix{c}{d}{c}{d} &
\rmatrix{d}{c}{c}{d} &
\rmatrix{d}{c}{d}{c}
\\
x_i-x_j & (1-t)x_i & t(x_i-x_j) & (1-t)x_j & x_i-x_j\\
\hline
\end{array}
\]
\caption{The R-matrix. Here $x_i$ and $x_j$ are the row parameters. 
It is assumed that $c$ and $d$ are colors with $c<d$.  
All edges are of horizontal type. This R-matrix
works for the Boltzmann weights in Figure~\ref{fig:whittaker_weights}
and Figure~\ref{fig:modified_weights}.} 
\label{fig:rmatrix}
\end{figure}

\begin{theorem}
	\label{thm:ybe}
	Using the vertex weights from either Figure~\ref{fig:whittaker_weights} or Figure~\ref{fig:modified_weights} and the 
	R-matrix from Figure~\ref{fig:rmatrix}, the Yang-Baxter equation is 
	satisfied. 
\end{theorem}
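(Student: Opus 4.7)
The plan is to follow the reduction approach hinted at in the remark preceding the theorem: reduce the general Yang-Baxter equation, for arbitrarily many colors, to a finite check involving only a small bounded number of colors, and then verify the reduced identity by direct (computer-assisted) computation. First, fix boundary labels $a,b,c,d,e,f$ for one of the two configurations in Figure~\ref{fig:ybe}. By Lemma~\ref{lem:ybe_paths}, if the multiset of colors carried by the three input edges differs from that carried by the three output edges, both partition functions are empty sums and the identity is immediate. Otherwise let $\Sigma$ denote the (finite) set of colors appearing on the boundary; the colored-path interpretation (Remark~\ref{rem:colored_paths}) shows that in any admissible state of either configuration, every colored interior path begins and ends on the boundary, so only colors from $\Sigma$ can appear on interior edges.

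Second, the Boltzmann weights in Figures~\ref{fig:whittaker_weights} (Whittaker weights), \ref{fig:modified_weights} (K-theoretic weights), and~\ref{fig:rmatrix} (R-matrix) depend on the colors only through their linear order and through the row and column parameters. Hence any order-preserving relabeling of $\Sigma$ yields a weight-preserving bijection on admissible states, so we may assume $\Col = \Sigma$. The critical next step is to bound $|\Sigma|$ by a constant independent of the ambient $n$. With only three vertices and six boundary edges, and with each horizontal boundary edge carrying at most one color, the colors that interact nontrivially inside the configuration are highly constrained. One then argues that ``spectator'' colors --- colors whose paths traverse the configuration without ever coinciding at a vertex with another colored path --- contribute a common multiplicative factor to both sides of the Yang-Baxter equation and may be removed. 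After this pruning, the effective number of colors is bounded by a small constant (on the order of the number of interacting paths, i.e.\ a handful).

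With the number of colors bounded, both sides of the Yang-Baxter equation become explicit finite sums of products of Boltzmann weights over a finite list of admissible interior configurations. The identity to check is then a finite identity of rational functions in $t, x_i, x_j, y$, which can be verified by enumerating all admissible states of each of the two diagrams in Figure~\ref{fig:ybe}, forming the corresponding sums, and checking equality on a computer algebra system. The same reduction and subsequent verification proceeds essentially identically for both the Whittaker weights of Figure~\ref{fig:whittaker_weights} and the K-theoretic weights of Figure~\ref{fig:modified_weights}; the differences between the two weight sets only affect the symbolic content of the final computation, not the structure of the reduction.

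The principal obstacle is making the ``spectator color'' reduction rigorous. Establishing that a path of color $c$ whose trajectory does not coincide at any vertex with another colored path contributes a multiplicative factor independent of the rest of the state requires a careful case analysis of the weights in Figures~\ref{fig:whittaker_weights}--\ref{fig:rmatrix}, verifying that adding or removing such an isolated path at each vertex type multiplies the weight by a factor depending only on the colors already present (not on which subset of colors is ``present vs.\ absent'' in a more refined sense). Once this reduction is secure, the remaining check for a bounded number of colors is routine if laborious and can be handed off to a computer.
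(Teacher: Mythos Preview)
Your overall strategy matches the paper's: reduce to a bounded number of colors, then verify by computer. The paper also first reduces the K-theoretic weights to the Whittaker weights by a simple transformation argument, so that only one computer check is needed; you might want to do the same rather than run both checks.

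The genuine gap is in your ``spectator color'' reduction. Your criterion---a color whose path ``does not coincide at any vertex with another colored path''---is too restrictive to be useful here. In the three-vertex YBE configuration the only candidate for removal is a color $g$ carried by the vertical boundary edges $c$ and $f$ but not by any of $a,b,d,e$; its path runs straight from $c$ to $f$, but it necessarily passes through both T-vertices, where it shares the vertex (indeed, shares the vertical edge) with whatever horizontal paths are present. So $g$ is never a ``spectator'' in your sense, yet these are exactly the colors one must strip off to get down to the at most four colors determined by $a,b,d,e$.

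What actually needs to be shown is that deleting such a $g$ from every vertical edge multiplies the Boltzmann weight of each admissible state by a factor depending only on the boundary data, not on the state. This is not automatic: at each T-vertex the ratio $\beta_v(\mathfrak{s})/\beta_v(\mathfrak{s}')$ depends on which of Cases~1--6 applies there, which is state-dependent. The paper handles this by defining, case by case, a ``crossing number'' $\chi_v(d_i,g)$ for each of the at most four horizontal colors $d_i$ (and the empty color $d_0=\oplus$), proving the uniform formula
\[
\frac{\beta_v(\mathfrak{s})}{\beta_v(\mathfrak{s}')} = (-t)^{\chi_v(d_0,g)} \prod_{d_i<g} t^{\chi_v(d_i,g)},
\]
and then showing (via a topological argument about where each $d_i$-path enters and exits relative to the $g$-path) that the sums $N_i=\sum_v \chi_v(d_i,g)$ over the two T-vertices are determined by the boundary data alone. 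This crossing-number lemma is the real content of the reduction; once you have it, the ratio $\mathrm{LHS}/\mathrm{LHS}'$ equals $\mathrm{RHS}/\mathrm{RHS}'$ and induction on the number of colors finishes the proof. Your proposal correctly identifies this step as the principal obstacle, but the formulation in terms of ``isolated'' paths points in the wrong direction.
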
	

\begin{figure}[h]
\begin{center}
\begin{tabular}{|ccc|}
\hline
\hline
Case 1 & Case 2 & Case 3\\
\hline
\vertex{+}{\Sigma}{+}{\Sigma} &
\vertex{c}{\Sigma}{c}{\Sigma} &
\vertex{+}{\Sigma}{c}{\Sigma^-_c}
\\[4pt]
$(-t)^{\Sigma_{[1,r]}}$ & 
$t^{\Sigma_{(c,r]}} (-xy^{-1} + t^{\Sigma_c})$ &
$(-t)^{\Sigma_{[1,c)}} t^{\Sigma_{(c,r]}}$ \\[4pt]
\hline
\hline
Case 4 & Case 5 & Case 6\\
\hline
\vertex{c}{\Sigma}{+}{\Sigma^+_c} &
\vertex{c}{\Sigma}{d}{\Sigma^{+-}_{cd}} &
\vertex{d}{\Sigma}{c}{\Sigma^{+-}_{dc}}
\\    
$-(1-t) x y^{-1} (-t)^{\Sigma_{(c,r]}}$ &
$-(1-t) x y^{-1} (-t)^{\Sigma_{(c,d)}} t^{\Sigma_{(d,r]}}$ &
$-(1-t) (-1)^{\Sigma_{(c,d)}} t^{\Sigma_{(d,r]}}$ \\
\hline
\end{tabular}
\end{center}
\caption{K-theoretic weights at a vertex labeled $x,y$. Here 
$c$ and $d$ are colors with $c<d$, $x$ is the ``row parameter,'' and $y$ is 
the ``column parameter''. These weights are obtained from the Whittaker weights
by dividing each weight by $y$ if there is a color to the left of the vertex,
and also replacing $x$ by~$-x$. This procedure does not affect the R-matrix
in Figure~\ref{fig:rmatrix} because the values are homogenous in the
$x_i$, hence changing $x_i\to -x_i$ multiplies both sides of the Yang-Baxter
equation by $-1$.}
\label{fig:modified_weights}
\end{figure}

\begin{proof}
  For definiteness, we will prove this for the Whittaker weights in
  Figure~\ref{fig:whittaker_weights}. But first let us explain why this
  implies the Yang-Baxter equation for the slightly different weights in
  Figure~\ref{fig:modified_weights}.
  
  The weights in Figure~\ref{fig:modified_weights} are obtained from those in
  Figure~\ref{fig:whittaker_weights} by dividing by $y$ if the edge to the
  left of the vertex carries a color (as opposed to the value $+$) and then replacing
  $x$ with $-x$. Let us first address the effect of division by $y$. Now the
  number of vertices that have a color to the left of them may be inferred
  from the boundary edges. Indeed, it is equal to the number $N_1$ of edges in
  $\{a, b\}$ that carry a color. This may be seen by considering the colored
  paths (as in Remark~\ref{rem:colored_paths} and Lemma~\ref{lem:ybe_paths}).
  Therefore the number is the same for both figures, so the value of either
  figure in Figure~\ref{fig:ybe} using the Figure~\ref{fig:modified_weights}
  weights is multipled by $y^{N_1}$.
  
  To address the substitution $x \mapsto -x$, note that the proposed $R$-matrix solution
  in Figure~\ref{fig:rmatrix} is homogeneous of degree one in the $x_i$ and hence the
  substitution introduces an overall minus sign to the $R$-matrix entries, which appears on
  both sides of the Yang-Baxter equation and hence may be dispensed with. 
  From these observations it is clear that the
  Yang-Baxter equation for the Figure~\ref{fig:modified_weights} will follow
  if we prove them for the Whittaker weights in
  Figure~\ref{fig:whittaker_weights}.
  
  We will need to distinguish between the boundary edges in
  Figure~\ref{fig:ybe} and their values. Therefore we will denote by
  $\boldsymbol{a}, \boldsymbol{b}, \boldsymbol{c}, \boldsymbol{d}, \boldsymbol{e}, \boldsymbol{f}$ the
  six boundary edges, and by $a, b, c, d, e, f$ their spins, which are either
  a color or $+$ for the edges $\boldsymbol{a}, \boldsymbol{b}, \boldsymbol{d},
  \boldsymbol{e}$, and a set of colors for edges $\boldsymbol{c}$ and $\boldsymbol{f}$.
  
  The first point is that for $\leqslant 4$ colors, the Yang-Baxter equation
	may be checked by hand. A Sage program to check this may be found at~\cite{whitfact4}.
  The rest of this proof will rely on the fact that for $\leqslant 4$ colors
  the Yang-Baxter equation has been checked.
  
  We introduce the notion of a \textit{reducible} set of boundary
  conditions. Suppose that either $c$ or $f$ carries some color $g$ that is
  not among the four $a$, $b$, $d$, $e$. If the system has any admissible
  state $\mathfrak{s}$, then the colored path with color $g$ must run directly
  from $c$ to $f$. Therefore both boundary edge spins $c$ and $f$ carry the
  color $g$. Such a system will be called \textit{reducible}.
  
  Any system that is not reducible can involve at most four colors and these
  are checked by the computer program. Therefore we look more closely at the
  reducible case. Assume for the rest of the proof that the system with
  boundary conditions $a, b, c, d, e, f$ is reducible. Let $c'$ and $f'$ be
  the vertical spin types obtained by deleting the color $g$ from $c$ and~$f$.
  Then we have another system with boundary conditions $a, b, c', d, e, f'$.
  
  We will write the Yang-Baxter equation in the form
  \[ \mathrm{LHS} (a, b, c, d, e, f) = \mathrm{RHS} (a, b, c, d, e, f), \]
  where the notation refers to the partition function of the left-hand side
  and the right-hand side in Figure~\ref{fig:ybe}, using the weights in
  Figure~\ref{fig:whittaker_weights}. By induction on the number of colors we
  have
  \[ \mathrm{LHS} (a, b, c', d, e, f') = \mathrm{RHS} (a, b, c', d, e, f') .
  \]
  The result will therefore follow if we prove that
  \begin{equation}
    \label{eq:hsratios} \frac{\mathrm{LHS} (a, b, c, d, e, f)}{\mathrm{LHS}
    (a, b, c', d, e, f')} = \frac{\mathrm{RHS} (a, b, c, d, e,
    f)}{\mathrm{RHS} (a, b, c', d, e, f')} .
  \end{equation}
  We will assume that the boundary data describe a system with legal states,
  so that both sides are nonzero. There is a bijection between the states of
  the system $\operatorname{LHS} (a, b, c, d, e, f)$ and the states of $\operatorname{LHS} (a,
  b, c', d, e, f')$, namely if we erase the path with color $g$ from a state
  $\mathfrak{s}$ of $\operatorname{LHS} (a, b, c, d, e, f)$ we obtain a system
  $\mathfrak{s}'$ of $\operatorname{LHS} (a, b, c', d, e, f')$. There is a similar
	bijection for the $\operatorname{RHS}$ states.
  
  The edges $\boldsymbol{a}, \boldsymbol{b}, \boldsymbol{d}, \boldsymbol{e}$ can only account
  for at most four colors. Let $d_1, d_2, d_3, d_4$ be four colors such that
  $a, b, d, e \in \{d_1, d_2, d_3, d_4 \}$. We will also denote $d_0 = \oplus$
  for the uncolored horizontal spin type. Let us fix a state $\mathfrak{s}$ of
  $\mathrm{LHS} (a, b, c, d, e, f)$. We will define for each of the vertices
  $v = x_i, y$ and $x_j, y$ a {\textit{crossing number}} $\chi_v (d_i, g)$ which
  is to be interpreted as the multiplicity with which the line of color $d_i$
  crosses the line of color $g$. We will relabel the colors in
  Figure~\ref{fig:whittaker_weights} and define the crossing numbers case-by
  case as follows.
  
  \
  
  {\textbf{Case 1.}} In this case $\chi_v (d_0, g) = 1$ where we recall
  $d_0 = \oplus$. We have $\chi_v (d_k, g) = 0$ for all other~$k$.
  
  \
  
  {\textbf{Case 2.}} The color $c = d_i$ for some $i > 0$. Then $\chi_v
  (d_i, g) = 1$ and all other $\chi_v (d_k, g) = 0$ for all other $k$.
  
  \
  
  {\textbf{Case 3.}} The color $c = d_i$ for some $i > 0$. If $d_i < g$
  then $\chi_v (d_i, g) = 1$ while $\chi_v (d_k, g) = 0$ for all $k \neq i$.
  On the other hand if $g < d_i$ then $\chi_v (d_0, g) = 1$ while $\chi_v
  (d_k, g) = 0$ for all $k \neq 0$.
  
  \
  
  {\textbf{Case 4.}} The color $c = d_i$ for some $i > 0$. If $g < d_i$
  then $\chi_v (d_i, g) = 0$ while $\chi_v (d_k, g) = 0$ for all $k \neq i$.
  If $d_i < g$, then $\chi_v (d_0, g) = 1$ while $\chi_v (d_k, g) = 0$ for all
  $k \neq 0$..
  
  \
  
  {\textbf{Case 5.}} The colors $c = d_i$ and $d = d_j$ for some $d_i <
  d_j$. There are now three cases. If $g < d_i$ then $\chi_v (d_i, g) = 1$
  while all other $\chi_v (d_k, g) = 0$. If $d_i < g < d_j$ then $\chi_v (d_0,
  g) = 1$ while all other $\chi_v (d_k, g) = 0$. Finally, if $d_j < g$ then
  $\chi_v (d_j, g) = 1$ while all other $\chi_v (d_k, g) = 0$.
  
  \
  
  {\textbf{Case 6.}} Now $c = d_j$ and $d = d_i$ for some $d_j < d_i$.
  There are three cases. If $g < d_j$ then $\chi_v (g, d_j) = 1$ while all
  other $\chi_v (g, d_k) = 0$. If $d_j < g < d_i$ then
  \[ \chi_v (d_i, g) = \chi_v (d_j, g) = 1, \qquad \chi_v (d_0, g) = - 1, \]
  and all other $\chi_v (g, d_k) = 0$. Finally, if $d_i < g$ then $\chi_k
  (d_i, g) = 1$ and all other $\chi_v (g, d_k) = 0$.
  
  \
  
  In every case, we have
  \begin{equation}
    \label{eq:chisumone} \sum_i \chi_v (d_i, g) = 1,
  \end{equation}
  meaning intuitively, that exactly one $d_i$-path crosses the $g$-path at the
  vertex. Indeed in almost all cases, a unique $\chi_v (d_i, g)$ is nonzero,
  and that crossing number is $1$. The exceptional case is Case~6 with $d_j <
  g < d_i$, in which case both paths of colors $d_i$ and $d_j$ cross, but also
  there is a crossing of $d_0 = \oplus$ type, with multiplicity $- 1$.
  
\begin{remark} Roughly, the crossing number $\chi_v(d_i,g)$ is the
	number of times a line of color $d_i$ beginning at $\boldsymbol{a}$, $\boldsymbol{b}$ or $\boldsymbol{c}$
	and ending at $\boldsymbol{d}$, $\boldsymbol{e}$ or $\boldsymbol{f}$ crosses the colored line of color
	$g$ running from $\boldsymbol{c}$ to $\boldsymbol{f}$. However if the line of color $d_i$
	begins at $\boldsymbol{c}$ or ends at $\boldsymbol{f}$, there is a nuance as to whether
	the line should be considered to cross the line of color $g$
	or not. For the moment, $\chi_v(d_i,g)$ can be taken to be
	an \textit{ad hoc} definition, which will be clarified in
	the proof of Lemma~\ref{lem:crossings}.
\end{remark}

  Now let $\mathfrak{s}$ be a state of either $\operatorname{LHS} (a, b, c, d, e, f)$
  or $\operatorname{RHS} (a, b, c, d, e, f)$. Let $\mathfrak{s}'$ be the corresponding
  state of $\operatorname{LHS} (a, b, c', d, e, f')$ or $\operatorname{RHS} (a, b, c', d, e,
  f')$, obtained by removing the path of color $g$. In
  Figure~\ref{fig:whittaker_weights}, the set $\Sigma$ always contains the
  color $g$. Let $\Sigma'$ be the set $\Sigma^-_g = \Sigma - \{ g \}$ obtained by removing
  it, and similarly for any set containing $g$.
  
  An examination of Figure~\ref{fig:whittaker_weights} shows that in every
  case (with $v = x_i, y$ one of the T-vertices in Figure~\ref{fig:ybe}) we
  will prove
  \begin{equation}
    \label{eq:bwratios} \frac{\beta_v (\mathfrak{s})}{\beta_v (\mathfrak{s}')}
    = (- t)^{\chi_v (d_0, g)} \prod_{i > 0} \left\{\begin{array}{ll}
      t^{\chi_v (d_i, g)} & \text{if $d_i < g$,}\\
      1 & \text{if $g < d_i$.}
    \end{array}\right.
  \end{equation}

  This may be proved on a case-by-case basis. Let $\Sigma$ be a vertical
  spintype carrying the color $g$, and let $\Sigma' = \Sigma - \{ g \}$. We
  will check Cases~1 and~6, and leave the remaining cases to the reader. To
  prove (\ref{eq:bwratios}) in Case~1, $\mathfrak{s}'$ eliminates a $d_0,
  g$-crossing and indeed the ratio of Boltzmann weights is $- t$, because
  $\Sigma'_{[1, r]} = \Sigma' = \Sigma - \{ g \} = \Sigma_{[1, r]} - \{ g \}$
  we have
  \[ \frac{\beta_v (\mathfrak{s})}{\beta_v (\mathfrak{s}')} = \frac{(-
     t)^{\Sigma_{[1, r]}}}{(- t)^{\Sigma_{[1, r]}'}} = - t, \]
  confirming (\ref{eq:bwratios}) in this case.
  
  The other cases are all similar, but we consider Case~6. In this case with
  $c = d_j$ and $d = d_i$ (so $d_j < d_i$) we have
  \[ \beta_v (\mathfrak{s}) = - y (1 - t) (- 1)^{\Sigma_{(d_j, d_i)}}
     t^{\Sigma_{(d_i, r]}} . \]
  If $g < d_j$ then $\Sigma'_{(d_j, d_i)} = \Sigma_{(d_j, d_i)}$ and
  $\Sigma'_{(d_i, r]} = \Sigma_{(d_i, r]}$ so
  \[ \frac{\beta_v (\mathfrak{s})}{\beta_v (\mathfrak{s}')} = 1. \]
  This is consistent with (\ref{eq:bwratios}) given $\chi_v (d_j, g) = 1$ but
  $g < d_j$. On the other hand if $d_j < g < d_i$ then \ $\Sigma'_{(d_j, d_i)}
  = \Sigma_{(d_j, d_i)} - \{ g \}$ and $\Sigma'_{(d_i, r]} = \Sigma_{(d_i,
  r]}$ so with
  \[ \frac{\beta_v (\mathfrak{s})}{\beta_v (\mathfrak{s}')} = - 1. \]
  This is consistent with (\ref{eq:bwratios}) with $\chi_v (d_0, g) = - 1$,
  $\chi_v (d_i, g) = \chi_v (d_j) = 1$, so the right hand side is $(- t)^{- 1}
  \cdot 1 \cdot t = - 1$. Finally if $g < d_i$ then $\Sigma'_{(d_i, r]} =
  \Sigma_{(d_i, r]} - \{ g \}$ so
  \[ \frac{\beta_v (\mathfrak{s})}{\beta_v (\mathfrak{s}')} = t, \]
  again consistent with~(\ref{eq:bwratios}).
  
  Now if $i \in \{ 0, 1, 2, 3, 4 \}$ let
  \[ N_i (\mathfrak{s}) = \sum_{\text{$v$ a T-vertex}} \chi_v (d_i, g) . \]
  where the sum is over the two T-vertices $x_i, y$ and $x_j, y$. By
  (\ref{eq:chisumone}) we have
  \begin{equation}
    \label{eq:nisumtwo} \sum_i N_i (\mathfrak{s}) = 2.
  \end{equation}
  \begin{lemma}
		\label{lem:crossings}
    The quantities $N_i (\mathfrak{s})$ are depend only on $a, b, c, d, e, f$
    and not on the state $\mathfrak{s}$. They are the same whether
    $\mathfrak{s}$ is a state of $\operatorname{LHS} (a, b, c, d, e, f)$ or
    $\operatorname{RHS} (a, b, c, d, e, f)$.
  \end{lemma}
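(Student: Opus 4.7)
My plan is to interpret $N_i(\mathfrak{s})$ as an algebraic (signed) crossing number between the distinguished color-$g$ strand running from $\boldsymbol{c}$ to $\boldsymbol{f}$ and the collection of strands of color $d_i$ in the state $\mathfrak{s}$. Once this interpretation is established, the lemma follows from the topological invariance of such crossing numbers for a planar tangle with fixed boundary: by Lemma~\ref{lem:ybe_paths} the endpoints of every colored strand are prescribed by $a,b,c,d,e,f$, so the intersection count is a function of the boundary data alone. Since LHS and RHS in Figure~\ref{fig:ybe} share the same six boundary edges, the counts agree on both sides.

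I would execute this by checking case-by-case that the values of $\chi_v(d_i,g)$ in Cases~1--6 agree with the signed count of $d_i$-strand against $g$-strand at the vertex, using the down-and-right orientation convention of Remark~\ref{rem:colored_paths}. In Cases~1--5 this is a straightforward unsigned count: the $g$-strand either crosses a single colored strand transversely (contributing $1$) or passes with no crossing, and $\chi_v$ is supported on the corresponding color. The delicate step is Case~6 with $d_j < g < d_i$, where the vertex records two apparent crossings of $g$ with $d_i$ and $d_j$, compensated by the ``anti-crossing'' value $\chi_v(d_0,g)=-1$; I would verify that this signed assignment is precisely what makes (\ref{eq:chisumone}) hold and what matches the Boltzmann-weight ratio (\ref{eq:bwratios}) already computed. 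Summing over the two T-vertices of either configuration then interprets $N_i(\mathfrak{s})$ as a global signed count of crossings of the $g$-path with the union of $d_i$-paths.

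With the topological interpretation in hand, the two assertions of the lemma are immediate. Each $d_i$-strand has a starting edge in $\{\boldsymbol{a},\boldsymbol{b},\boldsymbol{c}\}$ and an ending edge in $\{\boldsymbol{d},\boldsymbol{e},\boldsymbol{f}\}$ determined by the spins $a,b,c,d,e,f$, and the $g$-strand runs from $\boldsymbol{c}$ to $\boldsymbol{f}$, so the signed intersection number depends only on boundary data and not on the interior of $\mathfrak{s}$. The same count applies verbatim to the right-hand configuration since it differs from the left only in the arrangement of three interior vertices, leaving the color-to-endpoint assignment unchanged. The main obstacle is justifying the disentangling convention in Case~6, where three differently-colored strands meet at a single vertex; I would handle this by formally resolving the vertex into a superposition of a transverse $d_i$-$g$ crossing, a transverse $d_j$-$g$ crossing, and a ``ghost'' $\oplus$-crossing of multiplicity $-1$, and then confirming that this resolution correctly routes colors to exit edges so that the signed crossing count accurately reflects the global planar topology of the state.
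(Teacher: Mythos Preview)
Your approach is essentially the same as the paper's: interpret $\chi_v(d_i,g)$ as a local crossing count between the $g$-strand and the $d_i$-strands, then argue that the total is a boundary invariant. The only substantive difference is packaging. Where you invoke ``topological invariance of signed crossing numbers for a planar tangle with fixed boundary'' as a principle, the paper makes the invariant explicit: it defines $N_i(a,b,c,d,e,f)$ directly as the number of sources in $\{\boldsymbol{a},\boldsymbol{b},\boldsymbol{c}\}$ carrying $d_i$ that lie to the left of the $g$-path, minus the number of sinks in $\{\boldsymbol{d},\boldsymbol{e},\boldsymbol{f}\}$ carrying $d_i$ to the left of the $g$-path, using the convention that on a shared vertical edge the $d_i$-strand sits to the left of the $g$-strand if $d_i<g$ and to the right if $d_i>g$. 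The case-by-case verification you plan is then exactly what the paper does (Cases~3,~5,~6 are spelled out with pictures).

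The one place your write-up is soft is the ``disentangling convention'' on vertical edges. Your appeal to topology only goes through once you fix how $d_i$-strands and the $g$-strand are ordered on $\boldsymbol{c}$, $\boldsymbol{f}$, and the internal vertical edge; without that, the crossing count in Cases~3--6 is genuinely ambiguous, and Case~6 with $d_j<g<d_i$ cannot be resolved by a ``superposition'' argument alone. If you state the left/right convention above and check that the $\chi_v$ values match it, your proof is complete and coincides with the paper's.
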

  
  \begin{proof}
    It is only necessary to prove this for $N_1 (\mathfrak{s}), N_2
    (\mathfrak{s}), N_3 (\mathfrak{s}), N_4 (\mathfrak{s})$
    by~(\ref{eq:nisumtwo}).
    
    Let us consider the colored paths of type $d_i$, $i = 1, 2, 3, 4$. Such a
    path begins at a ``source'' boundary edge $\boldsymbol{a}$, $\boldsymbol{b}$ or
    $\boldsymbol{c}$ and terminates at ``sink'' $\boldsymbol{d}, \boldsymbol{e}$ or
    $\boldsymbol{f}$. Now there is a path of color $g$ running from $\boldsymbol{c}$
    to $\boldsymbol{f}$. If $\boldsymbol{c}$ or $\boldsymbol{f}$ also carries the color
    $d_i$, we will consider that on the vertical edges ($\boldsymbol{c},
    \boldsymbol{f}$ or the internal vertical edge) that path runs to the left of
    the $g$-colored path if $d_i < g$, and to the right of the $g$-colored
    path if $d_i > g$.
    
    Let $N_i$ be the number of sources to the left of the $g$-path that carry
    the color $d_i$, minus the number of sinks to the left of the path that
    carry the color $d_i$. Thus $\boldsymbol{a}$ contributes $1$ to this count if
    $a = d_i$, and $\boldsymbol{b}$ contributes $1$ if $b = d_i$, while
    $\boldsymbol{c}$ contributes $1$ if $\boldsymbol{c}$ carries the color $d_i$ and
    $d_i < g$, but not otherwise. If $\boldsymbol{f}$ carries the color $d_i$ and
    $d_i < g$, then $\boldsymbol{f}$ contributes $- 1$ because it is a sink to the
    left of the path. To emphasize that $N_i$ depends only on the boundary
    data, we write $N_i = N_i (a, b, c, d, e, f)$.
    
    Now we claim that $N_i (\mathfrak{s}) = N_i (a, b, c, d, e, f)$. Indeed
    $N_i$ is the number of times that the colored $d_i$-path crosses the
    $g$-path, and the problem is to show that $\chi_v (d_i, g)$ correctly
    counts whether a crossing takes place at the T-vertex $v$. A nuance is
    that this depends on whether $d_i < g$ or $d_i > g$. Let us look at
    Cases~3, 5 and~6.
    
    In Case~3, it is a consequence of the assumption that the $d_i$ path runs
    to the left of the $g$-path if $d_i < g$ and to the right of the $g$-path
    if $d_i > g$ that the paths cross at the vertex if $d_i < g$, but not if
    $d_i < g$. This justifies the definition of $\chi_v (d_i, g)$ as $1$ if
    $d_i < g$ but $0$ if $d_i > g$.
    
  %
 %

\[\begin{tikzpicture}[every node/.style={scale=.8}]
\draw (0,1)--(1,1);
\draw[thick, red] (1-.1,2)--(1-.1,1)--(2,1);
\draw[thick, blue] (1.1,2)--(1.1,0);
\foreach \x/\y in {0/1,2/1,1/0,1/2}
\path[fill=white] (\x,\y) circle (.25);
\node at (0,1) {$+$};
\node at (2,1) {$d_i$};
	\node at (1-.2,2) {$d_i$};
	\node at (1+.2,2) {$g$};
	\node at (1,-.6) {$d_i<g$};

\pgfmathsetmacro{\w}{4.5}

\draw (\w,1)--(\w+1+.1,1);
\draw[thick, red] (\w+1+.1,2)--(\w+1+.1,1)--(\w+2,1);
\draw[thick, blue] (\w+1-.1,2)--(\w+1-.1,0);
\foreach \x/\y in {0/1,2/1,1/0,1/2}
\path[fill=white] (\w+\x,\y) circle (.25);
\node at (\w,1) {$+$};
\node at (\w+2,1) {$d_i$};
	\node at (\w+1+.2,2) {$d_i$};
	\node at (\w+1-.2,2) {$g$};
	\node at (\w+1,-.6) {$d_i>g$};
		\end{tikzpicture}\]
    In Case~5, the $d_i$-path crosses the $d_i$-path if $g < d_i$ and it
    crosses the $d_j$ path at the vertex $v$ if $g > d_j$. If $d_i < g < d_j$
    it crosses neither path. This is reflected in the definition of $\chi_v
    (d_i, g)$.
\[\begin{tikzpicture}[every node/.style={scale=.8}]
\draw (0,1)--(2,1);
\draw[thick, red] (0,1)--(1-.1,1)--(1-.1,0);
\draw[thick, green] (1+.1,2)--(1.1,1)--(2,1);
\draw[thick, blue] (1-.2,0)--(1-.2,2);
\foreach \x/\y in {0/1,2/1,1-.2/2}
\path[fill=white] (\x,\y) circle (.25);
\node at (0,1) {$d_i$};
\node at (2,1) {$d_j$};
\node at (1-.2,2) {$g$};
\node at (1,-.6) {$g<d_i<d_j$};

\pgfmathsetmacro{\w}{4.5};

\draw (\w,1)--(\w+2,1);
\draw[thick, red] (\w,1)--(\w+1-.1,1)--(\w+1-.1,0);
\draw[thick, green] (\w+1+.1,2)--(\w+1.1,1)--(\w+2,1);
\draw[thick, blue] (\w+1,0)--(\w+1,2);
\foreach \x/\y in {0/1,2/1,1/2}
\path[fill=white] (\w+\x,\y) circle (.25);
\node at (\w,1) {$d_i$};
\node at (\w+2,1) {$d_j$};
\node at (\w+1,2) {$g$};
\node at (\w+1,-.6) {$d_i<g<d_j$};

\draw (2*\w,1)--(2*\w+2,1);
\draw[thick, red] (2*\w,1)--(2*\w+1-.1,1)--(2*\w+1-.1,0);
\draw[thick, green] (2*\w+1+.1,2)--(2*\w+1.1,1)--(2*\w+2,1);
\draw[thick, blue] (2*\w+1+.2,0)--(2*\w+1+.2,2);
\foreach \x/\y in {0/1,2/1,1.2/2}
\path[fill=white] (2*\w+\x,\y) circle (.25);
\node at (2*\w,1) {$d_i$};
\node at (2*\w+2,1) {$d_j$};
\node at (2*\w+1,2) {$g$};
\node at (2*\w+1,-.6) {$d_i<d_j<g$};
\end{tikzpicture}\]
    In Case~6, with $d_j < d_i$, the $d_j$-path crosses the $g$-path if $d_j <
    g$, and the $d_i$-path crosses the $g$-path if $d_i < g$. In the case $d_j
    < g < d_i$ both the $d_i$ and $d_j$-paths cross the $g$-path, and this is
    reflected in the definition of $\chi_v$.
\[\begin{tikzpicture}[every node/.style={scale=.8}]
\draw (0,1)--(2,1);
\draw[thick, red] (0,1-.1)--(1+.1,1-.1)--(1+.1,0);
\draw[thick, green] (1-.1,2)--(1-.1,1+.1)--(2,1+.1);
\draw[thick, blue] (1-.2,0)--(1-.2,2);
\foreach \x/\y in {0/1,2/1,1-.2/2}
\path[fill=white] (\x,\y) circle (.25);
\node at (0,1) {$d_i$};
\node at (2,1) {$d_j$};
\node at (1-.2,2) {$g$};
\node at (1,-.6) {$g<d_j<d_i$};

\pgfmathsetmacro{\w}{4.5};

\draw (\w+0,1)--(\w+2,1);
\draw[thick, red] (\w,1-.1)--(\w+1+.1,1-.1)--(\w+1+.1,0);
\draw[thick, green] (\w+1-.1,2)--(\w+1-.1,1+.1)--(\w+2,1+.1);
\draw[thick, blue] (\w+1,0)--(\w+1,2);
\foreach \x/\y in {0/1,2/1,1/2}
\path[fill=white] (\w+\x,\y) circle (.25);
\node at (\w+0,1) {$d_i$};
\node at (\w+2,1) {$d_j$};
\node at (\w+1,2) {$g$};
\node at (\w+1,-.6) {$d_j<g<d_i$};

\draw (2*\w+0,1)--(2*\w+2,1);
\draw[thick, red] (2*\w,1-.1)--(2*\w+1+.1,1-.1)--(2*\w+1+.1,0);
\draw[thick, green] (2*\w+1-.1,2)--(2*\w+1-.1,1+.1)--(2*\w+2,1+.1);
\draw[thick, blue] (2*\w+1+.2,0)--(2*\w+1+.2,2);
\foreach \x/\y in {0/1,2/1,1+.2/2}
\path[fill=white] (2*\w+\x,\y) circle (.25);
\node at (2*\w+0,1) {$d_i$};
\node at (2*\w+2,1) {$d_j$};
\node at (2*\w+1+.2,2) {$g$};
\node at (2*\w+1,-.6) {$d_j<d_i<g$};
\end{tikzpicture} \qedhere \]	
  \end{proof}
  
  From the Lemma, we have
  \[ \frac{\operatorname{LHS} (a, b, c, d, e, f)}{\operatorname{LHS} (a, b, c', d, e, f')} =
     (- 1)^{N_0} \prod_{i = 1}^4 \left\{\begin{array}{ll}
       1 & \text{if $g < d_i,$}\\
       t^{N_i} & \text{if $g > d_i,$}
     \end{array}\right. \]
  since the expression on the right equals $\prod_v \beta_v (\mathfrak{s}) /
  \beta_v (\mathfrak{s}')$ for every state. Also the RHS ratio is equal to the
  same ratio, proving~(\ref{eq:hsratios}).
\end{proof}

\subsection{Relation with Lattice Models in \cite{AggarwalBorodinWheelerColored}}
Amol Aggarwal \cite{AggarwalPersonal} pointed out to us that
the Boltzmann weights in Figure \ref{fig:modified_weights} can be obtained
from the weights in \cite{AggarwalBorodinWheelerColored} via a series of
transformations (a mix of gauge transformations, Drinfeld twists,
normalizations, substitutions, and degenerations), each of which preserves
the Yang-Baxter equation. Combined with the Yang-Baxter equation in
\cite{AggarwalBorodinWheelerColored}, this gives an additional proof of
Theorem~\ref{thm:ybe}.

These transformations are as follows. First consider the weights in Figure \ref{fig:ABW-weights}, which are the Boltzmann weights from \cite[Definition~3.2.1]{AggarwalBorodinWheelerColored} in the case $m=1$ with a couple of minor convention changes.

For a given vertex, let $a, B, c$, and $D$ denote the (sets of) colors at the right, bottom, left, and top edges, respectively, and fix $B_0 = L-B_{[1,r]}, D_0 = L-D_{[1,r]}$. If $P$ is a logical statement, we consider $P$ to equal 1 if true and 0 if false.

Step 1: Start with the weights in Figure \ref{fig:ABW-weights}, multiply by
\[
(-1)^{\binom{B_0}{2} - \binom{D_0}{2}} (-t)^{B_{(a,r]} - D_{[0,c)} + L} \cdot \frac{1-t^{-1}z}{1-t^{L-1}z} \cdot \prod_{j=0}^r \frac{(t;t)_{c=j} (t;t)_{D_j}}{(t;t)_{a=j} (t;t)_{B_j}},
\]
where $(u;t)_k = \prod_{j=1}^k (1-t^{j-1}u)$ is the \emph{$t$-Pochhammer symbol}, and then reverse the colors.
Step 2: Multiply each weight by $1-t^{L-1}z$, set $x=t^{L-1}z$, and let $t^L\to 0$, keeping $x$ fixed. Further, multiply all weights by
\[
x^{-1} (-1)^{\binom{B_{[1,n]}}{2} - \binom{D_{[1,n]}}{2}}  (-1)^{c\neq +} (1-t)^{1_{a=+} - 1_{c=+}}.
\]
Step 3: Replace $x^{-1}$ with $xy^{-1}$, multiply by \[t^{1_{a\ne +} B_{[a+1,n]} - 1_{c\ne +} D_{[1,c-1]}},\] and let $L$ be odd so that $(-1)^L = -1$. The resulting weights match those in Figure \ref{fig:modified_weights}.

\begin{figure}[h]
\begin{center}
\begin{tabular}{|ccc|}
\hline
\hline
Case 1 & Case 2 & Case 3\\
\hline
\vertex{+}{B}{+}{B} &
\vertex{j}{B}{j}{B} &
\vertex{+}{B^+_j}{j}{B}
\\[4pt]
$t^{-B_{[1,r]}} \frac{1 - t^{B_{[1,r]}-1} z}{1 - t^{-1}z}$ & 
$t^{-B_{[i,r]}} (-1)^{B_j} \frac{1-t^{L-1+B_j}z}{1-t^{-1}z}$ &
$t^{L-B_{[1,r]}-1} z \frac{1 - t^{B_{[1,r]}-L}}{1-t^{-1}z}$ \\[4pt]
\hline
\hline
Case 4 & Case 5 & Case 6\\
\hline
\vertex{j}{B^-_j}{+}{B} &
\vertex{i}{B^{+-}_{ji}}{j}{B} &
\vertex{j}{B^{+-}_{ij}}{i}{B}
\\    
$t^{-B_{(j,r]}} \frac{t-1}{1-t^{-1}z}$ &
$t^{-B_{(j,r]} + L-1} z\frac{t-1}{1-t^{-1}z}$ &
$t^{-B_{(j,r]}} \frac{t-1}{1-t^{-1}z}$ \\
\hline
\end{tabular}
\end{center}
\caption{The set of Boltzmann weights in \cite[Definition~3.2.1]{AggarwalBorodinWheelerColored} in the case $m=1$, with some minor convention changes. $i$ and $j$ are colors with $i<j$. $B$ is a set of colors. $L$ is a global parameter, often taken to be a large positive integer.}
\label{fig:ABW-weights}
\end{figure}

\begin{remark} It follows from this connection with the results of~\cite{AggarwalBorodinWheelerColored} that
	there also exist \emph{column} Yang-Baxter equations. We do not need these, but
	they were used in~\cite{BumpMcNamaraNakasujiFactorial} 
	to prove stability properties of factorial Schur functions
	as the number of variables is increased. Note that~\cite{BumpMcNamaraNakasujiFactorial} 
	is a special case of the models in this paper. See Section~\ref{subsec:factorial}.
\end{remark}

\section{Evaluating Partition Functions via Demazure operators}\label{sec:partition-function}

Having specified Boltzmann weights for our solvable lattice models, we now
focus on the partition functions for finite rectangular lattices with $n$
rows, $N$ columns, and prescribed boundary conditions. Recall that we have
numbered the rows from top to bottom and columns from right to left. We
further assume that $N \geqslant n$. We need at least $n$ distinct colors, 
so we may take $r=n$, though any larger value of $r$ would work (but not all 
colors would be used).

\begin{definition}
  We associate a set of boundary conditions $B (\sigma, w, \lambda)$ on the
  rectangular lattice with $n$ rows and $N$ columns to a pair of permutations
  $\sigma$ and $w$ and an integer partition $\lambda = (\lambda_1, \cdots,
  \lambda_n)$ of length $n$ with $N \geqslant \lambda_1 \geqslant \lambda_2 \geqslant \cdots
  \geqslant \lambda_n>0$ as follows:
  \begin{itemize}
		\item Along the top boundary, the boundary edge in column $\lambda_{i}$ 
			carries the color $\sigma (n+1-i)$. All other top boundary edges are uncolored.
    In particular, when $\lambda$ has repeated parts, there will be columns
    with multiple colors.
    
    \item Along the right boundary, the boundary edge in row $i$ has color $w
    (i)$. All other right boundary edges are uncolored.
    
    \item Along the bottom and left boundaries, all boundary edges are
    uncolored.
  \end{itemize}
  \label{def:boundary}
\end{definition}

Following the language of Definition~\ref{def:admissiblestate}, given boundary condition $B$, 
the collection of admissible states is denoted $\mathfrak{S}_B$ or more explicitly for $B(\sigma, w, \lambda)$
by $\mathfrak{S}_{\sigma, w}^\lambda$.  Figure~\ref{fig:characteristicstate} below shows an example of an admissible state 
for the model $\mathfrak{S}^\lambda_{\sigma, w} = \mathfrak{S}^{(4, 3, 2, 1)}_{e, 1423}$ in the case $n = N = 4$. As that
state demonstrates, it is easy to verify that our collection of admissible vertices (i.e., those with non-zero Boltzmann weights)
produce states that appear as colored paths traveling downward and rightward from the top boundary.

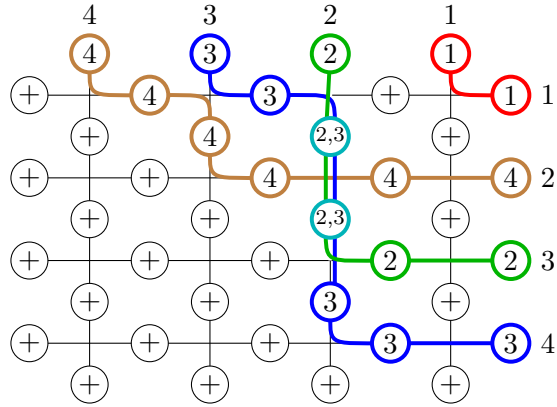
\begin{figure}[h] 
  \begin{center}
    \begin{tikzpicture}[xscale=0.8, yscale=0.55, font=\small]
    \icegrid{4}{4}

    \draw[white,line width=0.5mm] (5,3) to (5,8);
    \draw [brown, path]
    (1,8) node {$4$} to [out=-90,in=180,looseness=2.0]
    (2,7) node {$4$} to [out=-0,in=90,looseness=2.0]
    (3,6) node {$4$} to [out=-90,in=180,looseness=2.0]
    (4,5) node {$4$} --
    (6,5) node {$4$} --
    (8,5) node {$4$};

    \draw [blue, path]
    (3,8) node{$3$} to [out=-90,in=180,looseness=2.0]
    (4,7) node{$3$} to [out=0,in=90,looseness=2.0]
    (5+0.075,6);

    \draw[darkgreen, path]
    (5,8) node {$2$} --
    (5-0.075,6);

    \draw [blue, path] (5+0.075,6) -- (5+0.075,2);
    \draw [darkgreen, path] (5-0.075,6) -- (5-0.075,4) to [out=-90,in=180,looseness=2.0] (6,3);

    \draw [BlueGreen, invisiblepath]
    (5,6) node[draw=BlueGreen] {$\scriptstyle 2,3$} --
    (5,4) node[draw=BlueGreen] {$\scriptstyle 2,3$};

    \draw [blue, path]
    (5,2) node{$3$} to [out=-90,in=180,looseness=2.0]
    (6,1) node{$3$} --
    (8,1) node{$3$};

    \draw[darkgreen, path]
    (6,3) node {$2$} --
    (8,3) node {$2$};

    \draw [red, path]
    (7,8) node {$1$} to [out=-90,in=180,looseness=2.0]
    (8,7) node {$1$};

  \end{tikzpicture}
  \end{center}
   \caption{An admissible state for the model $\mathfrak{S}^{(4, 3, 2,
  1)}_{e, 1423}$.\label{fig:characteristicstate}}
\end{figure}

Given the required boundary data of permutations $\sigma$ and $w$ and partition $\lambda$, we make a
family of partition functions associated to this data:
\begin{equation}
  (\sigma, w, \lambda) \longmapsto Z^{\lambda}_{\sigma, w} (x_1, \ldots, x_n ;
  y_1, \ldots, y_N) \label{zpartition}
\end{equation}
where $Z_{\sigma, w}^{\lambda} := Z_{B (\sigma, w, \lambda)}$ defined
according to~(\ref{generic-partition-function}) and boundary conditions $B
(\sigma, w, \lambda)$ as in Definition~\ref{def:boundary} above. The Boltzmann
weights are as in Figure~\ref{fig:modified_weights}. For brevity, we sometimes
make use of the vector notation $\bs{x} = (x_1, \ldots, x_n)$ and $\bs{y} =
(y_1, \ldots, y_N)$.

\begin{example} \label{ex:partfun} In the simplest non-trivial case $N=n=2$, the Weyl group has just two elements $\{e, s \}$ and we must have $\lambda = (2,1)$. There are four allowable boundary conditions corresponding to choices of top boundary correspoding to $\sigma$ and right-hand boundary corresponding to $w$ in $\{ e, s \}$. We depict the admissible states for two of the four choices here. The boundary conditions $B(\sigma, w, \lambda)  = B(e,s,(2,1))$ admit a single admissible state (pictured below). Thus its partition function is just the weight of this lone admissible state. The table at right of the state shows the Boltzmann weights for each individual vertex in their respective positions, using the weights from Figure~\ref{fig:modified_weights}, resulting in the partition function $Z_{e,s}^{(2,1)}(\boldsymbol{x};\boldsymbol{y}) = 1 - x_1 y_1^{-1}$.
 \[  \begin{tikzpicture}[xscale=0.8, yscale=0.7, font=\small, baseline={1.5cm}]
    \icegrid{2}{2}
\draw [blue, path]
    (1,4) node{$2$} to [out=-90,in=180,looseness=2.0]
    (2,3) node{$2$};
   
    \draw [blue, path]
    (2,3) node{$2$} to [out=0,in=180,looseness=2.0]
    (4,3) node{$2$};

    \draw[red, path]
    (3,4) node {$1$} --
    (3,2) node {$1$};

    \draw [red, path]
    (3,2) node {$1$} to [out=-90,in=180,looseness=2.0]
    (4,1) node {$1$};
  \end{tikzpicture} \quad \begin{tabular}{c|c} 1 & $1 - x_1 y_1^{-1}$ \\ \hline 1 & 1 \end{tabular} \]
The boundary conditions $B(\sigma, w, \lambda) = B(e,e, (2,1))$ admit two admissible states depicted below, with the Boltzmann weights of vertices at their respective positions to the right. Summing over states, the resulting partition function $Z_{e,e}^{(2,1)}(\boldsymbol{x};\boldsymbol{y}) = (t-1) - t(1-x_2 y_1^{-1}).$
 \[  \begin{tikzpicture}[xscale=0.8, yscale=0.7, font=\small, baseline={1.5cm}]
    \icegrid{2}{2}
\draw [blue, path]
    (1,4) node{$2$} to [out=-90,in=90,looseness=2.0]
    (1,2) node{$2$};

\draw [blue, path]   
   (1,2) node{$2$} to [out=-90,in=180,looseness=2.0]
    (2,1) node{$2$};

\draw [blue, path]   
   (2,1) node{$2$} to [out=0,in=180,looseness=2.0]
    (4,1) node{$2$};
   
     \draw [red, path]
    (3,4) node {$1$} to [out=-90,in=180,looseness=2.0]
    (4,3) node {$1$};
  \end{tikzpicture} \quad \begin{tabular}{c|c} $-t$  & 1 \\ \hline 1 & $1 - x_2 y_1^{-1}$ \end{tabular} \; ;
  \qquad
  \begin{tikzpicture}[xscale=0.8, yscale=0.7, font=\small, baseline={1.5cm}]
    \icegrid{2}{2}
\draw [blue, path]
    (1,4) node{$2$} to [out=-90,in=180,looseness=2.0]
    (2,3) node{$2$};
   
    \draw [blue, path]
    (2,3) node{$2$} to [out=0,in=90,looseness=2.0]
    (3,2) node{$2$};

   \draw [blue, path]
    (3,2) node{$2$} to [out=-90,in=180,looseness=2.0]
    (4,1) node{$2$};

    \draw [red, path]
    (3,4) node {$1$} to [out=-90,in=180,looseness=2.0]
    (4,3) node {$1$};
  \end{tikzpicture} \quad \begin{tabular}{c|c} 1 & $t-1$ \\ \hline 1 & 1 \end{tabular} \; .
  \]
\end{example}

\medskip

Our next result uses the row solvability of our weights
to completely characterize the partition function $Z^{\lambda}_{\sigma, w}
(\bs{x} ; \bs{y})$ as the solution to a recurrence relation. 
Because we allow repeated parts in the partition, there is some ambiguity in
the choice of the top boundary permutation $\sigma$ up to elements of the
stabilizer $\operatorname{Stab}(w_0 \cdot \lambda)$ of $w_0 \cdot \lambda$ in
$S_n$. In what follows, we choose $\sigma$ in $S_n$ to be the
\textit{maximal} length element in its coset $[\sigma] \in S_n /
\mathrm{Stab} (w_0 \cdot \lambda)$.

\begin{example}
  Let $n = 3$ and $\lambda = (4, 2, 2)$ with $N \geqslant 4$. Then $w_0 \cdot
  \lambda = (2, 2, 4)$. Then $\operatorname{Stab}(w_0 \cdot \lambda) = \langle s_1
  \rangle$. If we choose the coset $[s_2]$ in $S_n / \mathrm{Stab} (w_0
  \cdot \lambda)$, then the maximal element in the left coset $[s_2]$ is $\sigma =
  s_2 s_1$. We have color set $(c_1, c_2, c_3)$ with $c_1 < c_2 < c_3$. The
  top boundary corresponding to $[\sigma]$, that is to any element of this
  coset, has color $c_2$ on column 4, and colors $c_1, c_3$ on column 2.
\end{example}

\begin{proposition}[Base Case]
  \label{prop:basecase}Given any partition $\lambda$ of length $n$, let $\sigma$ in $S_n$ be
  the maximal length coset representative for $[\sigma]$ in $S_n /
  \mathrm{Stab} (w_0 \cdot \lambda)$. Then
  \[ Z^{\lambda}_{\sigma, \sigma w_0} (\bs{x} ; \bs{y}) = t^{\ell (\sigma)} 
     \prod_{i = 1}^n \prod_{j < \lambda_i} \left( 1 - \frac{x_i}{y_j} \right)
     . \]
\end{proposition}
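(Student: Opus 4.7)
The plan is to show that the boundary conditions $B(\sigma, \sigma w_0, \lambda)$ admit a \emph{unique} admissible state and then to compute its Boltzmann weight directly using Figure~\ref{fig:modified_weights}. Setting $c^{(i)} := \sigma(n+1-i) = \sigma w_0(i)$, color $c^{(i)}$ enters at the top of column $\lambda_i$ and exits at the right of row $i$. I claim the unique admissible state is the one in which each $c^{(i)}$ descends straight down in column $\lambda_i$ until reaching row $i$, then turns right and traverses horizontally to the right boundary.

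For uniqueness, I would induct on $i = 1, 2, \ldots, n$. Since each horizontal edge carries at most one color, and since by the inductive hypothesis $c^{(k)}$ for $k < i$ already occupies every horizontal edge in row $k$ from column $\lambda_k$ rightward, any attempted right turn by $c^{(i)}$ in a row $k < i$ fails because $\lambda_k \geq \lambda_i$ forces a conflict. Together with the fact that paths cannot move upward, this forces $c^{(i)}$ to remain in column $\lambda_i$ through row $i$, then turn right. When $\lambda$ has repeated parts so that several colors enter at the same column, the right-boundary data dictates which color exits which row and hence which color turns right at each vertex, and the maximal-length hypothesis on $\sigma$ guarantees that within a block of equal $\lambda$-values we have $c^{(i)} < c^{(i+1)} < \cdots$, making the stacked straight-down routing globally consistent.

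In this unique state every vertex falls into one of three cases: the entry vertex at $(i, \lambda_i)$ is a Case~3 vertex (top-to-right turn) with horizontal color $c^{(i)}$ and vertical set $B = \{c^{(i')}: i' > i,\ \lambda_{i'} = \lambda_i\}$; vertices at $(i, j)$ with $j < \lambda_i$ are Case~2 (horizontal pass-through) with horizontal color $c^{(i)}$ and vertical set $B = \{c^{(i')}: i' > i,\ \lambda_{i'} = j\}$; and all remaining vertices at $(i, j)$ with $j > \lambda_i$ are empty Case~1 vertices with both horizontal and vertical labels trivial. Reading off the Boltzmann weights from Figure~\ref{fig:modified_weights}, Case~1 weights equal $1$, each Case~2 vertex at $(i, j)$ contributes $(1 - x_i/y_j) \cdot t^{|B_{>c^{(i)}}|}$, and each Case~3 entry vertex contributes $t^{|B_{>c^{(i)}}|}$ with no $x, y$ dependence. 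Taking the product immediately yields $\prod_i \prod_{j < \lambda_i}(1 - x_i/y_j)$ together with an overall factor $t^E$, where $E$ collects the exponents $|B_{>c^{(i)}}|$ from all vertices.

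The main obstacle is establishing $E = \ell(\sigma)$. Combining Case~2 and Case~3 contributions, $E$ counts pairs $(i, i')$ with $i < i'$, $\lambda_{i'} \leq \lambda_i$, and $c^{(i')} > c^{(i)}$. The inequality $\lambda_{i'} \leq \lambda_i$ is automatic from the weakly decreasing order of $\lambda$, so
\[ E = \#\{(i, i') : i < i',\ \sigma(n+1-i') > \sigma(n+1-i)\}. \]
Via the substitution $a = n+1-i$, $b = n+1-i'$, this is exactly the number of inversions of $\sigma$, namely $\ell(\sigma)$, completing the proof.
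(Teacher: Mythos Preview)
Your proof is correct and follows the same strategy as the paper: exhibit the unique admissible state for these boundary conditions and then read off its weight from the table of Boltzmann weights. The paper's proof is considerably terser—it simply asserts that the vertices at $(i,\lambda_i)$ are Case~3 and those at $(i,j)$ with $j<\lambda_i$ are Case~2 and leaves the weight computation to the reader—whereas you fill in the counting argument showing the total exponent of $t$ equals $\ell(\sigma)$; note that your Case~3 contribution $t^{|B_{>c^{(i)}}|}$ silently uses the fact that $B_{<c^{(i)}}=\varnothing$ (so the $(-t)^{\Sigma_{[1,c)}}$ factor is $1$), which is exactly where the maximal-length hypothesis on $\sigma$ enters the weight computation.
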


\begin{proof}
  By choosing the right boundary to be $w = \sigma w_0$, then there exists a
  unique admissible state with these boundary conditions. It is uniquely
  characterized by the property that the color exiting in row $i$ originated
  at column $\lambda_i$ and if $\lambda_i = \lambda_j$ then the smaller of the
  two colors $c_{\sigma (n + 1 - i)}$ and $c_{\sigma (n + 1 - j)}$ exits
  above. Our choice of the maximal length coset representative ensured that
  the latter convention holds when choosing the right boundary to be $\sigma
  w_0$.
  
  The vertices of the admissible state with at least one adjacent edge colored
  are those in row $i$ and column $j$ with $j \leqslant \lambda_i$. (The
  reader may visualize this as colored paths tracing out the Young diagram of
  $\lambda$ with boxes right-justified, where the boxes are centered on each
  of the vertices in the state having at least one adjacent colored edge.)
  
  The weight of this state may now be read from
  Figure~\ref{fig:modified_weights}. The vertices at positions $(i,
  \lambda_i)$ are in Case 3 and those at $(i, j)$ with $j < \lambda_i$ are in
  Case 2. The remaining vertices have no adjacent edges with color, so belong
  to Case 1 with Boltzmann weight 1.
\end{proof}

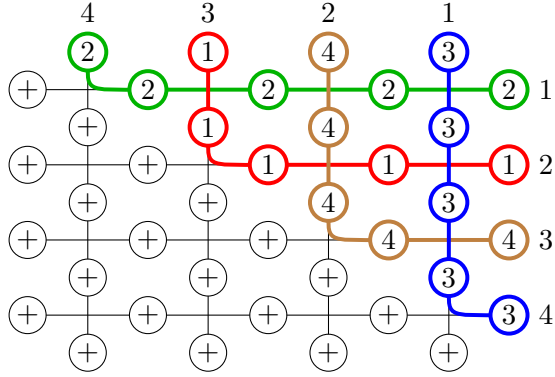
\begin{figure}[h]
  \begin{center}
    \
    \begin{tikzpicture}[xscale=.8, yscale=0.5, font=\small]
    \icegrid{4}{4}

    \draw [darkgreen, path]
    (1,8) node {$2$} to [out=-90,in=180,looseness=2.0]
    (2,7) node {$2$} --
    (4,7) node {$2$} --
    (6,7) node {$2$} --
    (8,7) node {$2$};

    \draw [red, path]
    (3,8) node {$1$} --
    (3,6) node {$1$} to [out=-90,in=180,looseness=2.0]
    (4,5) node {$1$} --
    (6,5) node {$1$} --
    (8,5) node {$1$};

    \draw [brown, path]
    (5,8) node {$4$} --
    (5,6) node {$4$} --
    (5,4) node {$4$} to [out=-90,in=180,looseness=2.0]
    (6,3) node {$4$} --
    (8,3) node {$4$};

    \draw [blue, path]
    (7,8) node {$3$} --
    (7,6) node {$3$} --
    (7,4) node {$3$} --
    (7,2) node {$3$} to [out=-90,in=180,looseness=2.0]
    (8,1) node {$3$};

  \end{tikzpicture}
  \end{center}
  \caption{The sole admissible state for the model
  $\mathfrak{S}_{\sigma, \sigma w_0}$, with $\sigma = 3412$. We refer to this as the ``ground state'' for all models of the form $\mathfrak{S}_{\sigma, w}.$}
\end{figure}

Next we turn to recursive relations, which together with the above result,
completely determine the partition function. In expressing these recursions,
we find it convenient to use root theoretic notation. Let $x^{\alpha_i}$
denote $x_i / x_{i + 1}$.
Let $\tau_i$ be the operator on $\mathbb{C} (t) [x_1,
  \ldots, x_n]$ defined by~(\ref{eq:tauop}). In particular, $\tau_i$ is invertible with inverse
	\[ \tau_i^{- 1} = \frac{(t - 1) \bs{x}^{\alpha_i}}{t (1 - \bs{x}^{\alpha_i})} +
		 \frac{\bs{x}^{\alpha_i} - t}{t (1 - \bs{x}^{\alpha_i})} s_i, \]
We will also make use of $\overline\tau=\theta\tau\theta$, where $\theta$ is the map on functions
that sends $\bs{x}\to\bs{x}^{-1}$ and $\bs{y}\to\bs{y}^{-1}$. Thus
\begin{equation} \label{eq:tau-bar}
\overline\tau_i \cdot f(\bs{x}) 
= \frac{t-1}{1-\bs{x}^{-\alpha_i}} f(\bs{x}) + \frac{\bs{x}^{-\alpha_i}-t}{1-\bs{x}^{-\alpha_i}} f(s_i\bs{x})
\end{equation}
and
\[\overline\tau_i^{-1}\cdot f(\bs{x}) 
= \frac{(t - 1) \bs{x}^{-\alpha_i}}{t (1 - \bs{x}^{-\alpha_i})} + \frac{\bs{x}^{-\alpha_i} - t}{t (1 - \bs{x}^{-\alpha_i})} s_i.\]

\begin{proposition}[Recursion]
  \label{prop:recursion} Then 
  \[ Z^{\lambda}_{\sigma, ws_i} (\bs{x} ; \bs{y}) = \left\{\begin{array}{ll}
       \tau_i \cdot Z^{\lambda}_{\sigma, w} (\bs{x} ; \bs{y}), & \text{if }
       \ell (ws_i) < \ell (w),\\
       \tau_i^{- 1} \cdot Z^{\lambda}_{\sigma, w} (\bs{x} ; \bs{y}), &
       \text{if } \ell (ws_i) > \ell (w) .
     \end{array}\right. \]
  Here we have extended the action of $\tau_i$ to $\mathbb{C} (t) [x_1, \ldots,
  x_n ; y_1, \ldots, y_N]$ by having $s_i$ act only on $\bs{x}$ while acting
  trivially on $\bs{y}$.
\end{proposition}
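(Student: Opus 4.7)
The plan is to prove the recursion via the standard ``train argument'' built on the Yang-Baxter equation established in Theorem~\ref{thm:ybe}. I would attach an auxiliary R-vertex with spectral parameters $(x_i,x_{i+1})$ adjacent to rows $i$ and $i+1$ on the right side of the lattice, compute the partition function of the enlarged configuration in two different ways, and then equate the results.

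First, I would compute the enlarged partition function with R on the right. Fixing the two external horizontal edges of R (which become the new right-boundary edges of rows $i$ and $i+1$), summing over the two internal edges that were the old right-boundary edges of rows $i,i+1$, and applying the color-conservation principle (Lemma~\ref{lem:ybe_paths}), the internal labels must be the two colors $\{c_{w(i)},c_{w(i+1)}\}$ in some order. Thus the enlarged partition function is an explicit R-weighted combination of $Z^\lambda_{\sigma,w}(\bs{x};\bs{y})$ and $Z^\lambda_{\sigma,ws_i}(\bs{x};\bs{y})$, with the R-weights read off from Figure~\ref{fig:rmatrix}, and the structure depending on whether $w(i)<w(i+1)$ or $w(i)>w(i+1)$.

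Next, I would use Theorem~\ref{thm:ybe} to translate the R-vertex one column at a time through the lattice until it arrives at the left boundary. At that point both left-boundary edges in rows $i$ and $i+1$ are uncolored, and so by color conservation the two external horizontal edges of R must also be uncolored; the R-vertex therefore contributes a single scalar weight (the ``$(+,+)\!\to\!(+,+)$'' entry from Figure~\ref{fig:rmatrix}, rational in $x_i/x_{i+1}$ and $t$) multiplying the partition function on the original lattice with the row parameters $x_i$ and $x_{i+1}$ swapped; i.e.\ $Z^\lambda_{\sigma,w}(s_i\bs{x};\bs{y})$. Equating the two evaluations yields, for each fixed choice of the two external horizontal edges of the R-vertex on the right side, a linear relation of the form
\[ A\cdot Z^\lambda_{\sigma,w}(\bs{x};\bs{y}) + B\cdot Z^\lambda_{\sigma,ws_i}(\bs{x};\bs{y}) = C\cdot Z^\lambda_{\sigma,w}(s_i\bs{x};\bs{y}), \]
with $A,B,C\in\mathbb{C}(t)[\bs{x}^{\pm1}]$ determined by R-matrix entries.

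Finally, I would solve this relation for $Z^\lambda_{\sigma,ws_i}(\bs{x};\bs{y})$ and compare the resulting coefficients of $Z^\lambda_{\sigma,w}(\bs{x};\bs{y})$ and $Z^\lambda_{\sigma,w}(s_i\bs{x};\bs{y})$ with those appearing in the definition (\ref{eq:tauop}) of $\tau_i$ and its inverse. The cases $w(i)<w(i+1)$ and $w(i)>w(i+1)$ correspond exactly to $\ell(ws_i)>\ell(w)$ and $\ell(ws_i)<\ell(w)$, matching $\tau_i^{-1}$ and $\tau_i$ respectively. The only nontrivial step is the bookkeeping: one must verify that the specific R-matrix entries involved telescope into precisely the rational coefficients $(t-1)/(1-\bs{x}^{\alpha_i})$ and $(\bs{x}^{\alpha_i}-t)/(1-\bs{x}^{\alpha_i})$ of (\ref{eq:tauop}), and that the ``crossing vs.\ non-crossing'' dichotomy of the R-weights aligns correctly with the ascent/descent dichotomy for $w$. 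This is the main obstacle, and it comes down to a direct inspection of Figure~\ref{fig:rmatrix} in the two relevant cases.
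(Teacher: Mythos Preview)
Your proposal is correct and is essentially the same approach the paper takes: the paper's proof simply invokes Proposition~7.1 of \cite{BBBGIwahori}, which is exactly the train argument you describe (attach an R-vertex, push it across using the Yang-Baxter equation of Theorem~\ref{thm:ybe}, and read off the Demazure operator from the R-matrix entries). The only point worth making explicit in your write-up is the choice of the external edges of the attached R-vertex---you need them to be $(c_{w(i)},c_{w(i+1)})$ so that after pushing to the left the surviving lattice is precisely $Z^\lambda_{\sigma,w}(s_i\bs{x};\bs{y})$; you allude to this but never state it outright.
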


\begin{proof}
  Because our $R$-vertex weights precisely match those in
  {\cite{BBBGIwahori}}, the same proof (Proposition 7.1 in
  {\cite{BBBGIwahori}}) demonstrates the recursion. The formula for the
  inverse of $\tau_i$ may be checked by direct computation, or inferred from
  the quadratic relations for $\tau_i$ given in {\cite{BBBGIwahori}}.
\end{proof}

By a \textit{Weyl vector} we mean a vector $\rho$ characterized by the condition
that $s_i (\rho) = \rho - \alpha_i$ for every simple root $\alpha_i$. A
common choice is to take $\rho$ to be half the sum of the positive roots, but for
$\mathrm{GL} (n)$ a convenient choice is $(n - 1, n - 2, \cdots, 0)$, where
half the sum of the positive roots would be $(\frac{n - 1}{2}, \frac{n -
3}{2}, \cdots, \frac{1 - n}{2})$. Changing the Weyl vector does not affect
anything of significance. For example, either vector produces the same result
in the Weyl character formula. So for us we make the convenient choice $\rho =
(n - 1, n - 2, \cdots, 0)$.

In {\cite{BBBGIwahori}}, it is checked that $\tau_i = \bs{z}^{\rho}
\mathfrak{T}_i \bs{z}^{- \rho}$ where the operators $\mathfrak{T}_i$ appear in
{\cite{BBBGIwahori}}, Equation (11), and are sometimes referred to as
``Demazure-Whittaker operators'' in contrast to ``Demazure-Lusztig
operators.''\footnote{The former arise in the antispherical module of the
affine Hecke algebra $\mathcal{H}$ -- the representation $\mathcal{H}
\otimes_{\mathcal{H}_0} \epsilon$ induced from the sign character $\epsilon$
of the finite Hecke algebra $\mathcal{H}_0$ -- while the latter
Demazure-Lusztig operators arise in the induced representation from the
``trivial'' character of the Hecke algebra.} The $\mathfrak{T}_i$ satisfy
braid relations (see for example, Proposition 5 of {\cite{BBL}}), and hence
the operators $\tau_i$ satisfy the braid relations $\tau_i \tau_{i + 1} \tau_i
= \tau_{i + 1} \tau_i \tau_{i + 1}$ for all $i$. Thus it makes sense to define
\[ \tau_w := \tau_{i_1} \tau_{i_2} \cdots \tau_{i_{\ell}} \]
for any reduced decomposition $w = s_{i_1} s_{i_2} \cdots s_{i_{\ell}}$.

\begin{theorem}
  \label{thm:partition-function}For any partition $\lambda$ and any
  permutations $\sigma, w$ in $S_n$ such that $\sigma$ is the maximal length
  coset representative for $[\sigma]$ in $S_n / \mathrm{Stab} (\lambda)$,
  \[ Z_{\sigma, w}^{\lambda} (\bs{x} ; \bs{y}) = t^{\ell (\sigma)} \tau_w^{-
     1} \tau_{\sigma w_0} \cdot \left[ \prod_{i = 1}^n \prod_{j <
     \lambda_i} \left( 1 - \frac{x_i}{y_j} \right) \right] \]
\end{theorem}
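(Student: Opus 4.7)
The plan is to combine the base case of Proposition~\ref{prop:basecase} with iterated applications of the recursion of Proposition~\ref{prop:recursion}, using a walk in the right weak Bruhat graph from $\sigma w_0$ to $w$ that passes through the identity element $e$. The point of routing through $e$ is that the first leg then consists entirely of descents and the second leg entirely of ascents, eliminating the need to track which recursion case applies at each step.

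For the descending leg, fix a reduced decomposition $\sigma w_0 = s_{i_1} s_{i_2} \cdots s_{i_p}$ and consider the walk
\[\sigma w_0 \;\longrightarrow\; s_{i_1}\cdots s_{i_{p-1}} \;\longrightarrow\; \cdots \;\longrightarrow\; s_{i_1} \;\longrightarrow\; e\]
obtained by right-multiplying by $s_{i_p}, s_{i_{p-1}}, \ldots, s_{i_1}$ in turn. Each such step is a descent, so Proposition~\ref{prop:recursion} applied $p$ times yields
\[Z^{\lambda}_{\sigma,e}(\bs{x};\bs{y}) \;=\; \tau_{i_1}\tau_{i_2}\cdots\tau_{i_p}\cdot Z^{\lambda}_{\sigma,\sigma w_0}(\bs{x};\bs{y}) \;=\; \tau_{\sigma w_0}\cdot Z^{\lambda}_{\sigma,\sigma w_0}(\bs{x};\bs{y}),\]
where the last equality uses the braid-relation well-definedness of $\tau_{\sigma w_0}$ recalled just before the theorem statement. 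For the ascending leg, fix a reduced decomposition $w = s_{j_1} s_{j_2} \cdots s_{j_q}$ and climb from $e$ to $w$ by right-multiplying by $s_{j_1},\ldots, s_{j_q}$ in order; each step is now an ascent, so Proposition~\ref{prop:recursion} gives
\[Z^{\lambda}_{\sigma,w}(\bs{x};\bs{y}) \;=\; \tau_{j_q}^{-1}\cdots\tau_{j_1}^{-1}\cdot Z^{\lambda}_{\sigma,e}(\bs{x};\bs{y}) \;=\; \tau_w^{-1}\cdot Z^{\lambda}_{\sigma,e}(\bs{x};\bs{y}).\]

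Composing the two identities and substituting the closed form for $Z^{\lambda}_{\sigma,\sigma w_0}$ from Proposition~\ref{prop:basecase} yields the stated expression, using that the scalar $t^{\ell(\sigma)}$ commutes past the $\tau$-operators. I do not foresee any genuine obstacle: once the walk is chosen, the proof is essentially bookkeeping, and the only nontrivial ingredient (braid invariance of $\tau_w$) has already been invoked in the discussion preceding the theorem.
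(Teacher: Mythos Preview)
Your proposal is correct and essentially identical to the paper's proof, which is a one-sentence sketch of the same idea: combine Propositions~\ref{prop:basecase} and~\ref{prop:recursion} by first descending from $\sigma w_0$ to $e$ (all descents, each contributing a $\tau_i$) and then ascending from $e$ to $w$ (all ascents, each contributing a $\tau_i^{-1}$). You have merely made the bookkeeping explicit.
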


\begin{proof}
  Combine the previous two propositions, noting that we must first go down in
  Bruhat order from $\sigma w_0$ to $e$ to guarantee all steps are descents,
  and then back up to $w$ using only ascents in the Bruhat order.
\end{proof}

Note that the prior theorem is just one of the many ways in which we may
express the partition function $Z_{\sigma, w}^{\lambda} (\bs{x} ; \bs{y})$ in
terms of divided difference operators. In general, one must take care to use
the appropriate operator $\tau_i$ or $\tau_i^{- 1}$ according to whether one
is ascending or descending from $w$ to $ws_i$ in the Bruhat order. We have
chosen one sure-fire way to do this above, in proceeding from $\sigma w_0$
down to the identity and then back up in Bruhat order to $w$.

\subsection{Specialization of variables} \label{sec:specialization-variables}

We turn now to particular specializations of our partition functions in
the previous section.
In these cases, we restrict to the square lattice where $N = n$.
The partition $\lambda$ defining the top boundary conditions
will be taken to be $\square := (n, n - 1, \cdots, 1)$. 
This means that our boundary conditions are such that the
lattice is a square grid with $n$ rows and columns, and all top and right
boundary edges receive a single color according to $\sigma$ and $w$. These
conventions will be in force in all subsequent sections except
Subsection~\ref{subsec:iwahori}. Thus in particular the Weyl vector is 
$(n - 1, n - 2, \cdots, 0)$.  

We will later make a connection between our partition functions and certain
special functions in the K-theory of the flag variety. To this end, we need a
few preliminaries about how this specialization operation works on polynomials
in two sets of variables $\bs{x}$ and $\bs{y}$. Given a permutation $u$ and a
function $f (\bs{x} ; \bs{y}) = f (x_1, \ldots, x_n ; y_1, \ldots, y_n)$,
define the {\textit{restriction}} of $f$ at $u$ to be
\begin{equation} f |_u (\bs{y}) = f|_u (y_1, \ldots, y_n) := f (y_{u (1)}, \ldots, y_{u
   (n)} ; y_1, \ldots, y_n) . \label{eq:permrestriction} \end{equation}
Define
\[
\bs{x}_w:=
w^{-1}\cdot\bs{x} = w^{-1}\cdot (x_1,\ldots,x_n) = (x_{w(1)},\ldots, x_{w(n)}).
\]
   The symmetric group also acts on functions by permuting their $x$ variables
\[ w \cdot f (\bs{x} ; \bs{y}) = f (\bs{x}_w ; \bs{y}), \]
which has the following relationship with restriction
\begin{equation}
	\label{eq:relres}
 (w\cdot f) |_u = f|_{uw} . 
\end{equation}
Now for any permutation $u \in S_n$, define the following specialization of
our two sets of variables:
\[ Z^{\square}_{\sigma, w} |_u (\bs{y}) := Z^{\square}_{\sigma, w} (\bs{y}_u
   ; \bs{y}), \]
so that each $x_i$ is set to $y_{u (i)}$. In particular under this
specialization, for any root $\alpha$, $\bs{x}^{\alpha} |_u = \bs{y}^{u \alpha}$. We
sometimes drop the variables $\bs{y}$ and simply write $Z^{\square}_{\sigma,
w} |_u$ when clear from context. The following result follows from a
straightforward application of Propositions~\ref{prop:basecase}
and~\ref{prop:recursion} under this specialization operation, and its proof is
left to the reader.

\begin{proposition}
  \label{prop:special-recursion}For any permutations $\sigma, u$, then
  $Z^{\square}_{\sigma, \sigma w} |_u (\bs{y}) \ne 0$ if and only if $w =
  w_0$. In this case,
  \[ Z^{\square}_{\sigma, \sigma w_0} |_{w_0} (\bs{y}) = t^{\ell (\sigma)} 
     \prod_{\alpha \in \Phi^+} (1 - \bs{y}^{- \alpha}) \]
  and to any permutations $\sigma, w, u$ and any simple reflection $s_i$ in
  $S_n$,
  \[ Z^{\square}_{\sigma, ws_i} |_u = \left\{\begin{array}{ll}
       \frac{t - 1}{1 - y^{u \alpha_i}} Z^{\square}_{\sigma, w}  |_u +
       \frac{y^{u \alpha_i} - t}{1 - y^{u \alpha_i}} Z^{\square}_{\sigma, w}
       |_{us_i} & \textrm{if } \ell (ws_i) < \ell (w),\\
       \frac{(t - 1) y^{u \alpha_i}}{t (1 - y^{u \alpha_i})}
       Z^{\square}_{\sigma, w}  |_u + \frac{y^{u \alpha_i} - t}{t (1 - y^{u
       \alpha_i})} Z^{\square}_{\sigma, w} |_{us_i} & \textrm{if } \ell (ws_i)
       > \ell (w) .
     \end{array}\right. \]
\end{proposition}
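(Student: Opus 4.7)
The plan is to deduce all three claims---the vanishing statement, the explicit formula, and the specialized recursion---by direct substitution into Propositions~\ref{prop:basecase} and~\ref{prop:recursion} with $\lambda = \square$. Those results already determine $Z^{\square}_{\sigma, w}(\bs{x};\bs{y})$ completely, so the work consists of tracking how the substitution $x_i \mapsto y_{u(i)}$ interacts with the $\tau_i$ operators and with the base-case product.

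For the vanishing claim and explicit formula, I would specialize Proposition~\ref{prop:basecase} at $\lambda = \square$, giving
\[
Z^{\square}_{\sigma, \sigma w_0}(\bs{x};\bs{y}) = t^{\ell(\sigma)} \prod_{i=1}^n \prod_{j=1}^{n-i}\left(1 - \frac{x_i}{y_j}\right).
\]
Setting $x_i = y_{u(i)}$, each factor becomes $1 - y_{u(i)}/y_j$, which vanishes identically precisely when $u(i) = j$. Hence the product survives as a nonzero rational function of $\bs{y}$ iff $u(i) \geq n+1-i$ for every $i$, and a short counting argument on the $i$ distinct values $u(1),\ldots,u(i)$ all lying in $\{n+1-i,\ldots,n\}$ forces $u = w_0$. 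At $u = w_0$ we have $y_{w_0(i)} = y_{n+1-i}$, so reindexing $k = n+1-i$ converts the product to $\prod_{1 \le j < k \le n}(1 - y_k/y_j)$. Since the positive roots of type $A_{n-1}$ are $e_j - e_k$ for $j < k$ and $\bs{y}^{-(e_j-e_k)} = y_k/y_j$, this equals $t^{\ell(\sigma)} \prod_{\alpha \in \Phi^+}(1 - \bs{y}^{-\alpha})$, as required.

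For the recursion, I would apply Proposition~\ref{prop:recursion} and specialize both sides. Two elementary identities suffice. First, $\bs{x}^{\alpha_i}|_u = \bs{y}^{u\alpha_i}$ because $u(e_i - e_{i+1}) = e_{u(i)} - e_{u(i+1)}$. Second, if $\bs{x} = \bs{y}_u$ then swapping positions $i$ and $i+1$ produces the tuple $\bs{y}_{us_i}$, so $f(s_i\bs{x})|_u = f|_{us_i}$; this is exactly~(\ref{eq:relres}). Substituting these into the definition~(\ref{eq:tauop}) of $\tau_i$ yields the length-decreasing branch of the recursion verbatim, and the length-increasing branch follows identically from the explicit formula for $\tau_i^{-1}$ recorded earlier in this section.

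The only real obstacle is the convention check that the $s_i$-action on $\bs{x}$ corresponds to right multiplication of $u$ by $s_i$ rather than left; once this is confirmed via the identity $(w\cdot f)|_u = f|_{uw}$, the entire proof is simply a transport of Propositions~\ref{prop:basecase} and~\ref{prop:recursion} through the specialization map, with no new manipulation of Boltzmann weights required.
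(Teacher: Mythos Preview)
Your proposal is correct and follows exactly the approach the paper itself indicates: the paper does not give a detailed proof but simply states that the result ``follows from a straightforward application of Propositions~\ref{prop:basecase} and~\ref{prop:recursion} under this specialization operation, and its proof is left to the reader.'' Your argument---specializing the base-case product, using the pigeonhole observation to force $u=w_0$, reindexing to recognize $\prod_{\alpha\in\Phi^+}(1-\bs{y}^{-\alpha})$, and then transporting the $\tau_i$ and $\tau_i^{-1}$ formulas through the identities $\bs{x}^{\alpha_i}|_u=\bs{y}^{u\alpha_i}$ and $(s_i\cdot f)|_u=f|_{us_i}$---is precisely that straightforward application spelled out in full.
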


\begin{proposition}
  \label{prop:vanishing}For any permutations $\sigma, w, u$ in $S_n$,
  $Z^{\square}_{\sigma, w} |_u (\bs{y}) = 0$ unless $\sigma^{- 1} w \leqslant
  u$.
\end{proposition}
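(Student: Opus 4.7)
My plan is to induct on $d(w) := \ell(w_0) - \ell(\sigma^{-1}w)$, the distance from $w$ to the base case $w = \sigma w_0$, propagating vanishing through the recursion of Proposition~\ref{prop:special-recursion}. For the base case $d(w) = 0$ (so $\sigma^{-1}w = w_0$), Proposition~\ref{prop:special-recursion} gives exactly what is needed: $Z^{\square}_{\sigma, \sigma w_0}|_u = 0$ unless $u = w_0$, matching the condition $w_0 \leqslant u$.

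For the inductive step, set $v := \sigma^{-1}w$ and assume $v \neq w_0$. I would choose a simple reflection $s_i$ that is a right ascent of $v$, so that $v' := vs_i$ has $\ell(v') = \ell(v) + 1$, and put $w' := ws_i = \sigma v'$, so that $d(w') < d(w)$. Applying Proposition~\ref{prop:special-recursion} with $w$ replaced by $w'$ (so $w's_i = w$) expresses $Z^{\square}_{\sigma,w}|_u$ as a $\bs{y}$-linear combination of $Z^{\square}_{\sigma,w'}|_u$ and $Z^{\square}_{\sigma,w'}|_{us_i}$, irrespective of whether $\ell(w)$ is greater or less than $\ell(w')$. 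By the inductive hypothesis, both of these terms vanish as soon as $v' \not\leqslant u$ and $v' \not\leqslant us_i$.

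The substantive step---and I expect the only subtle one---is the Bruhat-order implication: \emph{if $v' = vs_i > v$ and $v \not\leqslant u$, then $v' \not\leqslant u$ and $v' \not\leqslant us_i$.} The first is immediate from $v \leqslant v'$. For the second, split on the sign of $\ell(us_i) - \ell(u)$. If $us_i < u$, then $v' \leqslant us_i$ together with $us_i \leqslant u$ would force $v \leqslant u$, a contradiction. If $us_i > u$, appending $s_i$ to a reduced word for $u$ yields a reduced word for $us_i$, and the subword characterization of Bruhat order shows that any embedding of $v' = vs_i$ as a subword of $us_i$ either includes the terminal $s_i$ (leaving a reduced word for $v$ inside one for $u$, hence $v \leqslant u$) or omits it (so $v' \leqslant u$, hence $v \leqslant u$). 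Both sub-cases contradict $v \not\leqslant u$.

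Once this lifting-type lemma---a variant of the standard Lifting Property for Bruhat order---is established, the induction concludes immediately. Notably, the proof requires no knowledge of the specific coefficients in Proposition~\ref{prop:special-recursion}: only the fact that the recursion expresses $Z^{\square}_{\sigma,w}|_u$ as a linear combination of two values of $Z^{\square}_{\sigma,w'}$ at $u$ and $us_i$.
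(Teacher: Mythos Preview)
Your proposal is correct and follows essentially the same approach as the paper's proof: downward induction on $\ell(\sigma^{-1}w)$ from the base case $w=\sigma w_0$, using the recursion of Proposition~\ref{prop:special-recursion} and the Bruhat-order implication that $v\not\leqslant u$ with $v'=vs_i>v$ forces $v'\not\leqslant u$ and $v'\not\leqslant us_i$. The paper's proof simply asserts this last fact, whereas you supply a full argument via the subword criterion (equivalently the Lifting Property), which is a welcome elaboration but not a different route.
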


\begin{proof}
  This follows by induction on $w$, moving down in the Bruhat order for
  elements of the form $\sigma^{- 1} w$. The base case of the induction is
  thus $w = \sigma w_0$ given in the previous proposition. The induction step,
  required for $s_i$ such that $\sigma^{- 1} ws_i < \sigma^{- 1} w$, follows
  from the recursion of the prior proposition, noting that if $\sigma^{- 1}
  ws_i \nleqslant u$ then both $\sigma^{- 1} w \nleqslant u$ and $\sigma^{- 1}
  w \nleqslant us_i$.
\end{proof}

\begin{proposition}
  For any permutations $\sigma, w$ in $S_n$,
  \[ Z^{\square}_{\sigma, w} |_{\sigma^{- 1} w} (\bs{y}) = t^{\frac{1}{2} 
     (\ell (\sigma) - \ell (w) + \ell (w^{- 1} \sigma))}  (- 1)^{\ell (w^{- 1}
		 \sigma w_0)}  \prod_{\substack{
       \alpha \in \Phi^+\\
       w^{- 1} \sigma (\alpha) \in \Phi^+ }} (1 - ty^{- \alpha})  
		 \prod_{\substack{
       \alpha \in \Phi^+\\
       w^{- 1} \sigma (\alpha) \in \Phi^- }} (1 - y^{- \alpha}) . \]
\end{proposition}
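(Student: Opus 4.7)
The plan is to prove the formula by downward induction on $\ell(u)$, where $u := \sigma^{-1}w$. The base case is $u = w_0$, corresponding to $w = \sigma w_0$: here $u^{-1}(\alpha) \in \Phi^-$ for every $\alpha \in \Phi^+$, so the first product is empty and the second is $\prod_{\alpha \in \Phi^+}(1 - y^{-\alpha})$; the sign $(-1)^{\ell(u^{-1}w_0)}$ is trivial, and the $t$-exponent $\tfrac{1}{2}(\ell(\sigma) - \ell(\sigma w_0) + \ell(w_0))$ simplifies to $\ell(\sigma)$. This matches exactly the first formula in Proposition~\ref{prop:special-recursion}.

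For the inductive step with $u < w_0$, choose a simple reflection $s_i$ that is a right ascent of $u$ (equivalently, $u(\alpha_i) \in \Phi^+$); such an $s_i$ exists because $u \neq w_0$. Set $u' := us_i$ and $w' := ws_i$, so that $\ell(u') = \ell(u) + 1$ and $\sigma^{-1}w' = u'$, and the inductive hypothesis yields a closed form for $Z^{\square}_{\sigma, w'}|_{u'}$. The central step is to apply Proposition~\ref{prop:special-recursion} with $w$ and $w'$ interchanged (viewing $w = w' s_i$) to express $Z^{\square}_{\sigma, w}|_u$ in terms of $Z^{\square}_{\sigma, w'}|_u$ and $Z^{\square}_{\sigma, w'}|_{u'}$. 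Since $\sigma^{-1}w' = u' > u$ in Bruhat order, we have $u' \not\leq u$, so Proposition~\ref{prop:vanishing} forces $Z^{\square}_{\sigma, w'}|_u = 0$, leaving
\[
Z^{\square}_{\sigma, w}|_u \;=\; \frac{y^{u\alpha_i} - t}{c\,(1 - y^{u\alpha_i})}\, Z^{\square}_{\sigma, w'}|_{u'},
\]
with $c = 1$ when $\ell(w') > \ell(w)$ and $c = t$ when $\ell(w') < \ell(w)$.

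It remains to verify that this multiplier matches the ratio of the proposed formulas for $Z^{\square}_{\sigma, w}|_u$ and $Z^{\square}_{\sigma, w'}|_{u'}$. Under the change $u \to u' = us_i$ with $u(\alpha_i) > 0$, the unique positive root whose sign flips between $u^{-1}(\cdot)$ and $u'^{-1}(\cdot) = s_iu^{-1}(\cdot)$ is $\beta = u(\alpha_i)$, satisfying $u^{-1}\beta = \alpha_i > 0$ and $u'^{-1}\beta = -\alpha_i < 0$. Hence the ratio of products contributes $(1 - ty^{-\beta})/(1 - y^{-\beta}) = -(y^{u\alpha_i} - t)/(1 - y^{u\alpha_i})$. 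The sign exponent changes by $\ell(u^{-1}w_0) - \ell(u'^{-1}w_0) = 1$, giving a factor $-1$, and the $t$-exponent changes by $\tfrac{1}{2}((\ell(w') - \ell(w)) - 1)$, giving $t^0$ when $\ell(w') > \ell(w)$ and $t^{-1}$ when $\ell(w') < \ell(w)$. Multiplying the three contributions reproduces exactly the recursion multiplier $(y^{u\alpha_i}-t)/(c(1-y^{u\alpha_i}))$ in both cases, completing the induction.

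The main obstacle is the bookkeeping in the final step: three independent ratios (a sign factor, a power of $t$, and a single-root product factor) must conspire to recover the recursion multiplier, and the case split on $\ell(w')$ versus $\ell(w)$ must be handled in parallel. The essential simplification that makes the argument clean is the observation that choosing $s_i$ to be a right ascent of $u$ (which is forced by the induction direction) automatically triggers the vanishing of the off-diagonal term via Proposition~\ref{prop:vanishing}.
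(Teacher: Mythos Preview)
Your proof is correct and follows essentially the same approach as the paper: downward induction on $\ell(\sigma^{-1}w)$ from the base case $w=\sigma w_0$, using Proposition~\ref{prop:vanishing} to kill the off-diagonal term in the recursion of Proposition~\ref{prop:special-recursion}, and then matching the remaining multiplier against the ratio of the closed forms. Your bookkeeping of the sign, $t$-power, and single-root factor is in fact somewhat more explicit than the paper's, which appeals to the standard description of inversion sets via a reduced word rather than tracking the unique flipped root directly.
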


\begin{proof}
  Again, we use induction in the Bruhat order from the base case $\sigma w_0$
  to $w$. Note that for any permutation $v$, $\sigma^{- 1} vs_i < \sigma^{- 1}
  v$, then by Proposition~\ref{prop:vanishing}, $Z^{\square}_{\sigma,
  \sigma^{- 1} v} |_{\sigma^{- 1} vs_i} = 0$. This reduces our recursion when
  $\sigma^{- 1} vs_i < \sigma^{- 1} v$ from
  Proposition~\ref{prop:special-recursion} to :
  \begin{equation}
    Z^{\square}_{\sigma, vs_i} |_{\sigma^{- 1} vs_i} = \frac{y^{- \sigma^{- 1}
    v (\alpha_i)} - t}{1 - y^{- \sigma^{- 1} v (\alpha_i)}}
    Z^{\square}_{\sigma, v} |_{\sigma^{- 1} v} \cdot \left\{\begin{array}{ll}
      1 & \textrm{if } \ell (ws_i) < \ell (w),\\
      t^{- 1} & \textrm{if } \ell (ws_i) > \ell (w) .
    \end{array}\right. \label{t-recursion}
  \end{equation}
  The proposition now follows by combining the expression for the base case
  from Proposition~\ref{prop:special-recursion} with the above simplified
  recursion, recalling the standard fact that to any permutation $u$, the set
  $\{\alpha \in \Phi^+ \hspace{0.27em} | \hspace{0.27em} u (\alpha) \in \Phi^-
  \} = \{\alpha_{i_k}, s_{i_k} (\alpha_{i_{k - 1}}), \ldots, s_{i_k} s_{i_{k -
  1}} \cdots s_{i_2} (\alpha_{i_1})\}$ if $u = s_{i_1} \cdots s_{i_k}$ is a
  reduced expression. (See for example, {\cite[Proposition 20.10]{BumpLie}}).
  We still need to keep careful track of the power of $t$ in the above cases
  of (\ref{t-recursion}) at each step in the Bruhat chain. The total number of
  steps in the chain is $\ell (w^{- 1} \sigma w_0)$ so the power of $t^{- 1}$
  will then be
  \[ \frac{1}{2} (\ell (w^{- 1} \sigma w_0) - (\ell (\sigma w_0) - \ell (w)) =
     \frac{1}{2}  (\ell (\sigma) + \ell (w) - \ell (w^{- 1} \sigma)) . \]
  The total power of $t$ is this quantity multiplied by $t^{\ell (\sigma)}$
  (from the base case) giving the desired result.
\end{proof}

\begin{remark} 
Similar arguments to the ones from the previous three propositions can be used to show
	vanishing conditions for the restriction functions of the lattice model with
	more general boundary conditions corresponding to an arbitrary partition $\lambda$. 
	To state them, we can extend the notion of restriction in (\ref{eq:permrestriction}) 
	to the case of an integer composition 
	$\mu = (\mu_1,\ldots,\mu_n)$. Define the restriction of $f$ at $\mu$ to be
\[ 
f|_\mu (\bs{y}) := f (y_{\mu(1)}, \ldots, y_{\mu(n)} ; y_1, \ldots, y_n) .
\]
	Suppose that $\mu$ has all parts distinct.
	There is a unique partition $\nu = (\nu_1,\ldots,\nu_n)$ and permutation $u$
	such that $\mu_i = \nu_{u(i)}$ for all $i$, and we write $\mu = \nu u$. Then
	assuming $|\nu|\leqslant |\lambda|$, $Z^{\lambda}_{\sigma,w}|_{\nu u} = 0$
	unless $\nu = \lambda$ and $u\leqslant w_0\sigma^{-1}w$.

Moreover, in the case $u = w_0\sigma^{-1}w$,
\begin{equation} \label{eq:diag}
Z^{\lambda}_{\sigma, w}|_{\lambda w_0\sigma^{-1}w} = t^{(\ell(\sigma)-\ell(w) + \ell(w_0) - \ell(w_0\sigma^{-1}w))/2}\prod_{\substack{\alpha\in\Phi^-\\ w\sigma w_0(\alpha)\in\Phi^+}} \frac{\bs{y}^{\lambda(\alpha)} - t}{1 - \bs{y}^{\lambda(\alpha)}} \prod_{i = 1}^n \prod_{j < \lambda_i} \left( 1 - \frac{y_{\lambda_i}}{y_j} \right),
\end{equation}
where if $\alpha = e_i-e_j, \lambda(\alpha) = e_{\lambda_i}-e_{\lambda_j}$.
\end{remark}

\subsection{Vanishing Properties\label{subsec:vanishing}}

We have already witnessed several results in the previous section on the vanishing of 
the partition function $Z^\square_{\sigma,w}(\boldsymbol{x}; \boldsymbol{y})$ at specializations 
of the variables $\boldsymbol{y}$ in terms of $\boldsymbol{x}$. Here we make some additional 
remarks about vanishing of the partition function in more general cases, replacing the top boundary
condition $\square$ with an arbitrary partition $\lambda = (\lambda_1, \ldots, \lambda_n)$. 
Before doing so, we mention several related results from earlier works.

In Section~4 of~\cite{SahiInterpolation}, Sahi defines a family of non-symmetric polynomials 
$G_\alpha(\boldsymbol{x};q,t)$ depending on a composition $\alpha$ as the unique (up to normalization) 
family vanishing on certain integer points in the convex hull of the $W$-orbit of $\alpha$. Their symmetrization 
results in polynomials whose top homogeneous component are Macdonald 
polynomials.\footnote{In fact, there are further generalizations in \cite{SahiInterpolation} allowing 
for $t$ to be replaced by multiple parameters $\tau_1, \ldots, \tau_n$. These bring to mind earlier 
special cases of our lattice models treated in \cite{hkice}, which allowed
for $t$ to be replaced by parameters $t_i$ in row $i$. We don't pursue this connection further here.} If $q=0$, then
combining Theorem~4.5 in~\cite{SahiInterpolation} with results in \cite{BBBGIwahori} demonstrates that 
$G_\alpha(\boldsymbol{x}; 0, t)$ are (up to a constant multiple) Iwahori Whittaker functions. 
Indeed, both satisfy recursive relations on word length in terms of the Demazure-Whittaker 
operators $\tau_i$ in~\eqref{eq:tauop}. The connection between Iwahori Whittaker functions and 
specializations of our partition functions in this paper is treated in Section~\ref{subsec:iwahori}. 
Thus Sahi's vanishing results, which allow for general $q$, and our vanishing results, which allow 
for a second set of variables $\boldsymbol{y}$, coincide in the special case $q=0$ and 
$\boldsymbol{y} = (1,\ldots,1)$.

As we noted in the introduction, symmetric variants of Sahi's polynomials were treated via lattice models in 
Section~12 of~\cite{AggarwalBorodinWheelerColored}. There the vanishing of the matching partition function 
was explained by the vanishing of each admissible state. Such vanishing occurs in our models as well. Consider 
the admissible vertex of Case 2 in~Figure~\ref{fig:modified_weights} whose Boltzmann weight includes the 
factor $(-xy^{-1} + t^{\Sigma_c})$ where $\Sigma_c$ records the multiplicity along the vertical edge of the 
distinguished color $c$ traveling along the horizontal edge. If we choose boundary conditions such that all 
admissible states must have a Case 2 vertex at location $(i.j)$, then vanishing results at $x_i = t^{\Sigma_c} y_j$ 
immediately follow. We may refer to this as ``local vanishing'' of the partition function, in the case where the 
partition function is zero because specializations of Boltzmann weights vanish. 
In~\cite{AggarwalBorodinWheelerColored}, the vertices at which the vanishing occurs are called ``blocking vertices'' 
as they block the paths in non-zero admissible states. Propositions~\ref{prop:special-recursion} and~\ref{prop:vanishing} 
give examples of local vanishing results for lattice models, though the proofs make use of the Demazure recursion rather
than combinatorial arguments based on blocking vertices.

In this section, we call attention to certain ``global vanishing'' results for our partition functions. The simplest example 
of this can already be seen in Example~\ref{ex:partfun}. There we computed two partition functions:
\[ Z_{e,s}^{(2,1)}(\boldsymbol{x};\boldsymbol{y}) = 1 - x_1 y_1^{-1} \quad \text{and} \quad Z_{e,e}^{(2,1)}(\boldsymbol{x};\boldsymbol{y}) = (t-1) - t(1-x_2 y_1^{-1}). \]
Here $e$ is the identity element and $s$ is the non-trivial element in $S_2$. 
The former partition function $Z_{e,s}^{(2,1)}$ consists of a single state, so the notions of local and global 
vanishing coincide. But the latter function $Z_{e,e}^{(2,1)}$ exhibits only globally vanishing when $y_1 = t x_2$. This phenomenon extends
to all possible rank one cases according to the following result.

\begin{proposition} Let $\lambda = (\lambda_1, \lambda_2)$ and consider the associated partition functions 
\[ Z^{\lambda}_{\sigma, w} (\bs{x} ; \bs{y}) := Z^{(\lambda_1, \lambda_2)}_{\sigma, w} (x_1, x_2 ; y_1, \ldots, y_{\lambda_1}) \]
for any choice of $\sigma, w$ in $\{ e, s \} = S_2$.

\medskip

\noindent {\bf Case 1:} If $\sigma \ne w$, then $Z^{\lambda}_{\sigma, w} (\bs{x} ; \bs{y})$ is monostatic, hence only exhibits local vanishing.
This vanishing occurs precisely when $x_1= y_k$ for $k < \lambda_1$ or $x_2 = y_k$ for $k < \lambda_2$.

\medskip

\noindent {\bf Case 2:} If $\sigma = w = e$, then $Z^{\lambda}_{\sigma, w} (\bs{x} ; \bs{y})$ has $\lambda_1 - \lambda_2 + 1$ admissible states. It exhibits
local vanishing, i.e., all states vanish, if $x_i = y_k$ for $k < \lambda_2$ and $i = 1$ or $2$. It exhibits global vanishing if $x_1 = y_k$ and $x_2 = t^{-1} y_k$ for $\lambda_2 \leqslant k < \lambda_1$.
\end{proposition}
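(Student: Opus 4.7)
The plan is to recognize Case~1 as a direct instance of Proposition~\ref{prop:basecase} and to attack Case~2 via the recursion of Proposition~\ref{prop:recursion} together with a short algebraic computation. In $S_2=\{e,s\}$, Case~1 (with $\sigma\ne w$) covers the two pairs $(e,s)$ and $(s,e)$, and in each instance $w=\sigma w_0$; Proposition~\ref{prop:basecase} then exhibits the unique admissible state with weight $t^{\ell(\sigma)}\prod_{i=1}^2\prod_{j<\lambda_i}(1-x_i/y_j)$, so the vanishing is literally local and occurs precisely at the claimed loci.

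For Case~2 ($\sigma=w=e$), I would first count states using the colored-path interpretation of Remark~\ref{rem:colored_paths}. The color $c_2$ enters at the top of column $\lambda_1$ and must exit at the right of row~2, while $c_1$ enters at column $\lambda_2$ and must exit at the right of row~1. Since paths travel only down and right, the entire state is determined by the column~$k$ at which $c_2$ descends from row~1 to row~2. A short case analysis at the vertex $(1,\lambda_2)$ --- where $c_1$ either meets a passing $c_2$-path or an empty horizontal edge --- rules out $k<\lambda_2$, since the only admissible configuration forcing $c_2$ to continue rightward sends $c_1$ into row~2, where the required right-boundary exit is $c_2$, trapping $c_1$ with no legal exit. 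Together with $k\leqslant\lambda_1$, this gives $k\in[\lambda_2,\lambda_1]$ and hence the $\lambda_1-\lambda_2+1$ states.

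For both vanishing statements I would apply Proposition~\ref{prop:recursion}: since $w_0=s$ and $\ell(e)<\ell(s)$, $Z^\lambda_{e,e}=\tau_1\cdot Z^\lambda_{e,s}$, and $Z^\lambda_{e,s}$ is the base case $\prod_{j<\lambda_1}(1-x_1/y_j)\prod_{j<\lambda_2}(1-x_2/y_j)$. Setting $Q(x)=\prod_{j<\lambda_2}(1-x/y_j)$ and $R(x)=\prod_{j=\lambda_2}^{\lambda_1-1}(1-x/y_j)$, the base case factors as $Q(x_1)R(x_1)Q(x_2)$; since $Q(x_1)Q(x_2)$ is $s_1$-symmetric, a direct application of~(\ref{eq:tauop}) yields
\[
Z^\lambda_{e,e}(\bs{x};\bs{y}) = Q(x_1)Q(x_2)\cdot\frac{x_2(t-1)R(x_1)+(x_1-tx_2)R(x_2)}{x_2-x_1}.
\]
The factor $Q(x_1)Q(x_2)$ gives the local vanishing at $x_i=y_k$ for $k<\lambda_2$. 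For global vanishing, substituting $x_1=y_k$ and $x_2=t^{-1}y_k$ with $\lambda_2\leqslant k<\lambda_1$ annihilates both terms of the numerator --- $R(x_1)=0$ since the $j=k$ factor vanishes, and $x_1-tx_2=y_k-y_k=0$ --- while the denominator $(t^{-1}-1)y_k$ remains nonzero.

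I expect the main obstacle to be the state-count case analysis in Case~2, specifically the admissible vertex configurations at $(1,\lambda_2)$ where $c_1$ and $c_2$ must be carefully sorted against the options in Figure~\ref{fig:modified_weights}. Once the recursion is invoked, the rest of Case~2 reduces to the pleasant algebraic coincidence that $R(x_1)$ and $(x_1-tx_2)$ vanish simultaneously at the asserted global vanishing locus, producing the cancellation despite individual states being nonzero.
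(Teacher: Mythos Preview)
Your proposal is correct and follows essentially the same route as the paper: Case~1 via the base case (monostaticity from $w=\sigma w_0$), and Case~2 via the recursion $Z^\lambda_{e,e}=\tau_1\cdot Z^\lambda_{e,s}$ followed by the same factorization and the simultaneous vanishing of $R(x_1)$ and $x_1-tx_2$. The paper additionally supplies the state-by-state weights and verifies global vanishing a second time via an explicit telescoping sum, which you omit but do not need; conversely, you fill in the state-count argument that the paper merely asserts. One small point: to justify that the vanishing at $x_i=y_k$ for $k<\lambda_2$ is \emph{local} (every state vanishes, not just the sum), you should note---as the paper does---that every admissible state is frozen with Case~2 vertices in both rows to the right of column~$\lambda_2$, rather than appealing only to the $Q(x_1)Q(x_2)$ factor of the partition function.
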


\begin{proof} In both Case 1 and Case 2, the local vanishing is a simple consequence of the fact that each state is ``frozen'' (forced into a particular configuration) 
at every vertex to the right of column $\lambda_2$. More precisely, all admissible states have colored paths traveling horizontally along each row to their exit 
on the right-hand boundary. Such vertices are of type ``Case 2'' in Figure~\ref{fig:modified_weights} which gives the local vanishing.

For the global vanishing in Case 2, we set $\sigma = w = e$. The case of $\sigma = w = s$ is similar and is left to the reader. In this case, we may evaluate the partition function via the recursion in Proposition~\ref{prop:recursion} to obtain:
\begin{align*}
Z^{\lambda}_{e, e} &= \frac{t-1}{1-x_1/x_2} Z^{\lambda}_{e, s} + \frac{x_1/x_2-t}{1-x_1/x_2} Z^{\lambda}_{e, s}
\\&= \frac{t-1}{1-x_1/x_2} 
    \prod_{k < \lambda_1} \left( 1 - \frac{x_1}{y_k} \right) \prod_{k < \lambda_2} \left( 1 - \frac{x_2}{y_k} \right) + \frac{x_1/x_2-t}{1-x_1/x_2}
   \prod_{k <  \lambda_1} \left( 1 - \frac{x_2}{y_k} \right) \prod_{k < \lambda_2} \left( 1 - \frac{x_1}{y_k} \right)
\\&= \frac{1}{1-x_1/x_2} \prod_{k< \lambda_2} \left( 1 - \frac{x_1}{y_k} \right) \left( 1 - \frac{x_2}{y_k} \right)
	\\& \ \times\left[ (t-1) \prod_{\lambda_2 \leqslant k < \lambda_1} \left( 1 - \frac{x_1}{y_k} \right) + \left( \frac{x_1}{x_2}-t \right)
   \prod_{\lambda_2 \leqslant k < \lambda_1} \left( 1 - \frac{x_2}{y_k} \right) \right],
\end{align*}
and this vanishes whenever either $x_1 =y_k$ or $x_2=y_k$ for some $k < \lambda_2$ (in which case all states vanish), or if
\begin{equation} \label{eq:global-vanishing}
(t-1) \prod_{\lambda_2 \leqslant k < \lambda_1} \left( 1 - \frac{x_1}{y_k} \right) = (t-x_1/x_2)
   \prod_{\lambda_2 \leqslant k < \lambda_1} \left( 1 - \frac{x_2}{y_k} \right).
\end{equation}
If $x_1 = y_a$ and $x_2 = t^{-1}y_a$ for some $a$ with $\lambda_2 \leqslant a < \lambda_1$, both sides of~\eqref{eq:global-vanishing} vanish. 
In this case, a state vanishes if and only if there is a ``Case 2'' vertex (as enumerated in Figure~\ref{fig:modified_weights}) in row 1, column $a$; 
this means that a state survives if and only if the left path enters row 2 in column $m$ for some $m\ge a$. The weight of such a state in the 
specialization $x_1 = y_a$ and $x_2 = t^{-1}y_a$ is
\[
(t-1) \frac{y_a}{y_m}\prod_{k< \lambda_2} \left(1-\frac{y_a}{y_k}\right) \prod_{k<m} \left(1-\frac{t^{-1}y_a}{y_k}\right) \prod_{m<k< \lambda_1} \left(1-\frac{y_a}{y_k}\right)
\]
if $a\le m< \lambda_1$. If $m= \lambda_1$, the weight of the state is
\[
t\prod_{k< \lambda_2} \left(1-\frac{y_a}{y_k}\right) \prod_{k< \lambda_1} \left(1-\frac{t^{-1}y_a}{y_k}\right).
\]
Since~\eqref{eq:global-vanishing} is satisfied, the partition function vanishes. One can also show directly that the sum over states~\eqref{generic-partition-function} vanishes, as
a consequence of the vanishing of the series:
\begin{equation} \label{eq:telescoping-series}
-t \prod_{k<\lambda_1} \left(1-\frac{t^{-1}y_a}{y_k}\right) + \sum_{a\le m< \lambda_1} (t-1) \frac{y_a}{y_m}\prod_{k<m} \left(1-\frac{t^{-1}y_a}{y_k}\right) \prod_{m<k< \lambda_1} \left(1-\frac{y_a}{y_k}\right),
\end{equation}
%
and we will give an alternate proof of global vanishing by showing the vanishing of~\eqref{eq:telescoping-series} explicitly. We may express~\eqref{eq:telescoping-series} in terms of the monomials of the form
\[
(-1)^j\frac{y_a^j}{\prod_{k\in I} y_k}
\]
where $I\subseteq [1,\lambda_1)\setminus \{a\}$ has size $j$ for some positive integer $j$ and each coefficient of these monomials is in $\mathbb{Z}[t,t^{-1}]$. We show that each such coefficient (corresponding to all possible choices of sets $I$) vanishes. The coefficient in
\[
-t \prod_{k< \lambda_1} \left(1-\frac{t^{-1}y_a}{y_k}\right)
\]
is independent of the choice of $I$ and equal to $-(1-t^{-1})t^{1-j} = t^{-j}-t^{1-j}$. The summand $m=a$ in~\eqref{eq:telescoping-series} is
\[
(t-1) \frac{y_a}{y_a}\prod_{k<a} \left(1-\frac{t^{-1}y_a}{y_k}\right) \prod_{a<k< \lambda_1} \left(1-\frac{y_a}{y_k}\right)
\]
with coefficient
\[
(t-1) t^{-I_{[1,a)}} = t^{1-I_{[1,a)}} - t^{-I_{[1,a)}}
\]
and for the summand for $m$ with $a<m< \lambda_1$ in~\eqref{eq:telescoping-series}, the coefficient in
\[
(t-1) \frac{y_a}{y_m}\prod_{k<m} \left(1-\frac{t^{-1}y_a}{y_k}\right) \prod_{m<k< \lambda_1} \left(1-\frac{y_a}{y_k}\right)
\]
is 0 if $m\notin I$, and if $m\in I$ is
\[
-(t-1)(1-t^{-1}) t^{-I_{[1,m)}} = 2t^{-I_{[1,m)}} - t^{1-I_{[1,m)}} - t^{-1-I_{[1,m)}}
\]
Summing these terms and setting $b = |I_{[1,a)}|$, we get:
\[
t^{1-b} - t^{-b} + t^{-j}-t^{1-j} + \sum_{i=b+1}^j (2t^{1-i} - t^{2-i} - t^{-i}),
\]
which telescopes to 0.
\end{proof}

\section{Partition functions for motivic Chern classes\label{sec:motivic-chern}}

In this section, we show that motivic Chern classes of Schubert cells and their
duals can be written in terms of our partition functions. Motivic Chern classes
are examples of characteristic classes of a smooth algebraic variety. They
were defined by Brasselet, Sch\"urmann, and Yokura
\cite{BrasseletSchurmannYokura}. These were then generalized to double (or
equivariant) motivic Chern classes by Feh\'er, Rim\'anyi, and Weber
\cite{FeherRimanyiWeber}. This latter paper also proved a remarkable result:
that double motivic Chern classes satisfy the axioms for Maulik-Okounkov
$K$-theoretic stable envelopes (see Remark~\ref{rem:motivic-chern-lifts}). We
will discuss the relationship of the lattice model to stable envelopes in the
next section.

\subsection{Demazure operators for motivic Chern classes}

Aluffi, Mihalcea, Sch\"urmann and Su \cite{AMSSCasselman} give recursive formulas for double motivic Chern classes of Schubert cells, which we take as our definitions.
If $G=GL_n$, the equivariant K-theory of the flag variety has the following description.
\[ K_{T \times G_m} (\operatorname{Fl}_n) \cong\mathbb{Z} [t, t^{- 1}, x_1^{\pm}, \cdots,
   x_n^{\pm}, y_1^{\pm}, \cdots, y_n^{\pm}] / \left\langle f (\bs{x}) -
   f (\bs{y}) | \text{$f$ symmetric} \right\rangle \]
We will refer to this well-known isomorphism as the GKM presentation.
See Vezzosi and Vistoli~\cite{VezzosiVistoli} Corollary~5.12 and
Rosu and Knutson~\cite{RosuEquivariant} Corollary~A.5.

\begin{equation}
  \label{eq:zqy} \bigoplus_{w \in S_n} \mathbb{Z} [t, t^{- 1}, y_1^{\pm},
  \cdots, y_n^{\pm}]
\end{equation}
where 
\[
K_{T \times G_m} (\operatorname{Fl}_n) \hookrightarrow \bigoplus_{w \in S_n} \mathbb{Z} [t, t^{- 1}, y_1^{\pm},\cdots, y_n^{\pm}]
\]
via
\[
\varphi\mapsto (\varphi(\bs{y}_w))_{w\in S_n}.
\]

This map is an injection and identifies $K_{T \times G_m} (G / B)$ with the
subring of (\ref{eq:zqy}) where either
\begin{equation} \label{eq:GKM-divisibility}
\text{$y_i - y_j$ divides $f_w - f_{(i, j) w}$}
\end{equation}
or equivalently
\[ \text{$y_{w(i)} - y_{w(j)}$ divides $f_w - f_{w(i, j)} $} \]
We will now define some classes in 
$K_{T \times G_m} (\operatorname{Fl}_n)$, identified with (\ref{eq:zqy}), using
the notation $[x]_w$ to mean that the $w$-th component of the element defined
is given by the expression~$x$.

\begin{definition} \label{def:motivic-classes} \cite[Propositions~7.1,~7.2,~7.3]{AMSSCasselman}
 The {\it double motivic Chern class of the Schubert cell $X (w)^{\circ}$} is
  determined by the following properties:
\begin{itemize} 
\item $[\MC(X (w)^{\circ}))]_u = 0$ unless $u
  \leqslant w$,
\item  $[\MC(X (1_W)^{\circ})]_{1_W} = \prod_{\alpha \in \Phi^+} (1
     -\bs{y}^{\alpha}),$
  and 
  \item if $ws_i > w$, then
		\[ [\MC(X (ws_i)^{\circ})]_u = - \frac{1 - t}{1
     -\bs{y}^{- u \alpha_i}} [\MC(X (w)^{\circ})]_u +
     \frac{1 - t\bs{y}^{u \alpha_i}}{1 - \bs{y}^{- u \alpha_i}}
     [\MC(X (w)^{\circ})]_{us_i} . \]
 \end{itemize} 
  The {\it double motivic Chern class of the opposite Schubert cell $Y
  (w)^{\circ}$} is determined by:
 \begin{itemize}
 \item  $[\MC(Y (w)^{\circ})]_u = 0$ unless $u \geqslant w$,
 \item $[\MC(Y (w_0)^{\circ})]_{w_0} = \prod_{\alpha \in \Phi^+} (1
     -\bs{y}^{- \alpha}),$ and 
  \item if $ws_i < w$, then
  \[ [\MC(Y (ws_i)^{\circ})]_u = \frac{1 - t}{1 -\bs{y}^{-
     u \alpha_i}} [\MC(Y (w)^{\circ})]_u + \frac{1 -
     t\bs{y}^{u \alpha_i}}{1 -\bs{y}^{- u \alpha_i}} [\MC(Y (w)^{\circ})]_{us_i}. \]
\end{itemize}
The {\it dual double motivic Chern class of the Schubert cell $X(w)^\circ$} is determined by:
\begin{itemize}
\item $[\MC^\vee(X(w)^\circ)]_u = 0$ unless $u\leqslant w$, 
\item $[\MC^{\vee}(X(1_W)^\circ)]_{1_W}=\prod_{\alpha\in \Phi^+} (1-\bs{y}^\alpha),$
and 
\item if $ws_i>w$, then
\[[\MC^{\vee}(X(ws_i)^\circ)]_u=\frac{1-t^{-1}}{\bs{y}^{u\alpha_i}-1}[\MC^{\vee}(X(w)^\circ)]_u+\frac{-t^{-1}+\bs{y}^{-u\alpha_i}}{\bs{y}^{-u\alpha_i}-1}[\MC^{\vee}(X(w)^\circ)]_{us_i}. \]
\end{itemize}
The {\it dual double motivic Chern class of the opposite Schubert cell $Y(w)^\circ$} is given by:
\begin{itemize}
\item $[\MC^\vee(Y(w)^\circ)]_u = 0$ unless $u\geqslant w$, 
\item $[\MC^{\vee}(Y(w_0)^\circ)]_{w_0}=\prod_{\alpha\in \Phi^+} (1-\bs{y}^{-\alpha}),$
and 
\item if $ws_i<w$, then
\[[\MC^{\vee}(Y(ws_i)^\circ)]_u=\frac{1-t^{-1}}{\bs{y}^{u\alpha_i}-1}[\MC^{\vee}(Y(w)^\circ)]_u+\frac{-t^{-1}+\bs{y}^{-u\alpha_i}}{\bs{y}^{-u\alpha_i}-1}[\MC^{\vee}(Y(w)^\circ)]_{us_i}.\]
\end{itemize}
\end{definition}

\begin{remark}
Our notation has a couple of small differences to that of \cite{AMSSCasselman}. Our $t$ equals their $-y$, and they use the notation $e^{\alpha}$ for what we call $\bs{y}^\alpha$.
\end{remark}

By the geometric arguments in \cite{AMSSCasselman}, the double motivic Chern
classes in Definition~\ref{def:motivic-classes} lie in $K_{T \times G_m}
(\operatorname{Fl}_n)$, so satisfy \eqref{eq:GKM-divisibility}. Applying the
GKM isomorphism, there exist elements of $\mathbb{Z}[t^\pm, \bs{x}^\pm,
\bs{y}^\pm]$ whose restriction functions match
Definition~\ref{def:motivic-classes}. These lifts are nonunique; however, the
lifts we give in the next definition have nice combinatorial properties,
including a direct connection to our partition function. See
Remark~\ref{rem:motivic-chern-lifts}.

For each $i \in {1,\dots,n-1}$ define operators on $\mathbb{C}[t,t^{-1},\bs{x}^{\pm}, \bs{y^{\pm}}]$ by 
\begin{equation} T_i = \frac{-1+t}{1- \bs{x}^{-\alpha_i}} + \frac{1-t\bs{x}^{\alpha_i}}{1- \bs{x}^{-\alpha_i}} s_i 
\qquad  \text{and} \qquad T_i^\vee = \frac{1-t^{-1}}{\bs{x}^{\alpha_i}-1} + \frac{-t^{-1} + \bs{x}^{-\alpha_i}}{\bs{x}^{-\alpha_i}-1}s_i, 
\label{eq:tandthat}
\end{equation}
where, as before, the operators treat the variables $\bs{y}^{\pm}$ as constants.

\begin{definition} \label{def:doublemotivic}
	The \emph{lifted double (dual) motivic Chern classes} of $X(w)^\circ$ and $Y(w)^\circ$ are each determined as elements of $\mathbb{C}[t,t^{-1},\bs{x}^{\pm}, \bs{y^{\pm}}]$ by the following properties:
\begin{align*}
&\MC(X(1_W)^\circ) = \prod_{i<j} \left(1-\frac{x_i}{y_j}\right), \quad &\text{and if $ws_i>w$, then } \MC(X(ws_i)^\circ)= T_i\MC(X(w)^\circ).\\
&\MC(Y(w_0)^\circ) = \prod_{i+j\leqslant n} \left(1-\frac{x_i}{y_j}\right), &\text{and if $ws_i<w$, then } \MC(X(ws_i)^\circ)= T_i\MC(Y(w)^\circ).\\
&\MC^\vee(X(1_W)^\circ) = \prod_{i<j} \left(1-\frac{x_i}{y_j}\right), &\text{and if $ws_i>w$, then } \MC^\vee(X(ws_i)^\circ)= T_i^\vee \MC(X(w)^\circ).\\
&\MC^\vee(Y(w_0)^\circ) = \prod_{i+j\leqslant n} \left(1-\frac{x_i}{y_j}\right), &\text{and if $ws_i<w$, then } \MC^\vee(Y(ws_i)^\circ)= T_i^\vee \MC(Y(w)^\circ).
\end{align*}
\end{definition}

It is apparent from these definitions that
\begin{equation}
\MC(Y(w)^\circ)(\bs{x}; \bs{y}) = \MC(X(w_0w)^\circ)(\bs{x},\bs{y}_{w_0}) \label{eqn:MCX-MCY}
\end{equation}
and
\begin{equation}\MC^\vee(Y(w)^\circ)(\bs{x}; \bs{y}) = \MC^\vee(X(w_0w)^\circ)(\bs{x},\bs{y}_{w_0}) \label{eqn:MCXdual-MCYdual}.
\end{equation}

The next proposition says that the functions in Definition \ref{def:doublemotivic} really are lifts of the double motivic Chern classes and their duals, notated with square brackets.

\begin{proposition}
For all permutations $u$ and $w$,
\[
\MC(X(w)^\circ)|_u = [\MC(X(w)^{\circ}))]_u, \qquad \MC(Y(w)^\circ)|_u = [\MC(Y(w)^{\circ}))]_u,
\]
\[
\MC^\vee(X(w)^\circ)|_u = [\MC^\vee(X(w)^{\circ}))]_u, \qquad \MC^\vee(Y(w)^\circ)|_u = [\MC^\vee(Y(w)^{\circ}))]_u.
\]
\end{proposition}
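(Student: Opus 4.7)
The plan is to prove all four identities by induction in the Bruhat order, leveraging the recursive structure built into Definition~\ref{def:doublemotivic}. The central computational ingredient is the intertwining identity
\[
(T_i f)|_u = \frac{-1+t}{1-\bs{y}^{-u\alpha_i}} f|_u + \frac{1-t\bs{y}^{u\alpha_i}}{1-\bs{y}^{-u\alpha_i}} f|_{us_i}
\]
for any polynomial $f(\bs{x};\bs{y})$ and permutation $u$, together with the analogous identity for $T_i^\vee$. Both follow immediately from the formulas~\eqref{eq:tandthat}, the substitution $\bs{x}^{\alpha_i}|_u = y_{u(i)}/y_{u(i+1)} = \bs{y}^{u\alpha_i}$, and the identity $(s_i\cdot f)|_u = f|_{us_i}$ obtained from~\eqref{eq:relres} with $w = s_i$. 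Crucially, the right-hand sides are \emph{exactly} the recursions defining $[\MC(X(ws_i)^\circ)]_u$ and $[\MC^\vee(X(ws_i)^\circ)]_u$ in Definition~\ref{def:motivic-classes} when $ws_i > w$.

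For the base cases, restricting $\prod_{i<j}(1 - x_i/y_j)$ at $u = 1_W$ gives $\prod_{i<j}(1 - y_i/y_j) = \prod_{\alpha \in \Phi^+}(1 - \bs{y}^\alpha)$, matching the axiomatic value. For $u \neq 1_W$ one selects the smallest index $i$ with $u(i) > i$; setting $j = u(i) > i$, the factor $(1 - x_i/y_j)|_u = 1 - y_{u(i)}/y_{u(i)} = 0$ forces the restriction to vanish, as required. The base cases for $Y(w_0)^\circ$ are parallel: restricting $\prod_{i+j \leq n}(1-x_i/y_j)$ at $u = w_0$ (so $x_i \mapsto y_{n+1-i}$) yields $\prod_{k > j}(1 - y_k/y_j) = \prod_{\alpha \in \Phi^+}(1 - \bs{y}^{-\alpha})$ after reindexing, and vanishing at $u \neq w_0$ follows by a symmetric argument. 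Since the dual-class base polynomials coincide with their un-dualized counterparts, no further computation is needed at the base.

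With the intertwining and base cases in hand, the induction is mechanical. Assuming by induction that $\MC(X(w)^\circ)|_u = [\MC(X(w)^\circ)]_u$ for every $u$, applying the intertwining identity with $f = \MC(X(w)^\circ)$ and invoking Definition~\ref{def:doublemotivic} shows that $\MC(X(ws_i)^\circ)|_u$ coincides with the right-hand side of the GKM recursion, which by induction equals $[\MC(X(ws_i)^\circ)]_u$ for every $u$—including $u \nleq ws_i$, where matching the GKM value automatically supplies the required vanishing. The inductions for $\MC(Y(w)^\circ)$ and $\MC^\vee(Y(w)^\circ)$ descend in Bruhat order from $w_0$ in precisely the same way, and the dual cases are handled verbatim via $T_i^\vee$. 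The one subtlety is that Definition~\ref{def:doublemotivic} implicitly requires the $T_i$ and $T_i^\vee$ to satisfy the braid relations so that the recursive construction is independent of the choice of reduced decomposition; this is standard for Demazure-Lusztig-type operators (see, e.g., Proposition~5 of~\cite{BBL}), so no real obstacle arises—the proof reduces to the short intertwining calculation together with the base-case verifications already outlined.
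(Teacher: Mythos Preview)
Your proposal is correct and follows essentially the same approach as the paper's own proof: induction on $w$ in the Bruhat order, with the base case verified by direct computation of the restriction of $\prod_{i<j}(1-x_i/y_j)$, and the inductive step carried out via the intertwining identity $(T_i f)|_u = \frac{-1+t}{1-\bs{y}^{-u\alpha_i}} f|_u + \frac{1-t\bs{y}^{u\alpha_i}}{1-\bs{y}^{-u\alpha_i}} f|_{us_i}$, which reproduces the GKM recursion of Definition~\ref{def:motivic-classes}. The paper treats only the $\MC(X(w)^\circ)$ case in detail and declares the others similar, exactly as you do.
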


\begin{remark}
This proposition can be taken as an alternate proof that the functions in Definition~\ref{def:motivic-classes} are indeed elements of $K_{T \times G_m} (\operatorname{Fl}_n)$.
\end{remark}

\begin{proof}
We prove the result for the case of $\MC(X(w)^\circ)$, and the others are similar.

We use induction on $w$.
If $w=1_W$, then
\[
\MC(X(1_W)^\circ)|_u = \prod_{i<j} \left(1 - \frac{y_{u(i)}}{y_j}\right).
\]
This is zero precisely when $u(i) = j$ for some $i<j$, which happens precisely when $u\ne 1_W$. If $u=1_W$, then we have
\[
\MC(X(1_W)^\circ)|_{1_W}  = \prod_{i<j} \left(1 - \frac{y_i}{y_j}\right) = \prod_{\alpha\in \Phi^+} (1- \bs{y}^\alpha) = [\MC(X(1_W)^\circ)]_{1_W}
\]
so $\MC(X(1_W)^\circ)|_u  = [\MC(X(1_W)^\circ)]_u$ for all $u$.

Next, assume that $w<ws_i$ and that $\MC(X(w)^\circ)|_u  = [\MC(X(w)^\circ)]_u$ for all $u$. We have
\[
\MC(X(ws_i)^\circ) = T_i\MC(X(w)^\circ) = \frac{-1+t}{1- \bs{x}^{-\alpha_i}}\MC(X(w)^\circ) + \frac{1-t\bs{x}^{\alpha_i}}{1- \bs{x}^{-\alpha_i}} (s_i \MC(X(w)^\circ),
\]
and restricting to $u$ gives
\begin{align*}
\MC(X(ws_i)^\circ)|_u 
&= \frac{-1+t}{1- \bs{y}^{-u\alpha_i}} \MC(X(w)^\circ)|_u  + \frac{1-t\bs{y}^{u\alpha_i}}{1- \bs{y}^{-u\alpha_i}} \MC(X(w)^\circ)|_{us_i}
\\&= \frac{-1+t}{1- \bs{y}^{-u\alpha_i}} [\MC(X(w)^\circ]_u + \frac{1-t\bs{y}^{u\alpha_i}}{1- \bs{y}^{-u\alpha_i}} [\MC(X(w)^\circ)]_{us_i}
\\&= [\MC(X(ws_i)^\circ)]_u.
\end{align*}

In particular, if $u\not\leqslant ws_i$, then $u\not\leqslant w$ and $us_i\not\leqslant w$, so $\MC(X(ws_i)^\circ)|_u $ is zero in this case by induction.
\end{proof}

\begin{remark} \label{rem:motivic-chern-lifts}
The choice of Definition~\ref{def:doublemotivic} as a lift for Definition~\ref{def:motivic-classes} is not unique. In \cite[Section~7]{FeherRimanyiWeber}, Feh\'er, Rim\'anyi, and Weber give another possible choice of a lift, which are the double motivic Chern classes of matrix Schubert cells. Their lifts appear to be different from ours. Definition~\ref{def:doublemotivic} has the benefit of good factorization, and connection with the other functions of this paper.  See Section~\ref{sec:KZJ} for a comparison with a lift of \emph{motivic Segre classes} and a lattice model of Knutson and Zinn-Justin \cite{KnutsonZinnJustinMotivic}.
\end{remark}

Next, we express the lifted double motivic Chern classes in terms of our partition functions, starting with the dual functions.
%
%
%
\begin{theorem} \label{thm:dual-motivic-part-fun}
For all permutations $w$,
\[
\MC^\vee(X(w)^\circ) = (-t)^{-\ell(w)} Z^\square_{1_W,w_0w}(\bs{x}; \bs{y}_{w_0}),
\]
and
\[
\MC^\vee(Y(w)^\circ) = (-t)^{-\ell(w^{-1}w_0)}Z^\square_{1_W,w}(\bs{x}; \bs{y}).
\]
\end{theorem}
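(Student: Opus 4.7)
The plan is to prove the second formula by descending induction in Bruhat order on $w$, starting from $w = w_0$, and then to deduce the first formula from it via the symmetry relation~(\ref{eqn:MCXdual-MCYdual}).

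For the base case $w = w_0$ of the second formula, Proposition~\ref{prop:basecase} applied with $\sigma = 1_W$ and $\lambda = \square = (n, n-1, \ldots, 1)$ gives
\[
Z^\square_{1_W, w_0}(\bs{x}; \bs{y}) = \prod_{i=1}^n \prod_{j < n+1-i}\left(1 - \tfrac{x_i}{y_j}\right) = \prod_{i+j \leqslant n}\left(1 - \tfrac{x_i}{y_j}\right),
\]
which agrees with the base case $\MC^\vee(Y(w_0)^\circ)$ in Definition~\ref{def:doublemotivic}, and the prefactor $(-t)^{-\ell(w_0^{-1}w_0)} = 1$. For the inductive step, fix $w$ and $s_i$ with $ws_i < w$. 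Proposition~\ref{prop:recursion} gives $Z^\square_{1_W, ws_i} = \tau_i \cdot Z^\square_{1_W, w}$, while Definition~\ref{def:doublemotivic} gives $\MC^\vee(Y(ws_i)^\circ) = T_i^\vee\, \MC^\vee(Y(w)^\circ)$. Since $ws_i < w$ one has $\ell((ws_i)^{-1}w_0) = \ell(w_0) - \ell(ws_i) = \ell(w^{-1}w_0) + 1$, so matching the induction hypothesis against the desired formula at $ws_i$ reduces to the operator identity
\[
T_i^\vee = -t^{-1}\tau_i.
\]
This is a routine check from~(\ref{eq:tauop}) and~(\ref{eq:tandthat}): multiplying the numerator and denominator of the $s_i$-coefficient of $T_i^\vee$ by $\bs{x}^{\alpha_i}$ produces $(1 - t^{-1}\bs{x}^{\alpha_i})/(1 - \bs{x}^{\alpha_i})$, matching the $s_i$-coefficient of $-t^{-1}\tau_i$, and the constant parts match similarly.

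For the first formula, substituting $w \mapsto w_0 w$ in~(\ref{eqn:MCXdual-MCYdual}) and using that $w_0$ is an involution, so $(\bs{y}_{w_0})_{w_0} = \bs{y}$, yields
\[
\MC^\vee(X(w)^\circ)(\bs{x}; \bs{y}) = \MC^\vee(Y(w_0 w)^\circ)(\bs{x}; \bs{y}_{w_0}).
\]
Invoking the just-proved second formula at $w_0 w$ evaluated at $(\bs{x}; \bs{y}_{w_0})$, together with $\ell((w_0 w)^{-1} w_0) = \ell(w^{-1}) = \ell(w)$, produces the claimed identity. The only substantive computation is the elementary operator identity $T_i^\vee = -t^{-1}\tau_i$; the remainder is a careful tracking of Bruhat lengths, so no real obstacle is anticipated.
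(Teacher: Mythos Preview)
Your proof is correct and follows essentially the same approach as the paper: induction on Bruhat order from the base case $w=w_0$, with the inductive step reducing to the operator identity $T_i^\vee=-t^{-1}\tau_i$. The only minor difference is that the paper runs two parallel inductions (one for each formula), whereas you prove the second and then invoke the symmetry~(\ref{eqn:MCXdual-MCYdual}) to obtain the first; this is a harmless shortcut.
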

\begin{proof}
Both base cases $\MC^\vee(X(1)^\circ) = Z^\square_{1_W,w_0}(\bs{x}; \bs{y}_{w_0})$ and $\MC^\vee(Y(w_0)^\circ) = Z^\square_{1_W,w_0}(\bs{x}; \bs{y})$ follow from Proposition~\ref{prop:basecase} and Definition~\ref{def:doublemotivic}.

The inductive steps follow because $T_i^\vee = -t^{-1}\tau_i$, which is easily checked from the definition of the 
operators $\tau_i$ and $T_i^\vee$ in~(\ref{eq:tauop}) and~(\ref{eq:tandthat}) respectively. In the first equation in the statement of the theorem, applying this 
operator is length-increasing on the left side and length-decreasing on the right side, whereas it is length-decreasing 
on both sides of the second equation.
\end{proof}

Next, we move on to the lifted double ordinary motivic Chern classes, and prove the following relationship between these and the dual classes. Let $\bs{x}^{-1} = (x_1^{-1},\ldots,x_n^{-1})$ and let
\[
r(\bs{x};\bs{y}) := (-1)^{\ell(w_0)} \bs{x}^\rho \bs{y}^{-\rho} = (-1)^{\ell(w_0)}\frac{x_1^{n-1}x_2^{n-2}\cdots x_{n-1}}{y_1^{n-1} y_2^{n-2}\cdots y_{n-1}}.
\]

\begin{proposition} \label{prop:ordinary-dual-motivic-relation} For all permutations $w \in W$,
\[
\MC(X(w)^\circ)(\bs{x}; \bs{y}) = (-t)^{\ell(w)} r(\bs{x},\bs{y}_{w_0}) \MC^\vee(X(w)^\circ)(\bs{x}^{-1}; \bs{y}^{-1}) \] and \[\MC(Y(w)^\circ)(\bs{x}; \bs{y}) = (-t)^{\ell(w_0w)} r(\bs{x},\bs{y}) \MC^\vee(Y(w)^\circ)(\bs{x}^{-1}; \bs{y}^{-1}).
\]
\end{proposition}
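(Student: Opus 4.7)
The plan is to reduce the $Y$-identity to the $X$-identity, then prove the $X$-identity by induction on $\ell(w)$. The reduction uses the symmetries \eqref{eqn:MCX-MCY} and \eqref{eqn:MCXdual-MCYdual}, noting that $(\bs{y}_{w_0})_{w_0} = \bs{y}$ and $\bs{y}_{w_0}^{-1} = (\bs{y}^{-1})_{w_0}$: substituting $\bs{y} \mapsto \bs{y}_{w_0}$ in the $X$-identity at $w_0 w$ and applying the two $X \leftrightarrow Y$ translations to the two sides gives exactly the $Y$-identity at $w$, with $\ell(w_0 w)$ appearing in the prefactor as required.

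For the $X$-identity, the base case $w = 1_W$ is direct: from Definition~\ref{def:doublemotivic}, both $\MC(X(1_W)^\circ)$ and $\MC^\vee(X(1_W)^\circ)$ equal $\prod_{i<j}(1 - x_i/y_j)$, so one just checks that the substitution $\bs{x} \mapsto \bs{x}^{-1}$, $\bs{y} \mapsto \bs{y}^{-1}$ produces the monomial $r(\bs{x}, \bs{y}_{w_0}) = (-1)^{\ell(w_0)} \bs{x}^\rho \bs{y}_{w_0}^{-\rho}$. Using $1 - y_j/x_i = -(y_j/x_i)(1 - x_i/y_j)$ and $\ell(w_0) = \binom{n}{2}$ makes this bookkeeping routine.

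The inductive step for $ws_i > w$ rests on the single operator identity
\[
T_i \circ m_r \;=\; m_r \circ \overline{\tau}_i,
\]
where $m_r$ denotes multiplication by $r = r(\bs{x},\bs{y}_{w_0})$ and $\overline{\tau}_i$ is as in \eqref{eq:tau-bar}. This follows from $s_i \cdot r = \bs{x}^{-\alpha_i} r$ (immediate from $s_i \bs{x}^\rho = \bs{x}^{\rho - \alpha_i}$, the $\bs{y}$-factor of $r$ being $s_i$-invariant) together with the simplification $(1 - t\bs{x}^{\alpha_i})\bs{x}^{-\alpha_i} = \bs{x}^{-\alpha_i} - t$: expanding $T_i(rg)$, pulling $r$ out of both terms, and collecting yields $r \cdot \overline{\tau}_i g$. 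Applying $T_i$ to the inductive hypothesis and invoking this identity reduces the right-hand side to $(-t)^{\ell(w)}\, r \cdot \overline{\tau}_i \theta \MC^\vee(X(w)^\circ)$; using $\overline{\tau}_i = \theta \tau_i \theta$ and the already-noted relation $\tau_i = -t\,T_i^\vee$ (the same relation used in Theorem~\ref{thm:dual-motivic-part-fun}) contributes the final factor of $-t$ and rewrites $\tau_i \MC^\vee(X(w)^\circ)$ as $-t\,\MC^\vee(X(ws_i)^\circ)$ via Definition~\ref{def:doublemotivic}, producing $(-t)^{\ell(ws_i)}\, r \cdot \theta \MC^\vee(X(ws_i)^\circ)$ as needed.

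The main obstacle is the operator identity $T_i \circ m_r = m_r \circ \overline{\tau}_i$, but this is morally the same conjugation as $\tau_i = \bs{x}^\rho \mathfrak{T}_i \bs{x}^{-\rho}$ from Section~\ref{sec:polys}, adapted to the pair $(T_i, \overline{\tau}_i)$; once it is in hand, the inductive step is essentially forced.
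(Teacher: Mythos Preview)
Your proof is correct and follows essentially the same inductive strategy as the paper: reduce the $Y$-statement to the $X$-statement via \eqref{eqn:MCX-MCY} and \eqref{eqn:MCXdual-MCYdual}, verify the base case from \eqref{eq:rwithbasecase}, and push through the recursion via an operator identity relating $T_i$ and $T_i^\vee$. The only difference is in packaging: you isolate the conjugation identity $T_i \circ m_r = m_r \circ \overline{\tau}_i$ directly (using $s_i \cdot r = \bs{x}^{-\alpha_i} r$), whereas the paper writes $T_i = -t\,\theta\,\bs{x}^{-\rho} T_i^\vee\, \bs{x}^{\rho}\,\theta$ and then separately observes that $\bs{x}^\rho r(\bs{y}_{w_0};\bs{x})$ is symmetric in $\bs{x}$ and hence commutes with $T_i^\vee$. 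These are the same computation organized two ways; your formulation is arguably cleaner since it absorbs the symmetric-function commutation into a single identity.
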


This result is a lifted version, on the level of functions, of the geometric duality as in~\cite[Theorem~9.1(a)]{AMSSHirzebruch}.

Recall that $\theta$ is the involution which sends $f(\bs{x}; \bs{y}) \mapsto f(\bs{x}^{-1};\bs{y}^{-1})$. Thus $\theta$ commutes with the action of $s_i$ and moreover,
\[
\bs{x}^{-\rho} s_i\cdot \bs{x}^{\rho} = s_i\cdot \bs{x}^{\alpha_i} = \bs{x}^{-\alpha_i}\qquad \text{and} \qquad \theta \bs{x}^{\alpha_i}\theta = \bs{x}^{-\alpha_i}.
\]
One easily verifies that $r(\bs{x},\bs{y})$
satisfies the following properties:
\begin{equation}
r(\bs{x};\bs{x}) = (-1)^{\ell(w_0)}, \qquad r(\bs{x};\bs{y})r(\bs{y};\bs{z}) = (-1)^{\ell(w_0)} r(\bs{x};\bs{z}),
\label{eq:rinprods} \end{equation}
\begin{equation}
\theta r(\bs{x};\bs{y}) = r(\bs{y};\bs{x}) = (r(\bs{x};\bs{y}))^{-1}, \qquad s_i\cdot r(\bs{x},\bs{y}) = \bs{x}^{-\alpha_i} r(\bs{x},\bs{y}),
\label{eq:rundertheta} \end{equation}
\begin{equation}
r(\bs{x};\bs{y}) \prod_{i+j\leqslant n} \left(1-\frac{y_j}{x_i}\right) = \prod_{i+j\leqslant n} \left(1-\frac{x_i}{y_j}\right), \quad r(\bs{x},\bs{y}_{w_0}) \prod_{i<j} \left(1-\frac{y_j}{x_i}\right) = \prod_{i<j} \left(1-\frac{x_i}{y_j}\right).
\label{eq:rwithbasecase}
\end{equation}
%
%
\begin{proof}[Proof of Proposition~\ref{prop:ordinary-dual-motivic-relation}]
We use induction on $w$. For the first equation, when $w=1$, \[r(\bs{x},\bs{y}_{w_0}) \theta \MC^\vee(X(1)^\circ) = r(\bs{x},\bs{y}_{w_0}) \prod_{i<j} \left(1-\frac{y_j}{x_i}\right) = \prod_{i<j} \left(1-\frac{x_i}{y_j}\right).\]
If the first equation holds for $w$ and $w<ws_i$, then
\begin{align*}
\MC(X(ws_i)^\circ)
&= T_i \MC(X(w)^\circ) \\&= (-t)^{\ell(w)} T_i r(\bs{x},\bs{y}_{w_0}) \theta \MC^\vee(X(w)^\circ)
\\&= (-t)^{\ell(w)} (-t)\theta \bs{x}^{-\rho}T_i^\vee \bs{x}^{\rho} \theta r(\bs{x},\bs{y}_{w_0}) \theta \MC^\vee(X(w)^\circ)
\\&= (-t)^{\ell(w)+1} r(\bs{x};\bs{y}_{w_0}) \theta T_i^\vee \MC^\vee(X(w)^\circ)
\\&= (-t)^{\ell(ws_i)} r(\bs{x};\bs{y}_{w_0}) \theta \MC^\vee(X(ws_i)^\circ),
\end{align*}
where the third equality follows from the identity of operators $T_i = -t\theta \bs{x}^{-\rho}T_i^\vee \bs{x}^{\rho}\theta$ and the fourth equality is because $\bs{x}^\rho \theta r(\bs{x};\bs{y}_{w_0}) = \bs{x}^\rho r(\bs{y}_{w_0}; \bs{x})\theta $, and $\bs{x}^\rho r(\bs{y}_{w_0}; \bs{x})$ is symmetric in $\bs{x}$, so commutes with $T_i^\vee$.

The second equation follows from the first, along with (\ref{eqn:MCX-MCY}, \ref{eqn:MCXdual-MCYdual}).
\end{proof}

\begin{figure}[h]
\begin{center}
\begin{tabular}{|ccc|}
\hline
\hline
Case 1 & Case 2 & Case 3\\
\hline
\vertex{+}{\Sigma}{+}{\Sigma} &
\vertex{c}{\Sigma}{c}{\Sigma} &
\vertex{+}{\Sigma}{c}{\Sigma^-_c}
\\[4pt]
$(-tx)^{\Sigma_{[1,r]}}$ & 
$t^{\Sigma_{(c,r]}} x^{\Sigma_{[1,r]}}(-x^{-1} + t^{\Sigma_c}y^{-1})$ &
$(-t)^{\Sigma_{[1,c)}} t^{\Sigma_{(c,r]}} x^{\Sigma_{[1,r]}-1}$ \\[4pt]
\hline
\hline
Case 4 & Case 5 & Case 6\\
\hline
\vertex{c}{\Sigma}{+}{\Sigma^+_c} &
\vertex{c}{\Sigma}{d}{\Sigma^{+-}_{cd}} &
\vertex{d}{\Sigma}{c}{\Sigma^{+-}_{dc}}
\\    
$(t-1) x^{-1} (-t)^{\Sigma_{(c,r]}} x^{\Sigma_{[1,r]}+1}$ &
$(t-1) x^{-1} (-t)^{\Sigma_{(c,d)}} t^{\Sigma_{(d,r]}} x^{\Sigma_{[1,r]}}$ &
$(t-1) y^{-1} (-1)^{\Sigma_{(c,d)}} t^{\Sigma_{(d,r]}} x^{\Sigma_{[1,r]}}$ \\
\hline
\end{tabular}
\end{center}
	\caption{A dual set of Boltzmann weights for a rectangular vertex with row parameter $x$ and column parameter $y$. Throughout, $c<d$.}
\label{fig:dual-rect-weights}
\end{figure}

Consider next the set of weights in Figure \ref{fig:dual-rect-weights}. They are obtained from the weights in Figure \ref{fig:modified_weights} by inverting the parameters $x$ and $y$ and multiplying each Boltzmann weight by $x^b y^{-l}$, where $b$ is the number of colors on the bottom edge and $l$ is the number of colors on the left edge.

Let $\widetilde{Z}^\square_{\sigma,w}$ denote the partition function of the lattice model on the $n \times n$ square with boundary conditions as in Definition~\ref{def:boundary}
and with Boltzmann weights as in Figure~\ref{fig:dual-rect-weights}.

\begin{proposition} \label{prop:dual-partition-function}
\[\widetilde{Z}^\square_{\sigma,w}(\bs{x};\bs{y}) = r(\bs{x};\bs{y})Z^\square_{\sigma,w}(\bs{x}^{-1};\bs{y}^{-1}).\]
\end{proposition}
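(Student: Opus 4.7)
The plan is to prove a per-state identity and then sum over admissible states. For any admissible $\mathfrak{s}\in\mathfrak{S}^\square_{\sigma,w}$, the description of the dual weights in Figure~\ref{fig:dual-rect-weights} as being obtained from those in Figure~\ref{fig:modified_weights} by inverting $\bs{x}$ and $\bs{y}$ and then multiplying each vertex weight by $x^b y^{-l}$ gives
\[
\prod_v \widetilde{\beta}_v(\mathfrak{s}) = \left(\prod_v \beta_v(\mathfrak{s})\right)\bigg|_{\bs{x}\to\bs{x}^{-1},\,\bs{y}\to\bs{y}^{-1}} \cdot \prod_v x_{i(v)}^{b_v}\, y_{j(v)}^{-l_v}.
\]
Thus the proposition reduces to verifying that the trailing product equals $r(\bs{x};\bs{y})$ for every admissible state; once this per-state identity is established, the shared factor $r(\bs{x};\bs{y})$ comes out of the sum over $\mathfrak{S}^\square_{\sigma,w}$ and yields the claim.

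The first substantive step is to compute the monomial part $\bs{x}^\rho\bs{y}^{-\rho}$ of $r(\bs{x};\bs{y})$ by a color-conservation argument applied to the colored-path picture of Remark~\ref{rem:colored_paths}. The exponent of $x_i$ is $\sum_j b_{(i,j)}$, which counts colored occurrences on vertical edges between rows $i$ and $i+1$; since each colored path from top to right is monotone and descends past row $i$ precisely when its exit row $w^{-1}(c)$ exceeds $i$, this sum equals $|\{c:w^{-1}(c)>i\}|=n-i$. Symmetrically, the exponent of $y_j^{-1}$ is $\sum_i l_{(i,j)}$, which counts colored horizontal edges between columns $j$ and $j+1$; the path of color $c$ contributes one such edge exactly when its top-entry column (which for $\lambda=\square$ equals $\sigma^{-1}(c)$) is strictly greater than $j$, giving $n-j$. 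These compute $\prod_v x_{i(v)}^{b_v}=\bs{x}^\rho$ and $\prod_v y_{j(v)}^{-l_v}=\bs{y}^{-\rho}$.

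The remaining ingredient, and the main obstacle, is the overall sign $(-1)^{\ell(w_0)}$ in $r(\bs{x};\bs{y}) = (-1)^{\ell(w_0)}\bs{x}^\rho\bs{y}^{-\rho}$, which the monomial factor above does not account for. This sign must arise from case-dependent sign contributions in the Boltzmann weights of Figures~\ref{fig:modified_weights} and~\ref{fig:dual-rect-weights}, concentrated in the turn and crossing cases (Cases~4--6). The plan is to tabulate, for each of the six vertex cases, the sign of the ratio $\widetilde{\beta}_v/\beta_v(\bs{x}^{-1};\bs{y}^{-1})$ beyond the monomial factor $x_{i(v)}^{b_v} y_{j(v)}^{-l_v}$, and then show by a color-conservation tally that the product of these vertex-level signs over any admissible state is exactly $(-1)^{\binom{n}{2}}=(-1)^{\ell(w_0)}$. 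The monotonicity of colored paths together with the permutation structure of both top and right boundaries constrains the relevant vertex-type counts so that their parities depend only on the boundary data, supplying the desired state-independent sign.

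Once the monomial identity and the sign identity are combined, the per-state statement $\prod_v x_{i(v)}^{b_v} y_{j(v)}^{-l_v}\cdot(\text{residual signs}) = r(\bs{x};\bs{y})$ holds for every admissible $\mathfrak{s}$, and summing over $\mathfrak{S}^\square_{\sigma,w}$ completes the proof. I expect the sign bookkeeping in the final step to be the delicate part; the rest is a direct application of color conservation and of the explicit form of the weight transformation stated in the paragraph preceding the proposition.
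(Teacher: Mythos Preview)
Your approach is identical to the paper's: both argue per state, using that the per-vertex ratio of the dual and inverted-original Boltzmann weights is the monomial $x_{i(v)}^{b_v}y_{j(v)}^{-l_v}$, and then totaling these across the grid. Your counts $\sum_j b_{(i,j)}=n-i$ and $\sum_i l_{(i,j)}=n-j$ are exactly what the paper records, giving the state-independent factor $\bs{x}^\rho\bs{y}^{-\rho}$.

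The gap is in your final step. You propose to recover the missing factor $(-1)^{\ell(w_0)}$ by tabulating ``residual signs'' in the ratios $\widetilde\beta_v/\beta_v(x^{-1},y^{-1})$, concentrated in Cases~4--6. But the sentence preceding Figure~\ref{fig:dual-rect-weights}, which you already invoked, asserts that this ratio is \emph{exactly} $x^{b_v}y^{-l_v}$; a case-by-case check of all six vertex types confirms there is no extra sign in any case. So the tabulation you sketch will return $+1$ everywhere and cannot produce $(-1)^{\ell(w_0)}$. The paper's one-sentence proof is the same counting argument and is equally silent on this sign. In fact the stated identity appears to be off by exactly $(-1)^{\ell(w_0)}$: for $n=2$, $\sigma=e$, $w=s$ (the first state of Example~\ref{ex:partfun}), direct computation from Figure~\ref{fig:dual-rect-weights} gives $\widetilde Z^\square_{e,s}=x_1/y_1-1$, whereas $r(\bs{x};\bs{y})\,Z^\square_{e,s}(\bs{x}^{-1};\bs{y}^{-1})=1-x_1/y_1$. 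So the discrepancy you flagged is a genuine sign slip somewhere in the paper (in the definition of $r$, in the dual-weight table, or in the statement of the proposition), not something that your proposed ``sign bookkeeping'' can supply.
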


\begin{proof}
For any state, the total number of paths on the bottom edge of vertices in row $i$ is $n-i$, and the total number of paths on the left edge of vertices in column $i$ is $n-i$. Therefore, the result follows from the relationship between the Boltzmann weights in Figures \ref{fig:modified_weights} and \ref{fig:dual-rect-weights}.
\end{proof}

This allows us to represent the lifted double motivic Chern classes as partition functions.

\begin{corollary} \label{cor:motivic-part-fun} For all $w \in W$,
\[\MC(X(w)^\circ) = \widetilde{Z}^\square_{1_W,w_0w}(\bs{x}; \bs{y}_{w_0}) \qquad \text{and} \qquad \MC(Y(w)^\circ) = \widetilde{Z}^\square_{1_W,w}(\bs{x}; \bs{y}).\]
\end{corollary}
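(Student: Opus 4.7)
The proof is essentially an assembly of the three results immediately preceding the corollary, so the plan is to chain them together and verify that all prefactors of $-t$, together with the $r(\bs{x};\bs{y})$ factors, cancel exactly as needed.

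First I would start from Proposition~\ref{prop:ordinary-dual-motivic-relation}, which expresses $\MC(X(w)^\circ)(\bs{x};\bs{y})$ as $(-t)^{\ell(w)}\,r(\bs{x},\bs{y}_{w_0})\,\MC^\vee(X(w)^\circ)(\bs{x}^{-1};\bs{y}^{-1})$. Into the right-hand side I would substitute the first formula of Theorem~\ref{thm:dual-motivic-part-fun}, giving
\[
\MC^\vee(X(w)^\circ)(\bs{x}^{-1};\bs{y}^{-1}) = (-t)^{-\ell(w)}\,Z^\square_{1_W,w_0w}\bigl(\bs{x}^{-1};(\bs{y}_{w_0})^{-1}\bigr),
\]
where I would use that inverting $\bs{y}$ and then permuting by $w_0$ yields the same result as permuting first and then inverting. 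The explicit $(-t)^{\ell(w)}$ and $(-t)^{-\ell(w)}$ factors cancel, and the remaining expression is $r(\bs{x};\bs{y}_{w_0})\,Z^\square_{1_W,w_0w}(\bs{x}^{-1};(\bs{y}_{w_0})^{-1})$. Applying Proposition~\ref{prop:dual-partition-function} with the variables $\bs{y}$ there specialized to $\bs{y}_{w_0}$ recognizes this as $\widetilde Z^\square_{1_W,w_0w}(\bs{x};\bs{y}_{w_0})$, which is the first claim.

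The second equation is analogous. I would begin with the opposite-Schubert case of Proposition~\ref{prop:ordinary-dual-motivic-relation}, substitute in the second formula of Theorem~\ref{thm:dual-motivic-part-fun}, and then apply Proposition~\ref{prop:dual-partition-function}. Here the exponent bookkeeping is the one small point worth noting explicitly: the prefactor from Proposition~\ref{prop:ordinary-dual-motivic-relation} is $(-t)^{\ell(w_0w)}$, whereas the prefactor from Theorem~\ref{thm:dual-motivic-part-fun} is $(-t)^{-\ell(w^{-1}w_0)}$. The identity $\ell(w_0w) = \ell(w_0) - \ell(w) = \ell(w_0) - \ell(w^{-1}) = \ell(w^{-1}w_0)$ ensures these cancel. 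After cancellation the result is $r(\bs{x};\bs{y})\,Z^\square_{1_W,w}(\bs{x}^{-1};\bs{y}^{-1})$, which by Proposition~\ref{prop:dual-partition-function} is $\widetilde Z^\square_{1_W,w}(\bs{x};\bs{y})$.

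There is essentially no obstacle in the argument; it is an algebraic assembly. The only place one has to be a little careful is verifying that the variable change $\bs{y}\mapsto\bs{y}^{-1}$ commutes appropriately with the permutation $w_0$ inside the arguments of $Z^\square$, and that the two competing $t$-exponents line up, both of which are immediate from definitions.
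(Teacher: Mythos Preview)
Your proof is correct and follows essentially the same approach as the paper: chaining Proposition~\ref{prop:ordinary-dual-motivic-relation}, Theorem~\ref{thm:dual-motivic-part-fun}, and Proposition~\ref{prop:dual-partition-function}, with the $(-t)$-exponents cancelling and the $r$-factor absorbed into $\widetilde Z$. Your remarks on the exponent identity $\ell(w_0w)=\ell(w^{-1}w_0)$ and on the commutation of inversion with the $w_0$-permutation are exactly the small checks the paper leaves implicit.
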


\begin{proof}
Combining Theorem \ref{thm:dual-motivic-part-fun} and Proposition \ref{prop:dual-partition-function}, we have
\begin{align*}
\MC(X(w)^\circ)
&= (-t)^{\ell(w)} r(\bs{x},\bs{y}_{w_0}) \theta \MC^\vee(X(w)^\circ)
\\&= (-t)^{\ell(w)} r(\bs{x},\bs{y}_{w_0}) \theta (-t)^{-\ell(w)} Z^\square_{1_W,w_0w}(\bs{x}; \bs{y}_{w_0})
\\&= \widetilde{Z}^\square_{1_W,w_0w}(\bs{x}; \bs{y}_{w_0}).
\end{align*}

Similarly, \begin{align*} \MC(Y(w)^\circ) &= (-t)^{\ell(w_0w)} r(\bs{x},\bs{y}) \theta \MC^\vee(Y(w)^\circ) \\&= (-t)^{\ell(w_0w)} r(\bs{x},\bs{y}) \theta (-t)^{-\ell(w^{-1}w_0)}Z^\square_{1_W,w}(\bs{x}; \bs{y}) \\&= \widetilde{Z}^\square_{1_W,w}(\bs{x}; \bs{y}). \qedhere \end{align*}
\end{proof}

\subsection{Comparison with Knutson and Zinn-Justin} \label{sec:KZJ}

Knutson and Zinn-Justin \cite{KnutsonZinnJustinMotivic} study similar geometric
objects via a different solvable lattice model. Their partition functions are
the \emph{motivic Segre classes} of Schubert cells, a renormalization of the
motivic Chern classes \cite[Remark~4.6]{AMSSCasselman}.

Both partition functions involve two sets of variables which are subject to
specialization to obtain motivic Chern or Segre classes. In their two-variable
forms these partition functions are very different.  Our double polynomials
(\ref{def:doublemotivic}) are quite different from theirs in equations (5) and
(7) of \cite{KnutsonZinnJustinMotivic}.  The difference comes because their
lattice model is associated to the standard $R$-matrix for
$U_q(\widehat{\mathfrak{gl}}(n+1))$, while the horizontal edges of our lattice
model are related to the standard module for $U_q({\mathfrak{gl}}(n|1))$, and
the vertical edges are related to $2^n$ dimensional Kac modules (or from a
different viewpoint to limits of Kirillov-Reshetikhin modules for
$U_q(\widehat{\mathfrak{gl}}(n|1))$). Yet the partition functions become
closely related upon restriction. 

For a permutation $w\in S_n$, let $Z^{\text{KZJ}}_w$ be the partition function with boundary conditions $B(1_W,w,\square)$ and weights given in Figure \ref{fig:KZJ-weights}.
The functions $Z^{\text{KZJ}}_w$ are the partition functions $S^w$ from equation (5) of \cite{KnutsonZinnJustinMotivic}.

\begin{remark} \label{rem:KZJ-conventions}
The weights in Figure \ref{fig:KZJ-weights} are the weights from equation (1) of \cite{KnutsonZinnJustinMotivic}, with the following changes. First, we have rotated their model $90^\circ$ clockwise to match the conventions of our model. Second, we have replaced their parameters $z'$ and $z''$ with $x$ and $y$, respectively.
\end{remark}

\begin{figure}[h] 
\begin{center}
\begin{tabular}{|ccccc|}
\hline
\hline
Case 1 & Case 2 & Case 3 & Case 4 & Case 5 \\
\hline
\vertex{a}{a}{a}{a} &
\vertex{d}{c}{d}{c} &
\vertex{c}{d}{c}{d} &
\vertex{d}{c}{c}{d} &
\vertex{c}{d}{d}{c}
\\[4pt]
$1$ & 
$\frac{q(1-yx^{-1})}{1-q^2 yx^{-1}}$ &
$\frac{q(1-yx^{-1})}{1-q^2 yx^{-1}}$ &
$\frac{1-q^2}{1-q^2 yx^{-1}}$ &
$\frac{(1-q^2)yx^{-1}}{1-q^2 yx^{-1}}$  \\
\hline

\hline
\end{tabular}
\end{center}
\caption{The Boltzmann weights from \cite{KnutsonZinnJustinMotivic}, with convention changes describe in Remark \ref{rem:KZJ-conventions}. Here, $a$ is any color, while $c$ and $d$ are distinct colors with $c<d$. For these weights, colors include the empty color $+$. Any weights which do not appear in this table are 0.}
\label{fig:KZJ-weights}
\end{figure}

 A similar argument to the one given in our Section \ref{sec:partition-function} or \cite[Section~2]{KnutsonZinnJustinMotivic} shows that
 
\[Z^{\text{KZJ}}_{w_0} = q^{\ell(w_0)} \prod_{i+j\leqslant n} \left(1-\frac{y_j}{x_i}\right) \cdot (1-q)^n \cdot \prod_{i+j\leqslant n+1} \left(1 - q^2\frac{y_j}{x_i}\right)^{-1}\]
and 
\[ Z^{\text{KZJ}}_{ws_i} = \tau^{\text{KZJ}}_i Z^{\text{KZJ}}_w, \qquad \text{if }ws_i < w.\]
where
\[ \tau^{\text{KZJ}}_i  = \frac{(1-q^2) x^{\alpha_i}}{1 - q^2 x^{\alpha_i}} + \frac{q(1-x^{\alpha_i})}{1 - q^2 x^{\alpha_i}} s_i.\] 

These functions are substantially different from (\ref{def:doublemotivic}). The largest difference is in the seed of the recursion, $w=w_0$ , where their last factor in particular is very different from our corresponding functions (\ref{def:doublemotivic}). However, the restriction functions turn out to be quite similar.

\begin{example}
Let $w = s_1$, and $v = s_2s_1$, and set $t=q^2$. We have
\[
Z^\square_{1_W,w}|_v = (1- q^2 ) \left(1 - \frac{y_3}{y_1}\right)\left(1 - q^2\frac{y_2}{y_1}\right),
\]
while
\[
Z^{KZJ}_w|_v =   \frac{q (q - 1)  (1 - \frac{y_3}{y_2}) (1-\frac{y_1}{y_3}) (1 - \frac{y_1}{y_2} )^2}{(q + 1)^2(1-q^2\frac{y_1}{y_2})^2 (1-q^2\frac{y_3}{y_2}) (1-q^2\frac{y_2}{y_3} ) (1-q^2\frac{y_1}{y_3})}.
\]
So these restriction functions are equal up to a normalization factor of
\[
\frac{(q + 1)^3 y_3  (1-q^2\frac{y_3}{y_2}) (1-q^2\frac{y_2}{y_3}) (1-q^2\frac{y_2}{y_1} ) (1-q^2\frac{y_1}{y_3}) (1-q^2\frac{y_1}{y_2})^2}{ qy_1(1-\frac{y_1}{y_2})^2(1-\frac{y_3}{y_2})}.
\]

However, the double functions are much more different. We have
\[
Z^\square_{1_W,w} = \left(1-\frac{x_1}{y_1}\right) \left(q^4\frac{x_2x_3}{y_1y_2} + q^4\frac{x_3^2}{y_1y_2} - q^4\frac{x_3}{y_2} - q^2\frac{x_2x_3}{y_1y_2} - q^2\frac{x_3}{y_1} + 1\right),
\]
while $Z^{KZJ}_w$ even fully factored has 18 distinct factors, one of which has 581 distinct terms.

In both cases, passing from the double functions to the restriction functions we see a miraculous simplification, but this is true to a much larger extent for the functions in \cite{KnutsonZinnJustinMotivic}. This appears to be the general situation.
\label{ex:kzjcomparison} \end{example}

\section{Partition functions for K-theoretic stable envelopes\label{sec:stable}}

In the previous section, we demonstrated that our partition functions on the lattice of size $n \times n$ with 
prescribed boundary conditions match polynomial representatives for  (specializations of) motivic Chern classes 
of various Schubert cells in $G = GL_n$. Moreover, we were led by the lattice model and our natural family of 
Boltzmann weights in row parameters $\boldsymbol{x}$ and column parameters $\boldsymbol{y}$ to define 
natural two-variable lifts of these polynomial representatives. In the present section, we repeat this process for 
localizations of the K-theoretic stable envelopes~\cite{OkounkovEnumerative, MaulikOkounkov}, which we refer to as ``stab polynomials'' for brevity.

These K-theoretic stable envelopes $\{ \textrm{stab}_X(w) \; | \; w \in W \}$ form a distinguished basis of the 
localized equivariant K-theory of $T^\ast (G/B)$, the cotangent bundle of the flag variety, where 
$X = (\mathfrak{C}, T^{1/2}, \mathcal{L})$ is a chosen triple of data  consisting of a chamber $\mathfrak{C}$ in 
the Lie algebra of the maximal torus, a polarization $T^{1/2}$ in the equivariant K-theory of the cotangent bundle, 
and a certain line bundle $\mathcal{L}$ in the equivariant Picard group of the cotangent bundle. Our notation here 
matches~\cite{MaulikOkounkov, OkounkovEnumerative} where these were first defined, and we refer the reader to that source for further 
details. Roughly speaking the choice of the triple $X$ identifies a relatively small support set for these K-theoretic 
classes and that the set of weights appearing in a certain set of torus fixed points is appropriately small. Maulik and 
Okounkov \cite{MaulikOkounkov} show that these classes are uniquely characterized by several axioms on the 
support of their localizations.

In~\cite{SuZhaoZhongKStable}, Su, Zhao, and Zhong identify certain natural families of classes 
$\{ \textrm{stab}_{\pm}(w) \}$ for $w \in W$ corresponding to certain triples of data. The class $\textrm{stab}_+$ 
corresponds to the positive Weyl chamber for which all roots are positive (and other natural choices of polarization 
and line bundle made explicit in Section~3 of~\cite{SuZhaoZhongKStable}). Similarly the family $\textrm{stab}_-$ 
depends on the opposite (or negative) Weyl chamber (and other natural choices). A main theorem 
of~\cite{SuZhaoZhongKStable} (Theorem 3.5) shows that members of each family $\textrm{stab}_\pm$ are 
related by Demazure-Lusztig operators, so it is perhaps not surprising that they might arise as special cases 
of partition functions of our lattice models.

\begin{remark}
	\label{rem:qiswhat}
  In most of this paper, the deformation parameter is denoted $t$. In
	Sections~{\ref{sec:stable}} and {\ref{sec:denom}} we will use the letter $q$ to denote
  $t$ or $t^{- 1}$ in order for convenient comparison with other literature.
  Thus in this section, $t = q^{- 1}$ and our results may be compared with
  those in {\cite{AMSSCasselman}}. On the other hand in
  Section~\ref{sec:denom} we take $t = q$ for better comparison with~\cite{BumpNakasujiKL}.
\end{remark}

If we take localizations of stable envelope classes, these take values in 
$K_{T \times \mathbb{C}^\ast}(\mathbb{C}) \simeq \mathbb{Z}[q^{1/2}, q^{-1/2}][\Lambda]$ where $T$ is the maximal 
torus with weight lattice $\Lambda$. Here $q^{1/2}$ denotes the standard representation of $\mathbb{C}^\ast$. Further 
identifying the weight lattice $\Lambda$ with monomials in a single set of variables $\boldsymbol{y}^{\pm 1} = (y_1^{\pm 1}, \ldots, y_n^{\pm 1})$, 
then the localizations of stable envelope classes are polynomials in $\boldsymbol{y}^{\pm 1}$, the so-called ``stab polynomials.''

In Section~5 of~\cite{FeherRimanyiWeber}, the stable envelopes $\textrm{stab}_+(w)$ and $\textrm{stab}_-(w)$ are related to the motivic Chern class $MC_{-q^{-1}}(X(w)^\circ)$ and $MC_{-q^{-1}}(Y(w)^\circ)$, respectively. Their proof relies on verifiying that these classes satisfy the Maulik-Okounkov axioms that uniquely characterize these Laurent polynomials. An alternate proof using tools and notation more similar to ours was later given in Section 8 of~\cite{AMSSCasselman} via comparison of localization properties.

To demonstrate that our partition functions match stab polynomials, we thus have several proof strategies available. 
We may use the axiomatic characterization of \cite{MaulikOkounkov} applied to our partition functions, we may connect 
partition functions to stable envelopes by combining the results of the prior section with Theorems 8.5 and 8.6 
in~\cite{AMSSCasselman}, or we may employ the Demazure-Lusztig recursions of \cite{SuZhaoZhongKStable}. We choose 
the latter method, as it very naturally leads to the two-variable lifting of stable envelope localizations we propose later in this 
section. Thus the proofs bear strong resemblance to those of the prior section, but each case of the recursion is an intricate 
and subtle calculation, so full details are provided.

We begin by recalling the explicit formulas for stab polynomials $\textrm{stab}_{\pm}$, those for the positive and negative 
chambers, from~\cite{SuZhaoZhongKStable} and Section 8 of~\cite{AMSSCasselman}. Given any triple of data $X$ and 
any pair of elements $u, w \in W$, we write $[\stab_X(w)]_u$ for the localization of $\stab_X(w)$ at~$u$.
To translate from the notation of~\cite{AMSSCasselman}, simply replace their $e^{\alpha_i}$ with our $\boldsymbol{y}^{\alpha_i}$.
While these are given as propositions in~\cite{SuZhaoZhongKStable, AMSSCasselman}, whose proof relies on ensuring that
the resulting polynomials satisfy the criteria of \cite{MaulikOkounkov} that uniquely determine them, we may take this as the
definition of these specializations.

\begin{definition}[Propositions 8.3, 8.4 in~\cite{AMSSCasselman}] \label{def:stab-functions} 
The localizations of the positive-chamber K-theoretic stable envelope $\stab_+$ are determined by the following properties:
\begin{enumerate}
\item For any pair $u, w \in W$, then $[\stab_+(w)]_u = 0$ unless $u \leqslant w$.
\item For every element $w \in W$, 
\[ [\stab_+(w)]_{w}= q^{\ell(w) / 2} \prod_{\substack{\alpha\in \Phi^+ \\ w(\alpha) \in \Phi^-}}(1- q^{-1} \boldsymbol{y}^{w(\alpha)}) \prod_{\substack{\alpha\in \Phi^+ \\ w(\alpha) \in \Phi^+}}(1-\boldsymbol{y}^{w(\alpha)}). \]
\item For any $w \in W$ and any simply reflection $s_i$ with $ws_i>w$,
\[[\stab_+(ws_i)]_u=\frac{q^{1/2}-q^{-1/2}}{1-\boldsymbol{y}^{u\alpha_i}}[\stab_+(w)]_u - 
\frac{q^{-1/2}\boldsymbol{y}^{u\alpha_i}-q^{1/2}}{1-\boldsymbol{y}^{-u\alpha_i}}[\stab_+(w)]_{us_i}.\]
\end{enumerate}
The localizations of the negative-chamber K-theoretic stable envelope $\stab_-$ are determined by the following properties:
\begin{enumerate}
\item For any pair $u, w \in W$, then $[\stab_-(w)]_u = 0$ unless $u \geqslant w$.
\item For any element $w \in W$, 
\[[\stab_-(w)]_{w}
= q^{\ell(w)/2} \prod_{\substack{\alpha\in \Phi^+ \\ w(\alpha) \in \Phi^-}}(1- \boldsymbol{y}^{-w(\alpha)}) \prod_{\substack{\alpha\in \Phi^+ \\ w(\alpha) \in \Phi^+}}(1-q \,\boldsymbol{y}^{-w(\alpha)}).\] 
\item For any $w \in W$ and any simple reflection $s_i$ such that $ws_i<w$,
 \[[\stab_-(ws_i)]_u=\frac{q^{-1/2}-q^{1/2}}{1-\boldsymbol{y}^{-u\alpha_i}}[\stab_-(w)]_u+
 \frac{q^{-1/2}-q^{1/2}\boldsymbol{y}^{-u\alpha_i}}{1-\boldsymbol{y}^{u\alpha_i}}[\stab_-(w)]_{us_i}.\]
\end{enumerate}
\end{definition}
Note there's some redundancy built into the previous statements, as we need only specify the ``diagonal terms'' $[\stab_+(e)]_e$ and $[\stab_{-}(w_0)]_{w_0}$.in each of the respective part (2)'s above to uniquely determine $\stab_\pm$. We'll make use of this in our next definition.

As with the double motivic Chern classes in the previous section, geometric arguments
of \cite{SuZhaoZhongKStable} show that the stable basis
classes in Definition~\ref{def:stab-functions} lie in $K_{T \times G_m}
(\operatorname{Fl}_n)$, so satisfy \eqref{eq:GKM-divisibility}. 
Applying the GKM isomorphism, there exist elements of $\mathbb{Z}[q^{\pm 1/2}, \bs{x}^{\pm 1},
\bs{y}^{\pm 1}]$ whose restriction functions match
Definition~\ref{def:stab-functions}, and the next definition gives such lifts.
First we define the operators $M_i^{\pm}$ on functions  
$f(\boldsymbol{x}; \boldsymbol{y}) \in \mathbb{Z}[q^{\pm 1/2}, \bs{x}^{\pm 1},
\bs{y}^{\pm 1}]$:
\begin{equation}
M_i^+ (f(\boldsymbol{x};\boldsymbol{y})):= \frac{q^{1/2}-q^{-1/2}}{1-\boldsymbol{x}^{\alpha_i}} 
f(\boldsymbol{x};\boldsymbol{y}) - \frac{q^{-1/2}\boldsymbol{x}^{\alpha_i}-q^{1/2}}{1-\boldsymbol{x}^{-\alpha_i}} 
f(\boldsymbol{x}^{s_i};\boldsymbol{y})
\label{eq:mplus} \end{equation}
and
\begin{equation}
M_i^- (f(\boldsymbol{x};\boldsymbol{y}))  := \frac{q^{-1/2}-q^{1/2}}{1-\boldsymbol{x}^{-\alpha_i}} 
f(\boldsymbol{x};\boldsymbol{y}) + \frac{q^{-1/2}-q^{1/2}\boldsymbol{x}^{-\alpha_i}}{1-\boldsymbol{x}^{\alpha_i}} 
f(\boldsymbol{x}^{s_i};\boldsymbol{y}).
\label{eq:mminus} \end{equation}
As usual $\boldsymbol{x}^{s_i}$ denotes the permutation action of the simple 
reflection $s_i$ on the variables, interchanging $x_i$ and $x_{i+1}$.

\begin{definition} \label{def:doublestab}
The \emph{lifted K-theoretic stable envelopes} $\stab_{\pm}$ are determined 
by the following set of conditions. At the identity element $e \in W$,
$$ \stab_+(e) = \prod_{i<j} \left(1-\frac{x_i}{y_j}\right), $$
and for $w \in W$ and simple reflection $s_i$ such that $ws_i>w$, then 
$$ \stab_+(ws_i) = M_i^+ \stab_-(w). $$
Similarly at the long element $w_0 \in W$, we set
\[\stab_-(w_0) = q^{\ell(w_0)/2} r(\boldsymbol{x}_{w_0};\boldsymbol{x})\prod_{i+j\leqslant n} \left(1-\frac{x_i}{y_j}\right),\] 
and for any $w \in W$ and simple reflection $s_i$ such that $ws_i<w$, then 
\[\stab_-(ws_i) = M_i^- \stab_-(w).\]
\end{definition}

Note the absence of brackets, e.g., $\stab_{\pm} (w)$ versus $[\stab_{\pm}
(w)]_u$, distinguishes the lifted two variable polynomials from their
localization in one variable set $\boldsymbol{y}$. The next result justifies
this notation choice and the validity of the term `lift' by matching the
specialization to the localization. Again for $f \in \mathbb{Z} [q^{\pm 1 /
2}, \boldsymbol{x}^{\pm 1}, \boldsymbol{y}^{\pm 1}]$ we define the specialization
at $u \in W$ by:
\[ f (\boldsymbol{x}; \boldsymbol{y}) |_u := f (\boldsymbol{y}_u ;
   \boldsymbol{y}) = f (y_{u (1)}, \ldots, y_{u (n)} ; y_1, \ldots, y_n) . \]
\begin{proposition}
  For all permutations $u$ and $w$,
  \[ \stab_+ (w) |_u = [\stab_+ (w)]_u \qquad \text{and} \qquad \stab_- (w)
     |_u = [\stab_- (w)]_u . \]
\end{proposition}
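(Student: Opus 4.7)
The plan is a straightforward induction on Bruhat length, closely paralleling the proof of the analogous proposition for $\MC(X(w)^\circ)$ in the previous section. The key observation is that under the restriction $\bs{x} \mapsto \bs{y}_u$ one has $\bs{x}^{\alpha_i}|_u = \bs{y}^{u\alpha_i}$, and by~\eqref{eq:relres} the action of $s_i$ on functions intertwines with restriction via $(s_i\cdot f)|_u = f|_{us_i}$. Consequently, applying either of the operators $M_i^\pm$ from~\eqref{eq:mplus}--\eqref{eq:mminus} and then restricting at $u$ reproduces literally the two-term recursion in Definition~\ref{def:stab-functions}, with the rational prefactors matching on the nose between the operator form and the localization form.

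For $\stab_+$ I would induct on $\ell(w)$ starting from $w = e$. At $w = e$ the lift gives $\stab_+(e)|_u = \prod_{i<j}(1 - y_{u(i)}/y_j)$, which equals $\prod_{\alpha \in \Phi^+}(1 - \bs{y}^{\alpha}) = [\stab_+(e)]_e$ when $u = e$. When $u \ne e$, letting $i$ be the smallest index with $u(i) \ne i$, the minimality of $i$ forces $u(i) > i$, so the factor indexed by $(i, u(i))$ vanishes, matching the support requirement of Definition~\ref{def:stab-functions}(1). In the inductive step, if $ws_i > w$ then expanding $(M_i^+ \stab_+(w))|_u$ and applying the induction hypothesis at both $u$ and $us_i$ reproduces Definition~\ref{def:stab-functions}(3) verbatim. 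Vanishing outside the Bruhat interval follows exactly as in the motivic Chern proof: if $u \not\leqslant ws_i$ then both $u \not\leqslant w$ and $us_i \not\leqslant w$, so both summands vanish by induction.

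For $\stab_-$ the induction is dual, starting from $w = w_0$ and proceeding by length-decreasing recursions via $M_i^-$; the inductive step is formally identical to the $\stab_+$ case. The only substantive new point is the base case, which requires evaluating
\[
\stab_-(w_0)|_u = q^{\ell(w_0)/2} \, r(\bs{y}_{uw_0}; \bs{y}_u) \prod_{i+j \leqslant n}\left(1 - \frac{y_{u(i)}}{y_j}\right).
\]
Vanishing for $u \ne w_0$ is immediate: such a $u$ must satisfy $u(i) \leqslant n - i$ for some $i$, producing a zero factor. For $u = w_0$, the restriction sends $\bs{x}_{w_0}$ to $\bs{y}$ and $\bs{x}$ to $\bs{y}_{w_0}$, so $r(\bs{x}_{w_0}; \bs{x})|_{w_0} = r(\bs{y}; \bs{y}_{w_0})$, and the product becomes $\prod_{\alpha \in \Phi^+}(1 - \bs{y}^{-\alpha})$. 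Pulling out $-\bs{y}^{-\alpha}$ from each factor and using $r(\bs{y}; \bs{y}_{w_0}) = (-1)^{\ell(w_0)} \bs{y}^{\rho}\bs{y}_{w_0}^{-\rho}$, a direct exponent comparison collapses these to the required $q^{\ell(w_0)/2} \prod_{\alpha \in \Phi^+}(1 - \bs{y}^{\alpha}) = [\stab_-(w_0)]_{w_0}$.

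The only nonroutine step is this base-case verification for $\stab_-(w_0)$, where the normalization factor $r(\bs{x}_{w_0}; \bs{x})$ is precisely engineered so that after restriction it absorbs the sign and monomial arising from converting $\prod_{\alpha \in \Phi^+}(1 - \bs{y}^{-\alpha})$ into $\prod_{\alpha \in \Phi^+}(1 - \bs{y}^{\alpha})$. Everything else is a rote induction that rigidly follows the template already developed in the motivic Chern setting.
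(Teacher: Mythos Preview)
Your proposal is correct and follows essentially the same approach as the paper: both argue by induction on Bruhat length (the paper spells out the $\stab_-$ case starting from $w_0$ and leaves $\stab_+$ as similar, while you sketch both), with the key inputs being that restriction at $u$ sends $\bs{x}^{\alpha_i}$ to $\bs{y}^{u\alpha_i}$ and intertwines $s_i$ with $u\mapsto us_i$, so that $M_i^\pm$ restricted reproduces the recursion of Definition~\ref{def:stab-functions}. Your base-case verification for $\stab_-(w_0)|_{w_0}$ via the monomial cancellation between $r(\bs{y};\bs{y}_{w_0})$ and $\prod_\alpha(-\bs{y}^{-\alpha})$ is exactly the computation the paper carries out (in slightly different packaging).
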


\begin{remark}
  As in the previous section, this proposition can be taken as an alternate
  proof that the functions in Definition~\ref{def:doublestab} are indeed
  elements of $K_{T \times G_m} (\mathrm{Fl}_n)$.
\end{remark}

\begin{proof}
  We prove the result for the case of $\stab_- (w)$, and the other case is
  similar. We use induction downward in Bruhat order beginning with the long
  element $w_0$. If $w = w_0$, then
  \[ \stab_- (w_0) |_u = q^{\ell (w_0) / 2} r (\boldsymbol{y}_{u w_0} ;
     \boldsymbol{y}_u)  \prod_{i + j \leqslant n} \left( 1 - \frac{y_{u
     (i)}}{y_j} \right) . \]
  This is zero if $u (i) = j$ for some $n - i \geqslant j$, or equivalently if
  $u (n + 1 - i) = j$ for some $i > j$, which happens whenever $u \ne w_0$. If
  $u = w_0$, then we have
  \[ \stab_- (w_0) |_{w_0} = q^{\ell (w_0) / 2} r (\boldsymbol{y};
     \boldsymbol{y}_{w_0})  \prod_{j < i} \left( 1 - \frac{y_i}{y_j} \right) =
     q^{\ell (w_0) / 2}  \prod_{i < j} \left( 1 - \frac{y_i}{y_j} \right) =
     [\stab_- (w_0)]_{w_0} . \]
  so $\stab_- (w_0) |_u = [\stab_- (w_0)]_u$ for all~$u$.
  
  Next, assume that $w > ws_i$ and that $\stab_- (w) |_u = [\stab_- (w)]_u$
  for all $u$. Then
  \[ \stab_- (ws_i) = M_i^- \stab_- (w) = \frac{q^{- 1 / 2} - q^{1 / 2}}{1
     -\boldsymbol{x}^{- \alpha_i}} \stab_- (w) + \frac{q^{- 1 / 2} - q^{1 / 2}
     \boldsymbol{x}^{- \alpha_i}}{1 -\boldsymbol{x}^{\alpha_i}} (s_i \stab_- (w)),
  \]
  and restricting to $u$ gives
  
  \begin{align*}
    \stab_- (ws_i) |_u & = \frac{q^{- 1 / 2} - q^{1 / 2}}{1 -\boldsymbol{y}^{- u
    \alpha_i}} \stab_- (w)  |_u + \frac{q^{- 1 / 2} - q^{1 / 2}
    \boldsymbol{y}^{- u \alpha_i}}{1 -\boldsymbol{y}^{u \alpha_i}} \stab_- (w)
    |_{us_i}\\
    & = \frac{q^{- 1 / 2} - q^{1 / 2}}{1 -\boldsymbol{y}^{- u \alpha_i}}
    [\stab_- (w)]_u + \frac{q^{- 1 / 2} - q^{1 / 2} \boldsymbol{y}^{- u
    \alpha_i}}{1 -\boldsymbol{y}^{u \alpha_i}} [\stab_- (w)]_{us_i}\\
    & = [\stab_- (ws_i)]_u .
  \end{align*}
  
  In particular, if $u \ngeqslant ws_i$, then $u \ngeqslant w$ and $us_i
  \ngeqslant w$, so $\stab_- (ws_i) |_u$ is zero in this case by induction.
\end{proof}

Next we connect the recursion operators $M_i^{\pm}$ defined in \eqref{eq:mplus} and \eqref{eq:mminus} to the prior recursive operators 
on lattice model partition functions $\tau_i$ from \eqref{eq:tauop}. Recall the operator $\overline{\tau}_i = \theta\tau_i\theta$ from \eqref{eq:tau-bar}.

\begin{proposition} \label{prop:stab-op-comp} For any $i \in [1,n]$,
\[M_i^- = q^{1/2} \boldsymbol{x}^{-\rho}\overline{\tau}_i \boldsymbol{x}^{\rho}|_{t=q^{-1}}, \qquad \text{and} \qquad M_i^+ = q^{-1/2} \boldsymbol{x}^{\rho} \overline{\tau}_i^{-1} \boldsymbol{x}^{-\rho} |_{t=q^{-1}}.\]
\end{proposition}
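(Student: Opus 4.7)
The plan is a direct algebraic verification, one identity at a time, starting from the defining formulas. The only conceptual ingredient is the conjugation rule for $s_i$ by powers of $\bs{x}^\rho$, which follows from the defining property $s_i(\rho) = \rho - \alpha_i$ of the Weyl vector: as operators on functions of $\bs{x}$ one has $s_i\bs{x}^\rho = \bs{x}^{\rho-\alpha_i} s_i$, equivalently
\[
\bs{x}^{-\rho} s_i \bs{x}^\rho = \bs{x}^{-\alpha_i} s_i \qquad \text{and} \qquad \bs{x}^{\rho} s_i \bs{x}^{-\rho} = \bs{x}^{\alpha_i} s_i.
\]
This is the same device used earlier in the paper to pass between $\tau_i$ and $\mathfrak{T}_i$ in \eqref{eq:demazurewhit}, and the rational coefficients appearing in $\overline\tau_i$ and $\overline\tau_i^{-1}$ depend only on $\bs{x}^{\pm\alpha_i}$, hence commute with $\bs{x}^{\pm\rho}$.

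For the first identity, I would apply the conjugation rule to $\overline\tau_i$ as given in \eqref{eq:tau-bar} to obtain
\[
\bs{x}^{-\rho}\overline\tau_i\bs{x}^\rho \;=\; \frac{t-1}{1-\bs{x}^{-\alpha_i}} \;+\; \frac{(\bs{x}^{-\alpha_i}-t)\bs{x}^{-\alpha_i}}{1-\bs{x}^{-\alpha_i}}\,s_i.
\]
Specializing $t=q^{-1}$ and multiplying by $q^{1/2}$, the constant term becomes $(q^{-1/2}-q^{1/2})/(1-\bs{x}^{-\alpha_i})$, matching the first summand of $M_i^-$ in \eqref{eq:mminus}. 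For the $s_i$-coefficient, multiplying the numerator and denominator by $-\bs{x}^{\alpha_i}$ rewrites $q^{1/2}(\bs{x}^{-\alpha_i}-q^{-1})\bs{x}^{-\alpha_i}/(1-\bs{x}^{-\alpha_i})$ as $(q^{-1/2}-q^{1/2}\bs{x}^{-\alpha_i})/(1-\bs{x}^{\alpha_i})$, which is exactly the second summand of $M_i^-$.

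For the second identity, the same conjugation applied to $\overline\tau_i^{-1}$ (whose formula is displayed just before Proposition~\ref{prop:recursion}) gives
\[
\bs{x}^{\rho}\overline\tau_i^{-1}\bs{x}^{-\rho} \;=\; \frac{(t-1)\bs{x}^{-\alpha_i}}{t(1-\bs{x}^{-\alpha_i})} \;+\; \frac{(\bs{x}^{-\alpha_i}-t)\bs{x}^{\alpha_i}}{t(1-\bs{x}^{-\alpha_i})}\,s_i.
\]
Setting $t=q^{-1}$ and multiplying by $q^{-1/2}$, the constant term simplifies (after multiplying numerator and denominator by $\bs{x}^{\alpha_i}$) to $(q^{1/2}-q^{-1/2})/(1-\bs{x}^{\alpha_i})$, matching the first summand of $M_i^+$ in \eqref{eq:mplus}. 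A parallel rearrangement of the $s_i$-coefficient produces $-(q^{-1/2}\bs{x}^{\alpha_i}-q^{1/2})/(1-\bs{x}^{-\alpha_i})$, completing the match.

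There is no serious obstacle: once the conjugation relation is in hand, the proof is symbolic manipulation. The only point requiring care is consistently converting between the two standard denominator forms $(1-\bs{x}^{\alpha_i})$ and $(1-\bs{x}^{-\alpha_i})$ so that the output of the conjugation lines up term-for-term with \eqref{eq:mplus} and \eqref{eq:mminus}.
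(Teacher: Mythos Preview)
Your proposal is correct and follows essentially the same approach as the paper: both use the conjugation rule $\bs{x}^{-\rho}s_i\bs{x}^\rho=\bs{x}^{-\alpha_i}s_i$ to reduce the identity to a direct symbolic check of the coefficients. The paper's proof is slightly terser (it verifies only $M_i^-$ and leaves $M_i^+$ as ``similar''), whereas you spell out both computations, but there is no substantive difference.
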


\begin{proof}
We prove this for $M_i^-$ and the result for $M_i^+$ follows similarly. Since
$\boldsymbol{x}^{-\rho} s_i \boldsymbol{x}^{\rho}= \boldsymbol{x}^{-\alpha_i}s_i$, 
\[ \boldsymbol{x}^{-\rho}\overline{\tau}_i \boldsymbol{x}^{\rho}|_{t=q^{-1}} = \frac{q^{-1}-1}{1-\bs{x}^{-\alpha_i}} + \frac{\bs{x}^{-\alpha_i}-q^{-1}}{1-\bs{x}^{-\alpha_i}} \bs{x}^{-\alpha_i}s_i
= \frac{q^{-1}-1}{1-\boldsymbol{x}^{-\alpha_i}} + \frac{\boldsymbol{x}^{-\alpha_i}-q^{-1}}{\boldsymbol{x}^{\alpha_i}-1} s_i = q^{-1/2}M_i^-. \qedhere \]
\end{proof}

Having connected the recursive operators, we may now formally connect our lifted K-theoretic stable envelopes to lattice model partition functions. Take $q^{-1}=t$ for the rest of this section. By Proposition~\ref{prop:recursion},
\[ Z^{\lambda}_{\sigma, ws_i} (\bs{x}^{-1} ; \bs{y}^{-1}) = \left\{\begin{array}{ll}
       \overline{\tau}_i \cdot Z^{\lambda}_{\sigma, w} (\bs{x}^{-1} ; \bs{y}^{-1}), & \text{if }
       \ell (ws_i) < \ell (w),\\
       \overline{\tau}_i^{- 1} \cdot Z^{\lambda}_{\sigma, w} (\bs{x}^{-1} ; \bs{y}^{-1}), &
       \text{if } \ell (ws_i) > \ell (w) .
     \end{array}\right. \]

\begin{theorem} \label{thm:stab-partition-function}
For all permutations $w$,
\[
\stab_-(w) = (-1)^{\ell(w_0)} q^{\ell(w_0)-\ell(w)/2} r(\boldsymbol{x}_{w_0}; \boldsymbol{y}) Z^\square_{1_W,w}(\boldsymbol{x}^{-1};\boldsymbol{y}^{-1})
\]
and
\[
\stab_+(w) = q^{\ell(w_0)-\ell(w)/2} r(\boldsymbol{x}; \boldsymbol{y}_{w_0}) Z^\square_{w_0,w}(\boldsymbol{x}^{-1};\boldsymbol{y}_{w_0}^{-1}).
\]
\end{theorem}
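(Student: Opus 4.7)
The plan is to prove both identities by parallel inductions on Bruhat length, downward from $w_0$ for $\stab_-$ and upward from $e$ for $\stab_+$. In each case one compares the defining recursion $\stab_\pm(ws_i) = M_i^\pm \stab_\pm(w)$ from Definition~\ref{def:doublestab} with the $\overline{\tau}$-form of the partition function recursion stated immediately before the theorem, using the operator identifications $M_i^- = q^{1/2}\bs{x}^{-\rho}\overline{\tau}_i\bs{x}^{\rho}$ and $M_i^+ = q^{-1/2}\bs{x}^{\rho}\overline{\tau}_i^{-1}\bs{x}^{-\rho}$ (at $t = q^{-1}$) from Proposition~\ref{prop:stab-op-comp}.

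For the base case $w = w_0$ in the $\stab_-$ identity, Proposition~\ref{prop:basecase} gives
\[
Z^\square_{1_W, w_0}(\bs{x}^{-1}; \bs{y}^{-1}) = \prod_{i+j \leqslant n}\left(1 - \frac{y_j}{x_i}\right),
\]
and multiplying by $r(\bs{x}_{w_0}; \bs{y})$ and invoking the first identity of (\ref{eq:rwithbasecase}) together with the cocycle relation $r(\bs{x}_{w_0}; \bs{y})\, r(\bs{y}; \bs{x}) = (-1)^{\ell(w_0)} r(\bs{x}_{w_0}; \bs{x})$ extracted from (\ref{eq:rinprods})--(\ref{eq:rundertheta}) reproduces the seed $\stab_-(w_0)$ of Definition~\ref{def:doublestab}. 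The analogous base case $w = e$ of the $\stab_+$ identity is handled in the same way: Theorem~\ref{thm:partition-function} (with $\tau_e^{-1}\tau_{w_0^2} = \mathrm{id}$) yields $Z^\square_{w_0, e} = t^{\ell(w_0)}\prod_{i+j \leqslant n}(1 - x_i/y_j)$, whose substitution of $\bs{y} \to \bs{y}_{w_0}^{-1}$ produces $t^{\ell(w_0)}\prod_{i<j}(1 - y_j/x_i)$; the leading $q^{\ell(w_0)}$ cancels $t^{\ell(w_0)} = q^{-\ell(w_0)}$, and the second half of (\ref{eq:rwithbasecase}) then matches $\stab_+(e) = \prod_{i<j}(1 - x_i/y_j)$.

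The inductive step turns on the observation that the $r$-prefactor combined with $\bs{x}^{\pm\rho}$ becomes Weyl-invariant in $\bs{x}$: explicitly, $\bs{x}^{\rho} r(\bs{x}_{w_0}; \bs{y}) = (-1)^{\ell(w_0)}(x_1 \cdots x_n)^{n-1} \bs{y}^{-\rho}$ (since $\rho + w_0\rho = (n-1, \ldots, n-1)$), and $\bs{x}^{-\rho} r(\bs{x}; \bs{y}_{w_0}) = (-1)^{\ell(w_0)} \bs{y}_{w_0}^{-\rho}$ is $\bs{x}$-constant. Since $\overline{\tau}_i$ and $\overline{\tau}_i^{-1}$ commute with multiplication by any $s_i$-invariant function, these prefactors pass through the inner operator, leaving exactly $\overline{\tau}_i^{\pm 1}$ acting on $Z^\square_{\sigma, w}(\bs{x}^{-1}; \bs{y}^{-1})$; the $\overline{\tau}$-recursion identifies the result with $Z^\square_{\sigma, ws_i}(\bs{x}^{-1}; \bs{y}^{-1})$. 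The $q^{\pm 1/2}$ factor contained in $M_i^\pm$ then precisely absorbs the change $\ell(ws_i) = \ell(w) \pm 1$ in the exponent of $q$, closing the induction.

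The main obstacle is the normalization bookkeeping in the base case, where one must carefully reconcile the asymmetric choices $r(\bs{x}_{w_0}; \bs{y})$ versus $r(\bs{x}; \bs{y}_{w_0})$ against the seeds $\stab_-(w_0)$ and $\stab_+(e)$; once the identities (\ref{eq:rinprods})--(\ref{eq:rwithbasecase}) are fully deployed and Proposition~\ref{prop:stab-op-comp} is in hand, the inductive step is essentially a one-line manipulation.
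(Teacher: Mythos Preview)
Your proposal is correct and follows essentially the same approach as the paper's proof: induction from $w_0$ (respectively $e$) using Proposition~\ref{prop:stab-op-comp} to identify $M_i^\pm$ with conjugates of $\overline{\tau}_i^{\pm 1}$, and the symmetry of $\bs{x}^{\rho}r(\bs{x}_{w_0};\bs{y})$ (respectively $\bs{x}^{-\rho}r(\bs{x};\bs{y}_{w_0})$) to commute the prefactor past the operator. Your explicit computation that $\rho + w_0\rho = (n-1,\ldots,n-1)$ makes the symmetry step more transparent than the paper's terse ``is symmetric in $\bs{x}$''.
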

\begin{proof}
We prove the $\stab_-$ equality, leaving the similar result for $\stab_+$ to the reader. The proof is by induction on $w$. For the base case,
using Proposition~\ref{prop:basecase} and Definition~\ref{def:doublestab}, together with \eqref{eq:rinprods} and \eqref{eq:rwithbasecase}:
\begin{align*} \stab_-(w_0) &= q^{\ell(w_0)/2}
r(\boldsymbol{x}_{w_0};\boldsymbol{x})\prod_{i+j\leqslant n} \left(1-\frac{x_i}{y_j}\right) \\
&= (-1)^{\ell(w_0)} q^{\ell(w_0) - \ell(w_0)/2} r(\boldsymbol{x}_{w_0};\boldsymbol{y}) Z_{1_W,w_0}^\square(\boldsymbol{x}^{-1}; \boldsymbol{y}^{-1}). \end{align*} 

For the inductive step, If $ws_i<w$, then using Proposition \ref{prop:stab-op-comp} and the induction hypothesis, 
\begin{align*} \stab_-(ws_i) &= M_i^- \stab_-(w) \\
&=q^{1/2}\boldsymbol{x}^{-\rho}\overline{\tau}_i \boldsymbol{x}^{\rho} (-1)^{\ell(w_0)} q^{\ell(w_0)-\ell(w)/2} r(\boldsymbol{x}_{w_0}; \boldsymbol{y}) Z^\square_{1_W,w}(\boldsymbol{x}^{-1};\boldsymbol{y}^{-1})\\
&= (-1)^{\ell(w_0)} q^{\ell(w_0)-\ell(w)/2} q^{1/2} \boldsymbol{x}^{-\rho} \overline{\tau}_i \boldsymbol{x}^{\rho} r(\boldsymbol{x}_{w_0};\boldsymbol{y}) Z_{1_W,w}^\square(\boldsymbol{x}^{-1}; \boldsymbol{y}^{-1}) \\
&= (-1)^{\ell(w_0)} q^{\ell(w_0)-\ell(w)/2} q^{1/2} r(\boldsymbol{x}_{w_0};\boldsymbol{y}) \overline{\tau}_i Z_{1_W,w}^\square(\boldsymbol{x}^{-1}; \boldsymbol{y}^{-1}) \\
&= (-1)^{\ell(w_0)} q^{\ell(w_0)-\ell(ws_i)/2} r(\boldsymbol{x}_{w_0};\boldsymbol{y}) Z_{1_W,ws_i}^\square(\boldsymbol{x}^{-1}; \boldsymbol{y}^{-1}) \\
\end{align*}
where we have used that $\boldsymbol{x}^{\rho} r(\boldsymbol{x}_{w_0};\boldsymbol{y})$ is symmetric in $\boldsymbol{x}$, so commutes with $\overline{\tau}_i$.

For the $\stab_+$ equality, the base case is 
\[\stab_+(e) = \prod_{i<j} \left(1-\frac{x_i}{y_j}\right) = r(\boldsymbol{x},\boldsymbol{y}_{w_0})\theta \prod_{i<j} \left(1-\frac{x_i}{y_j}\right) 
= q^{\ell(w_0)} r(\boldsymbol{x}; \boldsymbol{y}_{w_0})\theta Z_{w_0,1_W}^\square(\boldsymbol{x}; \boldsymbol{y}_{w_0}).\]

For the inductive step, if $ws_i>w$, then using Proposition \ref{prop:stab-op-comp},
\begin{align*} \stab_+(ws_i) &= M_i^+ \stab_+(w) \\
&= q^{-1/2} \boldsymbol{x}^{\rho} \overline{\tau}_i^{-1} \boldsymbol{x}^{-\rho} q^{\ell(w_0)-\ell(w)/2} r(\boldsymbol{x}; \boldsymbol{y}_{w_0}) Z^\square_{w_0,w}(\boldsymbol{x}^{-1};\boldsymbol{y}_{w_0}^{-1})\\
&= q^{\ell(w_0)-\ell(w)/2}q^{-1/2} \boldsymbol{x}^{\rho} \overline{\tau}_i^{-1} \boldsymbol{x}^{-\rho} r(\boldsymbol{x}; \boldsymbol{y}_{w_0}) Z^\square_{w_0,w}(\boldsymbol{x}^{-1};\boldsymbol{y}_{w_0}^{-1}) \\
&= q^{\ell(w_0)-\ell(w)/2}q^{-1/2} r(\boldsymbol{x}; \boldsymbol{y}_{w_0}) \overline{\tau}_i^{-1} Z^\square_{w_0,w}(\boldsymbol{x}^{-1};\boldsymbol{y}_{w_0}^{-1}) \\
&= q^{\ell(w_0)-\ell(ws_i)/2} r(\boldsymbol{x}; \boldsymbol{y}_{w_0}) Z^\square_{w_0,ws_i}(\boldsymbol{x}^{-1};\boldsymbol{y}_{w_0}^{-1}) \\
\end{align*} 
where we used that $\boldsymbol{x}^{-\rho} r(\boldsymbol{x}; \boldsymbol{y}_{w_0})$ is symmetric in $\boldsymbol{x}$.
\end{proof}

\subsection{Interpolation stab functions\label{subsec:interpstab}}

Theorem~\ref{thm:stab-partition-function} suggests an interpolation of the functions in Definition~\ref{def:doublestab}.

\begin{definition} \label{def:interpolation-stab}
For all permutations $u$ and $w$, we define the \emph{interpolated stab polynomials} by
\[
\stab_u(w) := (-1)^{\ell(u w_0)} q^{\ell(w_0)-\ell(w)/2} r(\boldsymbol{x}_{(uw_0)^{-1}}; \boldsymbol{y}_u) Z_{u,w}^\square(\boldsymbol{x}^{-1},(\boldsymbol{y}_u)^{-1}).\]
\end{definition}

Note in particular that with this definition,
\[
\stab_{e}(w) = \stab_-(w), \qquad \stab_{w_0}(w) = \stab_+(w),
\]
so they indeed interpolate between the lifted K-theoretic stable envelopes $\stab_\pm$.

Now building on the results of the prior section, we may construct recursions for both these functions and their restrictions. Set 
$$ M_i^{u} := q^{1/2} \boldsymbol{x}^{-w_0u^{-1}w_0\rho}\overline{\tau}_i \boldsymbol{x}^{w_0u^{-1}w_0\rho} . $$ 
In particular
\[
M_i^{1_W} = q^{1/2}\boldsymbol{x}^{-\rho}\overline{\tau}_i \boldsymbol{x}^{\rho} = M_i^- \qquad \text{and} \qquad M_i^{w_0} = q^{1/2}\boldsymbol{x}^{\rho}\overline{\tau}_i \boldsymbol{x}^{-\rho} = (M_i^+)^{-1}.
\]
Given any fixed $u \in W$, the $M_i^u$ satisfy the braid relations since the $\tau_i$ and thus the $\overline{\tau}_i$ braid.

\begin{proposition} \label{prop:interpolated-stab}
For any $u \in W$, the interpolated stab polynomial $\stab_u$ is completely determined by the following properties:
\begin{enumerate}
\item At the element $u w_0 \in W$,
\begin{equation} \label{eq:interpolated-stab-base-case}
\stab_u(uw_0) = (-1)^{\ell(u )} q^{\ell(uw_0)/2} r(\boldsymbol{x}_{(uw_0)^{-1}};\boldsymbol{x})\prod_{i+j\leqslant n} \left(1-\frac{x_i}{y_{u(j)}}\right).
\end{equation}
\item For any $w \in W$ and any simple reflection $s_i$ with $ws_i<w$, then 
\begin{equation}\label{eq:stabfromm}
\stab_u(ws_i) = M_i^u \stab_u(w).
\end{equation}
\end{enumerate}
\end{proposition}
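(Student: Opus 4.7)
The plan is to verify that the explicit formula in Definition~\ref{def:interpolation-stab} satisfies both the base case~\eqref{eq:interpolated-stab-base-case} and the recursion~\eqref{eq:stabfromm}, and then to deduce that these two conditions determine $\stab_u$ uniquely. Since $\square = (n, n-1, \ldots, 1)$ has distinct parts, the stabilizer $\operatorname{Stab}(w_0 \cdot \square)$ is trivial, so $u$ automatically plays the role of the maximal length coset representative in Propositions~\ref{prop:basecase} and~\ref{prop:recursion}, and both results apply directly to the partition function $Z^\square_{u,w}$ appearing in Definition~\ref{def:interpolation-stab}.

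For the base case, I would substitute $w = uw_0$ into Definition~\ref{def:interpolation-stab}. Proposition~\ref{prop:basecase} (with $t = q^{-1}$) gives
\[ Z^\square_{u, uw_0}(\boldsymbol{x}^{-1}; (\boldsymbol{y}_u)^{-1}) = q^{-\ell(u)} \prod_{i + j \leq n}\bigl(1 - y_{u(j)}/x_i\bigr), \]
and then identity~\eqref{eq:rwithbasecase} converts $\prod(1 - y_{u(j)}/x_i)$ into $\prod(1 - x_i/y_{u(j)})$ at the cost of a factor of $r(\boldsymbol{x}; \boldsymbol{y}_u)^{-1}$. Combining this factor with the $r(\boldsymbol{x}_{(uw_0)^{-1}}; \boldsymbol{y}_u)$ already present in the definition, via~\eqref{eq:rinprods}, produces $(-1)^{\ell(w_0)} r(\boldsymbol{x}_{(uw_0)^{-1}}; \boldsymbol{x})$. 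Tracking the remaining signs through $\ell(uw_0) = \ell(w_0) - \ell(u)$ and powers of $q$ through $\ell(w_0) - \ell(uw_0)/2 - \ell(u) = \ell(uw_0)/2$ yields exactly~\eqref{eq:interpolated-stab-base-case}.

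For the recursion, the key observation is that Definition~\ref{def:interpolation-stab} factors as $\stab_u(w) = K \boldsymbol{x}^{\mu} \tilde{F}(w)$, where $\mu := (uw_0)^{-1}\rho = w_0 u^{-1}\rho$ (so that $r(\boldsymbol{x}_{(uw_0)^{-1}}; \boldsymbol{y}_u) = (-1)^{\ell(w_0)} \boldsymbol{x}^{\mu} (\boldsymbol{y}_u)^{-\rho}$), the scalar $K$ depends on neither $\boldsymbol{x}$ nor $w$, and $\tilde{F}(w) := q^{-\ell(w)/2} Z^\square_{u, w}(\boldsymbol{x}^{-1}; (\boldsymbol{y}_u)^{-1})$. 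Since $w_0 \rho = -\rho$, the exponent in $M_i^u$ satisfies $-w_0 u^{-1} w_0 \rho = \mu$, so $M_i^u = q^{1/2} \boldsymbol{x}^{\mu} \overline{\tau}_i \boldsymbol{x}^{-\mu}$. Conjugating the recursion of Proposition~\ref{prop:recursion} by $\theta$ (and using that $\overline{\tau}_i$ treats $\boldsymbol{y}$ as constants, so is unaffected by the substitution $\boldsymbol{y} \to \boldsymbol{y}_u$) yields $\tilde{F}(ws_i) = q^{1/2} \overline{\tau}_i \tilde{F}(w)$ when $ws_i < w$. Thus $M_i^u \stab_u(w) = q^{1/2} K \boldsymbol{x}^{\mu} \overline{\tau}_i \tilde{F}(w) = \stab_u(ws_i)$, proving~\eqref{eq:stabfromm}. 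Uniqueness then follows because the $M_i^u$ are invertible (inheriting invertibility from $\overline{\tau}_i$) and satisfy the braid relations (conjugation by the fixed monomial $\boldsymbol{x}^{\mu}$ preserves braid relations), so that starting from $\stab_u(uw_0)$ one can consistently compute $\stab_u(w)$ for every $w \in W$ along any chain of simple reflections connecting $uw_0$ to $w$ in the Cayley graph. The only real difficulty is the bookkeeping in the base case, which is routine once one writes $r(\boldsymbol{x}; \boldsymbol{y}) = (-1)^{\ell(w_0)} \boldsymbol{x}^{\rho} \boldsymbol{y}^{-\rho}$ as an explicit monomial.
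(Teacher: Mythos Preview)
Your argument is correct and follows essentially the same route as the paper: verify the base case from Proposition~\ref{prop:basecase} together with the identities~\eqref{eq:rinprods}--\eqref{eq:rwithbasecase}, then derive the recursion by commuting the monomial prefactor in $r(\boldsymbol{x}_{(uw_0)^{-1}};\boldsymbol{y}_u)$ past $\overline\tau_i$, and finally invoke invertibility of $M_i^u$ for uniqueness.

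One small correction: with the paper's convention $\rho=(n-1,n-2,\ldots,0)$ one does \emph{not} have $w_0\rho=-\rho$ literally; rather $w_0\rho+\rho=(n-1)(1,\ldots,1)$ is $W$-invariant. Hence $-w_0u^{-1}w_0\rho$ and your $\mu=w_0u^{-1}\rho$ differ by a $W$-invariant vector, so $\boldsymbol{x}^{-w_0u^{-1}w_0\rho}$ and $\boldsymbol{x}^{\mu}$ differ by a symmetric monomial that commutes with $\overline\tau_i$. Your conclusion $M_i^u=q^{1/2}\boldsymbol{x}^{\mu}\overline\tau_i\boldsymbol{x}^{-\mu}$ is therefore still correct, but the stated reason should be adjusted. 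The paper handles the same point by observing directly that $\boldsymbol{x}^{w_0u^{-1}w_0\rho}\,r(\boldsymbol{x}_{(uw_0)^{-1}};\boldsymbol{y}_u)$ is symmetric in~$\boldsymbol{x}$.
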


\begin{proof}
The base case $w=uw_0$ follows from the definitions since 
\begin{align*} \stab_u(uw_0) &= (-1)^{\ell(u w_0)} q^{\ell(w_0)-\ell(uw_0)/2} r(\boldsymbol{x}_{(uw_0)^{-1}}; \boldsymbol{y}_u) Z_{u,uw_0}^\square(\boldsymbol{x}^{-1},\boldsymbol{y}_u^{-1}) \\
&= (-1)^{\ell(u w_0)} q^{\ell(w_0)/2+\ell(u)/2} r(\boldsymbol{x}_{(uw_0)^{-1}}; \boldsymbol{y}_u) Z_{u,uw_0}^\square(\boldsymbol{x}^{-1},\boldsymbol{y}_u^{-1}) \\&= (-1)^{\ell(u w_0)} q^{\ell(w_0)/2-\ell(u)/2} r(\boldsymbol{x}_{(uw_0)^{-1}}; \boldsymbol{y}_u) \prod_{i+j\leqslant n} \left(1-\frac{y_{u(j)}}{x_i}\right) \\
&= (-1)^{\ell(u)} q^{\ell(w_0)/2-\ell(u)/2} r(\boldsymbol{x}_{(uw_0)^{-1}}; \boldsymbol{x}) \prod_{i+j\leqslant n} \left(1-\frac{x_i}{y_{u(j)}}\right).\end{align*}

For the inductive step, suppose $ws_i<w$. Then, 
\begin{align*}
M_i^u \stab_u(w)
&=q^{1/2} \boldsymbol{x}^{-w_0u^{-1}w_0\rho\rho}\overline{\tau}_i \boldsymbol{x}^{w_0u^{-1}w_0\rho} (-1)^{\ell(u w_0)} q^{\ell(w_0)-\ell(w)/2} r(\boldsymbol{x}_{(uw_0)^{-1}}; \boldsymbol{y}_u) Z_{u,w}^\square(\boldsymbol{x}^{-1},\boldsymbol{y}_u^{-1})\\
&=  (-1)^{\ell(u w_0)} q^{\ell(w_0)-\ell(w)/2}q^{1/2} \boldsymbol{x}^{-w_0u^{-1}w_0\rho}\overline{\tau}_i \boldsymbol{x}^{w_0u^{-1}w_0\rho} r(\boldsymbol{x}_{(uw_0)^{-1}}; \boldsymbol{y}_u) Z_{u,w}^\square(\boldsymbol{x}^{-1},\boldsymbol{y}_u^{-1}) \\
&=  (-1)^{\ell(u w_0)} q^{\ell(w_0)-\ell(w)/2}q^{1/2} r(\boldsymbol{x}_{(uw_0)^{-1}}; \boldsymbol{y}_u)\overline{\tau}_i Z_{u,w}^\square(\boldsymbol{x}^{-1},\boldsymbol{y}_u^{-1}) \\
&=  (-1)^{\ell(u w_0)} q^{\ell(w_0)-\ell(ws_i)/2} r(\boldsymbol{x}_{(uw_0)^{-1}}; \boldsymbol{y}_u) Z_{u,ws_i}^\square(\boldsymbol{x}^{-1},\boldsymbol{y}_u^{-1}) \\
&= \stab_u(ws_i),
\end{align*} 
since $\boldsymbol{x}^{w_0u^{-1}w_0\rho} r(\boldsymbol{x}_{(uw_0)^{-1}}; \boldsymbol{y}_u)$ is symmetric in $\boldsymbol{x}$, so commutes with $\tau_i$. 

Note that this determines $\stab_u(w)$ for all permutations $u$ and $w$ since one can get from $\stab_u(uw_0)$ to $\stab_u(w)$ by applying a combination of $M_i^u$ and $(M_i^u)^{-1}$ operators.
\end{proof}

Consider the $u$-Bruhat order, denoted $\leqslant_u$, where $w_1 \leqslant_u
w_2$ means $u^{-1}w_1 \leqslant u^{-1}w_2$ in the usual Bruhat order. In
particular, when $u=e$, the identity element, the $e$-Bruhat order is just the
Bruhat order, and when $u=w_0$, it is the opposite of the Bruhat order. We use
this $u$-Bruhat order to provide a vanishing condition for specializations in
the next result.

\begin{proposition}
  The restriction of the interpolated stab function $\stab_u (w) |_v$
  satisfies the following properties:
  \begin{enumerate}
    \item For any $u, v, w \in W$, then $\stab_u (w) |_v = 0$ unless $v
    \geqslant_u w$.
    
    \item For any element $u \in W$,
    \[ \stab_u (uw_0) |_{uw_0} = q^{\ell (uw_0) / 2}  \prod_{\alpha \in
       \Phi^+} (1 -\bs{y}^{\alpha}), \]
    \item For any $u, v, w \in W$ and any simple reflection $s_i$, then
    $\stab_u (ws_i) |_v$ equals 
		\begin{equation}
      \label{eq:interpolation-restriction-recurrence-case-1} \frac{q^{1 / 2} -
      q^{- 1 / 2}}{\bs{y}^{- v \alpha_i} - 1} \stab_u (w)  |_v -
      \frac{q^{1 / 2} \bs{y}^{- v \alpha_i} - q^{- 1 /
      2}}{\bs{y}^{- v \alpha_i} - 1} \bs{y}^{- \langle
      \alpha_i^{\vee}, w_0 u^{- 1} w_0 \rho \rangle v \alpha_i} \stab_u (w)
      |_{vs_i}
    \end{equation}
    if $ws_i < w$, and
    \begin{equation}
      \label{eq:interpolation-restriction-recurrence-case-2} \frac{(q^{1 / 2}
      - q^{- 1 / 2}) \bs{y}^{- v \alpha_i}}{\bs{y}^{- v \alpha_i}
      - 1} \stab_u (w)  |_v - \frac{q^{1 / 2} \bs{y}^{- v \alpha_i} -
      q^{- 1 / 2}}{\bs{y}^{- v \alpha_i} - 1} \bs{y}^{\langle
      \alpha_i^{\vee}, w_0 u^{- 1} w_0 \rho \rangle v \alpha_i} \stab_u (w)
      |_{vs_i}
    \end{equation}
    if $ws_i > w$. 
  \end{enumerate}
\end{proposition}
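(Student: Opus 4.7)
The plan is to derive all three parts from Proposition~\ref{prop:interpolated-stab}, which furnishes both a closed-form expression for $\stab_u(uw_0)$ and a Demazure-type recursion via the operator $M_i^u = q^{1/2}\bs{x}^{-\mu}\overline{\tau}_i|_{t=q^{-1}}\bs{x}^\mu$ with $\mu := w_0u^{-1}w_0\rho$. The strategy closely mirrors that of Propositions~\ref{prop:basecase}, \ref{prop:special-recursion}, and~\ref{prop:vanishing}, with $M_i^u$ and the $u$-Bruhat order playing the roles of $\tau_i$ and the usual Bruhat order.

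First I would verify part (2) by directly restricting the closed form~(\ref{eq:interpolated-stab-base-case}) at $v = uw_0$. The substitution $x_i = y_{uw_0(i)}$ transforms $r(\bs{x}_{(uw_0)^{-1}};\bs{x})$ via the identities~(\ref{eq:rinprods}) and~(\ref{eq:rundertheta}), while the product $\prod_{i+j\leqslant n}(1-y_{uw_0(i)}/y_{u(j)})$ simplifies, after a change of indices on $u^{-1}uw_0$, into $\prod_{i<j}(1-y_i/y_j) = \prod_{\alpha\in\Phi^+}(1-\bs{y}^\alpha)$. Combining the $(-1)^{\ell(u)}$ prefactor with the $(-1)^{\ell(w_0)}$ arising from~(\ref{eq:rinprods}) leaves only the stated factor $q^{\ell(uw_0)/2}$.

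For part (3), I would directly apply the operator identity~(\ref{eq:stabfromm}). When $ws_i<w$, expanding $M_i^u$ and evaluating $\overline{\tau}_i$ on $\bs{x}^\mu\stab_u(w)$, the conjugation by $\bs{x}^\mu$ produces an extra multiplicative factor $\bs{x}^{s_i\mu - \mu} = \bs{x}^{-\langle\alpha_i^\vee,\mu\rangle\alpha_i}$ in the $s_i$-summand of $\overline{\tau}_i$; restriction at $v$ turns this into $\bs{y}^{-\langle\alpha_i^\vee,\mu\rangle v\alpha_i}$ and converts $s_if$ into $f|_{vs_i}$. After combining fractions over the common denominator $\bs{y}^{-v\alpha_i}-1$, this yields~(\ref{eq:interpolation-restriction-recurrence-case-1}). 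When $ws_i>w$, we instead invert the recursion to write $\stab_u(ws_i)=(M_i^u)^{-1}\stab_u(w)$; the analogous computation, using the explicit form of $\overline{\tau}_i^{-1}$ from~(\ref{eq:tau-bar}), produces~(\ref{eq:interpolation-restriction-recurrence-case-2}).

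Finally, part (1) would follow by downward induction on the $u$-Bruhat order beginning at the maximal element $uw_0$. For the base case, one uses the explicit factorization~(\ref{eq:interpolated-stab-base-case}): if $u^{-1}v\ne w_0$, then by the maximality of $w_0$ there exists some $i$ with $u^{-1}v(i)+i\leqslant n$, so setting $j=u^{-1}v(i)$ yields a pair with $i+j\leqslant n$ and $v(i)=u(j)$, causing the factor $1-y_{v(i)}/y_{u(j)}$ to vanish. For the inductive step, given $ws_i<_u w$ and $v\not\geqslant_u ws_i$, the standard Bruhat lifting property applied to $u^{-1}w$ and $u^{-1}v$ gives both $v\not\geqslant_u w$ and $vs_i\not\geqslant_u w$, so both terms on the right of the recursion in part~(3) vanish by induction. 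The main obstacle will be tracking the sign of the exponent $\langle\alpha_i^\vee,\mu\rangle v\alpha_i$ through the conjugation and inversion of $\overline{\tau}_i$, ensuring that the final recurrence formulas match the asserted form in each of the two cases.
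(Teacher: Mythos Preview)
Your proposal is correct and follows essentially the same approach as the paper's proof: Part~(2) by direct restriction of~(\ref{eq:interpolated-stab-base-case}) at $v=uw_0$ using the identities for $r$, Part~(3) by expanding $M_i^u$ (respectively $(M_i^u)^{-1}$) and restricting, and Part~(1) by downward induction in the $u$-Bruhat order from the base case $w=uw_0$, invoking the Bruhat lifting property in the inductive step. One small caution on your sketch of Part~(2): the product $\prod_{i+j\leqslant n}(1-y_{uw_0(i)}/y_{u(j)})$ does not simplify to $\prod_{\alpha\in\Phi^+}(1-\bs{y}^\alpha)$ by a reindexing alone; the paper first uses~(\ref{eq:rwithbasecase}) (with $\bs{x}\to\bs{y}_u$, $\bs{y}\to\bs{y}_{uw_0}$) to flip the product to $\prod_{i+j\leqslant n}(1-y_{u(i)}/y_{uw_0(j)})$, and the remaining factor $r(\bs{y};\bs{y}_u)$ then combines with the sign $(-1)^{\ell(u)+\ell(w_0)}$ to absorb the contribution from roots $\alpha$ with $u\alpha\in\Phi^-$, so that factor interacts with the product rather than being handled separately.
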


\begin{remark}
	Although most of our results are special to Cartan Type~A, the
	description of the restriction functions for $\stab_u$ in this Proposition
	is valid for all types. Thus, it can be taken as a \emph{definition} for
	(the GKM presentation of) a type-independent version of $\stab_u$.
\end{remark}

\begin{remark}
  If $\bs{y}^{v \alpha_i} = 1$ then $y_{v (i)} = y_{v (i + 1)}$. Then
  using (\ref{eq:relres}) we have $\stab_u (w) |_v = \stab_u (w) |_{vs_i}$.
  Since $\langle \alpha_i^{\vee}, w_0 u^{- 1} w_0 \rho \rangle$ is an integer,
  the numerator and denominators in
  (\ref{eq:interpolation-restriction-recurrence-case-1}) and
  (\ref{eq:interpolation-restriction-recurrence-case-2}) both vanish. Hence
  these recurrences do not introduce unwanted denominators.
\end{remark}

\begin{proof}
  We apply Proposition~\ref{prop:interpolated-stab} and the definition of the
  specialization at $v$ to obtain:
  \begin{equation}
    \label{eq:stabprelim} \stab_u (uw_0) |_v = (- 1)^{\ell (u)} q^{\ell (uw_0)
    / 2} r (\bs{y}_{v (uw_0)^{- 1}} ; \bs{y}_v)  \prod_{i + j
    \leqslant n} \left( 1 - \frac{y_{v (i)}}{y_{u (j)}} \right) .
  \end{equation}
  In the special case $v = uw_0$,
  
  \begin{align*}
    \stab_u (uw_0) |_{uw_0} & = (- 1)^{\ell (u)} q^{\ell (uw_0) / 2} r
    (\bs{y}; \bs{y}_{uw_0})  \prod_{i + j \leqslant n} \left( 1 -
    \frac{y_{uw_0 (i)}}{y_{u (j)}} \right) .\\
    & = (- 1)^{\ell (u)}  (- 1)^{\ell (w_0)} q^{\ell (uw_0) / 2} r
    (\bs{y}_u ; \bs{y}_{uw_0}) r (\bs{y}; \bs{y}_u) 
    \prod_{i + j \leqslant n} \left( 1 - \frac{y_{uw_0 (i)}}{y_{u (j)}}
    \right) .\\
    & = (- 1)^{\ell (u) + \ell (w_0)} q^{\ell (uw_0) / 2} r (\bs{y};
    \bs{y}_u)  \prod_{i + j \leqslant n} \left( 1 - \frac{y_{u
    (i)}}{y_{uw_0 (j)}} \right) .\\
    & = (- 1)^{\ell (u)} q^{\ell (uw_0) / 2}  \prod_{\alpha \in \Phi^+ 
    \text{with } u \alpha \in \Phi^-} \bs{y}^{- u \alpha}  \prod_{\alpha
    \in \Phi^+} (1 -\bs{y}^{u \alpha}) .\\
    & = q^{\ell (uw_0) / 2}  \prod_{\alpha \in \Phi^+} (1
    -\bs{y}^{\alpha}) .
  \end{align*}
   This proves Part~2 of the Proposition.
  
  Next assume that $ws_i < w$. We can then use \eqref{eq:stabfromm} to write
  \begin{align*}
    \stab_u (ws_i) |_v & = (M_i^u \stab_u (w)) |_v\\
    & = q^{1 / 2}  \left( \bs{x}^{- w_0 u^{- 1} w_0 \rho}  \bar{\tau}_i
    \bs{x}^{w_0 u^{- 1} w_0 \rho} \stab_u (w) \right) |_v\\
    & = q^{1 / 2}  \left( \bs{x}^{- w_0 u^{- 1} w_0 \rho}  \left(
    \frac{1 - q^{- 1}}{\bs{x}^{- \alpha_i} - 1} + \frac{q^{- 1}
    -\bs{x}^{- \alpha_i}}{\bs{x}^{- \alpha_i} - 1} s_i \right)
    \bs{x}^{w_0 u^{- 1} w_0 \rho} \stab_u (w) \right) |_v\\
    & = \left( \left( \frac{q^{1 / 2} - q^{- 1 / 2}}{\bs{x}^{-
    \alpha_i} - 1} + \frac{q^{- 1 / 2} - q^{1 / 2} \bs{x}^{-
    \alpha_i}}{\bs{x}^{- \alpha_i} - 1} \bs{x}^{s_i (w_0 u^{- 1}
    w_0 \rho) - w_0 u^{- 1} w_0 \rho} s_i \right) \stab_u (w) \right) |_v\\
    & = \left( \left( \frac{q^{1 / 2} - q^{- 1 / 2}}{\bs{x}^{-
    \alpha_i} - 1} + \frac{q^{- 1 / 2} - q^{1 / 2} \bs{x}^{-
    \alpha_i}}{\bs{x}^{- \alpha_i} - 1} \bs{x}^{- \langle
    \alpha_i^{\vee}, w_0 u^{- 1} w_0 \rho \rangle \alpha_i} s_i \right)
    \stab_u (w) \right) |_v\\
    & = \frac{q^{1 / 2} - q^{- 1 / 2}}{\bs{y}^{- v \alpha_i} - 1}
    \stab_u (w)  |_v + \frac{q^{- 1 / 2} - q^{1 / 2} \bs{y}^{- v
    \alpha_i}}{\bs{y}^{- v \alpha_i} - 1} \bs{y}^{- \langle
    \alpha_i^{\vee}, w_0 u^{- 1} w_0 \rho \rangle v \alpha_i} \stab_u (w)
    |_{vs_i} .
  \end{align*}
  
  If, alternatively, $ws_i > w$, then
  
  \begin{align*}
    \stab_u (ws_i) |_v & = (M_i^u)^{- 1} \stab_u (w) |_v\\
    & = q^{- 1 / 2}  \left( \bs{x}^{w_0 u^{- 1} w_0 \rho} 
    \bar{\tau}_i^{- 1} \bs{x}^{- w_0 u^{- 1} w_0 \rho} \stab_u (w)
    \right) |_v\\
    & = q^{- 1 / 2}  \left( \bs{x}^{w_0 u^{- 1} w_0 \rho}  \left(
    \frac{(q^{- 1} - 1)\bs{x}^{- \alpha_i}}{q^{- 1} (1 -\bs{x}^{-
    \alpha_i})} + \frac{\bs{x}^{- \alpha_i} - q^{- 1}}{q^{- 1} (1
    -\bs{x}^{- \alpha_i})} s_i \right) \bs{x}^{- w_0 u^{- 1} w_0
    \rho} \stab_u (w) \right) |_v\\
    & = \left( \left( \frac{(q^{1 / 2} - q^{- 1 / 2})\bs{x}^{-
    \alpha_i}}{\bs{x}^{- \alpha_i} - 1} - \frac{q^{1 / 2}
    \bs{x}^{- \alpha_i} - q^{- 1 / 2}}{\bs{x}^{- \alpha_i} - 1}
    \bs{x}^{- s_i (w_0 u^{- 1} w_0 \rho) + w_0 u^{- 1} w_0 \rho} s_i
    \right) \stab_u (w) \right) |_v\\
    & = \left( \left( \frac{(q^{1 / 2} - q^{- 1 / 2})\bs{x}^{-
    \alpha_i}}{\bs{x}^{- \alpha_i} - 1} - \frac{q^{1 / 2}
    \bs{x}^{- \alpha_i} - q^{- 1 / 2}}{\bs{x}^{- \alpha_i} - 1}
    \bs{x}^{\langle \alpha_i^{\vee}, w_0 u^{- 1} w_0 \rho \rangle
    \alpha_i} s_i \right) \stab_u (w) \right) |_v\\
    & = \frac{(q^{1 / 2} - q^{- 1 / 2}) \bs{y}^{- v
    \alpha_i}}{\bs{y}^{- v \alpha_i} - 1} \stab_u (w)  |_v - \frac{q^{1
    / 2} \bs{y}^{- v \alpha_i} - q^{- 1 / 2}}{\bs{y}^{- v
    \alpha_i} - 1} \bs{y}^{\langle \alpha_i^{\vee}, w_0 u^{- 1} w_0 \rho
    \rangle v \alpha_i} \stab_u (w) |_{vs_i} .
  \end{align*}
  This proves Part~3 of the Proposition.
  
  Finally, we prove Part~1, by induction on $w$. Specifically, we use reverse
  induction on the Coxeter length of $u^{- 1} w$. The base case is when $w =
  uw_0$. We now claim that
  \begin{equation}
    \label{eq:stabvanishes} \stab_u (uw_0) |_v = 0 \qquad \text{if } v \neq
    uw_0 .
  \end{equation}
  To see this, note that if $1 \neq y \in S_n$ there exists an $i$ such that
  $y (i) > i$. Applying this with $y = w_0^{- 1} u^{- 1} v$, if $v \neq uw_0$
  we have $w_0^{- 1} u^{- 1} v (i) > i$. Therefore $u^{- 1} v (i) < w_0 (i) =
  n + 1 - i$ which implies that $i + j \leqslant n$ where $j = u^{- 1} v (i)$.
  Hence one factor in the product in (\ref{eq:stabprelim}) is zero,
  proving~(\ref{eq:stabvanishes}).
  
  Now, suppose that (1) holds for $w$, and $ws_i \leqslant_u w$. That is, $u^{- 1}
  ws_i \leqslant u^{- 1} w$, and we assume that $\stab_u (w) |_v = 0$ unless $u^{-
  1} w \leqslant u^{- 1} v$. By Part~3,
  \[ \stab_u (ws_i) |_v = f_1 \stab_u (w)  |_v + f_2 \stab_u (w) |_{vs_i}, \]
  where $f_1$ and $f_2$ are given by
  either~\eqref{eq:interpolation-restriction-recurrence-case-1}
  or~\eqref{eq:interpolation-restriction-recurrence-case-2}. If $\stab_u
  (ws_i) |_v \neq 0$, then either $\stab_u (w) |_v \neq 0$ or $\stab_u (w)
  |_{vs_i} \neq 0$. The first case implies that $u^{- 1} w \leqslant u^{- 1} v$, so
  since $u^{- 1} ws_i \leqslant u^{- 1} w$, we have $u^{- 1} w \leqslant u^{- 1} v$ i.e.
  $w \leqslant_u v$. The second case implies that $u^{- 1} w \leqslant u^{- 1} vs_i$, and
  since right-multiplying by $s_i$ decreases the length of the left side, we
  again have $u^{- 1} w \leqslant u^{- 1} v$ i.e. $w \leqslant_u v$. This proves Part~1 of
  the Proposition.
\end{proof}

\begin{remark}
	Theorem~1.2 in~\cite{SuRestriction} gives a closed form expression for
	the restriction of the stable envelope in the context of equivariant cohomology. 
	In this context, the stable basis is (up to a sign) the characteristic classes of
	Verma modules. Finding closed forms for K-theoretic stable envelopes,
	apart from a few very special cases noted here, seems to be a challenging problem. 
\end{remark}

\section{The denominator conjectures} \label{sec:denom}

Bump and Nakasuji~{\cite{BumpNakasujiKL}} defined certain functions $r_{u, v}$
that are deformations of Kazhdan-Lusztig $R$-polynomials that arise in the
evaluation of integrals on $p$-adic groups. They made two conjectures (one of
them refined by Nakasuji and Naruse~{\cite{NakasujiNaruse}}). Both conjectures
were proved by Aluffi, Mihalcea, Sch{\"u}rmann and Su~{\cite{AMSSCasselman}}.

We remind the reader that in this section the parameter $q$ is the parameter
denoted~$t$ in the earlier sections of this paper, for comparison with the
literature. See Remark~\ref{rem:qiswhat}.

In this section we reconsider one of the two conjectures, the denominator
conjecture, which describes the poles of the $r_{u, v}$. \ This is
Theorem~10.5 in~{\cite{AMSSCasselman}}. We will prove it again here, and
indeed the proof in this section is motivated by their argument. However their
proof depends on their Theorem~7.4, which relies on properties of motivic
Chern classes and equivariant K-theory. One purpose of this section is to
remove this dependence in favor of elementary arguments. Their setup does
survive in their proof, since we will use arguments in a ring $\mathcal{R}$
(defined below) motivated by the GKM description of equivariant K-Theory.

The other purpose is to show in the Type~A case, the polynomials $r_{u, v}$
are essentially specializations of the partition functions $Z^{\Box}$ in our
earlier sections.

Let $T$ be a complex torus, and $\mathcal{O} (T)$ be the ring of regular
functions on $T$, generated by the functions $\bs{y} \mapsto
\bs{y}^{\lambda}$ with $\lambda$ in the ring $\Lambda = X^{\ast} (T)$ of
rational characters on $T$. We assume that $\Lambda$ contains the root system
$\Phi$ of a reductive complex Lie group with maximal torus~$T$, with Weyl
group acting on $\Lambda$ in the usual way. Let $q$ be an indeterminant, and
let $\mathcal{O}_q (T) =\mathbb{Z} [q, q^{- 1}] \otimes \mathcal{O} (T)$. Let
$W$ be the Weyl group of $\Phi$. If $\alpha \in \Phi$ let $r_{\alpha} \in W$
denote the reflection in the root $\alpha$. As usual, $\Phi$ is partitioned
into positive and negative roots, and $\{\alpha_1, \cdots, \alpha_{\ell} \}$
will denote the simple positive roots. Then $W$ is generated by the simple
reflections $s_i = r_{\alpha_i}$.We are using the same letter $r$ for both
reflections $r_{\alpha} \in W$ and for the function $r_{u, v}$ defined in the
next Theorem but we believe this should cause no confusion. The letter $s_i$
is reserved for {\textit{simple}} reflections.

\begin{theorem}[Bump and Nakasuji]
  \label{thm:bumpnakasuji}For $u, v \in W$ there is an element $r_{u, v}
  (\bs{y}) \in \mathcal{O}_q (T)$ such that $r_{u, v} (\bs{y})$
  vanishes unless $u \leqslant v$ in the Bruhat order, and $r_{v, v}
  (\bs{y}) = 1$, and that satisfies the following recursive relation. Let
  $s = s_i$ be a simple reflection such that $s_i v < v$. Then $\beta :=
  - v^{- 1} (\alpha_i)$ is a positve root. We have
  \begin{equation}
    \label{eq:rrecursion} r_{u, v} (\bs{y}) = \left\{ \begin{array}{ll}
      \frac{1 - q}{1 - \bs{y}^{\beta}} r_{u, sv} (\bs{y}) + r_{su, sv}
      (\bs{y}) & \text{if $su < u$,}\\
      (1 - q) \frac{\bs{y}^{\beta}}{1 - \bs{y}^{\beta}} r_{u, sv}
      (\bs{y}) + qr_{su, sv} & \text{if $su > u$.}
    \end{array} \right.
  \end{equation}
	If $\bs{y}\to\infty$ in such a direction that $\bs{y}^\alpha\to \infty$
	for all positive roots then $r_{u,v}(\bs{y})$ converges to the
	Kazhdan-Lusztig R-polynomial~\cite{KazhdanLusztigCoxeter}.
\end{theorem}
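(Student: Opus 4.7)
The plan is to define $r_{u,v}(\boldsymbol{y})$ by induction on $\ell(v)$, with base case $r_{u,e} = \delta_{u,e}$, using the recursion \eqref{eq:rrecursion} as the inductive definition after choosing any simple reflection $s = s_i$ with $s_i v < v$. The crux of the argument is well-definedness: different such choices must produce the same $r_{u,v}$. To handle this, I would assemble the $r_{u,v}$ into vectors $R_v := \sum_{u \in W} r_{u,v}\, e_u$ in the free $\mathcal{O}_q(T)$-module $\bigoplus_{u \in W} \mathcal{O}_q(T)\cdot e_u$, and recast \eqref{eq:rrecursion} as an operator identity $R_v = \mathcal{R}_i^{(v)} R_{sv}$. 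The recursion coefficients $(1-q)/(1-\boldsymbol{y}^\beta)$ and $(1-q)\boldsymbol{y}^\beta/(1-\boldsymbol{y}^\beta)$ mirror the off-diagonal entries of the Demazure--Whittaker operator $\tau_i$ from \eqref{eq:tauop} at $t=q$, and the plan is to show that, after a monomial rescaling depending on $v$, the operators $\mathcal{R}_i^{(v)}$ are conjugate to the $\tau_i$. Since the $\tau_i$ satisfy braid relations (Section~\ref{sec:partition-function} and the references to \cite{BBBGIwahori} therein), the iterated operator along any reduced expression for $v$ yields the same vector by Matsumoto's lemma, establishing well-definedness of $R_v$.

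With well-definedness in hand, the remaining properties follow by straightforward induction. For $r_{v,v}=1$: applying the recursion with $u=v$ and any $s_i v < v$ places us in the $su<u$ case, so $r_{v,v} = \tfrac{1-q}{1-\boldsymbol{y}^\beta}r_{v,sv} + r_{sv,sv} = 0+1$, using $r_{v,sv}=0$ since $v\not\leqslant sv$ and the inductive base case. For vanishing when $u\not\leqslant v$: the Bruhat-order lifting property says that if $sv<v$ and $u\not\leqslant v$, then both $u\not\leqslant sv$ and $su\not\leqslant sv$, so both terms on the right of \eqref{eq:rrecursion} vanish by induction. For the Kazhdan--Lusztig limit: as $\boldsymbol{y}^\alpha\to\infty$ for all positive $\alpha$, we have $(1-q)/(1-\boldsymbol{y}^\beta)\to 0$ and $(1-q)\boldsymbol{y}^\beta/(1-\boldsymbol{y}^\beta)\to q-1$, so \eqref{eq:rrecursion} reduces to $R_{u,v}=R_{su,sv}$ when $su<u$ and $R_{u,v}=(q-1)R_{u,sv}+qR_{su,sv}$ when $su>u$, which is the standard recursion for the Kazhdan--Lusztig $R$-polynomial \cite{KazhdanLusztigCoxeter}.

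The main obstacle is the well-definedness step. The operator $\mathcal{R}_i^{(v)}$ depends on $v$ through $\beta=-v^{-1}(\alpha_i)$, so identifying it with a conjugate of $\tau_i$ requires careful bookkeeping of the rescaling monomials, and one must verify that these conjugations assemble consistently along a reduced expression so that braid relations for the $\tau_i$ transfer to the iterated recursion. A possibly cleaner alternative — reflecting the paper's framing of the recursion as a shadow of the Yang--Baxter equation — is to give a closed-form expression for $r_{u,v}$ directly in terms of the $\tau_i$ (e.g., as a matrix coefficient of $\tau_v^{-1}$, or a related element, acting on a distinguished function in $\mathcal{O}_q(T)$) and then derive \eqref{eq:rrecursion} by one application of $\tau_i^{-1}$, trading the braid argument for an explicit matrix coefficient identity.
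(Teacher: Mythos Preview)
The paper does not prove this theorem: its entire proof is the sentence ``See \cite{BumpNakasujiKL} Theorem~1.2.'' In that source the $r_{u,v}$ are not defined by the recursion but arise as entries of the transition matrix between two bases of the Iwahori-fixed vectors in an unramified principal series (equivalently, as structure constants in a Hecke algebra over the localized ring). Well-definedness is therefore automatic, inherited from the braid relations for the Hecke generators $T_i$, and \eqref{eq:rrecursion} is then \emph{derived} from the quadratic relation $T_i^2=(q-1)T_i+q$.

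Your proposal is thus a genuinely different route: you try to use the recursion as the \emph{definition}. Your inductive arguments for $r_{v,v}=1$, for vanishing when $u\not\leqslant v$ (via the lifting property), and for the Kazhdan--Lusztig limit are all correct. The gap is precisely where you locate it: well-definedness. Your first strategy, conjugating $\mathcal{R}_i^{(v)}$ to $\tau_i$ by a monomial, does not work as stated. The operator $\tau_i$ acts on functions of $\boldsymbol{x}$ through the reflection $s_i$, whereas $\mathcal{R}_i^{(v)}$ acts on $\bigoplus_u \mathcal{O}_q(T_{\mathrm{reg}})\,e_u$ by permuting the $e_u$ with coefficients depending both on $\boldsymbol{y}^{-v^{-1}\alpha_i}$ and on whether $s_i u \lessgtr u$; no single monomial conjugation identifies these, and the $v$-dependence means there is no fixed family $\{\mathcal{R}_i\}_i$ whose braid relations you can invoke---at each step of a reduced word the operator changes. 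Your second strategy, writing $r_{u,v}$ as a matrix coefficient of a Hecke-algebra element such as $T_{v^{-1}}^{-1}$, is exactly what \cite{BumpNakasujiKL} does; it succeeds, but at that point you have reproduced the cited argument rather than replaced it.
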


\begin{proof}
  See {\cite{BumpNakasujiKL}} Theorem~1.2.
\end{proof}

In addition to the recursion in Theorem~\ref{thm:bumpnakasuji}, we also need
the corresponding fact when $s_i$ is an ascent of $v$ instead of a descent.

\begin{proposition}
  \label{prop:altbnrecursion}Suppose that $s_i v > v$. Then $\beta :=
  v^{- 1} \alpha_i$ is a positive root. If $s_i u < u$ then
  \[ \frac{(\bs{y}^{- \beta} - q)  (q \bs{y}^{- \beta} - 1)}{(1 -
     \bs{y}^{- \beta})^2} r_{u, v} (\bs{y}) = \frac{1 - q}{1 -
     \bs{y}^{- \beta}} r_{u, s_i v} (\bs{y}) + r_{s_i u, s_i v}
     (\bs{y}), \]
  and if $s_i u > u$ then
  \[ \frac{(\bs{y}^{- \beta} - q)  (q \bs{y}^{- \beta} - 1)}{(1 -
     \bs{y}^{- \beta})^2} r_{u, v} (\bs{y}) = \frac{(1 - q)
     \bs{y}^{- \beta}}{1 - \bs{y}^{- \beta}} r_{u, s_i v} (\bs{y})
     + qr_{s_i u, s_i v} (\bs{y}) . \]
\end{proposition}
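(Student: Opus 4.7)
The plan is to apply Theorem~\ref{thm:bumpnakasuji} twice to the element $s_iv$ and then eliminate the unwanted term $r_{s_iu, v}$ between the two resulting equations. The key initial observation is that the hypothesis $s_iv > v$ says precisely that $s_i$ is a \emph{descent} of $v' := s_iv$, so Theorem~\ref{thm:bumpnakasuji} applies to pairs with second index $s_iv$. The positive root attached to this descent in the notation of the theorem is
\[ -(s_iv)^{-1}\alpha_i \;=\; -v^{-1}s_i\alpha_i \;=\; v^{-1}\alpha_i \;=\; \beta, \]
which is positive and matches the $\beta$ in the statement.

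First I would apply Theorem~\ref{thm:bumpnakasuji} to the pair $(u, s_iv)$, which expresses $r_{u, s_iv}$ linearly in $r_{u, v}$ and $r_{s_iu, v}$, with coefficients determined by whether $s_iu<u$ or $s_iu>u$. I would then apply the theorem a second time to $(s_iu, s_iv)$; here the ascent/descent status of $s_i$ at $s_iu$ is the \emph{opposite} of its status at $u$ (since $s_i \cdot s_iu = u$), so the case of~\eqref{eq:rrecursion} triggered is swapped. This expresses $r_{s_iu, s_iv}$ as a linear combination of $r_{s_iu, v}$ and $r_{u, v}$.

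The third step is to solve the first of these two equations for $r_{s_iu, v}$ and substitute into the second. The coefficient of $r_{u, s_iv}$ on the right-hand side of the claim, together with the coefficient of $r_{u, s_iv}$ appearing in the expanded $r_{s_iu, s_iv}$, is a sum of two rational functions in $\bs{y}^\beta$ that cancels by the elementary identity
\[ \frac{1}{1-\bs{y}^{-\beta}} + \frac{\bs{y}^\beta}{1-\bs{y}^\beta} \;=\; 0. \]
The remaining coefficient of $r_{u,v}$ reduces, upon setting $z = \bs{y}^{-\beta}$ and clearing the common denominator $(1-z)^2$, to the polynomial identity
\[ q(1-z)^2 - (1-q)^2 z \;=\; (z-q)(qz-1), \]
which is immediate by expansion and produces exactly the prefactor $\tfrac{(\bs{y}^{-\beta}-q)(q\bs{y}^{-\beta}-1)}{(1-\bs{y}^{-\beta})^2}$ appearing on the left of the claim.

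The only real difficulty is bookkeeping the two subcases ($s_iu<u$ versus $s_iu>u$) consistently, since each triggers different branches of~\eqref{eq:rrecursion} both at $(u, s_iv)$ and at $(s_iu, s_iv)$. Once the case analysis is laid out, the argument is purely algebraic and invokes nothing beyond Theorem~\ref{thm:bumpnakasuji} and the two scalar identities displayed above.
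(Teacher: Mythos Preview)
Your proposal is correct and follows essentially the same approach as the paper: apply Theorem~\ref{thm:bumpnakasuji} twice (once to $(u,s_iv)$ and once to $(s_iu,s_iv)$, using that $s_i$ is a descent of $s_iv$ and that the ascent/descent status at $u$ and $s_iu$ are opposite), then eliminate $r_{s_iu,v}$. The paper organizes the same two applications as a $2\times2$ matrix identity and inverts via the adjugate and determinant, which produces your prefactor $\tfrac{(\bs{y}^{-\beta}-q)(q\bs{y}^{-\beta}-1)}{(1-\bs{y}^{-\beta})^2}$ as $-\det M$; your substitution-and-scalar-identity route is the same linear algebra unwound by hand.
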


\begin{proof}
  We can apply (\ref{eq:rrecursion}) with $v' = s_i v$ replacing $v$ since
  $s_i v' < v$. For definiteness assume that $s_i$ is also a descent of $u$,
  so that the first case in (\ref{eq:rrecursion}) applies to the pair $(u,
  v')$, and the second case applies to the pair $(u', v')$ with $u' = s_i u$.
  This gives two relations which we can encode in the matrix identity
  \[ \left( \begin{array}{c}
       r_{u, s_i v}\\
       r_{s_i u, s_i v}
     \end{array} \right) = M \cdot \left( \begin{array}{c}
       r_{u, v}\\
       r_{s_i u, v}
     \end{array} \right), \qquad M = \left( \begin{array}{cc}
       \frac{1 - q}{1 - \bs{y}^{\beta}} & 1\\
       q & \frac{1 - q}{\bs{y}^{- \beta} - 1}
     \end{array} \right), \]
  where $\beta = - (v')^{- 1} (\alpha_i) = v^{- 1} (\alpha_i)$. We find that
  \[ - \det (M) = \frac{(\bs{y}^{\beta} - q)  (q \bs{y}^{\beta} -
     1)}{(1 - \bs{y}^{\beta})^2} = \frac{(\bs{y}^{- \beta} - q)  (q
     \bs{y}^{- \beta} - 1)}{(1 - \bs{y}^{- \beta})^2} . \]
  So multiplying by $- \det (M) M^{- 1}$ gives
  \[ \frac{(\bs{y}^{- \beta} - q)  (q \bs{y}^{- \beta} - 1)}{(1 -
     \bs{y}^{- \beta})^2} \left( \begin{array}{c}
       r_{u, v}\\
       r_{s_i u, v}
     \end{array} \right) = \left( \begin{array}{cc}
       \frac{1 - q}{1 - \bs{y}^{- \beta}} & 1\\
       q & \frac{1 - q}{\bs{y}^{\beta} - 1}
     \end{array} \right) \left( \begin{array}{c}
       r_{u, s_i v}\\
       r_{s_i u, s_i v}
     \end{array} \right), \]
  which is equivalent to the stated recursion.
\end{proof}

The \textit{denominator conjecture}, proved in~{\cite{AMSSCasselman}} is the
following statement. Let
\[ S (u, v) = \{\alpha \in \Phi^+ |u \leqslant vr_{\alpha} < v\} . \]
The \textit{Deodhar inequality} asserts that $|S (u, v) | \geqslant \ell (v)
- \ell (u)$, with equality if and only if the Kazhdan-Lusztig polynomial
$P_{w_0 v, w_0 u} = 1$. The polynomials $r_{u, v} (\bs{y})$ are convenient
for the calculation of certain unipotent integrals on $G' (F)$ with $F$ a
nonarchimedean local field, where $G$ is the Langlands dual of $G'$. The
polynomials $r_{u, v} (\bs{y})$ are deformations of the Kazhdan-Lusztig
$R$ polynomial in that $r_{u, v} (\bs{y}) \longrightarrow R (\bs{y})$
if $\bs{y} \rightarrow \infty$ in a direction such that
$\bs{y}^{\alpha} \rightarrow \infty$ for all positive roots $\alpha$. Then
the denominator conjecture (now a theorem) was the following statement.

\begin{theorem}
	\label{thm:denomconject}
  The factor
  \[ \prod_{\beta \in S (u, v)} (1 - \bs{y}^{\beta}) r_{u, v} (\bs{y})
  \]
  is analytic on~$T$.
\end{theorem}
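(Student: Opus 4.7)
The plan is to proceed by induction on $\ell(v)$, using the recursion~(\ref{eq:rrecursion}) of Theorem~\ref{thm:bumpnakasuji}. The base case $v = e$ forces $u = e$, giving $r_{e,e} = 1$ and $S(e,e) = \emptyset$, so the claim is vacuous. For the inductive step, choose a simple reflection $s = s_i$ with $sv < v$, set $v' = sv$ and $\beta_0 = -v^{-1}\alpha_i \in \Phi^+$, so that $v r_{\beta_0} = v'$. The recursion writes $r_{u,v}$ as a $\mathbb{Z}[q,q^{-1}]$-linear combination of $r_{u,v'}$ and $r_{su,v'}$ whose only new denominator compared to the inductive inputs is $1-\boldsymbol{y}^{\beta_0}$. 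By induction, $\prod_{\gamma\in S(u,v')}(1-\boldsymbol{y}^\gamma)\, r_{u,v'}$ and $\prod_{\gamma\in S(su,v')}(1-\boldsymbol{y}^\gamma)\, r_{su,v'}$ are each analytic on $T$. I would work throughout in the localization of $\mathcal{O}_q(T)$ obtained by inverting $\{1-\boldsymbol{y}^\alpha : \alpha\in\Phi^+\}$, in the spirit of the ring $\mathcal{R}$ mentioned above, so that pole sets are tracked unambiguously.

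The key observation is the dichotomy: $\beta_0 \in S(u,v)$ if and only if $u \leq v'$ (since $vr_{\beta_0}=v' < v$). In the case $u \leq v'$, the factor $1-\boldsymbol{y}^{\beta_0}$ already appears in $\prod_{\beta\in S(u,v)}(1-\boldsymbol{y}^\beta)$, and the argument reduces to showing the containments $S(u,v') \subseteq S(u,v)$ and $S(su,v')\subseteq S(u,v)\cup\{\beta_0\}$. In the complementary case $u \not\leq v'$ (with $u \leq v$), the standard lifting lemma for Bruhat order gives $su < u$ and $su \leq v'$; the first case of~(\ref{eq:rrecursion}) applies, the first term vanishes since $r_{u,v'}=0$ by the support condition, so $r_{u,v} = r_{su,v'}$ and no apparent pole at $\boldsymbol{y}^{\beta_0}=1$ is introduced. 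It then suffices to check $S(su,v') \subseteq S(u,v)$.

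The main obstacle will be the purely combinatorial verification of the containments between the sets $S(u,v)$, $S(u,v')$, and $S(su,v')$. Concretely, given $\gamma\in\Phi^+$ with $v'(\gamma) < 0$ and either $u \leq v'r_\gamma$ or $su \leq v'r_\gamma$, one must conclude that either $\gamma = \beta_0$, or else $v(\gamma) < 0$ and $u \leq vr_\gamma$. I would organize this using the standard identity relating inversion sets along the edge $v' \lessdot v$, together with a careful case-split on whether $s_i$ is a descent or an ascent of $u$; when $s_iu > u$, Proposition~\ref{prop:altbnrecursion} provides a parallel ascent-form of the recursion that makes the case analysis close uniformly. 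Once these containments are established in each case, dividing the identity obtained from the recursion by $\prod_{\beta\in S(u,v)}(1-\boldsymbol{y}^\beta)$ exhibits $\prod_{\beta\in S(u,v)}(1-\boldsymbol{y}^\beta)\, r_{u,v}$ as a $\mathbb{Z}[q,q^{-1}]$-linear combination of analytic functions, completing the induction.
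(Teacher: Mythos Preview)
Your proposed induction on $\ell(v)$ via the recursion~(\ref{eq:rrecursion}) has a genuine gap: the set containments you plan to verify are \emph{false} in general. Specifically, it is \emph{not} true that $S(u,v') \subseteq S(u,v)$ (nor that $S(su,v') \subseteq S(u,v)\cup\{\beta_0\}$). The paper addresses exactly this point in the paragraph immediately following the statement of Theorem~\ref{thm:denomconject}: there can exist a root $\alpha$ (necessarily $\alpha\neq\beta_0$, since $v'r_{\beta_0}=v>v'$ forces $\beta_0\notin S(u,v')$) lying in both $S(u,sv)$ and $S(su,sv)$ but not in $S(u,v)$. In that situation the two terms of the recursion each carry an apparent pole at $\boldsymbol{y}^\alpha=1$ which is not absorbed by $\prod_{\beta\in S(u,v)}(1-\boldsymbol{y}^\beta)$; the theorem asserts these poles cancel, but the cancellation cannot be seen from Bruhat combinatorics alone. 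Bump and Nakasuji~\cite{BumpNakasujiKL}, Example~1.7, gives an explicit instance.

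The paper circumvents this obstruction entirely. Rather than tracking pole sets through the recursion, it packages the functions $\phi_{v^{-1}}\,r_{u,v}$ as the components $[\Theta_{u^{-1}}]_{v^{-1}}$ of an element of $\mathcal{O}_q(T)^W$, shows that the recursion becomes $\Theta_{us_i}=D_i\Theta_u$ (Proposition~\ref{prop:thetarecurse}), and proves that the operator $D_i$ preserves the GKM-type subring $\mathcal{R}$ defined by the divisibility conditions $1-\boldsymbol{y}^\alpha \mid [f]_w-[f]_{r_\alpha w}$ (Lemma~\ref{lem:distable}). The required cancellation is then automatic: if $\alpha\notin S(u,v)$ because $u\nleq vr_\alpha$, then $[\Theta_{u^{-1}}]_{r_\alpha v^{-1}}=0$, so the divisibility condition forces $1-\boldsymbol{y}^\alpha$ to divide $[\Theta_{u^{-1}}]_{v^{-1}}$ itself, killing the would-be pole. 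This is the key idea your direct induction is missing.
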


One can try to deduce this from the recursion (\ref{eq:rrecursion}), arguing
by induction on $\ell (v)$. This almost works, but for certain roots $\beta
\not\in S (u, v)$ the factor $1 - \bs{y}^{\beta}$ appears in both terms of
the recursion, and the result depends on a cancellation that is difficult to
prove in general. That is, for $s$ a simple reflection, there may be a root
$\alpha$ that appears in both $S (u, sv)$ and $S (su, sv)$, but not $S (u,
v)$. Hence to prove that it is not a pole of $r_{u, v}$, one would need to
prove that these poles of $r_{u, sv}$ and $r_{su, sv}$ cancel. See~Bump and
Nakasuji~{\cite{BumpNakasujiKL}} for further discussion, and Example~1.7 of
that paper for an example of this cancellation phenomenon.

Let $\mathcal{O}_q (T)^W$ denote the direct sum of $|W|$ copies of
$\mathcal{O}_q (T)$. We will denote by $[f]$ an element of $\mathcal{O}_q
(T)^W$, and if $w \in W$, then $[f]_w$ denotes the $w$-th component of $[f]$.
Motivated by the GKM description of equivariant K-theory, we define a subring
$\mathcal{R} \subset \mathcal{O}_q (T)^W$ characterized by a divisibility
property.

\begin{definition}
  Let $\mathcal{R}$ be the subring of $\mathcal{O}_q (T)^W$ characterized by
  the following condition. An element $[f] \in \mathcal{O}_q (T)^W$ is in
  $\mathcal{R}$ if for every root $\alpha$ and every $w \in W$ the factor $1 -
  \bs{y}^{\alpha}$ divides $[f]_w - [f]_{r_{\alpha} w}$ in the unique
  factorization ring~$\mathcal{O}_q (T)$.
\end{definition}

\begin{remark}
  \label{rem:altdefr}If this is satisfied, then for every root $\beta$ and
  every $w \in W$ the factor $1 - \bs{y}^{w (\beta)}$ divides $[f]_w -
  [f]_{wr_{\beta}}$. Indeed let $\alpha = w (\beta)$ so that $wr_{\beta} =
  r_{\alpha} w$, and the statement follows.
\end{remark}

Let $\mathcal{O}(T_{\operatorname{reg}})$ be the ring obtained by adjoining
the factors $(1-\bs{y}^{\alpha})^{-1}$ to $\mathcal{O}(T)$, as $\alpha$ runs through the positive
roots. If $\alpha_i$ is a simple root and $s_i$ the corresponding reflection, define
the following operator $D_i$ on $\mathcal{O}_q (T_{\operatorname{reg}})^W$:
\[ (D_i [f])_w = \frac{q - 1}{1 - \bs{y}^{w (\alpha_i)}} [f]_w +
   \frac{\bs{y}^{w (\alpha_i)} - q}{1 - \bs{y}^{w (\alpha_i)}}
   [f]_{ws_i} . \]
Note that $D_i$ does not preserve $\mathcal{O}_q(T)^W$, since the
operation could introduce denominators. We will show in Lemma~\ref{lem:distable}
that $D_i$ does preserve the subring $\mathcal{R}$, but first we note
the following.

\begin{proposition}
The operators $D_i$ satisfy the quadratic relations $D_i^2=(q-1)D_i+q$
and the braid relations of the Iwahori Hecke algebra of~$W$.
\end{proposition}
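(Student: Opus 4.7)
The plan is to transfer the Hecke relations already known for the Demazure-Lusztig operators $\tau_i$ in \eqref{eq:tauop} to the operators $D_i$ by means of a conjugation. Define $\psi \colon \mathcal{O}_q(T_{\operatorname{reg}})^W \to \mathcal{O}_q(T_{\operatorname{reg}})^W$ by $(\psi[f])_w := \sigma_{w^{-1}}([f]_w)$, where $\sigma_u$ denotes the ring automorphism of $\mathcal{O}_q(T)$ sending $\bs{y}^{\lambda}$ to $\bs{y}^{u\lambda}$. A direct computation using $\sigma_u \sigma_v = \sigma_{uv}$ and $\sigma_{w^{-1}}(\bs{y}^{w\alpha_i}) = \bs{y}^{\alpha_i}$ yields
\[
  (\psi D_i \psi^{-1}[g])_w = \frac{q-1}{1-\bs{y}^{\alpha_i}}\,[g]_w + \frac{\bs{y}^{\alpha_i}-q}{1-\bs{y}^{\alpha_i}}\,\sigma_{s_i}([g]_{ws_i}).
\]

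Next I would introduce the operator $T_i$ on $\mathcal{O}_q(T_{\operatorname{reg}})^W$ defined by $(T_i[g])_w := \sigma_{s_i}([g]_{ws_i})$, so that $\psi D_i \psi^{-1}$ takes the shape $A_i + B_i T_i$ for exactly the same rational coefficients $A_i, B_i$ appearing in \eqref{eq:tauop}. This expression is formally identical to the formula for $\tau_i$, with the single difference that the value-side reflection $s_i$ of \eqref{eq:tauop} is replaced by $T_i$. The crucial point is that $T_i$ enjoys the same abstract properties of $s_i$ that underlie the proof of the Hecke relations for $\tau_i$: (a) $T_i^2 = \operatorname{id}$; (b) $T_i \cdot h = \sigma_{s_i}(h) \cdot T_i$ as operators, for every $h \in \mathcal{O}_q(T_{\operatorname{reg}})$ acting by component-wise multiplication; and (c) the family $\{T_j\}$ satisfies the braid relations of $W$. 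Properties (a) and (b) are immediate from the definition, while (c) follows from the factorization $T_j = \sigma_{s_j} \circ R_{s_j}$, where $R_{s_j}$ is the involution $([g])_w \mapsto [g]_{ws_j}$, together with the observation that $\sigma_{s_j}$ and $R_{s_k}$ commute for all $j,k$, and that each of these two families individually satisfies the braid relations of~$W$.

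With (a)-(c) in place, the proofs of both the quadratic relation $\tau_i^2 = (q-1)\tau_i + q$ and the braid relations $\tau_i \tau_j \cdots = \tau_j \tau_i \cdots$ for the Demazure-Lusztig operators (see, e.g., Proposition~5 of \cite{BBL}) use only these abstract properties, and thus apply verbatim with $s_i$ replaced by $T_i$. This shows $\psi D_i \psi^{-1}$ satisfies both families of relations, and conjugating back by $\psi^{-1}$ gives the same for $D_i$. The step most likely to demand care is (c): one must keep track of two independent $W$-actions (on values and on the index set $W$), verify their commutation, and confirm that the standard rewriting manipulations in the braid verification for $\tau_i$ only mix these two actions in allowed ways. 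Once this is noted, braiding for the $T_j$ reduces cleanly to braiding in $W$ itself, and no new calculation beyond what was already done for $\tau_i$ is required.
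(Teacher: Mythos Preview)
Your argument is correct. The conjugation $\psi$ does exactly what you claim, and once you have $\psi D_i\psi^{-1}=A_i+B_iT_i$ with the same coefficients $A_i,B_i$ as in \eqref{eq:tauop}, the verification of the Hecke relations is a purely formal computation in the twisted group algebra generated over $\mathcal{O}_q(T_{\operatorname{reg}})$ by involutions $T_i$ satisfying your properties (a)--(c). The quadratic relation reduces to the two scalar identities $A_i+\sigma_{s_i}(A_i)=q-1$ and $A_i^2+B_i\,\sigma_{s_i}(B_i)=(q-1)A_i+q$, which you can check directly (and which are precisely the identities underlying the quadratic relation for $\tau_i$). The braid relation likewise reduces to the same polynomial identities in the $A_i,B_i$ and their $W$-translates that occur for $\tau_i$, so invoking \cite{BBL} here is legitimate. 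Your factorization $T_j=\Sigma_{s_j}R_{s_j}$ with $\Sigma$ and $R$ commuting makes property~(c) transparent.

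By contrast, the paper does not give a proof at all: it simply records that the statement is an instance of Theorem~1.1 in~\cite{BBBF} and notes that the result is not used in the proof of the denominator conjecture. Your route is therefore genuinely different---an explicit reduction to the Hecke relations already established for $\tau_i$ rather than a black-box citation. What your approach buys is a self-contained argument inside the paper's own framework (modulo the reference to \cite{BBL} for $\tau_i$), and it makes the structural reason for the Hecke relations---that $D_i$ is conjugate to a copy of the Demazure--Lusztig action on a larger module---completely visible. The citation approach buys brevity and places the result in its natural general context, since \cite{BBBF} presumably treats such operators for arbitrary Cartan type without reference to $\tau_i$.
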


\begin{proof}
This is an instance of Theorem~1.1 in~\cite{BBBF}. We will not make
use of this fact in the proof of the Denominator conjecture.
\end{proof}

\begin{lemma}
  \label{lem:distable}If $[f] \in \mathcal{R}$ then $D_i [f] \in \mathcal{R}$.
\end{lemma}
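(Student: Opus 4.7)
The plan is to verify the two conditions defining membership in $\mathcal{R}$ in turn: first, that each component $(D_i[f])_v$ actually lies in $\mathcal{O}_q(T)$ (and not merely in the localization $\mathcal{O}_q(T_{\operatorname{reg}})$ where $D_i$ is a priori defined), and second, that the divisibility condition propagates under $D_i$. The key identity to exploit throughout is that $vs_i = (vs_iv^{-1})v = r_{v\alpha_i}v$, so the hypothesis on $[f]$ gives $(1-\bs{y}^{v\alpha_i}) \mid [f]_v - [f]_{vs_i}$.

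For the first step, I would rewrite the denominator explicitly as
\[
(D_i[f])_v = (q-1)g_v - [f]_{vs_i}, \qquad g_v := \frac{[f]_v - [f]_{vs_i}}{1-\bs{y}^{v\alpha_i}}.
\]
By the observation above, $g_v\in\mathcal{O}_q(T)$, so $(D_i[f])_v\in\mathcal{O}_q(T)$.

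For the second step, fix a root $\alpha$ and $v\in W$, and set $v' = r_\alpha v$. I would split on whether $\alpha = \pm v\alpha_i$ or not. In the exceptional case $\alpha = \pm v\alpha_i$, we have $v' = vs_i$ and $v's_i = v$, so the two terms of $(D_i[f])_{v'}$ involve the same $[f]_v$ and $[f]_{vs_i}$ as those of $(D_i[f])_v$, with $\bs{y}^{v'\alpha_i} = \bs{y}^{-v\alpha_i}$. A direct algebraic collapse yields
\[
(D_i[f])_v - (D_i[f])_{v'} = q\bigl([f]_v - [f]_{vs_i}\bigr),
\]
which is divisible by $1-\bs{y}^{v\alpha_i} = 1-\bs{y}^{\pm\alpha}$ (up to a unit) by hypothesis.

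In the generic case $\alpha \ne \pm v\alpha_i$, the crucial observation is that modulo $1-\bs{y}^\alpha$, every $\bs{y}^\mu$ is congruent to $\bs{y}^{r_\alpha\mu}$; in particular $\bs{y}^{v'\alpha_i} \equiv \bs{y}^{v\alpha_i}$. Using $v' = r_\alpha v$ and $v's_i = r_\alpha(vs_i)$, the hypothesis on $[f]$ shows both $[f]_v - [f]_{v'}$ and $[f]_{vs_i} - [f]_{v's_i}$ are divisible by $1-\bs{y}^\alpha$. Combining these gives
\[
(1-\bs{y}^{v\alpha_i})(g_v - g_{v'}) \equiv 0 \pmod{1-\bs{y}^\alpha}.
\]
Then $(D_i[f])_v - (D_i[f])_{v'} = (q-1)(g_v - g_{v'}) - ([f]_{vs_i} - [f]_{v's_i})$, and divisibility by $1-\bs{y}^\alpha$ follows provided we can cancel the factor $1-\bs{y}^{v\alpha_i}$. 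The main obstacle is precisely this cancellation: one must check that $1-\bs{y}^{v\alpha_i}$ is a non-zero-divisor in $\mathcal{O}_q(T)/(1-\bs{y}^\alpha)$. Geometrically, this quotient is the coordinate ring of the irreducible subtorus $\ker(\alpha)$, and $1-\bs{y}^{v\alpha_i}$ fails to vanish identically on $\ker(\alpha)$ precisely because $v\alpha_i$ is not a rational multiple of $\alpha$; for a reduced root system this is exactly the assumption $\alpha\ne\pm v\alpha_i$. Once this non-zero-divisor fact is recorded, the argument concludes.
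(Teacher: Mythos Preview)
Your proof is correct and follows essentially the same approach as the paper: both first show integrality via the rewrite $(D_i[f])_v = -[f]_{vs_i} + (q-1)\dfrac{[f]_v-[f]_{vs_i}}{1-\bs{y}^{v\alpha_i}}$, then split the divisibility check into the cases $\alpha=\pm v\alpha_i$ and $\alpha\neq\pm v\alpha_i$, handling the first by direct computation and the second via the congruence $\bs{y}^{r_\alpha v\alpha_i}\equiv\bs{y}^{v\alpha_i}\pmod{1-\bs{y}^\alpha}$ together with coprimality. Your exceptional-case identity $(D_i[f])_v-(D_i[f])_{vs_i}=q([f]_v-[f]_{vs_i})$ is in fact a slightly cleaner packaging of the paper's computation.

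One small imprecision: you assert that $\mathcal{O}_q(T)/(1-\bs{y}^\alpha)$ is the coordinate ring of an \emph{irreducible} subtorus, and deduce the non-zero-divisor property from ``does not vanish identically.'' But $\ker(\alpha)$ is irreducible only when $\alpha$ is primitive in $\Lambda=X^*(T)$, which need not hold (e.g.\ $\mathrm{PGL}_2$). The statement you actually need, and which the paper invokes as ``$1-\bs{y}^\beta$ is prime to the denominator,'' is that $1-\bs{y}^\alpha$ and $1-\bs{y}^{v\alpha_i}$ are coprime in the UFD $\mathcal{O}_q(T)$ whenever $\alpha$ and $v\alpha_i$ are linearly independent; this holds because any common irreducible factor would force the codimension-one vanishing loci to share a component, hence force $\alpha$ and $v\alpha_i$ to be proportional. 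With that adjustment your argument is complete and matches the paper's.
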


\begin{proof}
  We can write
  \[ (D_i [f])_w = - [f]_{ws_i} + (q - 1)  \frac{[f]_w - [f]_{ws_i}}{1 -
     \bs{y}^{w \alpha_i}} . \]
	This is integral by Remark~\ref{rem:altdefr}, so $D_i [f] \in \mathcal{O} (T)^W$. 
	To prove the divisibility property for $D_i[f]$, let $\beta$ be any root. Then
  \[ (D_i [f])_{r_{\beta} w} - (D_i [f])_w = \]
  \begin{equation}
    \label{eq:fastdiv} = - ([f]_{r_{\beta} ws_i} - [f]_{ws_i}) + (q - 1) 
    \left( \frac{[f]_{r_{\beta} w} - [f]_{r_{\beta} ws_i}}{1 -
    \bs{y}^{r_{\beta} w (\alpha_i)}} - \frac{[f]_w - [f]_{ws_i}}{1 -
    \bs{y}^{w (\alpha_i)}} \right)
  \end{equation}
  The first term is divisible by $1 - \bs{y}^{\beta}$ since $[f] \in
  \mathcal{R}$. For the second, there are two cases.
  
  \textbf{Case 1:} $\beta = \pm w (\alpha_i)$. Then $r_{\beta} = ws_i w^{-
  1}$, and also $r_{\beta} w (\alpha_i) = - w (\alpha_i)$. Using these facts,
  the second term equals
  \[ (q - 1)  \left( \frac{[f]_{ws_i} - [f]_{w}}{1 - \bs{y}^{- w
     (\alpha_i)}} - \frac{[f]_w - [f]_{ws_i}}{1 - \bs{y}^{w (\alpha_i)}}
		 \right) = (q - 1)  ([f]_{ws_i} - [f]_{w}),\]
	Where we have used the identity $\frac{1}{1-1/y}+\frac{1}{1-y}=1$.
  This is divisible by $1 - \bs{y}^{\beta}$ by Remark~\ref{rem:altdefr}.
  
  \textbf{Case 2:} $\beta \neq \pm w (\alpha_i)$. In this case the second
  term in (\ref{eq:fastdiv}) equals $q - 1$ times
  \[ \frac{(1 - \bs{y}^{w (\alpha_i)})  ([f]_{r_{\beta} w} -
     [f]_{r_{\beta} ws_i}) - (1 - \bs{y}^{r_{\beta} w (\alpha_i)})  ([f]_w
     - [f]_{ws_i})}{(1 - \bs{y}^{r_{\beta} w (\alpha_i)})  (1 -
     \bs{y}^{w (\alpha_i)})} \]
  We note that $1 -\bs{y}^{\beta}$ divides $1 -\bs{y}^{N \beta}$
  if $N \in \mathbb{Z}$, and taking $N = \langle \beta^{\vee}, w (\alpha_i)
  \rangle \in \mathbb{Z}$ we see that
  \[ \bs{y}^{r_{\beta} w (\alpha_i)} =\bs{y}^{w (\alpha_i)}
     \bs{y}^{- \langle \beta^{\vee}, w (\alpha_i) \rangle \beta} \equiv
     \bs{y}^{w (\alpha_i)} \quad \operatorname{mod} \; 1 -\bs{y}^{\beta} .
  \]
  Therefore the numerator in our last expression is $\equiv$ mod $1 -
  \bs{y}^{\beta}$ to
  \[ (1 - \bs{y}^{w (\alpha_i)})  (([f]_{s_{\beta} w} - [f]_{s_{\beta}
     ws_i}) - ([f]_w - [f]_{ws_i})), \]
  and since $[f] \in \mathcal{R}$, this is divisible by $1 -
  \bs{y}^{\beta}$. Since $1 - \bs{y}^{\beta}$ is prime to the
  denominator, it divides the second term in this case also.
\end{proof}

If $w \in W$ define
\[ \phi_w (\bs{y}) = \prod_{\substack{
     \alpha \in \Phi^+\\
		 w^{- 1} (\alpha) \in \Phi^+}}
 (q \bs{y}^{- \alpha} - 1) 
	 \prod_{\substack{
     \alpha \in \Phi^+\\
		 w^{- 1} (\alpha) \in \Phi^-}}
 (1 - \bs{y}^{- \alpha}) . \]
\begin{lemma}
  \label{lem:fratio}Let $\alpha_i$ be a simple root, $s_i$ the corresponding
  simple reflection and let $v \in W$. Then
  \[ \label{eq:fvfvs} \frac{\phi_{vs_i} (\bs{y})}{\phi_v (\bs{y})} =
     \left\{ \begin{array}{ll}
       \frac{1 - \bs{y}^{v (\alpha_i)}}{\bs{y}^{v (\alpha_i)} - q} &
       \text{if $vs_i > v$,}\\
       \frac{\bs{y}^{- v (\alpha)} - q}{1 - \bs{y}^{- v (\alpha)}} &
       \text{if } vs_i < v.
     \end{array} \right. \]
\end{lemma}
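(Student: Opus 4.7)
The plan is to compare the two products term-by-term using how the inversion-type set $N(w) := \{\alpha \in \Phi^+ : w^{-1}(\alpha) \in \Phi^-\}$ changes under right multiplication by the simple reflection $s_i$. With this notation,
\[ \phi_w(\bs{y}) = \prod_{\alpha \in \Phi^+ \setminus N(w)} (q\bs{y}^{-\alpha} - 1) \prod_{\alpha \in N(w)} (1 - \bs{y}^{-\alpha}), \]
so the ratio $\phi_{vs_i}/\phi_v$ only sees those positive roots whose membership in $N$ changes as we pass from $v$ to $vs_i$.

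The combinatorial heart of the argument is the identity
\[ N(vs_i) = \begin{cases} N(v) \sqcup \{v(\alpha_i)\} & \text{if } vs_i > v, \\ N(v) \setminus \{-v(\alpha_i)\} & \text{if } vs_i < v. \end{cases} \]
To prove this, I would note that $\alpha \in N(vs_i)$ iff $s_i v^{-1}(\alpha) \in \Phi^-$, set $\beta = v^{-1}(\alpha)$, and use the standard fact that $s_i$ permutes $\Phi^+ \setminus \{\alpha_i\}$ and exchanges $\alpha_i$ with $-\alpha_i$. This implies $s_i(\beta) \in \Phi^-$ iff either $\beta = \alpha_i$ or $\beta \in \Phi^-$ with $\beta \neq -\alpha_i$. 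Translating back through $v$ and using that $v(\alpha_i) \in \Phi^+$ exactly when $vs_i > v$ yields the two cases above.

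With this in hand, the ratio reduces to a single factor. In the ascending case $vs_i > v$, the root $v(\alpha_i) \in \Phi^+$ moves from the complement of $N$ (contributing $q\bs{y}^{-v(\alpha_i)} - 1$ to $\phi_v$) into $N$ (contributing $1 - \bs{y}^{-v(\alpha_i)}$ to $\phi_{vs_i}$), while all other factors cancel, giving
\[ \frac{\phi_{vs_i}}{\phi_v} = \frac{1 - \bs{y}^{-v(\alpha_i)}}{q\bs{y}^{-v(\alpha_i)} - 1} = \frac{1 - \bs{y}^{v(\alpha_i)}}{\bs{y}^{v(\alpha_i)} - q}, \]
after multiplying numerator and denominator by $\bs{y}^{v(\alpha_i)}$ to match the form in the statement. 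The descending case $vs_i < v$ is completely analogous: the root $-v(\alpha_i) \in \Phi^+$ is the one whose status flips (now leaving $N$ rather than joining it), producing
\[ \frac{\phi_{vs_i}}{\phi_v} = \frac{q\bs{y}^{v(\alpha_i)} - 1}{1 - \bs{y}^{v(\alpha_i)}} = \frac{\bs{y}^{-v(\alpha_i)} - q}{1 - \bs{y}^{-v(\alpha_i)}}.\]

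There is no serious obstacle here; the only point that demands care is verifying the set-theoretic identity for $N(vs_i)$, and in particular checking in the ascending case that $v(\alpha_i) \notin N(v)$ (immediate since $v^{-1}v(\alpha_i) = \alpha_i \in \Phi^+$) and in the descending case that $-v(\alpha_i) \in N(v)$ (immediate since $v^{-1}(-v(\alpha_i)) = -\alpha_i \in \Phi^-$). Everything else is elementary manipulation.
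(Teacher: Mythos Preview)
Your proof is correct and follows essentially the same approach as the paper: both identify the inversion-type sets $\{\alpha\in\Phi^+ : (vs_i)^{-1}\alpha\in\Phi^-\}$ and $\{\alpha\in\Phi^+ : v^{-1}\alpha\in\Phi^-\}$, observe they differ by the single root $\pm v(\alpha_i)$, and take the resulting single-factor ratio. You are simply more explicit than the paper in justifying the set identity (via the standard fact that $s_i$ permutes $\Phi^+\setminus\{\alpha_i\}$) and you carry out both cases rather than leaving the second to the reader.
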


\begin{proof}
  We prove the first case and leave the second to the reader. With $v < vs_i$
  the root $v (\alpha_i)$ is positive and moreover
  \[ \begin{array}{c}
       \{\alpha \in \Phi^+ | (vs_i)^{- 1} \alpha \in \Phi^- \} = \{\alpha \in
       \Phi^+ |v^{- 1} \alpha \in \Phi^- \} \cup \{v (\alpha_i)\},\\
       \{\alpha \in \Phi^+ | (vs_i)^{- 1} \alpha \in \Phi^+ \} = \{\alpha \in
       \Phi^+ |v^{- 1} \alpha \in \Phi^- \} \setminus \{v (\alpha_i)\} .
     \end{array} \]
  This means that each of the two products defining $\phi_v$ and $\phi_{vs_i}$
  each differs by a single term, and the ratio $\phi_v (\bs{y}) /
  \phi_{vs_i} (\bs{y})$ is the ratio of these terms, that is $(1 -
  \bs{y}^{- v (\alpha_i)}) / (q \bs{y}^{- v (\alpha_i)} - 1)$ which
  equals the first case of~(\ref{eq:fvfvs}).
\end{proof}

Now if $u, v \in W$ define $[\Theta_u] \in \mathcal{O} (T)^W$ be defined by
\[ [\Theta_u]_v = \phi_v r_{u^{- 1}, v^{- 1}} . \]
\begin{proposition} \label{prop:D-of-Theta}
  \label{prop:thetarecurse}If $s_i u < u$ then $\Theta_{us_i} = D_i \Theta_u$.
\end{proposition}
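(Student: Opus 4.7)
The plan is to verify the claimed identity $\Theta_{us_i} = D_i \Theta_u$ componentwise. First I will unfold the definitions: the $v$-component of $\Theta_{us_i}$ is $\phi_v r_{s_i u^{-1}, v^{-1}}$, while the $v$-component of $D_i \Theta_u$ is
\[
\frac{q-1}{1-\bs{y}^{v(\alpha_i)}}\phi_v r_{u^{-1},v^{-1}} + \frac{\bs{y}^{v(\alpha_i)}-q}{1-\bs{y}^{v(\alpha_i)}}\phi_{vs_i} r_{u^{-1},s_i v^{-1}}.
\]
After dividing through by $\phi_v$ and using Lemma~\ref{lem:fratio} to evaluate $\phi_{vs_i}/\phi_v$, the desired equality becomes a closed-form identity among the three quantities $r_{s_i u^{-1}, v^{-1}}$, $r_{u^{-1}, v^{-1}}$, and $r_{u^{-1}, s_i v^{-1}}$.

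Next I will split into two cases according to the sign of $v(\alpha_i)$. If $vs_i > v$ (equivalently $s_i v^{-1} > v^{-1}$), I will apply Proposition~\ref{prop:altbnrecursion} twice: once to the pair $(u^{-1}, v^{-1})$ and once to $(s_i u^{-1}, v^{-1})$, using the simple reflection $s_i$ in both. If $vs_i < v$, I will do the analogous pair of applications of Theorem~\ref{thm:bumpnakasuji}. In each application, the sub-case (first vs.\ second form of the recursion) is determined by comparing the first argument with its product with $s_i$; the hypothesis $s_i u < u$ pins down which sub-case applies in each instance, and crucially ensures that in the two applications we hit opposite sub-cases (one ``descent'' and one ``ascent''), so the resulting pair of equations is non-degenerate.

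In each of these cases the two equations share the ``auxiliary'' unknown $r_{s_i u^{-1}, s_i v^{-1}}$. Solving the first for this unknown and substituting into the second yields $r_{s_i u^{-1}, v^{-1}}$ as an explicit linear combination of $r_{u^{-1}, v^{-1}}$ and $r_{u^{-1}, s_i v^{-1}}$. Comparing against the target expression, I expect the coefficient of $r_{u^{-1}, s_i v^{-1}}$ to collapse to $1$ in the case $vs_i > v$ (because $\phi_{vs_i}/\phi_v$ from Lemma~\ref{lem:fratio} is exactly the reciprocal of the prefactor from $D_i$), and in the case $vs_i < v$ to the combined factor $\frac{(\bs{y}^{-\beta}-q)(q\bs{y}^{-\beta}-1)}{(1-\bs{y}^{-\beta})^2}$ with $\beta = -v(\alpha_i) > 0$, which is precisely the prefactor appearing on the left of Proposition~\ref{prop:altbnrecursion}. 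Thus in both cases the verification reduces to a short rational-function identity in $q$ and $\bs{y}^{\pm v(\alpha_i)}$.

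The main obstacle is keeping the case bookkeeping straight. The sub-case conditions in Theorem~\ref{thm:bumpnakasuji} and Proposition~\ref{prop:altbnrecursion} are phrased in terms of $s_i$ acting on the left of the first argument, and must be reconciled with the hypothesis $s_i u < u$ (a condition on $u$, not on $u^{-1}$) and with the sign of $v(\alpha_i)$. Once the correct pairing of sub-cases is identified, the $r_{s_i u^{-1}, s_i v^{-1}}$-terms cancel cleanly and the coefficients of the remaining terms match by the algebraic identity described above.
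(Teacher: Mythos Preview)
Your approach is valid and will reach the same conclusion, but it is considerably more roundabout than the paper's. The key observation you are missing is that a \emph{single} application of the recursion already involves exactly the three $r$-values appearing in the target identity, provided you apply it with second argument $s_i v^{-1}$ rather than $v^{-1}$. Concretely: in the case $vs_i>v$ one has $s_i(s_iv^{-1})=v^{-1}<s_iv^{-1}$, so Theorem~\ref{thm:bumpnakasuji} applied to the pair $(u^{-1},s_iv^{-1})$ expresses $r_{u^{-1},s_iv^{-1}}$ directly as a combination of $r_{u^{-1},v^{-1}}$ and $r_{s_iu^{-1},v^{-1}}$; multiplying by $\phi_v$ and invoking Lemma~\ref{lem:fratio} gives the result. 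The case $vs_i<v$ is identical with Proposition~\ref{prop:altbnrecursion} in place of Theorem~\ref{thm:bumpnakasuji}. Your two-applications-plus-elimination strategy reproduces this, but the intermediate quantity $r_{s_iu^{-1},s_iv^{-1}}$ is an artifact of choosing the wrong starting pair; it never needs to appear.

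One genuine bookkeeping warning: you write that ``the hypothesis $s_iu<u$ pins down which sub-case applies.'' It does not. The sub-case in both Theorem~\ref{thm:bumpnakasuji} and Proposition~\ref{prop:altbnrecursion} depends on whether $s_i u^{-1}\lessgtr u^{-1}$, which is equivalent to $us_i\lessgtr u$, not to $s_iu\lessgtr u$. These conditions are independent in general (e.g.\ $u=s_1s_2$ in $S_3$, $i=1$). The paper's own proof uses the first sub-case, which requires $s_iu^{-1}<u^{-1}$, i.e.\ $us_i<u$; this is almost certainly the intended hypothesis (and is also what the subsequent application in the paper needs). If you carry out your elimination with the opposite sub-case assignment you will find an unwanted factor of $q^{-1}$, so this is not merely cosmetic. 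With the corrected hypothesis $us_i<u$, your elimination does go through and the coefficient of $r_{u^{-1},s_iv^{-1}}$ collapses exactly as you predict.
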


\begin{proof}
  We must show that $[\Theta_{us_i}]_v = [D_i \Theta_u]_v$, that is
  \begin{equation}
    \label{eq:target} [\Theta_{us_i}]_v = \frac{q - 1}{1 - \bs{y}^{v
    (\alpha_i)}} [\Theta_u]_v + \frac{\bs{y}^{v (\alpha_i)} - q}{1 -
    \bs{y}^{v (\alpha_i)}} [\Theta_u]_{vs_i} .
  \end{equation}
  There are two cases. First suppose that $vs_i > v$. We want to apply the
  recursion (\ref{eq:rrecursion}), but with $u$ replaced by $u^{- 1}$ and $v$
  replaced by $s_i v^{- 1}$. Note that the first case of the recursion is then
  applicable and we obtain
  \[ r_{u^{- 1}, (vs_i)^{- 1}} = \frac{1 - q}{1 - \bs{y}^{\beta}} r_{u^{-
     1}, v^{- 1}} (\bs{y}) + r_{(us_i)^{- 1}, v^{- 1}} (\bs{y}) \]
  where now $\beta = v (\alpha_i)$ because we replaced $v$ by $s_i v^{- 1}$ in
  Theorem~\ref{thm:bumpnakasuji}. We multiply both sides of this by $\phi_v$
  and use the first case of Lemma~\ref{lem:fratio} to obtain (\ref{eq:target})
  in this case in the form
  \[ \frac{\bs{y}^{\beta} - q}{1 - \bs{y}^{\beta}} [\Theta_u]_{vs_i} =
     \frac{1 - q}{1 - \bs{y}^{\beta}} [\Theta_u]_v + [\Theta_{us_i}]_v .
  \]
  Next suppose that $vs_i < v$. Now we use the first case of
  Proposition~\ref{prop:altbnrecursion} with $u$ and $v$ replaced by $u^{- 1}$
  and $s_i v^{- 1}$ respectively. This tells us, with $\beta = - v (\alpha_i)$
  that
  \[ \frac{(\bs{y}^{- \beta} - q)  (q \bs{y}^{- \beta} - 1)}{(1 -
     \bs{y}^{- \beta})^2} r_{u^{- 1}, (vs_i)^{- 1}} = \frac{1 - q}{1 -
     \bs{y}^{- \beta}} r_{u^{- 1}, v^{- 1}} + r_{(us_i)^{- 1}, v^{- 1}} .
  \]
  Now we multiply both sides by $\phi_v$ and use the second case of
  Lemma~\ref{lem:fratio} to obtain
  \[ \frac{\bs{y}^{- \beta} - q}{1 - \bs{y}^{- \beta}}
     [\Theta_u]_{vs_i} = \frac{1 - q}{1 - \bs{y}^{- \beta}} [\Theta_u]_v +
     [\Theta_{us_i}]_v \]
  which again is equivalent to (\ref{eq:target}) in this case.
\end{proof}

\begin{proposition}
  For any $w \in W$, $\Theta_w \in \mathcal{R}$.
\end{proposition}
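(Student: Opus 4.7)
The plan is to prove $\Theta_w \in \mathcal{R}$ by induction anchored at $u = w_0$, using the recursion of Proposition~\ref{prop:D-of-Theta} to step between elements of $W$ and Lemma~\ref{lem:distable} to propagate $\mathcal{R}$-membership.

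For the base case $u = w_0$, the factor $r_{w_0,v^{-1}}$ vanishes unless $w_0 \le v^{-1}$, i.e.\ unless $v = w_0$, so the tuple $\Theta_{w_0}$ is supported only at its $w_0$-component with value $[\Theta_{w_0}]_{w_0} = \phi_{w_0}$. Because $w_0$ sends every positive root to a negative root, the defining product collapses to $\phi_{w_0} = \prod_{\alpha \in \Phi^+}(1 - \bs{y}^{-\alpha})$. In the Laurent polynomial ring $\mathcal{O}_q(T)$ the factors $1 - \bs{y}^{-\alpha}$ and $1 - \bs{y}^{\alpha}$ differ by the unit $-\bs{y}^{-\alpha}$, so they generate the same principal ideal; hence $(1 - \bs{y}^{\alpha})$ divides $\phi_{w_0}$ for every root $\alpha$. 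The GKM divisibility $(1 - \bs{y}^{\alpha}) \mid [\Theta_{w_0}]_v - [\Theta_{w_0}]_{r_\alpha v}$ then holds trivially: both sides vanish when neither index equals $w_0$, and otherwise the nonvanishing entry is $\pm \phi_{w_0}$.

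For the inductive step, I proceed by induction on $\ell(u^{-1}w_0)$. Given $u \ne w_0$, I look for a longer element $u^+ > u$ and a simple reflection $s_i$ with $u^+ s_i = u$ such that the hypothesis of Proposition~\ref{prop:D-of-Theta} applies at $(u^+, s_i)$; then the recursion yields $\Theta_u = \Theta_{u^+ s_i} = D_i \Theta_{u^+}$, and since $\ell((u^+)^{-1}w_0) < \ell(u^{-1}w_0)$ the inductive hypothesis gives $\Theta_{u^+} \in \mathcal{R}$. The operator $D_i$ is invertible, as its quadratic relation $D_i^2 = (q-1)D_i + q$ rearranges to $D_i^{-1} = q^{-1}(D_i - (q-1))$, and both $D_i$ and $D_i^{-1}$ preserve $\mathcal{R}$ by Lemma~\ref{lem:distable} (the latter because $\mathcal{R}$ is a ring and multiplication by scalars preserves it). Hence $\Theta_u = D_i \Theta_{u^+} \in \mathcal{R}$, completing the inductive step.

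The hard part is the combinatorial bookkeeping needed to guarantee that a suitable $u^+$ always exists. Descending from $w_0$ by right-multiplying by simple reflections that happen also to be left descents of the running element reaches most of $W$, but this walk can stall at elements $u$ with $L(u) \subseteq R(u)$ (for instance $u = e$ or a simple reflection), where no left descent that is simultaneously a right ascent is available. The cleanest remedy is to supplement the basic recursion with the component-wise identities implied by Proposition~\ref{prop:altbnrecursion}, which, applied to $r_{u^{-1}, v^{-1}}$ and multiplied through by $\phi_v$, relate $[\Theta_u]_v$, $[\Theta_u]_{vs_i}$, and $[\Theta_{us_i}]_{vs_i}$ and thus allow $\mathcal{R}$-membership to flow across the residual bottlenecks. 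Once this routing is arranged the induction closes and the proposition follows.
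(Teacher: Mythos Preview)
Your base case is correct and in fact more careful than the paper's (which incorrectly asserts $[\Theta_{w_0}]_{w_0}=1$ rather than $\phi_{w_0}$; your divisibility argument is the right one).

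The inductive step, however, has a real gap, and its source is a typo in the statement of Proposition~\ref{prop:thetarecurse}. You have taken the hypothesis ``$s_i u < u$'' literally and then, quite correctly, discovered that you cannot always find $s_i$ with $u s_i > u$ and $s_i(u s_i) < u s_i$ simultaneously (your example of a simple reflection is exactly right). But if you inspect the proof of Proposition~\ref{prop:thetarecurse}, the recursion~(\ref{eq:rrecursion}) is applied with $u$ replaced by $u^{-1}$, so the ``first case'' condition $s\cdot u < u$ becomes $s_i u^{-1} < u^{-1}$, which is equivalent to $u s_i < u$, not $s_i u < u$. The correct hypothesis of the proposition is therefore that $s_i$ is a \emph{right} descent of $u$. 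With this correction the induction is immediate: given $w \ne w_0$, choose any $s_i$ with $w s_i > w$, set $u^+ = w s_i$; then $u^+ s_i = w < u^+$ is exactly the (corrected) hypothesis, and $\Theta_w = \Theta_{u^+ s_i} = D_i \Theta_{u^+} \in \mathcal{R}$ by Lemma~\ref{lem:distable} and induction. No further bookkeeping is needed, and this is precisely the paper's argument (modulo the same typo recurring in that proof, where ``left descent'' and ``$\Theta_{s_i w}$'' should read ``right descent'' and ``$\Theta_{w s_i}$'').

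Your final paragraph is not a proof: invoking Proposition~\ref{prop:altbnrecursion} to ``route around bottlenecks'' is a suggestion, not an argument, and you have not shown it would work. The digression about $D_i^{-1}$ is also unnecessary, since you only ever apply $D_i$ in the forward direction.
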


\begin{proof}
  If $w = w_0$ (the longest element of $W$) then $[\Theta_w]_v = 1$ if $v =
  w_0$ and 0 otherwise, so thes statement is true for $\Theta_{w_0}$. Now if
  $\Theta_w \in \mathcal{R}$ and $s_i$ is any left descent, then $\Theta_{s_i
  w} = D_i \Theta_w$ by Proposition~\ref{prop:thetarecurse}, so $\Theta_{s_i
  w} \in \mathcal{R}$ by Lemma~\ref{lem:distable}. The result thus follows by
  downward induction.
\end{proof}

\begin{proof}[Proof of Theorem~\ref{thm:denomconject}]
  Suppose that $u \leqslant v$. It follows from the recursive definition of
  $r_{u, v}$ that all denominator factors are of the form $1
  -\bs{y}^{\alpha}$ with $\alpha \in \Phi^+$. (Note that $1
  -\bs{y}^{\alpha}$ and $1 -\bs{y}^{- \alpha}$ differ by a unit in
  $\mathcal{O} (T)$, so we may restrict ourselve to $\alpha \in \Phi^+$.) What
  we must show is that all such denominator factors have $\alpha \in S (u,
  v)$, that is, $u \leqslant v.r_{\alpha} < v$.
  
  Recall that $[\Theta_{u^{- 1}}]_{v^{- 1}}$ is regular on $\mathcal{O} (T)$.
  Thus the poles of $r_{u, v} = \phi_{v^{- 1}}^{- 1} [\Theta_{u^{- 1}}]_{v^{-
  1}}$ can only come from the poles of $\phi_{v^{- 1}}^{- 1}$, and we need
  only need to consider those of the form $1 -\bs{y}^{- \alpha}$ (or
  equivalently $1 -\bs{y}^{\alpha}$), not those of the form
  $q\bs{y}^{- \alpha} - 1$. From the definition of $\phi_{v^{- 1}}$
  these have \ $\alpha \in \Phi^+$ and $v (\alpha) \in \Phi^-$. This condition
  is equivalent to $v r_{\alpha} < v$.
  
  We must also show that if $1 -\bs{y}^{\alpha}$ is is a denominator
  factor then $u \leqslant v.r_{\alpha}$. We note that $\phi_{v^{- 1}} r_{u,
  v} = [\Theta_{u^{- 1}}]_{v^{- 1}}$ is divisible by $1
  -\bs{y}^{\alpha}$ if $u \nleqslant v r_{\alpha}$. Indeed,
  $\Theta_{u^{- 1}} \in \mathcal{R}$ so $1 -\bs{y}^{\alpha}$ divides
  $[\Theta_{u^{- 1}}]_{v^{- 1}} - [\Theta_{u^{- 1}}]_{r_{\alpha} v^{- 1}}$and
  the second term vanishes since $u^{- 1} \nleqslant r_{\beta} v^{- 1}$. Thus
  $1 -\bs{y}^{\alpha}$ does not appear in the denominator of $r_{u, v} =
  \phi_{v^{- 1}}^{- 1} [\Theta_{u^{- 1}}]_{v^{- 1}}$.
\end{proof}

Up to this point the results of this section are valid for general Cartan types, but
we now specialize to Type~A to show that in this case
$r_{u,v}$ can be expressed in terms of our partition functions.

\begin{proposition} \label{prop:part-fun-r-polys}
For all $u,v\in S_n$, 
\[
Z^\square_{1_W,u}|_v = [\Theta_u]_v.
\]
\end{proposition}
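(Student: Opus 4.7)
The plan is to argue by downward induction on $\ell(u)$, beginning from $u = w_0$, verifying that both sides satisfy the same base case and the same $D_i$-recursion.

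For the base case $u = w_0$, Proposition~\ref{prop:special-recursion} (applied with $\sigma = 1_W$) gives $Z^\square_{1_W, w_0}|_{w_0} = \prod_{\alpha \in \Phi^+}(1-\bs{y}^{-\alpha})$, while Proposition~\ref{prop:vanishing} forces $Z^\square_{1_W, w_0}|_v = 0$ for $v \ne w_0$. On the other side, $[\Theta_{w_0}]_v = \phi_v\, r_{w_0, v^{-1}}$ vanishes unless $w_0 \le v^{-1}$, i.e.\ unless $v = w_0$; at $v = w_0$ one has $r_{w_0, w_0} = 1$ and $\phi_{w_0} = \prod_{\alpha \in \Phi^+}(1-\bs{y}^{-\alpha})$, since $w_0^{-1}$ sends every positive root to a negative one. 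Hence the two sides agree at $u = w_0$.

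For the inductive step, given $u$ with $\ell(u) < \ell(w_0)$, pick any simple reflection $s_i$ with $us_i > u$ and set $u' = us_i$, so that $\ell(u') > \ell(u)$ and $Z^\square_{1_W, u'}|_\cdot = [\Theta_{u'}]_\cdot$ by the induction hypothesis. Proposition~\ref{prop:special-recursion} in the length-decreasing case, applied with $w = u'$ (so that $ws_i = u < u' = w$), yields
\[
Z^\square_{1_W, u}|_v = \frac{q-1}{1-\bs{y}^{v\alpha_i}} Z^\square_{1_W, u'}|_v + \frac{\bs{y}^{v\alpha_i} - q}{1-\bs{y}^{v\alpha_i}} Z^\square_{1_W, u'}|_{vs_i},
\]
which is precisely $(D_i Z^\square_{1_W, u'}|_\cdot)_v$. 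Meanwhile, Proposition~\ref{prop:D-of-Theta} applied to $u'$ with the descent at $s_i$ (which is the right-descent condition $u's_i = u < u'$ governing the first-case Bump--Nakasuji step in its proof) gives $\Theta_u = \Theta_{u's_i} = D_i \Theta_{u'}$. Combining, $Z^\square_{1_W, u}|_\cdot = D_i\,[\Theta_{u'}]_\cdot = [\Theta_u]_\cdot$, closing the induction.

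The only substantive observation is that the specialized partition-function recursion of Proposition~\ref{prop:special-recursion} in the length-decreasing case is literally the action of $D_i$ on the restriction tuple; once this alignment is noted and the descent hypothesis on the $\Theta$-side is read off from the proof of Proposition~\ref{prop:D-of-Theta}, the induction is immediate. There is no real obstacle beyond this bookkeeping of descent directions.
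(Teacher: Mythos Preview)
Your proof is correct and follows essentially the same approach as the paper: downward induction on $\ell(u)$ from $u=w_0$, checking the base case directly and then observing that both the restricted partition functions and the $\Theta_u$ satisfy the same $D_i$-recursion. Your remark that the operative hypothesis in Proposition~\ref{prop:D-of-Theta} is the \emph{right}-descent condition $u's_i<u'$ (as one sees by tracing its proof) rather than the left-descent condition written in its statement is a useful clarification, and is implicitly what the paper uses as well.
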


\begin{proof}
Let  $[Z_u]$ be the element of $\mathcal{O}_q (T_{\operatorname{reg}})^W$ defined by $[Z_u]_v := Z^\square_{1_W,u}|_v$. We will prove the result by showing that $[Z_u] = [\Theta_u]$ for all $u\in S_n$.
The recursion in Proposition~\ref{prop:special-recursion} shows that when $us_i < u$, $[Z_{us_i}] = D_i[Z_u]$, so by Proposition~\ref{prop:D-of-Theta}, we only need to show that $[Z_{w_0}] = [\Theta_{w_0}]$.

By the base case of Proposition~\ref{prop:special-recursion}, with $\sigma=1_W$, the partition functions satisfy
\[
[Z_{w_0}]_v = Z^\square_{1_W,w_0}|_v = \begin{cases} \prod_{\alpha\in \Phi^+}(1-\bs{y}^{-\alpha}), &\text{if } v=w_0,\\ 0, & \text{otherwise},\end{cases}
\]
and on the other hand,
\[
[\Theta_{w_0}]_v.= \phi_v r_{w_0,v^{-1}} = \prod_{\substack{\alpha \in \Phi^+\\v^{- 1} (\alpha) \in \Phi^+}}(q \bs{y}^{- \alpha} - 1) 
	 \prod_{\substack{\alpha \in \Phi^+\\v^{- 1} (\alpha) \in \Phi^-}}(1 - \bs{y}^{- \alpha})
	 \begin{cases} 1, &\text{if } v=w_0,\\ 0, & \text{otherwise},\end{cases}
\]
In the case where $v=w_0$, the first product is empty and the second product is over all positive roots, so $Z^\square_{1_W,w_0}|_v = [\Theta_{w_0}]_v$ for all $v\in S_n$.
\end{proof}

\begin{corollary} \label{cor:KL-R}
For all $u,v\in S_n$,
\[
r_{u,v} = \phi_{v^{-1}}^{-1} Z^\square_{1_W,u^{-1}}|_{v^{-1}}.
\]
The Boltzmann weights for this lattice model are given in Figure~\ref{fig:r-poly-weights}.
\end{corollary}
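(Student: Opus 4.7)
The plan is to derive Corollary~\ref{cor:KL-R} as an essentially immediate consequence of Proposition~\ref{prop:part-fun-r-polys}, combined with unpacking the definition of $[\Theta_u]_v$. First, I would apply Proposition~\ref{prop:part-fun-r-polys} with $u$ replaced by $u^{-1}$ and $v$ replaced by $v^{-1}$, giving
\[
Z^\square_{1_W, u^{-1}}|_{v^{-1}} = [\Theta_{u^{-1}}]_{v^{-1}}.
\]
Then, using the definition $[\Theta_w]_x := \phi_x \, r_{w^{-1}, x^{-1}}$ stated just before Proposition~\ref{prop:D-of-Theta}, the right-hand side equals $\phi_{v^{-1}} \, r_{(u^{-1})^{-1}, (v^{-1})^{-1}} = \phi_{v^{-1}} \, r_{u,v}$. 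Rearranging gives the claimed formula $r_{u,v} = \phi_{v^{-1}}^{-1} Z^\square_{1_W, u^{-1}}|_{v^{-1}}$, noting that $\phi_{v^{-1}}$ is a product of factors of the form $q\boldsymbol{y}^{-\alpha}-1$ or $1-\boldsymbol{y}^{-\alpha}$, none of which vanish identically, so the quotient is well-defined in $\mathcal{O}_q(T_{\operatorname{reg}})$.

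For the second assertion, identifying the Boltzmann weights in Figure~\ref{fig:r-poly-weights}, I would simply observe that the partition function $Z^\square_{1_W, u^{-1}}$ is by definition computed using the weights of Figure~\ref{fig:modified_weights} as in Section~\ref{sec:partition-function}, and that the figure referenced in the statement records exactly these weights (perhaps after minor relabeling consistent with the conventions of this section, e.g.\ $t = q$). No independent verification is needed beyond pointing to the definition of $Z^\square$ already established.

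The only potential subtlety is to confirm that the substitution $u \leftrightarrow u^{-1}$ and $v \leftrightarrow v^{-1}$ is legitimate; since Proposition~\ref{prop:part-fun-r-polys} is stated for all $u, v \in S_n$, this is automatic. Thus there is no genuine obstacle: the corollary is purely a bookkeeping consequence of Proposition~\ref{prop:part-fun-r-polys} together with the definition of $\Theta_u$, and the main work has already been done in establishing that proposition (via the identification of the Demazure-like recursions for $Z^\square$ and for $\Theta$, and matching of their common base case at $w_0$).
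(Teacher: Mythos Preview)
Your derivation of the identity $r_{u,v} = \phi_{v^{-1}}^{-1} Z^\square_{1_W,u^{-1}}|_{v^{-1}}$ is correct and matches the paper's proof exactly: apply Proposition~\ref{prop:part-fun-r-polys} at $(u^{-1},v^{-1})$ and unwind the definition of $[\Theta_{u^{-1}}]_{v^{-1}}$.

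For the second assertion about the Boltzmann weights, your treatment is slightly too casual. Figure~\ref{fig:r-poly-weights} is not merely a relabeled copy of Figure~\ref{fig:modified_weights} with $t=q$; it records the weights \emph{after} performing the restriction $|_{v^{-1}}$. The paper makes this explicit: in the vertex at row $i$ and column $j$, the restriction replaces the row parameter $x$ by $y_{v^{-1}(i)}$ and the column parameter $y$ by $y_j$, so that the ratio $x y^{-1}$ becomes $\boldsymbol{y}^{\alpha}$ with $\alpha = e_{v^{-1}(i)} - e_j$. That is the substitution producing Figure~\ref{fig:r-poly-weights} from Figure~\ref{fig:modified_weights}, and it is worth saying so rather than leaving it as ``minor relabeling.'' Otherwise your argument is complete.
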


\begin{proof}
The first statement follows from Proposition~\ref{prop:part-fun-r-polys}. From the Boltzmann weights in Figure~\ref{fig:modified_weights}, replace $y$ with $y_j$ and $x$ with $y_{v^{-1}(i)}$. These are the Boltzmann weights for the vertex in row $i$ and column $j$ of $Z^\square_{1_W,u^{-1}}|_{v^{-1}}$, and appear in Figure~\ref{fig:r-poly-weights}, with $\alpha := e_{v^{-1}(i)} - e_j$. 
\end{proof}

\begin{figure}[h]
\begin{center}
\begin{tabular}{|ccc|}
\hline
\hline
Case 1 & Case 2 & Case 3\\
\hline
\vertexnoxy{+}{\Sigma}{+}{\Sigma} &
\vertexnoxy{c}{\Sigma}{c}{\Sigma} &
\vertexnoxy{+}{\Sigma}{c}{\Sigma^-_c}
\\[4pt]
$(-q)^{\Sigma_{[1,r]}}$ & $q^{\Sigma_{(c,r]}} (-\bs{y}^\alpha + q^{\Sigma_c})$ & $(-q)^{\Sigma_{[1,c)}} q^{\Sigma_{(c,r]}}$
\\
\hline
Case 4 & Case 5 & Case 6\\
\hline
\vertexnoxy{c}{\Sigma}{+}{\Sigma^+_c} &
\vertexnoxy{c}{\Sigma}{d}{\Sigma^{+-}_{cd}} &
\vertexnoxy{d}{\Sigma}{c}{\Sigma^{+-}_{dc}}
\\[4pt]
$(q-1) \bs{y}^\alpha (-q)^{\Sigma_{(c,r]}}$ & $(q-1) \bs{y}^\alpha (-q)^{\Sigma_{(c,d)}} q^{\Sigma_{(d,r]}}$ & $(q-1) (-1)^{\Sigma_{(c,d)}} q^{\Sigma_{(d,r]}}$
\\
\hline
\end{tabular}
\end{center}
\caption{The Boltzmann weights for the vertex in row $i$ and column $j$ of the lattice model $Z^\square_{1_W,u^{-1}}|_{v^{-1}}$, where we set $\alpha = e_{v^{-1}(i)} - e_j$.}
\label{fig:r-poly-weights}
\end{figure}

\begin{remark}
By \cite[Theorem~2]{BumpNakasujiKL}, $r_{u,v}\to R_{u,v}$ under the limit sending $\bs{y}^\alpha\to\infty$ for all $\alpha\in\Phi^+$. One can see that $\phi_v\to (-1)^{\ell(w_0v)}$ for all $v\in S_n$, so by Corollary~\ref{cor:KL-R}, $R_{u,v}$ is equal to the limit of $Z^\square_{1_W,u^{-1}}|_{v^{-1}}$ as $\bs{y}^\alpha\to\infty$ for all $\alpha\in\Phi^+$. Therefore, one may interpret $R_{u,v}$ as a sum over all states which survive in the limit. However, the cancellation involved is nontrivial.
\end{remark}

\section{Special Cases\label{sec:special}}

In this section, we show how various transformations and special cases of our lattice model result in special functions 
from geometry and representation theory, thus providing new lattice model interpretations of these functions.

\subsection{\label{subsec:iwahori}Iwahori Whittaker functions}

If we take the $\mathbf{y}$ parameters to be zero, the models in this paper
specialize to the models in~{\cite{BBBGIwahori}}, which represent Iwahori
Whittaker functions. More precisely, the {\textit{Whittaker weights}} in
Figure~\ref{fig:whittaker_weights} specialize to the Boltzmann weights in
{\cite{BBBGIwahori}} when the column parameter (denoted $y$ in
Figure~\ref{fig:whittaker_weights}) is zero. It is shown there that the
partition functions of these models represent Iwahori Whittaker functions on
$G = \operatorname{GL} (n, F)$ where $F$ is a nonarchimedean local field.

\begin{remark}
  There are notational differences between this paper and
  {\cite{BBBGIwahori}}. In both papers, Iwahori Whittaker functions and their associated
  lattice model boundaries are indexed by a triple of data - a  partition and a pair of Weyl group elements. 
  Let $(\lambda; \sigma, w)$, with $\lambda$ a partition and $\sigma, w \in S_n$ be such 
  a triple in the notation of this paper, and let $(\mu; w_1, w_2)$ be a triple in the notation of
  \cite{BBBGIwahori}. To make the correspondence between the two notations giving rise to the
  same partition functions (and hence the same associated Whittaker functions), we set 
  $(\mu; w_1, w_2) = (\lambda - \rho; w^{-1} w_0, w_0 \sigma^{-1} w_0)$. 
\end{remark}

Before making a connection to Whittaker functions, let us relate the partition functions
of the two sets of Boltzmann weights presented earlier.
Let $\Omega_{\sigma, w}^{\lambda} (\bs{x} ; \bs{y})$ be the partition function
made with boundary conditions $B (\sigma, w, \lambda)$ as in
Definition~\ref{def:boundary} and Boltzmann weights as in
Figure~\ref{fig:whittaker_weights}.
We can utilize the close relationship between our two sets of Boltzmann weights in Figures~\ref{fig:whittaker_weights} and~\ref{fig:modified_weights} 
to relate their respective partition functions $\Omega_{\sigma, w}^\lambda$ and $Z_{\sigma,w}^\lambda$.

\begin{proposition} \label{prop:omegazident} For all permutations $\sigma, w$ and dominant weights $\lambda$,
\[\Omega_{\sigma, w}^\lambda(\bs{x}; \bs{y}) = \left(\prod_{j=1}^N y_j^{|\{i | \lambda_i > j\}| }\right) Z_{\sigma, w}^\lambda(-\bs{x}; \bs{y})\]
and thus by Theorem~\ref{thm:partition-function} evaluating $Z_{\sigma, w}^\lambda$,
$$\Omega_{\sigma, w}^\lambda(\bs{x}; \bs{y})
= t^{\ell(\sigma)} \tau_w^{-1} \tau_{w_0 \sigma^{-1}} \cdot \left[ \prod_{i=1}^n \prod_{j < \lambda_i} \left(x_i+y_j\right) \right]. $$
\end{proposition}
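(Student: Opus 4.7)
The strategy is to exploit the vertex-by-vertex relationship between the two sets of Boltzmann weights to transfer the closed form of Theorem~\ref{thm:partition-function} from $Z$ to $\Omega$. As noted in the proof of Theorem~\ref{thm:ybe}, the modified weight at any vertex equals the Whittaker weight divided by $y$ whenever the horizontal edge to the left of the vertex carries a color, then with $x$ replaced by $-x$. Reversing this observation yields the local identity
\[
\beta_v^{\mathrm{Whit}}(x_{i(v)},y_{j(v)}) \;=\; y_{j(v)}^{\epsilon_L(v)}\,\beta_v^{\mathrm{mod}}(-x_{i(v)},y_{j(v)}),
\]
where $\epsilon_L(v)\in\{0,1\}$ records whether the left edge at $v$ is colored.

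The crucial step is that, when multiplied over the vertices of an admissible state, $\prod_v y_{j(v)}^{\epsilon_L(v)}$ is state-independent. This follows from the colored path picture of Remark~\ref{rem:colored_paths}: each path begins on the top boundary in some column $c$, travels monotonically down and to the right, and exits on the right boundary, so it traverses exactly one horizontal edge between columns $j+1$ and $j$ for every $1\le j<c$. Hence the number of vertices in column $j$ whose left edge is colored equals the number of paths whose starting column exceeds $j$, which by Definition~\ref{def:boundary} is $|\{i:\lambda_i>j\}|$. Grouping the product by columns gives $\prod_v y_{j(v)}^{\epsilon_L(v)}=\prod_{j=1}^N y_j^{|\{i:\lambda_i>j\}|}$ on every admissible state, and this common scalar factors out of the state sum to yield the first identity.

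For the closed form, I would substitute $\bs{x}\mapsto-\bs{x}$ in the formula of Theorem~\ref{thm:partition-function} for $Z_{\sigma,w}^\lambda$. Each $\tau_i$ depends on $\bs{x}$ only through $\bs{x}^{\alpha_i}=x_i/x_{i+1}$ and the coordinate swap $s_i$, both invariant under the sign change, so the substitution commutes with the entire Demazure product and acts only on the seed, turning $\prod_i\prod_{j<\lambda_i}(1-x_i/y_j)$ into $\prod_i\prod_{j<\lambda_i}(1+x_i/y_j)$. The external monomial, rewritten as $\prod_i\prod_{j<\lambda_i}y_j$, is a polynomial in $\bs{y}$ which the $\tau_i$ treat as constants, so it may be pulled inside the operators and absorbed into each factor $1+x_i/y_j$ to produce $x_i+y_j$, giving the claimed closed form. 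The only non-routine step in the whole argument is the state-independence of the monomial prefactor, which is a topological observation about monotone path crossings rather than any serious computation; everything else is invariance of $\tau_i$ under $\bs{x}\mapsto-\bs{x}$ combined with Theorem~\ref{thm:partition-function}.
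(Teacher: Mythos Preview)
Your proposal is correct and follows essentially the same route as the paper's proof: the first identity comes from the vertex-by-vertex relation between the two weight tables (the paper simply cites the caption to Figure~\ref{fig:modified_weights}, whereas you spell out the colored-path argument for why the resulting $y$-monomial is state-independent), and the second identity then follows by substituting $\bs{x}\mapsto-\bs{x}$ in Theorem~\ref{thm:partition-function}, using that each $\tau_i$ depends on $\bs{x}$ only through ratios $x_i/x_{i+1}$ and so is invariant under this sign change.
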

\begin{proof}
The first identity follows from the caption to Figure \ref{fig:modified_weights}. For the second identity, note that $\tau_i$ is invariant under the substitution $\bs{x}\mapsto -\bs{x}$. Using Theorem \ref{thm:partition-function}, we have
\begin{align*}
\Omega_{\sigma, w}^\lambda(\bs{x}; \bs{y})
&=  \left(\prod_{j=1}^N y_j^{|\{i | \lambda_i > j\}| }\right)  t^{\ell(\sigma)} \tau_w^{-1} \tau_{w_0 \sigma^{-1}} \cdot \left[ \prod_{i=1}^n \prod_{j < \lambda_i} \left(1 + \frac{x_i}{y_j}\right) \right]
\\&= t^{\ell(\sigma)} \tau_w^{-1} \tau_{w_0 \sigma^{-1}} \cdot \left[ \prod_{i=1}^n \prod_{j < \lambda_i} \left(x_i+y_j\right) \right]. \qedhere
\end{align*} 
\end{proof}

In particular, the prior result shows that in the specialization $(\bs{x}; \bs{y}) = (\bs{z}, \bs{0})$
$$ \Omega_{\sigma, w}^\lambda(\bs{z}; \bs{0}) = t^{\ell(\sigma)} \tau_w^{-1} 
\tau_{w_0 \sigma^{-1}} \bs{z}^\lambda $$ 
matching the divided difference operator description of Iwahori-Whittaker functions in Corollary 3.9~of \cite{BBBGIwahori} 
with our $t$ equal to their $v$. Note that we must shift all parts of the partition $\lambda$ by 1, as our first column 
in the lattice is labeled ``1" while the first column in~\cite{BBBGIwahori} is labeled ``0." Thus, comparing with \cite{BBBGIwahori}, 
we are immediately led to the following result:

\begin{proposition}
The standard Iwahori Whittaker function $ \phi^{\bs{z}}_w (g)$ evaluated at $g = \varpi^{- \lambda}
\sigma$ is (up to normalization) the partition function $\Omega_{w_0 \sigma^{- 1} w_0, w_0 w^{-
1}}^{\lambda + \rho} (\bs{z} ; \bs{0}).$
\end{proposition}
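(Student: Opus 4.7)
The plan is to reduce the statement to a direct application of Proposition~\ref{prop:omegazident} and the description of Iwahori Whittaker polynomials from Corollary~3.9 of \cite{BBBGIwahori}, translating between the Demazure operators $\tau_i$ that naturally arise from our lattice model and the Demazure-Whittaker operators $\mathfrak{T}_i$ of equation~(\ref{eq:demazurewhit}) that naturally arise in the representation theory of $\mathrm{GL}(n,F)$. The conjugation $\tau_i = \bs{z}^\rho \mathfrak{T}_i \bs{z}^{-\rho}$ recorded in Section~\ref{sec:polys} will be the bridge, and the shift $\lambda \mapsto \lambda + \rho$ in the statement will be exactly what is required to absorb the conjugation through any product of $\tau_i^{\pm 1}$'s.

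Concretely, I would first apply Proposition~\ref{prop:omegazident} with partition $\lambda + \rho$ and the prescribed boundary permutations $w_0\sigma^{-1}w_0$ and $w_0w^{-1}$, obtaining
\[
\Omega_{w_0\sigma^{-1}w_0,\, w_0 w^{-1}}^{\lambda+\rho}(\bs{z};\bs{0})
= t^{\ell(\sigma)} \tau_{w_0 w^{-1}}^{-1}\, \tau_{\sigma w_0} \cdot \bs{z}^{\lambda+\rho},
\]
where I have used $\ell(w_0\sigma^{-1}w_0) = \ell(\sigma)$ and $w_0(w_0\sigma^{-1}w_0)^{-1} = \sigma w_0$, together with the specialization of the base case $\prod_{j<(\lambda+\rho)_i}(x_i+y_j)$ at $\bs{y}=\bs{0}$. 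Next, I would use the conjugation $\tau_i = \bs{z}^\rho\mathfrak{T}_i\bs{z}^{-\rho}$ to telescope the product of $\tau_i^{\pm 1}$'s, obtaining an expression of the form $\bs{z}^\rho \cdot \bigl(\mathfrak{T}_{i_k}^{\epsilon(i_k)}\cdots\mathfrak{T}_{i_1}^{\epsilon(i_1)}\bigr)\cdot \bs{z}^{\lambda}$ along any reduced word realizing the Bruhat-graph walks appearing in $\tau_{w_0 w^{-1}}^{-1}\tau_{\sigma w_0}$. This is exactly the shape of the Whittaker polynomial $\phi_w^\lambda$ in~(\ref{eq:whit-poly}), so I would appeal to Corollary~3.9 of \cite{BBBGIwahori} to identify the product of $\mathfrak{T}$'s applied to $\bs{z}^\lambda$ with the Iwahori Whittaker function $\phi_w^{\bs{z}}$ evaluated at $g = \varpi^{-\lambda}\sigma$.

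The main obstacle will be verifying that the Bruhat-graph walk prescribed by Corollary~3.9 of \cite{BBBGIwahori} for the value $\phi_w^{\bs{z}}(\varpi^{-\lambda}\sigma)$ matches the specific walk from the identity to $\sigma w_0(w_0 w^{-1})^{-1} = \sigma w$ that our recursion traces out (descending $\tau$-steps from $\sigma w_0$ down to $e$ and then ascending $\tau^{-1}$-steps up to $w_0 w^{-1}$). This is precisely where the index correspondence noted in the remark preceding Proposition~\ref{prop:omegazident}, namely $(\mu;w_1,w_2) = (\lambda-\rho;\, w^{-1}w_0,\, w_0\sigma^{-1}w_0)$, is used: substituting our triple $(\lambda+\rho;\, w_0\sigma^{-1}w_0,\, w_0 w^{-1})$ yields the $\cite{BBBGIwahori}$-triple $(\lambda;\, w,\, \sigma)$, which is the indexing of $\phi_w^{\bs{z}}(\varpi^{-\lambda}\sigma)$. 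The remaining bookkeeping -- the overall factor $t^{\ell(\sigma)}\bs{z}^\rho$, the sign coming from $\bs{x}\mapsto -\bs{x}$ in Proposition~\ref{prop:omegazident}, and the $\prod_j y_j^{|\{i:\lambda_i>j\}|}$ prefactor (which is trivial at $\bs{y}=\bs{0}$ but whose disappearance must be tracked) -- is absorbed into the ``up to normalization'' qualifier in the statement.
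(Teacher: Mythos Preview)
Your approach is essentially the same as the paper's: specialize Proposition~\ref{prop:omegazident} at $\bs{y}=\bs{0}$ to obtain $\Omega^{\lambda}_{\sigma,w}(\bs{z};\bs{0}) = t^{\ell(\sigma)}\tau_w^{-1}\tau_{w_0\sigma^{-1}}\bs{z}^{\lambda}$ (up to a scalar in $\bs{z}$), then match with Corollary~3.9 of \cite{BBBGIwahori} via the index correspondence in the preceding remark. The paper treats this as immediate; you have usefully made the conjugation $\tau_i=\bs{z}^\rho\mathfrak{T}_i\bs{z}^{-\rho}$ and the resulting $\rho$-shift explicit.

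One small point of confusion in your final paragraph: the prefactor $\prod_j y_j^{|\{i:\lambda_i>j\}|}$ from the first identity of Proposition~\ref{prop:omegazident} is \emph{not} ``trivial'' at $\bs{y}=\bs{0}$---it vanishes, while $Z^\lambda_{\sigma,w}(-\bs{x};\bs{y})$ has $y^{-1}$ factors, so that identity is genuinely indeterminate there. You are (correctly) using the second identity, which has no such prefactor and specializes cleanly; the sentence about tracking its disappearance should simply be dropped.
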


Several notations need to be briefly explained here. In the standard basis of 
Whittaker functions $\phi^{\bs{z}}_w$, the complex vector $\bs{z}$ refers to the
Langlands parameters of the associated principal series on $G$. We evaluate
at $g = \varpi^{-\lambda} \sigma$ where we may identify the Weyl group element $\sigma$ with an
element of $G$ as a permutation matrix. Moreover $\varpi^{-\lambda}$ denotes the torus element
with entry $\varpi^{-\lambda_i}$ in the $i$-th component. 
Furthermore in this description, it is assumed that $\lambda$ is \textit{$w_2$-almost
dominant} in the terminology of \cite{BBBGIwahori} Definition~3.4. We will not
repeat this definition here. Although $\lambda$ is not necessarily
dominant, this condition at least implies that $\lambda+\rho$ is dominant.

Note that in order to completely determine all values of Iwahori Whittaker functions, it
suffices to restrict $g$ to a set of double coset representatives for $N^-\backslash G/J$ 
where $N^-$ is the group of lower triangular unipotent matrices and $J$ is an Iwahori subgroup.
Moreover the Whittaker function vanishes on some double cosets. Taking both facts into account, 
it is shown in \cite{BBBGIwahori} that the set of $g$ described in the above proposition are exactly the double coset representatives
$g=\varpi^{-\lambda}\sigma$ for which $\phi_w(g)$ is nonzero.

Now we can combine the above with earlier results from Section~\ref{sec:motivic-chern}
on motivic Chern classes to connect these two families of special functions via their common
interpretations as a lattice model partition function.
To avoid complications let us assume that $\sigma = 1$, so we will be
comparing motivic Chern classes to Whittaker values on diagonal elements. We
will also replace $\lambda$ by $\lambda - \rho$. Thus $\phi_{w_0 w^{- 1}}
(\varpi^{- (\lambda - \rho)})$ is to be compared with $\Omega_{1_W,
w}^{\lambda} (\bs{z} ; \bs{0})$. These Whittaker values are equal to
nonsymmetric Macdonald polynomials as explained in~\cite{BBBGIwahori}.
Moreover, given the relation between $Z_{\sigma, w}^\lambda$ and
$\Omega_{\sigma,w}^\lambda$ above, we may immediately realize certain values of
Iwahori Whittaker functions as specializations of motivic Chern classes.

\begin{corollary} For any permutation $w$, the value of the Iwahori-Whittaker function
\[ \Omega_{1_W, w}^\rho(\bs{x}) = \left. (w_0\bs{y})^\rho  (-t)^{\ell(w_0)-\ell(w)} \MC^\vee(X(w_0w)^\circ)  \right|_{\bs{x}\to -\bs{x}; \bs{y}\to 0}. \]
\end{corollary}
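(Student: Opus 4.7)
The plan is to reduce the claim to a direct application of the paper's two previous structural identities: Theorem~\ref{thm:dual-motivic-part-fun} relating $\MC^\vee$ to $Z^\square$, and Proposition~\ref{prop:omegazident} relating the two sets of Boltzmann weights. The only nontrivial ingredient is a small combinatorial check matching the monomial $(w_0\bs{y})^\rho$ with the exponent factor appearing in Proposition~\ref{prop:omegazident}.

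First, I will apply Theorem~\ref{thm:dual-motivic-part-fun} to the class $\MC^\vee(X(w_0w)^\circ)$. Since $\ell(w_0 w) = \ell(w_0)-\ell(w)$, this gives
\[
\MC^\vee(X(w_0w)^\circ) \;=\; (-t)^{\ell(w)-\ell(w_0)}\, Z^\square_{1_W,w}(\bs{x};\bs{y}_{w_0}),
\]
so the $(-t)^{\ell(w_0)-\ell(w)}$ factor in the corollary cancels, leaving the right-hand side equal to $(w_0\bs{y})^\rho\cdot Z^\square_{1_W,w}(\bs{x};\bs{y}_{w_0})$ before the substitutions $\bs{x}\to -\bs{x}$, $\bs{y}\to 0$.

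Next, I will invoke Proposition~\ref{prop:omegazident} with $\lambda=\square=(n,n-1,\ldots,1)$, applied to the partition function $Z^\square_{1_W,w}$ in the variables $(\bs{x};\bs{y}_{w_0})$ rather than $(\bs{x};\bs{y})$. The prefactor in that proposition becomes
\[
\prod_{j=1}^n y_{w_0(j)}^{\,|\{i\,:\,\square_i>j\}|} \;=\; \prod_{j=1}^n y_{n+1-j}^{\,n-j} \;=\; \prod_{k=1}^n y_k^{\,k-1},
\]
which is exactly $(w_0\bs{y})^\rho$ for the paper's Weyl vector $\rho=(n-1,n-2,\ldots,0)$. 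This combinatorial identification is the one step requiring any care, and it is immediate from $\square_i = n+1-i$ and the reindexing $k=n+1-j$. Consequently,
\[
(w_0\bs{y})^\rho \, Z^\square_{1_W,w}(-\bs{x};\bs{y}_{w_0}) \;=\; \Omega^\square_{1_W,w}(\bs{x};\bs{y}_{w_0}).
\]

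Finally, performing the substitution $\bs{x}\to -\bs{x}$ on the right-hand side of the corollary and then specializing $\bs{y}\to 0$ (which sends $\bs{y}_{w_0}\to 0$ as well), the resulting expression becomes $\Omega^\square_{1_W,w}(\bs{x};\bs{0})$. This is precisely the partition function appearing (under the standard one-column shift between the indexing $j\in\{1,\ldots,N\}$ used here and the indexing $j\in\{0,\ldots,N-1\}$ of \cite{BBBGIwahori}) as the Iwahori Whittaker value $\Omega^\rho_{1_W,w}(\bs{x})$ in the first Proposition after Proposition~\ref{prop:omegazident}. The main ``obstacle'' is really only bookkeeping: keeping the two shifts in indexing (the one-column shift between our convention and the Whittaker literature, and the Weyl-vector identification $\square=\rho+(1,\ldots,1)$) consistent so that the monomial $(w_0\bs{y})^\rho$ lines up exactly with the prefactor of Proposition~\ref{prop:omegazident}. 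Once this is resolved, the corollary follows with no further computation.
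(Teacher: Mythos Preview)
Your proof is correct and follows essentially the same approach as the paper: invoke Theorem~\ref{thm:dual-motivic-part-fun} to convert $\MC^\vee(X(w_0w)^\circ)$ into $Z^\square_{1_W,w}(\bs{x};\bs{y}_{w_0})$, then apply Proposition~\ref{prop:omegazident} to obtain $\Omega$. Your explicit verification that the monomial prefactor in Proposition~\ref{prop:omegazident} equals $(w_0\bs{y})^\rho$, and your remark on the $\square$ versus $\rho$ indexing shift, spell out details the paper leaves implicit.
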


\begin{proof} By Theorem \ref{thm:dual-motivic-part-fun},
\[ \left. Z_{1_W, w}^\rho(-\bs{x}; w_0\bs{y})\right|_{\bs{y}\to 0} = \left.  (-t)^{\ell(w_0)-\ell(w)} \MC^\vee(X(w_0w)^\circ)  \right|_{\bs{x}\to -\bs{x}; \bs{y}\to 0} \]
so the result follows immediately from Proposition~\ref{prop:omegazident}.
\end{proof}

In principle, one could make this connection simply at the level of the divided difference operators $\tau_w$ with sufficiently many clever substitutions and normalizations. A related connection along these lines was previously found in \cite{MihalceaSuWhittaker} (see Theorem~1.1 and the discussion following it, building upon the identity in Remark 5.4 of \cite{AMSSCasselman} in equivariant K-theory). But it is interesting that the agreement extends to the much finer level of individual admissible states of the lattice model, where local statistics have been previously shown to encode interesting geometric information (as in the data on Gr\"{o}bner degenerations encoded by the lattice models rendered as so-called ``pipe dreams'' in \cite{KnutsonMillerGrobner}).

\subsection{Factorial Schur functions\label{subsec:factorial}}

Next, we consider the Whittaker model under the process of \emph{color
merging}. This is a phenomenon in which a sum of partition functions
for a colored model can equal a partition function of a model with
fewer colors, or an uncolored model. It was called \emph{color blindness}
in~\cite{BorodinWheelerColored} and \emph{local lifting property} in
\cite{BumpNaprienko}. The same phenomenon may be seen in \cite{BBBGIwahori}
as Properties~A and~B (Figures 18 and 19).
In our case, the colored models we consider are related to uncolored
models studied by Bump, McNamara, and
Nakasuji \cite{BumpMcNamaraNakasujiFactorial}, and correspond to a Tokuyama-like
deformation of factorial Schur functions.

The Shintani-Casselman-Shalika formula for $GL(n)$ expresses the spherical Whittaker function as a
Schur function times a deformation of the Weyl denominator. On the other hand,
\cite{Tokuyama,HamelKing,BBBF} expressed this as the partition function of a solvable
lattice model. The spherical Whittaker function is expressed as a sum over the Weyl group of
Iwahori Whittaker functions by~\cite{BBBGIwahori}, and these also have representations
as partition functions of solvable lattice models -- colored models. Putting
all these pieces together, the Schur function times the deformed Weyl denominator
is expressed as a sum over the Weyl group of partition functions of colored models.
The following result is a factorial version of this fact.

\begin{theorem} \label{thm:refinement}
For all permutations $\sigma$ and dominant weights $\lambda$ of length $\leqslant r$,
\begin{equation} \label{eq:Whittaker-BMN-part-funs}
\sum_{w\in S_n} \Omega_{\sigma, w}^{\lambda+\rho}(\bs{x}; \bs{y})
= \left(\prod_{i<j} (x_i-tx_j)\right)s_\lambda(x|y),
\end{equation}
where $s_\lambda(x|y)$ is the factorial Schur function given by
\[
s_\lambda(x|y) = \frac{\det\left( (x_i-y_1)\cdots (x_i-y_{\lambda_j+j}) \right)_{1\leqslant i,j\leqslant n}}{\prod_{i<j} (x_i-x_j)}.
\]
\end{theorem}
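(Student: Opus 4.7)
The plan is to use Proposition~\ref{prop:omegazident} to express each summand as a Demazure-Lusztig operator applied to a common base polynomial, reducing the theorem to a statement about the ``Hecke symmetrizer'' $E:=\sum_{w\in S_n}\tau_w^{-1}$. Proposition~\ref{prop:omegazident} gives
\[
\sum_{w\in S_n}\Omega_{\sigma,w}^{\lambda+\rho}(\bs{x};\bs{y}) = t^{\ell(\sigma)}\,E\cdot\tau_{w_0\sigma^{-1}}\cdot P(\bs{x};\bs{y}),
\]
where $P(\bs{x};\bs{y}) := \prod_{i=1}^n\prod_{j<(\lambda+\rho)_i}(x_i+y_j)$.

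The first step is to establish $\sigma$-independence by proving $E\cdot\tau_{s_i}=t\,E$ for each simple reflection $s_i$. Because $\tau_i^2=(t-1)\tau_i+t$, the inverse $\tau_i^{-1}$ satisfies the same quadratic Hecke relation with $t$ replaced by $t^{-1}$, so $\{\tau_w^{-1}\}_{w\in S_n}$ spans a copy of the Hecke algebra with dual parameter. The standard right-eigenvalue computation (partition the sum over $w$ according to whether $\ell(ws_i)$ exceeds $\ell(w)$, reindex via $w\mapsto ws_i$, and apply the quadratic relation to the descent terms) then yields $E\cdot\tau_{s_i}^{-1}=t^{-1}E$, equivalently $E\cdot\tau_{s_i}=tE$. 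Iterating along a reduced expression for $w_0\sigma^{-1}$, which has length $\ell(w_0)-\ell(\sigma)$, collapses the right-hand side to $t^{\ell(w_0)}\,E\cdot P$, independent of~$\sigma$.

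The remaining, and main, task is to evaluate $t^{\ell(w_0)}\,E\cdot P = \prod_{i<j}(x_i-tx_j)\,s_\lambda(\bs{x}\mid\bs{y})$. My preferred route is by color merging, in the spirit of the ``local lifting'' or ``color blindness'' phenomenon of \cite{BorodinWheelerColored,BBBGIwahori}: summing $\Omega_{\sigma,w}^{\lambda+\rho}$ over all right-boundary permutations $w$ corresponds to summing the colored Boltzmann weights in Figure~\ref{fig:whittaker_weights} over compatible color assignments, and a vertex-by-vertex check should show that the resulting color-merged weights coincide (up to a global power of $t$ already tracked in the base case of Proposition~\ref{prop:basecase}) with the uncolored five-vertex weights used by Bump--McNamara--Nakasuji \cite{BumpMcNamaraNakasujiFactorial}, whose partition function on the corresponding boundary data represents $\prod_{i<j}(x_i-tx_j)\,s_\lambda(\bs{x}\mid\bs{y})$. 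An alternative algebraic route uses the operator identity $\tau_i^{-1}+1=t^{-1}(x_i-tx_{i+1})\partial_i$ (where $\partial_i$ is the ordinary divided difference) to rewrite $t^{\ell(w_0)}\,E$ as a deformed antisymmetrizer; unwinding the sum along a reduced word for $w_0$ produces the ratio of determinants defining the factorial Schur function, with the deformed Vandermonde $\prod_{i<j}(x_i-tx_j)$ arising from the $(x_i-tx_{i+1})$ prefactors.

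The main obstacle is this final symmetrization step. In the specialization $\bs{y}=0$ it degenerates into the (colored) Shintani--Casselman--Shalika formula established in \cite{BBBGIwahori} precisely by color merging; the present setting requires propagating the $\bs{y}$-dependence through those merging identities and matching with the factorial-Tokuyama model of \cite{BumpMcNamaraNakasujiFactorial} at each of the six vertex types, taking care at the vertices where multiple colors meet. The algebraic alternative avoids combinatorial case analysis but demands a careful Lindstr\"om--Gessel--Viennot-style bookkeeping for how the base polynomial $\prod_i\prod_{j<(\lambda+\rho)_i}(x_i+y_j)$ antisymmetrizes into the determinantal numerator of $s_\lambda(\bs{x}\mid\bs{y})$.
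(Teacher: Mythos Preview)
Your color-merging route is exactly the paper's approach. The paper identifies the right-hand side as the partition function $\Omega^\lambda_{\text{BMN}}$ of the uncolored six-vertex model in~\cite{BumpMcNamaraNakasujiFactorial}, then proves $\sum_w \Omega_{\sigma,w}^{\lambda+\rho}=\Omega^\lambda_{\text{BMN}}$ directly at the level of states. The key technical step you are gesturing at with ``taking care at the vertices where multiple colors meet'' is made precise by splitting the pooled collection of colored states into \emph{strict} (no edge carries more than one color) and \emph{nonstrict} states. Two local identities do all the work: Property~A, the vertex-by-vertex color-merging identity \eqref{eq:color-merging} you anticipate, matches the strict-state sum to $\Omega^\lambda_{\text{BMN}}$; Property~B is a separate pairwise cancellation showing that the nonstrict states sum to zero. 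Both are checked by inspection of Figures~\ref{fig:whittaker_weights} and~\ref{fig:colorblind_weights}, and the argument is otherwise identical to \cite[Lemma~8.5]{BBBGIwahori}. Your sketch is missing this nonstrict cancellation as a distinct ingredient; the naive ``sum colored weights over compatible colorings'' does not by itself reproduce the uncolored weight when vertical edges can carry several colors.

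Your Hecke-symmetrizer framing and the $\sigma$-independence argument via $E\tau_i=tE$ are correct and give a cleaner explanation of that independence than the paper, which simply observes it as a corollary of the final formula. Your algebraic alternative using $\tau_i^{-1}+1=t^{-1}(x_i-tx_{i+1})\partial_i$ is a genuinely different idea, but as you acknowledge it is not completed: the prefactors $(x_i-tx_{i+1})$ do not commute with subsequent divided differences, so ``unwinding along a reduced word for $w_0$'' does not immediately produce $\prod_{i<j}(x_i-tx_j)$ times an antisymmetrization of $P$. If you pursue this route you will need either an explicit product formula for $E$ as an operator or an eigenvalue characterization of $\prod_{i<j}(x_i-tx_j)\,s_\lambda(\bs{x}\mid\bs{y})$ under the $\tau_i$.
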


Note in particular that since the right side is independent of $\sigma$, so is the left side.

The right side of \eqref{eq:Whittaker-BMN-part-funs} is the partition function
from \cite{BumpMcNamaraNakasujiFactorial}. Let
$\Omega^\lambda_{\text{BMN}}(\bs{x}; \bs{y})$ be the following partition
function:
\begin{itemize}
\item $r$ rows and $N$ columns, with row parameters $x_1,\ldots,x_r$ from top to bottom, and column parameters $y_1,\ldots,y_N$ from right to left.
\item Boltzmann weights as in \ref{fig:colorblind_weights}.
\item Along the top boundary, the boundary edge in columns $\lambda_i + r + 1 - i$ are $-$. All other top boundary edges are $+$. If $\ell(\lambda)<r$, we pad it with trailing $0$'s.
\item Along the right boundary, all boundary edges are $-$.
\item Along the bottom and left boundaries, all boundary edges are $+$.
\end{itemize}

\begin{figure}[h]
\begin{center}
\begin{tabular}{|cccccc|}
\hline
\hline
Case 1 & Case 2 & Case 3 & Case 4 & Case 5 & Case 6\\
\hline
\smallvertex{+}{+}{+}{+} &
\smallvertex{-}{-}{-}{-} &
\smallvertex{+}{-}{+}{-} &
\smallvertex{-}{+}{-}{+} &
\smallvertex{-}{+}{+}{-} &
\smallvertex{+}{-}{-}{+}
\\[4pt]
$1$ & 
$x+ty$ &
$-t$ &
$x+y$ &
$x(1-t)$ &
$1$ \\
\hline

\hline
\end{tabular}
\end{center}
	\caption{The $v_\Gamma$ weights from \cite{BumpMcNamaraNakasujiFactorial}, with $z$
	replaced by $x$, $\alpha$ replaced by $y$, and $t$ replaced by $-t$. Here, as
	is convention, a label of $-$ represents a (single) path, while $+$
	represents the absence of a path.}
\label{fig:colorblind_weights}
\end{figure}

By \cite[Theorem~1]{BumpMcNamaraNakasujiFactorial} (noting our substitutions),
\[
\Omega^\lambda_{\text{BMN}}(\bs{x}; \bs{y}) = \left(\prod_{i<j} (x_i-tx_j)\right)s_\lambda(x|y),
\]
so we need only show that
\[
\sum_{w\in S_n} \Omega_{\sigma, w}^{\lambda+\rho}(\bs{x}; \bs{y})
= \Omega^\lambda_{\text{BMN}}(\bs{x}; \bs{y}).
\]

 The left side is a sum over a collection of states; let $\mathfrak{S}$ denote this collection. A state is called \emph{strict} if no edge has multiple colors, otherwise it is called \emph{nonstrict}. Let $\mathfrak{S}_{\text{strict}}$ denote the states in $\mathfrak{S}$ which are strict, and let $\mathfrak{S}_{\text{ns}}$ denote the states which are nonstrict. Clearly,
 \[
 \sum_{w\in S_n} \Omega_{\sigma, w}^{\lambda+\rho}(\bs{x}; \bs{y}) = \sum_{\mathfrak{s}\in \mathfrak{S}_{\text{strict}}} \wt(\mathfrak{s}) + \sum_{\mathfrak{s}\in \mathfrak{S}_{\text{ns}}} \wt(\mathfrak{s}).
 \]

Theorem \ref{thm:refinement} then follows directly from the following Lemma.

\begin{lemma} \;
\begin{enumerate}
\item[(i)] $\sum_{\mathfrak{s}\in \mathfrak{S}_{\text{strict}}} \wt(\mathfrak{s}) = \Omega^\lambda_{\text{BMN}}(\bs{x}; \bs{y}).$
\item[(ii)] $\sum_{\mathfrak{s}\in \mathfrak{S}_{\text{ns}}} \wt(\mathfrak{s}) = 0.$
\end{enumerate}
\end{lemma}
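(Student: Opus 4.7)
For part~(i), my plan is to aggregate the strict colored states over all $w\in S_n$ and identify them with admissible BMN states via color-forgetting. In a strict state each edge carries at most one color, so erasing the color labels produces an unambiguous admissible BMN configuration; conversely each BMN state admits a small family of colored liftings, parametrized by the choice of~$w$ and, at each BMN Case~$2$ vertex, by whether the two local paths ``cross'' or ``bounce'' (corresponding respectively to colored Case~$2$ or Cases~$5$/$6$ of Figure~\ref{fig:modified_weights}). The identity $\sum_{\mathfrak{s}\in\mathfrak{S}_{\text{strict}}}\wt(\mathfrak{s})=\Omega^\lambda_{\text{BMN}}$ then reduces, via Proposition~\ref{prop:omegazident} to account for the substitution~$\bs{x}\mapsto-\bs{x}$ and the global prefactor $\prod_j y_j^{|\{i\colon(\lambda+\rho)_i>j\}|}$, to a finite list of local weight identities at each BMN vertex type. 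The cleanest of these are at BMN Cases~$1,3,4,5,6$, where a single colored case contributes. The most delicate identity is at BMN Case~$2$, where the sum of the colored ``cross'' and ``bounce'' modified weights must collapse to the BMN weight~$x+ty$; this is a short algebraic check once the prefactor is distributed appropriately among the vertices of column~$j$.

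For part~(ii), my plan is to construct a sign-reversing involution~$\iota$ on $\mathfrak{S}_{\text{ns}}$ preserving the ambient union $\bigsqcup_w\mathfrak{S}^{\lambda+\rho}_{\sigma,w}$. Given $\mathfrak{s}\in\mathfrak{S}_{\text{ns}}$, let~$e$ be the topmost, and within that row leftmost, vertical edge carrying at least two colors. Since $\lambda+\rho$ has strictly decreasing parts, every top-boundary edge carries at most one color, so~$e$ lies strictly below the top row. A conservation-of-colors count at the vertex~$v$ immediately above~$e$ then forces~$v$ into Case~$3$ of Figure~\ref{fig:modified_weights}: a single color~$c$ arrives from the left, a single color~$c'$ from the top, and both colors appear on~$e$. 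The two paths of colors~$c$ and~$c'$ descend together along some column, eventually diverging at a vertex~$v''$. I would define~$\iota$ to swap the subsequent trajectories of the two paths from~$v''$ onward (with a coordinated modification at~$v$), so that the resulting state has the same top boundary as~$\mathfrak{s}$ but right-boundary permutation $(c\,c')w$, and differs from~$\mathfrak{s}$ only in the weights at~$v$ and~$v''$.

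The main obstacle is showing that this local modification flips the weight by exactly the factor~$-1$ and that~$\iota$ is a genuine involution. The Case~$4$, $5$, or~$6$ weight at~$v''$ contains a factor of the form $(-t)^{\Sigma_{(\cdot,r]}}$ whose exponent changes by~$1$ when the roles of~$c$ and~$c'$ are transposed, contributing an extra factor of~$-t$; for the net effect to be~$-1$, this must be paired with an offsetting~$t$ coming from the Case~$3$ weight at~$v$, whose $(-t)^{\Sigma_{[1,c)}}$ factor is likewise sensitive to the ordering of~$c$ and~$c'$. A careful refinement of~$\iota$ combining simultaneous modifications at~$v$ and~$v''$, restricted to the case where~$c$ and~$c'$ are consecutive among the colors occupying the intervening vertical edges (so as to neutralize the $(-1)^{\Sigma_{(c,c')}}$ factor present in Cases~$5$ and~$6$), should produce the required sign flip. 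Once the sign identity and involutivity are verified, the non-strict states pair into $\iota$-orbits with cancelling contributions, yielding $\sum_{\mathfrak{s}\in\mathfrak{S}_{\text{ns}}}\wt(\mathfrak{s})=0$.
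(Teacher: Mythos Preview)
Your overall strategy matches the paper's: part~(i) via a local color-merging identity, part~(ii) via a sign-reversing involution on nonstrict states. The paper compresses both into directly checkable local facts---a \emph{Property~A} (each BMN vertex weight equals the sum of colored Whittaker weights over all colored outputs consistent with fixed single-color inputs) and a \emph{Property~B} (at a Case~3 vertex with $+$ on the left and a multi-color $\Sigma$ on top, swapping which of two $\Sigma$-adjacent colors exits right negates the weight)---and defers the involution construction itself to~\cite[Lemma~8.5]{BBBGIwahori}.

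For part~(i) your approach is correct, but the detour through Proposition~\ref{prop:omegazident} is unnecessary: both $\Omega^{\lambda+\rho}_{\sigma,w}$ and $\Omega^{\lambda}_{\text{BMN}}$ are computed with Whittaker-type weights, and Property~A compares them directly vertex by vertex.

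For part~(ii) there is a concrete error: you have interchanged Cases~3 and~4. In the paper's convention Case~3 has $+$ on the left and a color exiting right (a color \emph{leaves} the vertical column), while Case~4 has a color entering from the left, $+$ on the right, and that color \emph{joining} the column below. Your merging vertex~$v$ (color from left, color from top, both descending to~$e$) is therefore Case~4, not Case~3; your divergence vertex~$v''$ is Case~3 (or~5/6 if a color also enters from the left there), not Case~4/5/6. The weight factors you invoke---$(-t)^{\Sigma_{[1,c)}}$ at~$v$ and $(-t)^{\Sigma_{(\cdot,r]}}$ at~$v''$---belong to the cases you did not intend, so the sign bookkeeping does not go through as written. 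The simpler route is to ignore~$v$ entirely and work only at the divergence vertex: there Property~B gives the sign flip directly when you swap which of two $\Sigma$-adjacent colors exits right, and the remaining work is to verify that propagating this color swap through all edges below and to the right of~$v''$ leaves every other vertex weight unchanged.
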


\begin{proof}
The proof of this lemma is essentially the same as \cite[Lemma~8.5]{BBBGIwahori}. The key steps of that Lemma are the following properties:

\noindent\textbf{Property A:} For any $a,b,c,d\in\{0,1\}$ and for any sets $A,B\subseteq [1,r]$ such that $|A|=a, |B|=b$,

\begin{equation} \label{eq:color-merging}
	\wt\left(\vcenter{\hbox{\vertex{a}{b}{c}{d}}}\right)
	\quad=\sum_{|C|=c, |D|=d} \quad \wt\left(\vcenter{\hbox{\vertex{A}{B}{C}{D}}}\right),
\end{equation}
where the weights on the left side are taken from Figure \ref{fig:colorblind_weights}, while those on the right side are taken from Figure \ref{fig:modified_weights}.

\noindent\textbf{Property B:} For any set $\Sigma\subseteq [1,r]$, and any colors $c,d\in\Sigma$ such that $\Sigma$ contains no colors in between,
\begin{equation}
	\wt\left(\vcenter{\hbox{\vertex{+}{\Sigma}{c}{\Sigma^-_c}}}\right) \quad+\quad \wt\left(\vcenter{\hbox{\vertex{+}{\Sigma}{d}{\Sigma^-_d}}}\right) = 0.
\end{equation}
where the weights are taken from Figure \ref{fig:modified_weights}.

These properties are easily checked by direct inspection. The rest of the proof is unchanged from \cite{BBBGIwahori},  and we refer the reader that source.
\end{proof}

\subsection{Double Schubert and Grothendieck polynomials}

Consider the isobaric divided difference operators \[\overline{\partial}_i = \frac{1-x^{\alpha_i}s_i}{1-x^{\alpha_i}} = \frac{x_{i+1}-x_is_i}{x_{i+1}-x_i} = -\partial_i x_{i+1}, \qquad \text{where} \qquad \partial_i = \frac{1-s_i}{x_i-x_{i+1}}.\] These operators satisfy the braid relations, and so for any permutation $w \in S_n$, we can define \[\overline{\partial}_w = \overline{\partial}_{i_1} \overline{\partial}_{i_2}\cdots \overline{\partial}_{i_k}\] for any reduced expression $w = s_{i_1}\cdots s_{i_k}$, and $\overline{\partial}_w$ is independent of the choice of reduced expression.

The isobaric divided difference operators are used to define the \emph{double Grothendick (Laurent) polynomials} $\mathfrak{G}_w(\bs{x}; \bs{y})$, via the recursion \[\mathfrak{G}_{w_0}(\bs{x}; \bs{y}) = \prod_{i+j\leqslant n} \left(1 - \frac{x_i}{y_j}\right)\] and \[\mathfrak{G}_{ws_i}(\bs{x}; \bs{y}) = \overline{\partial}_i \mathfrak{G}_w(\bs{x}; \bs{y}), \qquad\qquad ws_i < w.\]

\begin{remark}
	There are several related definitions of double Grothendieck polynomials,
	which are usually related by simple transformations.
	This definition matches the one in \cite{KnutsonMillerGrobner}.
	The definition in \cite{KnutsonMillerSubword} coincides
	with $\mathfrak{G}_w(\bs{x}; \bs{y}^{-1})$.
	
\end{remark}

Consider now our operator $\tau_i$. After setting $t=0$, it becomes \begin{equation} \label{eq:tau-t=0} \tau_i|_{t=0} = \partial_i x_{i+1} = -\overline{\partial}_i.\end{equation} Let $\overline{Z}^\lambda_{\sigma,w}(\bs{x}; \bs{y})$ denote the partition function $Z^\lambda_{\sigma,w}(\bs{x}; \bs{y})_{t=0}$. We then have
\[\
\overline{Z}^\lambda_{\sigma,ws_i}(\bs{x}; \bs{y}) = -\overline{\partial}_i\overline{Z}^\lambda_{\sigma,w}(\bs{x}; \bs{y}), \qquad\qquad ws_i<w.
\]

We can evaluate $\overline{Z}^\lambda_{\sigma,w}(\bs{x}; \bs{y})$ using Theorem \ref{thm:partition-function}.

\begin{proposition}
Let $\lambda$ be a partition, and $\sigma, w\in S_n$. If $\lambda$ is nonstrict or $\sigma\ne 1_W$, then $\overline{Z}_{\sigma, w}^\lambda(\bs{x}; \bs{y})=0$. If $\lambda$ is strict, then
\[ \overline{Z}_{1_W, w}^\lambda(\bs{x}; \bs{y}) =  (-1)^{w^{-1}w_0}\overline{\partial}_{w^{-1}w_0} \cdot \left[ \prod_{i=1}^n \prod_{j < \lambda_i} \left(1 - \frac{x_i}{y_j}\right) \right].\]
\end{proposition}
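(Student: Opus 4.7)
The plan is to specialize the closed form from Theorem~\ref{thm:partition-function} at $t=0$, using $\tau_i|_{t=0}=-\overline{\partial}_i$ from~\eqref{eq:tau-t=0} together with the identity $t\,\tau_i^{-1}=\tau_i-(t-1)$ (the normalized Hecke quadratic relation). Set
\[
\Phi_\lambda(\bs{x};\bs{y}):=\prod_{i=1}^{n}\prod_{j<\lambda_i}\Bigl(1-\tfrac{x_i}{y_j}\Bigr),
\qquad
Z^{\lambda}_{\sigma,w}=t^{\ell(\sigma)}\,\tau_w^{-1}\tau_{\sigma w_0}\,\Phi_\lambda.
\]
All of the analysis reduces to how $\tau_i^{\pm 1}$ interacts with $\Phi_\lambda$ at $t=0$.

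For the formula at $\sigma=1_W$, I would use the fact that $w\le w_0$ always in Bruhat order to pick a reduced word for $w_0$ that extends a reduced word for $w$, producing the length-additive factorization $\tau_{w_0}=\tau_w\,\tau_{w^{-1}w_0}$ in the braid group. Hence $Z^{\lambda}_{1_W,w}=\tau_{w^{-1}w_0}\,\Phi_\lambda$, an expression free of any inverse operators. Since $\Phi_\lambda$ is $t$-independent and $\tau_i|_{t=0}=-\overline{\partial}_i$ satisfies the same braid relations as $\tau_i$, the operator $\tau_{w^{-1}w_0}$ specializes to $(-1)^{\ell(w^{-1}w_0)}\overline{\partial}_{w^{-1}w_0}$, which yields the claimed identity.

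For the vanishing, I would argue that the stated hypotheses force $Z^{\lambda}_{\sigma,w}$ to lie in the ideal $(t)$. Under the convention that $\sigma$ is the maximal-length coset representative in $S_n/\operatorname{Stab}(w_0\cdot\lambda)$, a nonstrict $\lambda$ forces $\ell(\sigma)\ge 1$, and $\sigma\neq 1_W$ does so directly. In either case the base value $Z^{\lambda}_{\sigma,\sigma w_0}=t^{\ell(\sigma)}\Phi_\lambda$ lies in $(t)$. For those $w$ with $w\le\sigma w_0$, Proposition~\ref{prop:recursion} reaches $w$ from $\sigma w_0$ using only the operators $\tau_i$, which manifestly preserve the ideal $(t)$, so $\overline{Z}^{\lambda}_{\sigma,w}=0$.

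The main obstacle is the case $w\not\le\sigma w_0$, because the recursion then forces at least one application of $\tau_i^{-1}$, an operator with a genuine $1/t$ pole that does not preserve $(t)$: one computes $\tau_i^{-1}(tg)=\tau_i(g)-(t-1)g$, whose value at $t=0$ is $(1-\overline{\partial}_i)(g|_{t=0})$, generically nonzero. To close the gap, I would leverage the symmetries of $\Phi_\lambda$ under $\operatorname{Stab}(\lambda)\subset S_n$ -- when $\lambda_j=\lambda_{j+1}$ the seed is $s_j$-symmetric, hence annihilated by $1-\overline{\partial}_j$ -- and inductively show that the relevant residue $g|_{t=0}$ always lies in $\ker(1-\overline{\partial}_i)$ at exactly the steps where an inverse operator is applied. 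Propagating these symmetries through the Hecke braid and quadratic relations, and reconciling the argument with the cases where symmetries of the seed are not directly available, is the technical heart of the proof.
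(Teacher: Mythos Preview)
Your argument for the displayed formula is correct and is essentially the paper's: both reduce $\tau_w^{-1}\tau_{w_0}$ to $\tau_{w^{-1}w_0}$ by length-additivity before specializing at $t=0$. (The paper phrases this as $\overline\partial_w^{-1}\overline\partial_{w_0}=\overline\partial_{w^{-1}w_0}$, which is the same cancellation; your version is cleaner in that it avoids writing the individually ill-defined symbol $\tau_i^{-1}|_{t=0}$.)

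For the vanishing clause you are right to flag the obstacle that $\tau_i^{-1}$ does not preserve the ideal $(t)$. The paper's one-line justification (``by Theorem~\ref{thm:partition-function}, $Z^\lambda_{\sigma,w}$ is a multiple of $t$ whenever $\sigma\ne 1_W$'') silently assumes this obstacle away, and in fact the assertion is \emph{false} when $\lambda$ is strict and $\sigma\ne 1_W$. Take $n=2$, $\lambda=(2,1)$, $\sigma=w=s_1$: the lattice model has exactly two admissible states, giving
\[
Z^{(2,1)}_{s_1,s_1}(\bs{x};\bs{y})\;=\;-t\Bigl(1-\tfrac{x_2}{y_1}\Bigr)+(t-1)\tfrac{x_1}{y_1},
\qquad
\overline Z^{(2,1)}_{s_1,s_1}(\bs{x};\bs{y})\;=\;-\tfrac{x_1}{y_1}\neq 0.
\]
So your proposed symmetry-propagation scheme could not have closed the gap here---$\Phi_{(2,1)}$ has no $s_i$-invariance to exploit---and neither can any argument, since the statement itself fails in this case. (A smaller point: your claim that $w\le\sigma w_0$ in Bruhat order suffices to reach $w$ from $\sigma w_0$ using only descents is also not right; Proposition~\ref{prop:recursion} moves by the \emph{weak} right order.)

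The nonstrict-$\lambda$ part of the vanishing is genuine, but the clean proof bypasses the Hecke recursion entirely and works directly with the Boltzmann weights at $t=0$. The only vertex type that strictly decreases the color-count on a vertical edge is Case~3 of Figure~\ref{fig:modified_weights}, and at $t=0$ its weight is nonzero only when that edge carries a single color; since the bottom boundary is uncolored, any state with a multi-colored vertical edge anywhere has weight zero. A nonstrict $\lambda$ forces a multi-colored top-boundary edge, so every state vanishes. This is exactly the mechanism the paper invokes later in the proof of Corollary~\ref{cor:pipe-dream}; it is what should be cited here, not Theorem~\ref{thm:partition-function}.
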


\begin{proof}
By Theorem \ref{thm:partition-function}, the partition function $Z^\lambda_{\sigma,w}(\bs{x}; \bs{y})$ is a multiple of $t$ whenever $\sigma\ne 1_W$, so $\overline{Z}^\lambda_{\sigma,w}(\bs{x}; \bs{y}) = 0$ in this case. In particular, if $\lambda$ is nonstrict, then $w_0\lambda$ has a nontrivial stabilizer, so the maximal-length condition on $\sigma$ Theorem \ref{thm:partition-function} ensures that $\sigma\ne 0$ in that case too.

Now suppose that $\lambda$ is strict and that $\sigma=1_W$.  Setting $t=0$ in Theorem \ref{thm:partition-function}, we get
\[
\overline{Z}_{1_W, w}^\lambda(\bs{x}; \bs{y}) =  \tau_w^{-1}|_{t=0} \tau_{w_0}|_{t=0} \cdot \left[ \prod_{i=1}^n \prod_{j < \lambda_i} \left(1 - \frac{x_i}{y_j}\right) \right].
\]
Thus, we only need to show that $\tau_w^{-1}|_{t=0} \tau_{w_0}|_{t=0} = (-1)^{\ell(w^{-1}w_0)}\overline{\partial}_{w^{-1}w_0}$, and this follows from (\ref{eq:tau-t=0}) and the fact that since $w^{-1}w_0<w_0$, $\overline{\partial}_w^{-1} \overline{\partial}_{w_0} = \overline{\partial}_w^{-1} \overline{\partial}_w \overline{\partial}_{w^{-1}w_0} = \overline{\partial}_{w^{-1}w_0}$.
\end{proof}

Setting $\lambda=\rho$, we get the following corollary.

\begin{corollary} \label{cor:groth-polys-specialization}
\[ \overline{Z}_{1_W, w}^\square(\bs{x}; \bs{y}) =  (-1)^{\ell(w^{-1}w_0)}\mathfrak{G}_w(\bs{x}; \bs{y}).\]
\end{corollary}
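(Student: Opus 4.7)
The plan is to specialize the preceding proposition to $\lambda=\square=(n,n-1,\ldots,1)$. Since $\square$ is strict, the nontrivial case of the proposition applies, and a direct reindexing (using $j<\lambda_i=n+1-i\iff i+j\leqslant n$) gives
\[
\prod_{i=1}^n\prod_{j<\lambda_i}\!\left(1-\frac{x_i}{y_j}\right)
\;=\;\prod_{i+j\leqslant n}\!\left(1-\frac{x_i}{y_j}\right)
\;=\;\mathfrak{G}_{w_0}(\bs{x};\bs{y}),
\]
which is exactly the base case of the recursion defining the double Grothendieck polynomials. Substituting into the proposition yields
\[
\overline{Z}^{\square}_{1_W,w}(\bs{x};\bs{y})
\;=\;(-1)^{\ell(w^{-1}w_0)}\,\overline{\partial}_{w^{-1}w_0}\,\mathfrak{G}_{w_0}(\bs{x};\bs{y}),
\]
so the corollary reduces to the identity $\overline{\partial}_{w^{-1}w_0}\mathfrak{G}_{w_0}=\mathfrak{G}_w$.

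To verify that identity, I would fix a reduced expression $w_0^{-1}w=s_{i_1}\cdots s_{i_k}$ with $k=\ell(w_0^{-1}w)=\ell(w^{-1}w_0)$, and set $v_j:=w_0 s_{i_1}\cdots s_{i_j}$. This produces a chain $w_0=v_0>v_1>\cdots>v_k=w$ in Bruhat order in which every step is length-decreasing, so the defining recursion $\mathfrak{G}_{vs_i}=\overline{\partial}_i\mathfrak{G}_v$ (valid precisely when $vs_i<v$) applies at each step and iterates to
\[
\mathfrak{G}_w \;=\; \overline{\partial}_{i_k}\cdots\overline{\partial}_{i_1}\mathfrak{G}_{w_0}.
\]
Since $w^{-1}w_0=(w_0^{-1}w)^{-1}=s_{i_k}\cdots s_{i_1}$ is itself a reduced expression, and since the $\overline{\partial}_i$ satisfy the braid relations (so that $\overline{\partial}_u$ is well-defined independently of the chosen reduced word), the right-hand side is exactly $\overline{\partial}_{w^{-1}w_0}\mathfrak{G}_{w_0}$, and the corollary follows.

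The argument is essentially bookkeeping, and I do not anticipate any real obstacle. The only two points requiring mild care are: (i) the identification $\square\leftrightarrow\mathfrak{G}_{w_0}$, which relies on the indexing convention that columns are labeled starting at $1$; and (ii) the matching of a Bruhat-descending chain from $w_0$ to $w$ with a reduced expression for $w^{-1}w_0$, so that both the operator $\overline{\partial}_{w^{-1}w_0}$ and the sign $(-1)^{\ell(w^{-1}w_0)}$ are produced consistently.
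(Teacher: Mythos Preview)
Your proposal is correct and takes essentially the same approach as the paper: the paper simply says ``Setting $\lambda=\rho$, we get the following corollary,'' leaving implicit exactly the two verifications you spell out, namely that the seed product for $\lambda=\square$ coincides with $\mathfrak{G}_{w_0}$ and that $\overline{\partial}_{w^{-1}w_0}\mathfrak{G}_{w_0}=\mathfrak{G}_w$ via the defining recursion. Your added detail on choosing a reduced word for $w_0^{-1}w$ and reading it backwards to produce $\overline{\partial}_{w^{-1}w_0}$ is precisely the bookkeeping the paper omits.
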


Next, we recover the reduced pipe dream formula for Grothendieck polynomials of \cite{KnutsonMillerSubword} by considering the $t=0$ specialization of our model. A \emph{pipe dream} is a tiling of an $n\times n$ grid with the tiles
\raisebox{-1 ex}{\begin{tikzpicture}[scale=.5]
    \draw (0,0) -- (1,0);
    \draw (0,1) -- (1,1);
    \draw (0,0) -- (0,1);
    \draw (1,0) -- (1,1);
    \draw[rounded corners = 2mm, line width = .5mm] (1,.5) -- (.5,.5)-- (.5,1);
    \draw[rounded corners = 2mm, line width = .5mm] (.5,0) --(.5,.5)-- (0,.5);
\end{tikzpicture}}
and 
\raisebox{-1 ex}{\begin{tikzpicture}[scale=.5]
    \draw (0,0) -- (1,0);
    \draw (0,1) -- (1,1);
    \draw (0,0) -- (0,1);
    \draw (1,0) -- (1,1);
    \draw[rounded corners = 2mm, line width = .5mm] (0,.5)--(1,.5);
    \draw[rounded corners = 2mm, line width = .5mm] (.5,0) --(.5,1);
\end{tikzpicture}}, such that every 
\raisebox{-1 ex}{\begin{tikzpicture}[scale=.5]
    \draw (0,0) -- (1,0);
    \draw (0,1) -- (1,1);
    \draw (0,0) -- (0,1);
    \draw (1,0) -- (1,1);
    \draw[rounded corners = 2mm, line width = .5mm] (0,.5)--(1,.5);
    \draw[rounded corners = 2mm, line width = .5mm] (.5,0) --(.5,1);
\end{tikzpicture}}
appears above the main diagonal. These tiles form a collection of pipes originating on the top boundary of the grid and terminating along the right boundary. A pipe dream is said to be \emph{reduced} if no two pipes cross more than once.

A reduced pipe dream is associated to the permutation $w$ given by $w(i)=j$ if the pipe entering in the $i$th column (from the right) on the top boundary exits via the $j$th row (from the top) on the right boundary. A tile of the form
\raisebox{-1 ex}{\begin{tikzpicture}[scale=.5]
    \draw (0,0) -- (1,0);
    \draw (0,1) -- (1,1);
    \draw (0,0) -- (0,1);
    \draw (1,0) -- (1,1);
    \draw[rounded corners = 2mm, line width = .5mm] (0,.5)--(1,.5);
    \draw[rounded corners = 2mm, line width = .5mm] (.5,0) --(.5,1);
\end{tikzpicture}}
which appears in row $i$ and column $j$ has weight $1-\frac{x_i}{y_j}$, while a tile of the form
\raisebox{-1 ex}{\begin{tikzpicture}[scale=.5]
    \draw (0,0) -- (1,0);
    \draw (0,1) -- (1,1);
    \draw (0,0) -- (0,1);
    \draw (1,0) -- (1,1);
    \draw[rounded corners = 2mm, line width = .5mm] (0,.5)--(1,.5);
    \draw[rounded corners = 2mm, line width = .5mm] (.5,0) --(.5,1);
\end{tikzpicture}}
has weight $\frac{x_i}{y_j}$ if the pipes in question have another crossing north and/or west of the current tile, and $1$ if they do not.

\begin{corollary} \label{cor:pipe-dream} \cite[Corollary~5.5]{KnutsonMillerSubword}
$\mathfrak{G}_w(\bs{x}; \bs{y})$ is the sum of the weights of all reduced pipe dreams with permutation $w$.
\end{corollary}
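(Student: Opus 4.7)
The plan is to combine Corollary~\ref{cor:groth-polys-specialization}, which identifies $(-1)^{\ell(w^{-1}w_0)}\mathfrak{G}_w(\bs{x}; \bs{y})$ with the partition function $\overline{Z}^\square_{1_W,w}(\bs{x}; \bs{y}) = Z^\square_{1_W,w}(\bs{x}; \bs{y})|_{t=0}$, with a direct combinatorial interpretation of the admissible states at $t=0$ as (signed) reduced pipe dreams for $w$. Since the partition function is already a sum over states with explicit Boltzmann weights from Figure~\ref{fig:modified_weights}, the proof will amount to specializing those weights, identifying which admissible states survive, and matching the bijection and weights with Knutson--Miller's formula.

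First, I would analyze Figure~\ref{fig:modified_weights} under the specialization $t=0$. Each of Cases~1--6 contains factors that are a power of $t$ multiplying a sign-and-variable expression; at $t=0$ most of these collapse, and only vertex configurations in which every vertical edge carries at most one color survive. The surviving local configurations correspond naturally to two types of pipe-dream tiles: an ``elbow,'' in which no color interacts with the distinguished color of the horizontal edge passing through, and a ``crossing,'' which is the $t=0$ limit of the Case~2 weight whose factor becomes $(1 - x_i/y_j)$ after the overall multiplicative constants have been absorbed.

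Next, I would construct the bijection between admissible states $\mathfrak{s} \in \mathfrak{S}^\square_{1_W, w}$ with nonzero weight at $t=0$ and reduced pipe dreams with permutation $w$. Under the boundary condition $B(1_W, w, \square)$, the color $c$ enters at the top of column $c$ and exits at the right of row $w^{-1}(c)$, so each colored path becomes a single pipe whose source and sink reconstruct the pipe-dream permutation data. The fermionic constraint (no vertical edge can carry the same color twice) together with the vanishing in Case~1 for nonempty $\Sigma$ at $t=0$ forces any two distinctly-colored paths to cross at most once, giving reducedness of the associated pipe dream.

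Finally, I would match the weights. After the bijection, each crossing tile at position $(i,j)$ contributes $1 - x_i/y_j$, while each elbow contributes $\pm 1$; the product of these signs across all vertices of a state collapses to a state-independent factor depending only on $w$, namely $(-1)^{\ell(w^{-1}w_0)}$. Canceling this sign against the prefactor in Corollary~\ref{cor:groth-polys-specialization} gives the pipe-dream formula. The main obstacle is the careful sign bookkeeping: the Boltzmann weights in Cases~3--6 carry nontrivial signs $(-1)^{\Sigma_{[1,c)}}$ and $(-1)^{\Sigma_{(c,d)}}$ depending on color multiplicities on the vertical edge, and one must verify that their product over a whole admissible state is indeed independent of the state and equal to $(-1)^{\ell(w^{-1}w_0)}$. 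This is plausible because the total number of color exchanges along vertical edges is a boundary invariant determined by $w$, but establishing it rigorously requires a global invariant argument rather than a tile-by-tile check.
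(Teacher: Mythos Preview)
Your overall strategy matches the paper's: specialize the Boltzmann weights of Figure~\ref{fig:modified_weights} at $t=0$, show the surviving states biject with reduced pipe dreams for $w$, and compare weights with Corollary~\ref{cor:groth-polys-specialization} to cancel the sign $(-1)^{\ell(w^{-1}w_0)}$. However, several of the steps you outline are either misattributed or incorrect as stated.

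First, your reducedness argument is wrong. The fermionic constraint only prevents repeated copies of the \emph{same} color on a vertical edge and says nothing about distinct colors crossing twice. The paper's argument is different: at $t=0$ the Case~2 weight $t^{\Sigma_{(c,r]}}(-xy^{-1}+t^{\Sigma_c})$ vanishes unless the vertical color is strictly smaller than the horizontal color. Hence at any crossing the vertical pipe carries the smaller color; after the crossing the roles would have to swap for a second crossing, forcing vertical $>$ horizontal, which has weight~$0$. That is what makes the resulting pipe dream reduced. The vanishing of Case~1 for nonempty $\Sigma$ is used for a different purpose: together with the observation that at $t=0$ paths can only merge (Case~3 requires $|\Sigma|=1$) and the bottom boundary is empty, it rules out multiple colors on any vertical edge, collapsing the model to the single-color weights of Figure~\ref{fig:pipe-dream-weights}.

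Second, your claim that ``each elbow contributes $\pm 1$'' is false for Grothendieck polynomials. The surviving elbow configurations include Case~5 with weight $-xy^{-1}$ and Case~6 with weight $-1$; these correspond precisely to the two elbow types in the pipe-dream weight (weight $x_i/y_j$ if the two pipes have already crossed to the northwest, weight $1$ otherwise), up to a uniform sign. Matching Cases~5 and~6 to these two elbow cases is part of the argument, not a nuisance to be absorbed into signs.

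Third, your sign argument can be made precise without a global topological invariant. The paper observes that the vertices of types~2,~5,~6 (exactly those where two distinct colored paths meet) occur $\ell(w_0)$ times in every state, and exactly $\ell(w)$ of them are Case~2 crossings; hence Cases~5 and~6 account for the remaining $\ell(w^{-1}w_0)$ vertices, each contributing an extra factor of $-1$ relative to the pipe-dream weight. This is the state-independent sign you were looking for, and it is a counting statement rather than the kind of global invariant you anticipated.
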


\begin{proof}
Consider the partition function $\overline{Z}_{1_W, w}^\square(\bs{x}; \bs{y})$. Specialize $t=0$ to the Boltzmann weights in Figure \ref{fig:modified_weights}. The first thing to notice is that many of the weights become zero. In particular, the weight of Case 1 is zero whenever $|\Sigma|\geqslant 1$, the weight of Case 3 is zero whenever $|\Sigma|\geqslant 2$, while the weights of Cases 2, 4, 5, and 6 are zero whenever $\Sigma$ contains a color larger than the color on either horizontal edge, unless it equals the color of the other horizontal edge.

Therefore, the only vertices with shared edges which survive are Case 4 or Case 6, when every color in $\Sigma$ is less than $c$, or Case 5, when every color in $\Sigma^-_d$ is less than $c$. In particular, paths moving down and right can only combine, not separate, since Case 3 has weight zero unless $\Sigma=\{c\}$. This means that no edge in the model can have multiple colors unless there is an edge with multiple colors on the bottom boundary of the model, which is forbidden by our boundary conditions. Therefore, we may disallow multiple colors on any edge. The resulting Boltzmann weights equal those in \ref{fig:pipe-dream-weights}, and so $\overline{Z}_{1_W, w}^\square(\bs{x}; \bs{y})$ is the partition function of the lattice model with these weights and boundary conditions $B(1, w, \rho)$.

We claim that states of this lattice model biject with reduced pipe dreams. For the vertices in Figure~\ref{fig:pipe-dream-weights}, replace every Case 2 vertex with a 
\raisebox{-1 ex}{\begin{tikzpicture}[scale=.5]
    \draw (0,0) -- (1,0);
    \draw (0,1) -- (1,1);
    \draw (0,0) -- (0,1);
    \draw (1,0) -- (1,1);
    \draw[rounded corners = 2mm, line width = .5mm] (0,.5)--(1,.5);
    \draw[rounded corners = 2mm, line width = .5mm] (.5,0) --(.5,1);
\end{tikzpicture}}
tile, and every other vertex with a 
\raisebox{-1 ex}{\begin{tikzpicture}[scale=.5]
    \draw (0,0) -- (1,0);
    \draw (0,1) -- (1,1);
    \draw (0,0) -- (0,1);
    \draw (1,0) -- (1,1);
    \draw[rounded corners = 2mm, line width = .5mm] (1,.5) -- (.5,.5)-- (.5,1);
    \draw[rounded corners = 2mm, line width = .5mm] (.5,0) --(.5,.5)-- (0,.5);
\end{tikzpicture}}
tile. Since Case 2 only appears with a path on the horizontal edge, Case 4 cannot appear, every vertex below the main diagonal must be Case 1, and every vertex on the main diagonal must be Case 3, so every 
\raisebox{-1 ex}{\begin{tikzpicture}[scale=.5]
    \draw (0,0) -- (1,0);
    \draw (0,1) -- (1,1);
    \draw (0,0) -- (0,1);
    \draw (1,0) -- (1,1);
    \draw[rounded corners = 2mm, line width = .5mm] (1,.5) -- (.5,.5)-- (.5,1);
    \draw[rounded corners = 2mm, line width = .5mm] (.5,0) --(.5,.5)-- (0,.5);
\end{tikzpicture}}
tile appears above the main diagonal, and we indeed have a pipe dream. Furthermore, since in Case 2 $c<d$, any pair of pipes can only cross once, and therefore the pipe dream is reduced. This map is a bijection since a state of this model is determined by the location of Case 2 vertices.

Now for the weights. Notice that the vertex weights in Figure \ref{fig:pipe-dream-weights} match the weights of the corresponding tiles, except that Cases 4, 5, and 6 are negated. As a consequence of the above discussion, Cases 2, 5, and 6 appear a total of $\ell(w_0)$ times in any state of the model (and Case 4 does not appear). $\ell(w)$ of these vertices are Case 2, so the remaining $\ell(w^{-1}w_0)$ are Cases 5 and 6. Therefore, the weight of any state is $(-1)^{\ell(w^{-1}w_0)}$ times the weight of the resulting pipe dream, and by Corollary~\ref{cor:groth-polys-specialization},
$\mathfrak{G}_w(\bs{x}; \bs{y}) = (-1)^{\ell(w^{-1}w_0)}\overline{Z}_{1_W, w}^\square(\bs{x}; \bs{y})$ equals the sum of the  weights of all reduced pipe dreams with permutation $w$.
\end{proof}

\begin{figure}[h] 
\begin{center}
\scalebox{0.9}{
\begin{tabular}{|cccccc|}
\hline
\hline
Case 1 & Case 2 & Case 3 & Case 4 & Case 5 & Case 6\\
\hline
\vertex{+}{+}{+}{+} &
\vertex{d}{c}{d}{c} &
\vertex{+}{c}{c}{+} &
\vertex{c}{+}{+}{c} &
\vertex{c}{d}{d}{c} &
\vertex{d}{c}{c}{d}
\\[4pt]
$1$ & 
$\delta_{c<d} - xy^{-1}$ &
$1$ &
$-x y^{-1}$ &
$-x y^{-1}$ &
$-1$ \\
\hline

\hline
\end{tabular}}
\end{center}
\caption{The Boltzmann weights from Figure \ref{fig:modified_weights} under the specialization $t=0$, with the restriction that multiple colors cannot share an edge. Here, $c$ and $d$ are colors with $c\leqslant d$. In Case 2 only, $c$ may be $+$, which is considered smaller than all colors for the purpose of this weight.. All vertex configurations which do not appear in this table have weight zero. States of this model with boundary conditions $B(1,w,\rho)$ biject with reduced pipe dreams, and the weights match up to a sign.}
\label{fig:pipe-dream-weights}
\end{figure}

\begin{remark}
We can use the weights in Figure~\ref{fig:pipe-dream-weights} to obtain the
	double $\beta$-Grothendieck polynomials of Fomin and Kirillov
	\cite{FominKirillovFPSAC}, and therefore Schubert polynomials by
	setting $\beta=0$. More precisely, multiply the weights in Cases 4, 5, and 6
	by $-1$ (via a change of basis in the matrix of Boltzmann weights that does
	not alter the solvability of the model) as in the proof of
	Corollary~\ref{cor:pipe-dream}, and then make the substitution $x\mapsto
	1+\beta x, y^{-1}\mapsto 1+\beta y$. The weight in Case 2 when $c<d$ becomes
	$\beta(x+y+\beta xy)$, and the weight in Case 5 becomes $1+\beta(x+y+\beta
	xy)$. Comparing with \cite[Figure~3]{FrozenPipes} with $q=0$, we see that the
	resulting partition function is $(-\beta)^{\ell(w)}$ times the double
	$\beta$-Grothendieck polynomial.
\end{remark}

\begin{remark}
A similar argument to Corollary~\ref{cor:pipe-dream} shows that $\overline{Z}_{1_W, w}^\lambda(\bs{x}; \bs{y})$, is (up to a sign) a Grothendieck polynomial as well. If $\lambda$ is a strict partition with $n$ parts and $w$ is a permutation of $1,\ldots, n$, let $v(w,\lambda)$ be the permutation of $1,\ldots,\lambda_1$ where 
\[
v(j) = \begin{cases} w(i), & \text{if } j = \lambda_i, \\ n + i, & \text{if $j$ is the $i$th smallest non-part of $\lambda$}.\end{cases}
\]
Applying the bijection in the proof of Corollary~\ref{cor:pipe-dream} to states of $\overline{Z}_{1_W, w}^\lambda(\bs{x}; \bs{y})$, we obtain the formula
\[
\overline{Z}_{1_W, w}^\lambda(\bs{x}; \bs{y}) = (-1)^{\binom{\lambda_1+1}{2} - \ell(v(w,\lambda))} \mathfrak{G}_{v(w,\lambda)}(\bs{x}; \bs{y}).
\]
See \cite[Figure~9]{Zinn-Justin-LRSchur} for a similar argument for Grassmannian permutations.

It is interesting that in the $t=0$ case, varying $\lambda$ doesn't lead to any new partition functions, while in the case of general $t$, the functions can be very different.
\end{remark}

\bibliographystyle{habbrv}
\bibliography{schubert-lattice}

\end{document}